\theoremstyle{plain}
\newtheorem*{acknowledgement}{Acknowledgement}
\numberwithin{equation}{section}
\newtheorem{theorem}{Theorem}[section]
\newtheorem{corollary}[theorem]{Corollary}
\newtheorem{lemma}[theorem]{Lemma}
\newtheorem{proposition}[theorem]{Proposition}
\newtheorem{notation}[theorem]{Notation}
\newtheorem{remark}[theorem]{Remark}
\newtheorem{example}[theorem]{Example}
\newtheorem{definition}[theorem]{Definition}
\newcommand{\bZ}{\mathbb{Z}}
\newcommand{\bS}{\mathbb{S}}
\newcommand{\bB}{\mathbb{B}}
\newcommand{\bD}{\mathbb{D}}
\newcommand{\cA}{\mathcal{A}}  
\newcommand{\cB}{\mathcal{B}}
\renewcommand{\hom}{\mathrm{Hom}}
\newcommand{\End}{\mathrm{End}}
\newcommand{\Irr}{\mathrm{Irr}}
\newcommand{\id}{\mathrm{id}}
\newcommand{\ev}{\mathrm{ev}}
\newcommand{\coev}{\mathrm{coev}}
\newcommand{\Tr}{\mathrm{Tr}}
\newcommand{\ONB}{\mathrm{ONB}}
\newcommand{\cyl}{\mathbf{Cyl}}
\renewcommand{\k}{\mathds{k}}
\newcommand{\SL}{\mathrm{SL}}
\newcommand{\cC}{\mathcal{C}}
\newcommand{\cZ}{\mathcal{Z}}
\newcommand{\ot}{\otimes}
\newcommand{\sgn}{\operatorname{sgn}}
\newcommand{\tr}{\operatorname{tr}}
\tikzset{
    partial ellipse/.style args={#1:#2:#3}{
        insert path={+ (#1:#3) arc (#1:#2:#3)}
    }
}
\renewcommand{\1}{\mathds{1}}
\newcommand{\tube}{T}
    \pgfarrowshullpoint{\pgfarrowlength}{0pt}
\else\pgfsetlinewidth{+\pgfarrowlinewidth}\fi
    \pgfarrowshullpoint{\pgfarrowlength}{0pt}
    \pgfarrowshullpoint{\pgfarrowinset}{0pt}
\else\pgfsetlinewidth{+\pgfarrowlinewidth}\fi
\newdimen\ipeminipagewidth
\tikzstyle{ipe import} = [
\tikzset{
  rgb color/.code args={#1=#2}{%
    \definecolor{tempcolor-#1}{rgb}{#2}%
    \tikzset{#1=tempcolor-#1}%
  },
}
\tikzstyle{transition}=[rectangle,draw=black!50,fill=white,thick,
\begin{document}

\title[3-Alterfold]{3-Alterfolds and Quantum Invariants}

\author{Zhengwei Liu}
\address{Z. Liu, Yau Mathematical Sciences Center and Department of Mathematics, Tsinghua University, Beijing, 100084, China}
\address{Yanqi Lake Beijing Institute of Mathematical Sciences and Applications, Huairou District, Beijing, 101408, China}
\email{liuzhengwei@mail.tsinghua.edu.cn}

\author{Shuang Ming}
\address{S. Ming, Yanqi Lake  Beijing Institute of Mathematical Sciences and Applications, Beijing, 101408, China}
\email{sming@bimsa.cn}

\author{Yilong Wang}
\address{Y. Wang, Yanqi Lake  Beijing Institute of Mathematical Sciences and Applications, Beijing, 101408, China}
\email{wyl@bimsa.cn}

\author{Jinsong Wu}
\address{J. Wu, Yanqi Lake  Beijing Institute of Mathematical Sciences and Applications, Beijing, 101408, China}
\email{wjs@bimsa.cn}

\maketitle

\begin{abstract}
In this paper, we introduce the concept of 3-alterfolds with embedded separating surfaces.
When the separating surface is decorated by a spherical fusion category,  we obtain quantum invariants of 3-alterfold, which is consistent with many topological moves. These moves provide evaluation algorithms for various presentations of $3$-alterfold, e.g. Heegaard splittings, triangulations, link surgeries. In particular, we obtain quantum invariants of 3-manifolds containing surfaces, generalizing those of 3-manifolds containing framed links. 
Moreover, in this framework, we topologize fundamental algebraic concepts. For instance, we implement the Drinfeld center by tube diagrams as a blow up of framed links.
The topologized center leads to a quick proof of the equality between the Reshetikhin-Turaev invariants and the Turaev-Viro invariants for spherical fusion categories.
In addition, we topologize half-braiding, $S$-matrix and the generalized Frobenius-Schur indicators, etc.
In particular, the latter leads to a quick topological proof of the equivariance of the generalized Frobenius-Schur indicators.
\end{abstract}

\tableofcontents

\section{Introduction}
We introduce the concept of 3-alterfolds, which are pairs of the form $(M, \Sigma)$, where $M$ is a closed compact oriented 3-manifold and $\Sigma$ is a disjoint union of embedded surfaces that separates $M$ into alternatively colored regions. We call the colors $A$ and $B$ to distinguish them.
When $\Sigma$ is decorated by tensor diagrams $\Gamma$ of a spherical fusion category $\cC$, we call the triple $(M, \Sigma, \Gamma)$ a $\cC$-decorated 3-alterfold, which plays a major role in this paper.

We extend the spherical graphical calculus to obtain a partition function $Z$ on $\cC$-decorated 3-alterfolds that is consistent with topological moves including attaching/eliminating $\cC$-decorated $A$-colored 0-, 1-, 2- and 3-handles to the alterfolds, and surgery moves. The partition function is unique up to a nonzero parameter $\zeta$ detecting the Euler characteristic of the $B$-colored region.
In particular, these topological moves allow us to evaluate the quantum invariant of decorated $3$-alterfold in different ways. 
Now we show how our invariants generalize some remarkable known invariants.

Firstly, through 0-, 1-, 2- and 3-moves, we can evaluate the partition function of a triangulated $3$-alterfolds as state sums, generalizing the Turaev-Viro invariants \cite{TurVir92} of $3$-manifolds. The invariant is independent of the $A$-colored region, so it is an invariant of $B$-colored $3$-manifolds with decoration on their boundary surfaces. It is significant that triangulations on the boundary surfaces are not required. In particular, we obtain quantum invariants of 3-manifolds with boundary, which has no decorations.

Secondly, by considering $3$-alterfolds of $A$-colored tubes in $B$-colored region, we construct a modular tensor category $\mathcal{A}$ with morphisms given by $\mathcal{C}$ decorated $A$-colored handlebodies. Later, we prove this category is equivalent to the Drinfeld center $\mathcal{Z(C)}$ of $\cC$. The tube category $\cA$ can be realized as a topological description of the Drinfeld center. 
It has been proved by Kirillov \cite{Kir11} that the tube category (only consisting cylinderical morphisms) is equivalent (as a category) to $\mathcal{Z(C)}$. When $\mathcal{C}$ is the bi-module category associated with a subfactor, it is equivalent (as a braided fusion category) to the \textit{tube system} described in \cite{KSW05}. Compared with M\"{u}ger's description \cite{Mug03a,Mug03b} of the Drinfeld center by categorical (or weak) Morita equivalence, the tube category produces topological interpretation of half-braiding, braiding, and the minimal central idempotents etc. (See Figure \ref{tab:top}). In particular, evaluatating the partition function of closed $B$-colored $3$-manifold using surgery moves computes the Reshetikhin-Turaev\cite{ResTur91} invariant of $\mathcal{Z(C)}$.

Thirdly, it is remarkable that by evaluating our partition function in two different ways (via triangulations and surgeries respectively), we obtain the equality $TV_{\mathcal{C}}(M)=RT_{\mathcal{Z(C)}}(M)$ between the Turaev-Viro invariant of $M$ associated to $\cC$ and the Reshetikhin-Turaev invariant associated to $\mathcal{Z(C)}$ for any closed oriented 3-manifold $M$. This result have been proved for modular tensor categories by Turaev \cite{Tur94}, Walker \cite{Wal91}, Roberts \cite{Rob95}, for unitary fusion categories comes from finite depth subfactors by Kawahigashi, Sato and Wakui \cite{KSW05}, and for spherical fusion categories by Turaev-Virelizier\cite{TurVir17} and Balsam-Kirillov \cite{BalKir10, Bal10}.

Fourthly, by evaluating our invariant on $3$-alterfolds whose $A$-colored regions are knotted handlebodies without decoration on the separating surfaces, we generalize the Turaev-Viro invariant for pseudo $3$-manifolds introduced by Benedetti and Petronio in \cite{BenPet96} which is originally defined for quantum groups associated to representation theory of $SU(2)$. As a corollary, we derive an equality of the Turaev-Viro invariant of pseudo $3$-manifolds and the evaluation of a link diagram colored by the canonical Lagrangian algebra of $\mathcal{Z(C)}$, which generalize an equality of Detcherry, Kalfagianni and Yang relating Turaev-Viro invariant and the colored Jones polynomials \cite{Jon85, DKY18}.

Last but not least, we show that the generalized Frobenius-Schur indicator of spherical fusion categories can be realized as the partition function of $3$-alterfolds consisting of $A$-colored linked tubes with decorations on the boundary tori, on which $\SL_2(\mathbb{Z})$ acts in a natural way. The generalize FS-indicators are important invariants of spherical fusion categories, and the equivariance of the $\SL_2(\bZ)$ action on their variables is crucial in the solution of the congruence subgroup problem of modular tensor categories \cite{NgSch10}. In Theorem \ref{thm:equivariance}, we give a simple and intuitive proof of the equivariance by the homeomorphism invariance of our partition function $Z$. Topological interpretation of the generalized FS-indicators and their equivariance was studied by Farnsteiner and Schweigert in \cite{FarSch22} under certain hypothesis on the existence of a topological field theory for decorated 3-manifolds with free boundary. By showing the existence of partition functions on decorated 3-manifolds with free boundary in Theorem \ref{thm:3mboundary}, we are able to establish the equivariance without any assumptions. In addition, we give an explanation of the necessity of the conjugation by $\operatorname{diag}(1,-1)$ in the $\SL_2(\bZ)$-action on one side of the equivariance equation, which, algebraically, looks merely like a mysterious coincidence. We argue that this conjugation is given by an orientation reversing homeomorphism on the torus. Moreover, we explain how our topological construction of the generalized FS-indicators can naturally be extended to higher genus, and the equivariance of such topological indicators under the mapping class groups of higher genus surfaces will follow easily.
We organize the paper as follows.
In Section 2, we give a short review on fusion categories and Drinfeld center.
In Section 3, we give the definition of $\mathcal{C}$-decorated 3-alterfold and a partition function on it. 
We prove the main theorems (Theorems \ref{thm:partition function} and \ref{thm:partition function2}) characterizing the invariance of the partition function under certain topological moves.
In Section 4, We defined a braided fusion category $\mathcal{A}$ using the partition function. 
By the main theorems, we show $\cA$ is actually a modular tensor category and braided equivalent to the Drinfeld center of $\mathcal{C}$.
In Section 5, We interpret the generalized Frobenius-Schur indicator as the quantum invariant of certain $\cC$-decorated 3-alterfold. 
We then give a simple topological proof of equivariance of the generalized Frobenius-Schur indicators. 
In Section 6, we discuss the future work and possible directions.

\begin{acknowledgement}
    The authors would like to thank A. Jaffe, Y. Kawahigashi, S.-H. Ng, N. Reshetikhin, Y. Ruan and C. Schweigert for discussions.
    The authors are supported by grants from Yanqi Lake Beijing Institute of Mathematical Sciences and Applications. Z.L. was supported by NKPs (Grant no. 2020YFA0713000), by BMSTC and ACZSP (Grant No. Z221100002722017), and by Templeton Religion Trust (TRT 159). Z.L., Y.W. and J.W. were supported by Beijing Natural Science Foundation Key Program (Grant No. Z220002).
\end{acknowledgement}

\section{Preliminaries}

In this section, we briefly review basic concepts and properties on tensor categories, modular tensor category, Drinfeld center etc.

\subsection{Fusion Categories and Graphical Calculus} 
Let $\mathcal{C}$ be a monoidal category with tensor product $\otimes^{\mathcal{C}}$ and tensor unit object $\mathbbm{1}_{\mathcal{C}}$.
A monoidal functor between two monoidal categories $\mathcal{C}$ and $\mathcal{B}$ is a pair $(F, J)$, where $F: \mathcal{C} \to \mathcal{B}$ is a functor, and 
\[J_{X, Y}: F(X)\otimes^{\cB} F(Y) \rightarrow F(X \otimes^{\mathcal{C}} Y)\]
is a natural isomorphism, satisfying coherence conditions for preserving monoidal structures.
A monoidal functor $(F, J)$ is an equivalence of monoidal categories if the underlying functor $F$ is an equivalence of categories.
A monoidal category $\mathcal{C}$ is strict if the associativity,  the left and the right unit isomorphisms are all identities.
Any monoidal category is monoidally equivalent to a strict one. 
For strict monoidal categories, we can present tensor products and compositions of morphisms using planar diagrams, see \cite{BakKir01, Kir11}.
We always assume monoidal categories appeared in this paper to be strict. 

An object $X^*$ in $\mathcal{C}$ is said to be a left dual of $X$ if there exist an evaluation $\ev_X:X^*\otimes X\to \mathbbm{1}$ and a coevaluation $\coev_X: \mathbbm{1} \to X\otimes X^*$ such that 
\begin{align}\label{eq:leftdual}
(\id_X\otimes \ev_X)\circ(\coev_X\otimes \id_X)=\id_X, \quad (\ev_X\otimes X^*)\circ(\id_{X^*}\otimes \coev_X)=\id_{X^*}.
\end{align}
Right duals are defined similarly.
In graphical calculus, we draw the evaluation and the coevaluation as cup and cap respectively. The identities \ref{eq:leftdual} are depicted as 
\begin{align*}
\raisebox{-1cm}{
\begin{tikzpicture}[xscale=0.5]
\draw [blue, -<-=0.5] (0, -1)--(0, 0) .. controls +(0, 1) and +(0, 1) .. (1.5, 0);
\draw [blue, -<-=0.5] (1.5, 0) .. controls +(0, -1) and +(0, -1) .. (3, 0)--(3, 1) node [black, above] {\tiny $X$};
\end{tikzpicture}}
=
\raisebox{-1cm}{
\begin{tikzpicture}
    \draw [blue, ->-=0.5] (0, 1) node [black, above] {\tiny $X$} --(0, -1);
\end{tikzpicture}
}\,,
\quad 
\raisebox{-1cm}{
\begin{tikzpicture}[xscale=0.5]
\draw [blue, -<-=0.5] (0, 1) node [black, above] {\tiny $X$} --(0, 0) .. controls +(0, -1) and +(0, -1) .. (1.5, 0);
\draw [blue, -<-=0.5] (1.5, 0) .. controls +(0, 1) and +(0, 1) .. (3, 0)--(3, -1);
\end{tikzpicture}}
=
\raisebox{-1cm}{
\begin{tikzpicture}
    \draw [blue, -<-=0.5] (0, 1) node [black, above] {\tiny $X$} --(0, -1);
\end{tikzpicture}
}\,, 
\end{align*}
where $\raisebox{-0.5cm}{
\begin{tikzpicture}[scale=0.5]
    \draw [blue, -<-=0.5] (0, 1) node [black, above] {\tiny $X$} --(0, -1);
\end{tikzpicture}
} = \raisebox{-0.5cm}{
\begin{tikzpicture}[scale=0.5]
    \draw [blue, ->-=0.5] (0, 1) node [black, above] {\tiny $X^*$} --(0, -1);
\end{tikzpicture}
}$.
A monoidal category $\mathcal{C}$ is left (resp.~right) rigid if every object admits a left (resp.~right) dual, and $\mathcal{C}$ is rigid if it is left rigid and right rigid. It is well-known that $X \mapsto X^{**}$ extends to a monoidal functor $**$ \cite{EGNO15}.

A pivotal category is a rigid monoidal category equipped with an isomorphism $j: \id\to **$ between monoidal functors, which is called a pivotal structure on $\mathcal{C}$. 
Given a pivotal structure $j$ on $\mathcal{C}$, the left pivotal trace $\Tr_L$ is defined as follows.
\begin{align*}
    \Tr_L(f)=\ev_{X^*} (j_X\otimes \id_{X^*})(f\otimes \id_{X^*})\coev_X\,, \quad \text{for all } f\in \hom(X, X)\,.
\end{align*}
The right pivotal trace $\Tr_R$ is defined similarly. If $\Tr_L$ is equal to $\Tr_R$ on all endomorphisms in $\mathcal{C}$, then $j$ is called spherical, and $\mathcal{C}$ is called a spherical category. 
In this case, we simply write $\Tr=\Tr_{L}=\Tr_{R}$, and call $\Tr$ the trace on $\mathcal{C}$. 
A pivotal strict rigid monoidal category is said to be strict pivotal if $j_X=\id_{X}$ for all object $X$ in $\mathcal{C}$, and $(X\otimes Y)^*= Y^*\otimes X^{*}$ for all $X, Y\in\mathcal{C}$. In \cite{NgSch07b}, it is shown that any pivotal category is equivalent to a strict pivotal one as pivotal categories. Hence, we assume all pivotal categories in this paper are strict pivotal. Under this assumption, we depict the left (right) pivotal trace in our graphical calculus as
\begin{align*}
    \Tr(f)=\Tr_L(f)=\Tr_R(f)=
    \raisebox{-0.6cm}{
    \begin{tikzpicture}[scale=0.5]
        \draw [blue] (0, 0) .. controls +(0, 1) and +(0, 1).. (1.5, 0);
        \draw [blue] (0, -1) .. controls +(0, -1) and +(0, -1).. (1.5, -1);
        \draw (-0.75, -1) rectangle (0.75, 0); \draw [blue, -<-=0.5] (1.5, 0)--(1.5, -1);
        \node at (0, -0.5) {\tiny $f$};
    \end{tikzpicture}
    }
    =    \raisebox{-0.6cm}{
    \begin{tikzpicture}[scale=0.5]
        \draw [blue] (0, 0) .. controls +(0, 1) and +(0, 1).. (-1.5, 0);
        \draw [blue] (0, -1) .. controls +(0, -1) and +(0, -1).. (-1.5, -1);
        \draw (-0.75, -1) rectangle (0.75, 0); \draw [blue, ->-=0.5] (-1.5, 0)--(-1.5, -1);
        \node at (0, -0.5) {\tiny $f$};
    \end{tikzpicture}
    }
\end{align*}

Let $\k$ be an algebraically closed field of characteristic zero. A monoidal category $\mathcal{C}$ is a fusion category if $\mathcal{C}$ is $\k$-linear, finite, semisimple, rigid and $\End(\mathbbm{1})=\k$. In particular, finiteness implies the underlying category consists of finitely many isomorphism classes of simple objects. A fusion category $\mathcal{C}$ is spherical if it admits a spherical pivotal structure. 

For a $\k$-linear category $\mathcal{C}$, the Karoubi envelope \cite{Del07} $\widetilde{\mathcal{C}}$ of $\mathcal{C}$ is define to be the $\k$-linear category with objects of the form $(X, p)$, where $X\in \text{Ob}(\mathcal{C})$ and $p$ is an idempotent in $\End(X)$ (i.e., $p^2=p$). Morphism spaces between objects are defined to be
\begin{align*}
\hom((X, p), (Y, q))=\{f\in \hom(X, Y)\mid f=q\circ f \circ p \}.
\end{align*}
A monoidal structure on $\mathcal{C}$ naturally induces one on $\widetilde{\cC}$ by
\begin{align*}
(X, p)\otimes  (Y, q)=(X Y, p\otimes q)\,.
\end{align*}
It is easy to see that if $\mathcal{C}$ is rigid (resp.~pivotal, spherical), then $\widetilde{\mathcal{C}}$ is rigid (reps.~pivotal, spherical).

Fix a spherical fusion category $\mathcal{C}$, we denote the set of isomorphism classes of simple objects as $\Irr(\mathcal{C})=\{\1=X_{0}, X_1,\ldots X_{r}\}$. The quantum dimension $d_X$ of object $X$ is defined to the $\Tr(\id_{X})$. In particular, we define $d_i$ to be the quantum dimension of the simple object $X_i$. The global dimension of $\mathcal{C}$ is equal to 
\[\mu := \displaystyle{\sum_{0\le i \le r}d_i^{2}}\,.\]

Let $X$ and $Y$ be objects in $\mathcal{C}$, we define a basis $\{\phi_{j}\}$ of $\hom(X, Y)$ and a basis $\{\phi_{j}'\}$ of $\hom(Y, X)$ be a pair of dual basis if $\Tr(\phi_{j}\phi_{k}')=\delta_{jk}$.

\begin{lemma}\cite[Lemma 5, Unit Decomposition]{FarSch22} \label{lem:unit1}
Suppose $\{\phi_{i, j}\}_j$ and $\{\phi_{i, j}'\}_{j}$ be a pair of dual base in $\hom(X, X_i)$ and $\hom(X_i, X)$ for $i=1, \ldots, r$. 
Then 
\begin{align*}
\sum_{i}\sum_{j} d_{i} \phi_{i, j}^{'} \circ \phi_{i,j}=\id_{X}\,.
\end{align*}
\end{lemma}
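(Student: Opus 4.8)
The plan is to deduce the identity from semisimplicity of $\mathcal{C}$ together with a single application of Schur's lemma. Set
\[e:=\sum_{i}\sum_{j}d_i\,\phi_{i,j}'\circ\phi_{i,j}\ \in\ \End(X),\]
and note that the mere existence of the dual bases in the hypothesis forces the pairing $\hom(X,X_i)\times\hom(X_i,X)\to\k$, $(\psi,\psi')\mapsto\Tr(\psi\circ\psi')$, to be nondegenerate, which in turn forces $d_i=\Tr(\id_{X_i})\neq 0$ for every $i$ that occurs; we will use this below. The goal is to show $e=\id_X$.

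\textbf{Reduction.} First I would observe that the family $\{\phi_{i,j}':X_i\to X\}_{i,j}$ is jointly epimorphic. Indeed, if $g\in\hom(X,X_m)$ satisfies $g\circ\phi_{m,l}'=0$ for all $l$, then expanding $g=\sum_l c_l\,\phi_{m,l}$ in the basis $\{\phi_{m,l}\}_l$ of $\hom(X,X_m)$ and composing with $\phi_{m,l'}'$ yields $c_{l'}=0$ for all $l'$ via $\Tr(\phi_{m,l}\phi_{m,l'}')=\delta_{ll'}$, so $g=0$; and since $\mathcal{C}$ is semisimple, a morphism out of $X$ vanishes as soon as it is killed by precomposition with all the $\phi_{i,j}'$. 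Hence $e=\id_X$ will follow once we check $e\circ\phi_{k,l}'=\phi_{k,l}'$ for every simple $X_k$ and every $l$.

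\textbf{Key computation.} The core identity is
\[\phi_{i,j}\circ\phi_{k,l}'=\frac{\delta_{ik}\,\delta_{jl}}{d_k}\,\id_{X_k}.\]
This is Schur's lemma: $\phi_{i,j}\circ\phi_{k,l}'\in\hom(X_k,X_i)$ vanishes unless $i=k$, and for $i=k$ it equals $\lambda\,\id_{X_k}$ with $\lambda\,d_k=\Tr(\phi_{k,j}\circ\phi_{k,l}')=\delta_{jl}$ by the defining property of dual bases. Substituting,
\[e\circ\phi_{k,l}'=\sum_{i}\sum_{j}d_i\,\phi_{i,j}'\circ\bigl(\phi_{i,j}\circ\phi_{k,l}'\bigr)=\sum_{j}d_k\,\phi_{k,j}'\circ\tfrac{\delta_{jl}}{d_k}\,\id_{X_k}=\phi_{k,l}',\]
which, by the reduction, gives $e=\id_X$.

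I do not anticipate a genuine obstacle here: this is the standard "resolution of the identity" in a semisimple category. The only delicate points are the joint-epimorphism step, where semisimplicity is essential, and the bookkeeping of conventions — the order of composition inside the trace pairing and the placement of the factor $d_i$ — which must stay consistent with the normalization $\Tr(\phi_{i,j}\phi_{i,k}')=\delta_{jk}$ fixed above. If one preferred to bypass the epimorphism argument, an equivalent route is to fix a direct-sum decomposition $X\cong\bigoplus_i X_i^{\oplus n_i}$ whose structure maps are supplied by the $\phi_{i,j}$ and $\phi_{i,j}'$, read $\id_X$ off in block form, and arrive at the same formula after the $d_i$-normalization.
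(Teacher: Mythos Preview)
Your argument is correct. The paper does not supply its own proof of this lemma at all: it is quoted verbatim as \cite[Lemma 5]{FarSch22} and then used. So there is nothing to compare against; your Schur-lemma computation $\phi_{i,j}\circ\phi_{k,l}'=\delta_{ik}\delta_{jl}\,d_k^{-1}\,\id_{X_k}$ followed by the joint-epimorphism step is exactly the standard proof one would give. One tiny remark: in your reduction paragraph you verify joint-epimorphism only against targets $X_m$, but since $e-\id_X\in\End(X)$ and $X$ decomposes as a sum of the $X_m$'s by semisimplicity, that is already enough.
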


We depict the equality in the above lemma by
\begin{align*}
\sum_{i,j} d_i
    \vcenter{\hbox{
    \begin{tikzpicture}
        \draw [blue, -<-=0.5, -<-=0.1, -<-=0.9] (0, -1.5) --(0, 1.5) node [pos=0.5, right, black] {\tiny $X_i$} node [pos=0.1, right, black] {\tiny $X$} node [pos=0.9, right, black] {\tiny $X$};
        \node [draw, fill=white] at (0, 0.55) {\tiny $\phi_{i,j}$};
        \node [draw, fill=white] at (0, -0.55) {\tiny $\phi_{i,j}'$};
    \end{tikzpicture}}}
    = \vcenter{\hbox{
\begin{tikzpicture}
    \draw [blue, ->-=0.5] (0, 1)--(0, -1) node [pos=0.5, right, black] {\tiny $X$};
\end{tikzpicture}
}}\,.
\end{align*}

\begin{notation}
    We will denote by the dual base $\{\phi_j\}_j$ and $\{\phi_j'\}_j$ by $\phi$ and $\phi'$ and suppress the summation if there is no confusion.
\end{notation}

The following two lemmas are direct consequences of basic linear algebra. We include them here to familiarize the reader with our graphical calculus conventions, and for future reference.

\begin{lemma}\label{lem:unit2}
Suppose $\phi_{j}$ and $\phi_{j}^{'}$ be dual base in $\hom(X, \1)$ and $\hom(\1, X)$, and $\psi_k$ and $\psi_k^{'}$ be dual base in $\hom(Y, \1)$ and $\hom(\1, Y)$. Then for any $f\in\hom(X, Y)$ and $g\in \hom(Y, X)$
\begin{align*}
\sum_{j} \phi_{j}\circ g\circ f\circ\phi_{j}^{'}
=\sum_{k} \psi_{k}\circ f\circ g\circ\psi_{k}^{'}\,,
\end{align*}
depicted as
\begin{align*}
    \vcenter{\hbox{
    \begin{tikzpicture}
        \draw [blue, -<-=0.5, -<-=0.2, -<-=0.8] (-1.5, 0) --( 1.5, 0);
        \node [draw, fill=white] at (1.5, 0) {\tiny $\phi$};
        \node [draw, fill=white] at (-1.5, 0) {\tiny $\phi'$};
        \node [draw, fill=white] at ( 0.4, 0) {\tiny $g$};
        \node [draw, fill=white] at (-0.4, 0) {\tiny $f$};
    \end{tikzpicture}
    }}
    =
    \vcenter{\hbox{
    \begin{tikzpicture}
        \draw [blue, -<-=0.5, -<-=0.2, -<-=0.8] (-1.5, 0) --( 1.5, 0);
        \node [draw, fill=white] at (1.5, 0) {\tiny $\psi$};
        \node [draw, fill=white] at (-1.5, 0) {\tiny $\psi'$};
        \node [draw, fill=white] at ( 0.4, 0) {\tiny $f$};
        \node [draw, fill=white] at (-0.4, 0) {\tiny $g$};
    \end{tikzpicture}
    }}\,.
\end{align*}
\end{lemma}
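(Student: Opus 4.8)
The plan is to reinterpret each side of the asserted identity as an ordinary matrix trace of a $\k$-linear operator---one acting on $\hom(\1,X)$ and one acting on $\hom(\1,Y)$---and then to invoke the elementary identity $\tr(BA)=\tr(AB)$ for linear maps between finite-dimensional vector spaces.

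First I would record the computational mechanism behind the argument. Since $\cC$ is fusion, $\End(\1)=\k$, so both sides of the identity are scalars. The composition map $\hom(X,\1)\times\hom(\1,X)\to\End(\1)=\k$, $(\phi,\phi')\mapsto\phi\circ\phi'$, is a bilinear pairing, and by hypothesis $\{\phi_j\}_j$ and $\{\phi_j'\}_j$ are dual bases for it, i.e.\ $\phi_j\circ\phi_k'=\delta_{jk}$ in $\k$. Hence for any $\k$-linear operator $A$ on $\hom(\1,X)$, expanding $A\phi_j'=\sum_k A_{kj}\phi_k'$ gives
\[
\sum_j \phi_j\circ\bigl(A\,\phi_j'\bigr)=\sum_{j,k}A_{kj}\,\bigl(\phi_j\circ\phi_k'\bigr)=\sum_j A_{jj}=\tr(A),
\]
and the same formula holds for operators on $\hom(\1,Y)$ with the dual bases $\{\psi_k\}_k,\{\psi_k'\}_k$.

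Next I would introduce the two $\k$-linear maps $\bar f\colon\hom(\1,X)\to\hom(\1,Y)$, $p\mapsto f\circ p$, and $\bar g\colon\hom(\1,Y)\to\hom(\1,X)$, $q\mapsto g\circ q$. Post-composition by $g\circ f\in\End(X)$ on $\hom(\1,X)$ is exactly $\bar g\circ\bar f$, and post-composition by $f\circ g\in\End(Y)$ on $\hom(\1,Y)$ is exactly $\bar f\circ\bar g$. Therefore, by the formula of the previous paragraph, the left-hand side of the lemma equals $\tr(\bar g\circ\bar f)$ and the right-hand side equals $\tr(\bar f\circ\bar g)$, and these agree by the cyclicity of the trace. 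That finishes the proof.

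I do not expect a genuine obstacle here---as advertised, the statement is a direct consequence of basic linear algebra. The only two points that warrant a little care are keeping track of the order of composition (so that the post-composition operators come out as $\bar g\bar f$ and $\bar f\bar g$, not their reverses), and noting that the displayed graphical equation is merely the diagrammatic transcription of this computation.
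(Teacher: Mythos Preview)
Your proof is correct and is precisely the ``basic linear algebra'' argument the paper has in mind; the paper does not actually write out a proof for this lemma, merely stating that it is a direct consequence of basic linear algebra. Your trace interpretation via post-composition operators and the cyclicity $\tr(\bar g\bar f)=\tr(\bar f\bar g)$ is exactly the intended content.
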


\begin{lemma}\label{lem:unit3}
   Suppose $\phi_{j}$ and $\phi_{j}^{'}$ be dual base in $\hom(X, \1)$ and $\hom(\1, X)$.
   We have 
   \begin{align*}
    \vcenter{\hbox{
    \begin{tikzpicture}
        \draw [blue, -<-=0.2, -<-=0.8] (-1.5, 0) --( 1.5, 0);
        \draw [white, thick] (0.4, 0)--(-0.4, 0);
        \node [draw, fill=white] at (1.5, 0) {\tiny $f$};
        \node [draw, fill=white] at (-1.5, 0) {\tiny $g$};
        \node [draw, fill=white] at ( 0.4, 0) {\tiny $\phi$};
        \node [draw, fill=white] at (-0.4, 0) {\tiny $\phi'$};
    \end{tikzpicture}
    }}
    =
    \vcenter{\hbox{
    \begin{tikzpicture}
        \draw [blue, -<-=0.5, -<-=0.2, -<-=0.8] (-0.5, 0) --( 0.5, 0);
        \node [draw, fill=white] at ( 0.4, 0) {\tiny $f$};
        \node [draw, fill=white] at (-0.4, 0) {\tiny $g$};
    \end{tikzpicture}
    }}\,.
\end{align*}
\end{lemma}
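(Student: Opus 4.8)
The plan is to translate the two pictures into algebra and then invoke the defining property of a dual basis. Reading off the diagrams, the left-hand side is the morphism $\sum_{j} g\circ \phi_{j}'\circ\phi_{j}\circ f$ and the right-hand side is $g\circ f$, where $f\in\hom(\1,X)$ and $g\in\hom(X,\1)$: in the left picture the strand between $\phi$ and $\phi'$ is erased (it carries the tensor unit), while the two outer legs of the boxes $f$ and $g$ are the unit object because those boxes sit at the boundary of the diagram. So the whole statement reduces to the assertion that $\sum_{j}g\circ\phi_{j}'\circ\phi_{j}\circ f=g\circ f$ for all such $f$ and $g$, and since $g$ is arbitrary it is enough to show $\sum_{j}\phi_{j}'\circ\phi_{j}\circ f=f$ for every $f\in\hom(\1,X)$.

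To prove this I would first unwind the dual-basis condition at the unit: for each $j,k$ the composite $\phi_{j}\circ\phi_{k}'$ lies in $\End(\1)=\k\cdot\id_{\1}$, and since $d_{\1}=\Tr(\id_{\1})=1$ the hypothesis $\Tr(\phi_{j}\circ\phi_{k}')=\delta_{jk}$ is exactly the statement $\phi_{j}\circ\phi_{k}'=\delta_{jk}\,\id_{\1}$. Next, expand an arbitrary $f\in\hom(\1,X)$ in the basis $\{\phi_{k}'\}_{k}$ of $\hom(\1,X)$ as $f=\sum_{k}c_{k}\phi_{k}'$; then $\phi_{j}\circ f=\sum_{k}c_{k}(\phi_{j}\circ\phi_{k}')=c_{j}\,\id_{\1}$, i.e.\ the scalar $\phi_{j}\circ f$ reads off the $j$-th coordinate of $f$. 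Therefore $\sum_{j}\phi_{j}'\circ\phi_{j}\circ f=\sum_{j}(\phi_{j}\circ f)\,\phi_{j}'=\sum_{j}c_{j}\phi_{j}'=f$, and composing on the left with $g$ (and commuting the scalars $\phi_j\circ f$ past it) gives the claim. Graphically this is precisely the move of absorbing the $\phi$--$\phi'$ pair and reconnecting the $X$-strand.

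As a conceptual repackaging I would remark that $P:=\sum_{j}\phi_{j}'\circ\phi_{j}\in\End(X)$ is an idempotent, since $\phi_{k}\circ P=\phi_{k}$ and $P\circ\phi_{k}'=\phi_{k}'$ follow at once from $\phi_{j}\circ\phi_{k}'=\delta_{jk}\id_{\1}$, and it is nothing but the projection of $X$ onto its $\1$-isotypic summand; every $f\in\hom(\1,X)$ factors through that summand, so $P\circ f=f$, which reproves the reduced claim without choosing coordinates. I do not expect a genuine obstacle here --- this is elementary linear algebra in the semisimple category $\cC$; the only thing that needs care is the diagrammatic bookkeeping, namely correctly identifying which hom-spaces the boxes $f$ and $g$ belong to (in particular that their outer legs carry the unit), so that $P$ may legitimately be deleted rather than merely inserted.
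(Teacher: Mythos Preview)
Your reading of the diagrams and the argument are both correct: once one identifies $f\in\hom(\1,X)$, $g\in\hom(X,\1)$ and uses $\phi_j\circ\phi_k'=\delta_{jk}\,\id_\1$ (which follows from $d_\1=1$), the identity $\sum_j\phi_j'(\phi_j\circ f)=f$ is exactly the coordinate expansion in the basis $\{\phi_k'\}$, and composing with $g$ finishes it. The paper gives no proof beyond the remark that the lemma is a ``direct consequence of basic linear algebra,'' and your argument is precisely that consequence spelled out.
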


A tensor diagram $\Gamma$ of $\mathcal{C}$ on a surface is a finite graph with vertices thicken to a coupon colored by morphisms in $\mathcal{C}$ and edges colored by objects in $\mathcal{C}$ coherent with the incoming and outgoing edges attached to it such that (1) if an edge is blue, then it is directed; (2) if a connected component of $\Gamma$ contains no coupons/vertices and is red, then it must be a closed loop on the surface, and it is understood as colored by the sum of all simple objects $X_i$ with weight $d_i$. The red closed curve is called the Kirby color or $\Omega$-color of $\cC$.

A crucial property of the Kirby color of a spherical fusion category $\mathcal{C}$ is the following handle slide property.

\begin{lemma}[Handle Slide]
    We have that
    \begin{align*}
    \vcenter{\hbox{
\begin{tikzpicture}
    \draw [blue, ->-=0.5] (0, 1)--(0, -1) node [pos=0.5, left, black] {\tiny $X$};
    \begin{scope}[shift={(0.3, 0)}]
     \draw [red] (0, 0) .. controls +(0, 0.6) and +(0, 0.6) .. (1, 0) (0, 0) .. controls +(0, -0.6) and +(0, -0.6) .. (1, 0);
    \end{scope}
\end{tikzpicture}
}}=
\vcenter{\hbox{
    \begin{tikzpicture}
  \draw [red] (0, 0) .. controls +(0, 0.6) and +(0, 0.6) .. (1, 0) (0, 0) .. controls +(0, -0.6) and +(0, -0.6) .. (1, 0);
  \draw [blue] (-0.2, 0.2) .. controls +(0, 0.7) and +(0, 0.7) .. (1.2, 0.2) (-0.2, -0.2) .. controls +(0, -0.7) and +(0, -0.7) .. (1.2, -0.2);
  \draw [blue, ->-=0.5] (1.2, 0.2)--(1.2, -0.2) node [pos=0.5, right, black] {\tiny $X$};
  \draw [blue] (-0.2, 0.2) .. controls +(0, -0.1) and +(0, -0.1) .. (-0.4, 0.2)--(-0.4, 1) (-0.2, -0.2) .. controls +(0, 0.1) and +(0, 0.1) .. (-0.4, -0.2)--(-0.4, -1);
    \end{tikzpicture}
    }}\,.
    \end{align*}
\end{lemma}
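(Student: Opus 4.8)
The plan is to evaluate both diagrams directly, reducing each to the same scalar multiple of the bare $X$-strand.

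First I would record the evaluation of an isolated Kirby-colored loop. By definition a closed $\Omega$-colored loop bounding an embedded disk that meets no other part of the diagram equals $\sum_i d_i$ times a contractible $X_i$-colored loop; and a contractible $X_i$-colored loop with no coupon equals $\Tr(\id_{X_i}) = d_i$. Hence such a loop evaluates to $\sum_i d_i^2 = \mu$. On the left-hand side the $\Omega$-loop is disjoint from the $X$-strand and bounds exactly such an empty disk, so the left-hand side equals $\mu$ times the $X$-strand.

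For the right-hand side I would first read off the topology of the picture: the blue $X$-strand has been routed so that, together with an arc joining its two endpoints, it encircles the red $\Omega$-loop exactly once and encloses nothing else. I then contract the $\Omega$-loop to the scalar $\mu$ exactly as above; the detour of the $X$-strand now encircles an empty region, so a planar isotopy rel endpoints straightens it, and the right-hand side also equals $\mu$ times the $X$-strand. Comparing with the previous paragraph finishes the proof.

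The only delicate point is that very first step on the right-hand side, namely reading the picture correctly: one must be sure the blue detour bounds, together with the $X$-strand, a region whose sole decoration is the $\Omega$-loop, so that contracting the loop genuinely makes the detour null-homotopic. If one prefers an argument that does not contract the $\Omega$-loop — the robust form one needs for the handle-slide moves appearing later in the paper, where the enclosed region is not empty — then instead I would deform the blue strand so that its detour runs parallel to the red loop, insert the unit decomposition of Lemma~\ref{lem:unit1} along that parallel stretch, fuse the $X$-strand with the $\Omega$-colored curve into a sum of simple strands $X_k$, and collapse the resulting contractible $X_k$-loops using the orthogonality $\Tr(\phi_{k,\alpha}\phi'_{k,\beta}) = \delta_{\alpha\beta}$ of dual bases. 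Keeping track of the dimension weights $d_i$ and $d_k$ and of the strand directions in that computation is the main bookkeeping task, but the outcome is again $\mu$ times the $X$-strand on each side.
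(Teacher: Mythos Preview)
Your primary argument is correct: in the planar pictures as drawn, the red $\Omega$-loop bounds an empty disk on both sides, so it contracts to the scalar $\mu$ in each case, and the remaining blue detour on the right straightens by isotopy. This is a genuinely different route from the paper's one-line proof, which simply invokes Lemma~\ref{lem:unit1} (unit decomposition). The trade-off is exactly the one you flag in your final paragraph: your direct evaluation exploits the accident that the region enclosed by the $\Omega$-loop is empty in the specific diagrams shown, whereas the unit-decomposition argument works uniformly and is the form actually needed when the lemma is applied on surfaces (e.g.\ in Theorem~\ref{thm:handleslides}), where the $\Omega$-curve need not bound a disk at all. Your alternative sketch via Lemma~\ref{lem:unit1}---fuse the parallel $X$- and $X_i$-strands into simples $X_k$, then reabsorb---is precisely the paper's intended mechanism, so you have both proofs in hand; the paper just records only the robust one.
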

\begin{proof}
It follows from Lemma \ref{lem:unit1} directly.
\end{proof}

\subsection{Drinfeld Center}
A monoidal category $\mathcal{C}$ is braided if it admits a natural isomorphism $c_{X,Y}:X\otimes Y\rightarrow Y\otimes X$ satisfying the hexagon axioms (\cite[Chapter 8]{EGNO15}).
A twist $\theta$ is an autoequivalence of the identity functor $\text{id}_{\mathcal{C}}$ subject to
\begin{align*}
\theta_{X\otimes Y}=(\theta_X\otimes \theta_Y)c_{Y,X} c_{X,Y}
\end{align*}
for any $X$, $Y \in \cC$. A twist is called a ribbon structure if $(\theta_{X})^*=\theta_{X^*}$.
For a simple object $X_j$, $\theta_{X_j}$ is a multiple of $\id_{X_j}$, and we denote this scalar by $\theta_j$.
A half braiding $e_X$ assigns each object $Y$ in $\mathcal{C}$ a natural isomorphism
$$e_{X}(Y):X\otimes Y\rightarrow Y\otimes X$$
such that $e_X(\mathbbm{1})=\id_X$ and
\begin{align*}
    (Y_1\otimes e_X(Y_2))(e_X(Y_1)\otimes Y_2)=e_X(Y_1\otimes Y_2),
\end{align*}
for all $Y_1, Y_2\in\mathcal{C}$.
The Drinfeld center $\mathcal{Z}(\mathcal{C})$ of a monoidal category $\mathcal{C}$ is a monoidal category with objects $(X, e_X)$ and morphism spaces
\begin{align*}
\hom((X, e_X), (Y, e_Y))=\{f\in \hom(X, Y)\mid (f\otimes \id_V) e_X(V)=e_Y(V) (\id_V\otimes f), \forall V\in \mathcal{C}\}\,,
\end{align*}
where the tensor product of $(X, e_X), (Y, e_Y)$ is $(XY, e_{XY})$ and the tensor unit is $(\mathbbm{1}, e_{\mathbbm{1}})$.
The Drinfeld center $\mathcal{Z}(\mathcal{C})$ is a braided monoidal category.
If $\mathcal{C}$ is rigid, pivotal and spherical respectively, then $\mathcal{Z}(\mathcal{C})$ is rigid, pivotal and spherical respectively (see \cite{Mug03b} for detail).

For simplicity, in our graphical calculus, we draw half braidings as crossings, but they should be thought of as special coupons. By naturality, a red string through a half braiding can still be considered as being colored by the sum over simple objects, weighted by their dimensions. Moreover, the naturality also implies that we are able to perform handle slide through half braidings. Precisely, we have the following lemma. 

\begin{lemma}
Suppose $\mathcal{C}$ is a fusion category and $V\in\mathcal{C}$,  $(X, e_X), (Y, e_Y)\in \mathcal{Z}(\mathcal{C})$.
Then we have that
    \begin{align*}
    \raisebox{-1cm}{
\begin{tikzpicture}[scale=1.2]
    \draw [blue, ->-=0.5] (0, 1)--(0, -1) node [pos=0.5, left, black] {\tiny $V$};
    \begin{scope}[shift={(0.3, 0)}]
     \draw [red] (0, 0) .. controls +(0, 0.6) and +(0, 0.6) .. (1, 0) (0, 0) .. controls +(0, -0.6) and +(0, -0.6) .. (1, 0);
     \path [fill=white] (0.4, -1) rectangle (0.6, 1);
     \draw [blue, -<-=0.1, -<-=0.9] (0.5, -1)--(0.5, 1) node [pos=0.1, right] {\tiny $Y$} node [pos=0.9, right, black] {\tiny $X$};
     \node [draw, fill=white] at (0.5, 0) {\tiny $f$};
    \end{scope}
\end{tikzpicture}
}=
    \raisebox{-1cm}{
    \begin{tikzpicture}[scale=1.2]
  \draw [red] (0, 0) .. controls +(0, 0.6) and +(0, 0.6) .. (1, 0) (0, 0) .. controls +(0, -0.6) and +(0, -0.6) .. (1, 0);
  \draw [blue] (-0.2, 0.2) .. controls +(0, 0.7) and +(0, 0.7) .. (1.2, 0.2) (-0.2, -0.2) .. controls +(0, -0.7) and +(0, -0.7) .. (1.2, -0.2);
  \draw [blue, ->-=0.5] (1.2, 0.2)--(1.2, -0.2) node [pos=0.5, right, black] {\tiny $V$};
  \draw [blue] (-0.2, 0.2) .. controls +(0, -0.1) and +(0, -0.1) .. (-0.4, 0.2)--(-0.4, 1) (-0.2, -0.2) .. controls +(0, 0.1) and +(0, 0.1) .. (-0.4, -0.2)--(-0.4, -1);
    \path [fill=white] (0.4, -1) rectangle (0.6, 1);
     \draw [blue, -<-=0.1, -<-=0.9] (0.5, -1)--(0.5, 1) node [pos=0.1, right, black] {\tiny $Y$} node [pos=0.9, right, black] {\tiny $X$};
     \node [draw, fill=white] at (0.5, 0) {\tiny $f$};
    \end{tikzpicture}
    }
    \end{align*}.
\end{lemma}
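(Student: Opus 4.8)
\emph{Proof plan.} The idea is that this identity is just the Handle Slide lemma carried out in the presence of a strand of the Drinfeld center, so nothing new should be needed beyond Lemma~\ref{lem:unit1} together with the two structural facts that define half braidings and center morphisms: the naturality of $e_X$ and $e_Y$ in their arguments, and the intertwining identity $(f\otimes\id_V)\,e_X(V)=e_Y(V)\,(\id_V\otimes f)$ satisfied by $f\in\hom((X,e_X),(Y,e_Y))$. Thus the plan is to repeat the proof of the Handle Slide lemma almost verbatim and to check that each move still goes through when the $V$-strand is dragged past the $X/Y$-strand and over the coupon $f$.

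Concretely, I would first insert $\id_V=\sum_{i,j}d_i\,\phi_{i,j}'\circ\phi_{i,j}$ on the $V$-strand using Lemma~\ref{lem:unit1}; exactly as in the proof of the Handle Slide lemma, this reduces the claim to lassoing each $X_i$-colored segment over the red $\Omega$-loop. In the present situation that lasso must additionally sweep across the $X/Y$-strand and over the coupon $f$. Dragging an $X_i$-colored (and, at the end, an $\Omega$-colored) strand across the $X$-part of that strand is exactly the crossing $e_X(X_i)$, and pulling a strand through such a crossing is the naturality of $e_X$ in its second argument; likewise for the $Y$-part with $e_Y$. Passing over $f$ — comparing the two ways of wrapping around it, one meeting $e_X$ and the other $e_Y$ — is precisely the intertwining identity above. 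After these moves the two sides agree inside a disk in which the red loop and the slid $V$-strand no longer meet the $X/Y$-strand, where the Handle Slide lemma (equivalently, Lemma~\ref{lem:unit1}) finishes the argument; reversing the isotopies returns to the original configuration.

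The only point that calls for genuine care — and the step I expect to take the most writing — is the orientation and object bookkeeping: one must verify that the half braiding is applied to $X_i$ rather than to $X_i^*$, which depends on the co-orientation of the red loop relative to the $X/Y$-strand, and that when the slid $V$-strand becomes a tensor product of segments the crossings split according to $e_X(V_1\otimes V_2)=(V_1\otimes e_X(V_2))(e_X(V_1)\otimes V_2)$. This is a routine but slightly fiddly diagram chase, and it is the only place where the reasoning is not completely mechanical; no algebraic ingredient beyond Lemma~\ref{lem:unit1}, naturality of the half braiding, and the definition of morphisms in $\mathcal{Z}(\mathcal{C})$ is required.
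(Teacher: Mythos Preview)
Your plan is correct and matches the paper's approach: the paper's proof is the single line ``It follows from Lemma~\ref{lem:unit1} and the naturality of the half braidings,'' and your proposal is precisely a spelled-out version of that, making explicit that the intertwining condition on $f\in\hom_{\mathcal{Z}(\mathcal{C})}((X,e_X),(Y,e_Y))$ is what lets the slid strand pass over the coupon. No additional ideas are needed.
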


\begin{proof}
It follows from Lemma \ref{lem:unit1} and the naturality of the half braidings.
\end{proof}
For all $f\in \hom_{\mathcal{C}}(X, Y)$ and $(X, e_X), (Y, e_Y)\in \mathcal{Z}(\mathcal{C})$, there exists a projector $P_{\mathcal{C}}: \hom_{\mathcal{C}}(X, Y)\to \hom_{\mathcal{Z}(\mathcal{C})}((X, e_X), (Y, e_Y))$ given by
\begin{align*}
    P_{\mathcal{C}}(f)=\frac{1}{\mu}
       \vcenter{\hbox{
\begin{tikzpicture}[scale=1.2]
    \begin{scope}[shift={(0.3, 0)}]
     \draw [red] (0, 0) .. controls +(0, 0.6) and +(0, 0.6) .. (1, 0) (0, 0) .. controls +(0, -0.6) and +(0, -0.6) .. (1, 0);
     \path [fill=white] (0.4, -1) rectangle (0.6, 1);
     \draw [blue, -<-=0.1, -<-=0.9] (0.5, -1)--(0.5, 1) node [pos=0.1, right, black] {\tiny $Y$} node [pos=0.9, right, black] {\tiny $X$};
     \node [draw, fill=white] at (0.5, 0) {\tiny $f$};
    \end{scope}
\end{tikzpicture}}}\,.
\end{align*}
We see that $\mathcal{Z}(\mathcal{C})$ is semisimple if $\mathcal{C}$ is semisimple by using $P_{\mathcal{C}}$ (see \cite{Mug03b} for more details).
Suppose $\mathcal{C}$ is a braided spherical fusion category, and let
$$S_{jk}=\Tr(c_{X_{k}, X_j^{*}} \circ c_{X_j^{*}, X_k})$$
then the matrix $(S_{jk})_{j,k=0}^r$ is called the $S$-matrix of $\mathcal{C}$. Let $T_{j,k}=\delta_{j,k}\theta_j$, we call $(T_{jk})_{j,k=0}^r$ the $T$-matrix of $\cC$.
A braided fusion category $\mathcal{C}$ is a modular tensor category if $\mathcal{C}$ is spherical and the $S$-matrix is invertible \cite{Mug03c}.
According to M\"uger \cite{Mug03b}, the Drinfeld center $\mathcal{Z}(\mathcal{C})$ of a spherical fusion category $\mathcal{C}$ is a modular tensor category.

\section{Partition Function over 3-Alterfold}
In this section, we give the definition of decorated 3-alterfolds and elementary moves of it. In addition, we define a partition function $Z$ that is invariant under the elementary moves. 

\subsection{3-Alterfold}
We begin with the definition of 3-alterfold.
\begin{definition}
By a 3-alterfold, we mean a pair $(M, \Sigma)$ where $M$ is an oriented, compact $3$-manifold without boundary and $\Sigma$ is an embedded oriented compact surface without boundary in $M$ separating $M\setminus \Sigma$ into connected components that are alternatively colored by colors $A$ and $B$. We denote the $B$-colored (resp.\ $A$-colored) region by
$R_{B}(M, \Sigma)$ (resp.\ $R_{A}(M, \Sigma)$). We further require the orientation of $\Sigma$ to be consistent with the boundary orientation of $R_{A}(M, \Sigma)$, which is the opposite of the boundary orientation of $R_{B}(M, \Sigma)$.
\end{definition}

\begin{remark}
Note that a $3$-alterfold actually is an alternatively shaded 3-manifold.
When the 3-alterfold $(M, \Sigma)$ is clear from the context, we simply denote $R_B(M, \Sigma)$ (resp.~$R_A(M, \Sigma)$) by $R_B$ (resp.~$R_A$). Also, the definition could be extended to a multicolored $3$-manifold with an consistently oriented embedded surface separating the manifold into multiple colored regions.
\end{remark}

\begin{definition}
Let $\mathcal{C}$ be a spherical fusion category. By a $\mathcal{C}$-decorated 3-alterfold, we mean a triple $(M, \Sigma, \Gamma)$ where $(M, \Sigma)$ is a 3-alterfold and $\Gamma$ is a $\mathcal{C}$-diagram embedded in the separating surface $\Sigma$.
\end{definition}

\begin{remark}
In our convention, the diagram $\Gamma$ is a $\mathcal{C}$-diagram when reading from $R_{B}$. When reading from the $R_{A}$, $\Gamma$ can be viewed as a diagram of $\mathcal{C}^{\otimes op}$ or $\mathcal{C}^{op}$.
\end{remark}

\begin{example}\label{exa:sphere}
Let $M=\mathbb{R}^{3}\cup \{\infty\}\cong \bS^{3}$ and $\Sigma=\{x^{2}+y^{2}+z^{2}=1\}\cong\bS^{2}$ be the separating sphere with the normal vector pointing to the $\infty$, and $\Gamma$ be a diagram over the separating sphere. Then $(\bS^{3}, \bS^{2}, \Gamma)$ is a $\mathcal{C}$-decorated 3-alterfold with $R_{A}=\{x^{2}+y^{2}+z^{2}<1\}$ and $R_{B}=\{x^{2}+y^{2}+z^{2}>1\}$. If we change the orientation of the separating surface $\Sigma$, then the $A-B$ coloring of the regions are interchanged, and $\Gamma\subset \Sigma$ now becomes diagram in $\mathcal{C}^{\otimes op}$.
\end{example}

We define the following elementary moves regarding the diagram evaluation and color interchange.
\begin{itemize}
\item \textbf{Planar Graphical Calculus:} Let $D\subset \Sigma$ be an embedded disk and $\Gamma_{D}=D\cap \Gamma$. Suppose $\Gamma_{D}'=\Gamma_{D}$ as morphisms in $\mathcal{C}$. We change the $\Gamma$ to $\Gamma'$ by replacing $\Gamma_{D}$ to $\Gamma_{D}'$.

$$
\begin{array}{ccc}
   \vcenter{\hbox{
\begin{tikzpicture}
\draw[->-=0.8, ->-=0.2, blue] (-.3, 1.2)--(-.3, -1.2);
\draw[->-=0.8, ->-=0.2, blue] (.3, 1.2)--(.3, -1.2);
\draw[blue] (0, -.75) node{$\ldots$};
\draw[blue] (0, .65) node{$\ldots$};
\draw[fill=white] (-.4, -.3) rectangle (.4, .3);
\draw (0, 0) node{$\Gamma_{D}$};
\draw[dashed] (-1.2, -1.2) rectangle (1.2, 1.2);
\draw (1, -1) node{$D$};
\end{tikzpicture}}}
&
\rightarrow
&
\vcenter{\hbox{
\begin{tikzpicture}    
\draw[->-=0.8, ->-=0.2, blue] (-.3, 1.2)--(-.3, -1.2);
\draw[->-=0.8, ->-=0.2, blue] (.3, 1.2)--(.3, -1.2);
\draw[blue] (0, -.75) node{$\ldots$};
\draw[blue] (0, .65) node{$\ldots$};
\draw[fill=white] (-.4, -.3) rectangle (.4, .3);
\draw (0, 0) node{$\Gamma_{D}'$};
\draw[dashed] (-1.2, -1.2) rectangle (1.2, 1.2);
\draw (1, -1) node{$D$};
\end{tikzpicture}}}\\
\end{array}
$$

\item \textbf{Move 0:} Let $P$ be a point in the interior of $R_B$. We change the color of a tubular neighborhood $P_{\epsilon}$ of $P$ from $B$ to $A$.
$$
\begin{array}{ccc}
   \vcenter{\hbox{
\begin{tikzpicture}
\draw[dashed] (0, 0) rectangle (2, 2);
\draw[dashed] (.8, .8) rectangle (2.8, 2.8);
\draw[dashed] (0, 2)--+(0.8, 0.8);
\draw[dashed] (2, 2)--+(0.8, 0.8);
\draw[dashed] (2, 0)--+(0.8, 0.8);
\draw[dashed] (0, 0)--+(0.8, 0.8);
\draw (2, 2.3) node{\tiny{$R_{B}$}};
\draw (1.3, 1.3) node[right]{\tiny{$P$}} node{$\cdot$};
\end{tikzpicture}}}  & \rightarrow & \vcenter{\hbox{
\begin{tikzpicture}
\draw[dashed] (0, 0) rectangle (2, 2);
\draw[dashed] (.8, .8) rectangle (2.8, 2.8);
\draw[dashed] (0, 2)--+(0.8, 0.8);
\draw[dashed] (2, 2)--+(0.8, 0.8);
\draw[dashed] (2, 0)--+(0.8, 0.8);
\draw[dashed] (0, 0)--+(0.8, 0.8);
\filldraw[black,opacity=0.2] (1.3, 1.3) circle (0.3);
\draw[dashed, opacity=0.3] (1.3, 1.3) [partial ellipse=0:180:0.3 and 0.1];
\draw[opacity=0.3] (1.3, 1.3) [partial ellipse=180:360:0.3 and 0.1];
\draw (2, 2.3) node{\tiny{$R_{B}$}};
\end{tikzpicture}}}\\
\vspace{2mm}\\
(M, \Sigma, \Gamma)&\rightarrow & (M, \Sigma \sqcup \partial P_{\epsilon}, \Gamma)\\
\end{array}
$$

\item \textbf{Move 1:} Let $S$ be an embedded arc in $R_{B}$ with $\partial S$ meeting $\Sigma$ transversely and not intersecting $\Gamma$. We change the color of the tubular neighborhood $S_{\epsilon}$ of $S$ from $B$ to $A$, and put an $\Omega$-colored circle $C$ on the belt of $S_{\epsilon}$.

$$
\begin{array}{ccc}
   \vcenter{\hbox{
\begin{tikzpicture}
\path[fill=black,opacity=0.2]
(-1.5, 0) [partial ellipse=-90:90:0.5 and 1];
\draw (-1.5, 0) [partial ellipse=-90:90:0.5 and 1];
\path[fill=black,opacity=0.2]
(-2.5, -1) rectangle (-1.5, 1);
\begin{scope}[xscale=-1]
\path[fill=black,opacity=0.2]
(-1.5, 0) [partial ellipse=-90:90:0.5 and 1];
\draw (-1.5, 0) [partial ellipse=-90:90:0.5 and 1];
\path[fill=black,opacity=0.2]
(-2.5, -1) rectangle (-1.5, 1);
\end{scope}
\draw[dashed] (-1, 0)--(1, 0);
\draw (0, 0.2) node[above]{\tiny{$S$}};
\end{tikzpicture}}}  & \rightarrow & \vcenter{\hbox{
\begin{tikzpicture}
\path[fill=black,opacity=0.2]
(-1.5, 0) [partial ellipse=-90:90:0.5 and 1];
\draw (-1.5, 0) [partial ellipse=-90:90:0.5 and 1];
\path[fill=black,opacity=0.2]
(-2.5, -1) rectangle (-1.5, 1);
\begin{scope}[xscale=-1]
\path[fill=black,opacity=0.2]
(-1.5, 0) [partial ellipse=-90:90:0.5 and 1];
\draw (-1.5, 0) [partial ellipse=-90:90:0.5 and 1];
\path[fill=black,opacity=0.2]
(-2.5, -1) rectangle (-1.5, 1);
\end{scope}
\draw[dashed] (-1, 0)--(1, 0);
\draw (0, 0.2) node[above]{\tiny{$C$}};
\path[fill=white]
(-1.1, -0.25) rectangle (1.1, 0.25);
\path[fill=black, opacity=0.2]
(-1.1, -0.25) rectangle (1.1, 0.25);
\draw (-1.02, 0.25)--(1.02, 0.25);
\draw (-1.02, -0.25)--(1.02, -0.25);
\draw[red, dashed] (0, 0) [partial ellipse=-90:90:0.125 and 0.25];
\draw[red] (0, 0) [partial ellipse=90:270:0.125 and 0.25];
\end{tikzpicture}}}\\
\vspace{2mm}\\
(M, \Sigma, \Gamma)&\rightarrow & (M, \partial(R_{B}\setminus S_{\epsilon}), \Gamma\sqcup C)\\
\end{array}
$$

\item \textbf{Move 2:} Let $D$ be a disk in $R_{B}$ with $\partial D\subset \partial R_{B}$ intersect $\Gamma$ only at edges transversely. We change the color of a tubular neighborhood of $D$ from $B$ to $A$, then cut the diagram along $\partial D$ and put a pair of sum of dual basis $\phi$ and $\phi'$ on both side of $D_{\epsilon}$.

$$
\begin{array}{ccc}
   \vcenter{\hbox{
\begin{tikzpicture}
\draw[dashed] (0, 1.5) [partial ellipse=0:360:1 and 0.3];
\draw[dashed] (0, 0) [partial ellipse=0:360:1 and 0.3];
\draw[dashed] (0, -1.5) [partial ellipse=0:360:1 and 0.3];
\draw (0.5, 0) node{\tiny{$D$}};
\draw (-1, -1.5)--(-1, 1.5);
\draw (1, -1.5)--(1, 1.5);
\path[fill=black, opacity=0.2]
(-1, -1.5) rectangle (-2, 1.5);
\path[fill=black, opacity=0.2]
(1, -1.5) rectangle (2, 1.5);
\draw [blue, ->-=0.7] (0, 1.2)--(0, -1.8);
\draw [blue, ->-=0.7] (-0.3, 1.2)--(-0.3, -1.8);
\draw [blue, ->-=0.7] (0.3, 1.2)--(0.3, -1.8);
\end{tikzpicture}}}  & \rightarrow & \vcenter{\hbox{
\begin{tikzpicture}
\draw[dashed] (0, 1.5) [partial ellipse=0:360:1 and 0.3];
\draw[dashed] (0, -1.5) [partial ellipse=0:360:1 and 0.3];
\draw (-1, -1.5)--(-1, -1.2);
\draw (1, 1.5)--(1, 1.2);
\draw (-1, 1.5)--(-1, 1.2);
\draw (1, -1.5)--(1, -1.2);
\path[fill=black, opacity=0.2]
(-1, -1.5) rectangle (-2, 1.5);
\path[fill=black, opacity=0.2]
(1, -1.5) rectangle (2, 1.5);
\draw [blue, ->-=0.5] (0, 1.2)--(0, 0.7);
\draw [blue, ->-=0.5] (-0.3, 1.2)--(-0.3, 0.7);
\draw [blue, ->-=0.5] (0.3, 1.2)--(0.3, 0.7);
\draw [blue, ->-=0.5] (0, -1)--(0, -1.8);
\draw [blue, ->-=0.5] (-0.3, -1)--(-0.3, -1.8);
\draw [blue, ->-=0.5] (0.3, -1)--(0.3, -1.8);
\path[fill=black, opacity=0.2]
(-1, -1.2) arc (180:0:1)--(1, 1.2) arc (0:-180:1)--(-1, -1.2);
\draw (-1, -1.2) arc (180:0:1);
\draw (1, 1.2) arc (0:-180:1);
\draw [fill=white](-0.4, 0.5) rectangle (0.4, 0.8); \node at (0, 0.65) {\tiny{$\phi$}};
\draw [fill=white](-0.4, -1.2) rectangle (0.4, -0.9); \node at (0, -1.05) {\tiny{$\phi'$}};
\end{tikzpicture}}}\\
\vspace{2mm}\\
(M, \Sigma, \Gamma)&\rightarrow & (M, \partial(R_{B}\setminus D_{\epsilon}), \Gamma')\\
\end{array}
$$
\item \textbf{Move 3:} Let $T$ be a $B$-colored $3$-ball with $\partial T\subset \Sigma$ and $\partial T\cap \Gamma=\emptyset$. We change the color of $T$ from $B$ to $A$.
$$
\begin{array}{ccc}
\vcenter{\hbox{
\begin{tikzpicture}
\path[fill=black, opacity=0.2]
(-2, -1.5) rectangle (2, 1.5);
\draw [fill=white] (0, 0) [partial ellipse=0:360:1];
\draw[opacity=0.3] (0, 0)[partial ellipse=0:-180:1 and 0.3];
\draw[dashed, opacity=0.3] (0, 0)[partial ellipse=0:180:1 and 0.3];
\draw (0.3, 0.3) node{\tiny{$T$}};
\end{tikzpicture}
}}
&\rightarrow&
\vcenter{\hbox{
\begin{tikzpicture}
\path[fill=black, opacity=0.2]
(-2, -1.5) rectangle (2, 1.5);
\end{tikzpicture}
}}\\
\vspace{2mm}\\
(M, \Sigma, \Gamma)&\rightarrow& (M, \Sigma\setminus \partial T, \Gamma)\\
\end{array}
$$

\end{itemize}

\begin{remark}\label{rem:changecolor}
Applying Move $i$ changes the color of a $3$-dimensional $i$-handle from $B$-color to $A$-color. Given a $\mathcal{C}$-decorated 3-alterfold $(M, \Sigma, \Gamma)$, one can always make it consist of no $B$-colored region by applying a sequence of elementary moves as follows. 

We first choose a handle decomposition of $R_{B}$ (see \cite{Kir89}), and then apply Moves 0, 1, 2 to the 0-, 1- and 2-handles respectively. In this way, we get a $\mathcal{C}$-decorated 3-alterfold with $B$-colored region homeomorphic to a disjoint union of $3$-balls. Then we apply Planar Graphical Calculus to evaluate diagrams on the separating spheres to scalars. Finally, we apply Move 3 to fill the $B$-colored $3$-balls.
\end{remark}

\subsection{Partition Function over 3-Alterfold} The usual planar graphic calculus naturally assigns a scalar to $\mathcal{C}$-diagrams over a sphere, which is known as the partition function of the spherical fusion category $\mathcal{C}$. It has been widely studied in the framework of planar algebras \cite{Jon21}. It is further extended as surface algebras by the first author \cite{Liu19} and by Costantino-Geer-Patureau-Turaev \cite{CGPT20} to pairs $(H, \Gamma)$, where $H$ is a multi-handlebody and $\Gamma$ is a $\mathcal{C}$-diagram over the boundary of $H$.

The partition function defined in \cite{Liu19} for a $3$-ball with the empty diagram on the boundary is a nonzero free parameter $\zeta \ne 0 \in \k$. The partition function for handlebody of genus $g$ is $\zeta^{1-g}$. Note that the Euler characteristic of a handlebody of genus $g$ is $1-g$, this means the powers of $\zeta$ in the partition function in \cite{Liu19} corresponds to the contribution (via the tensor product of TQFTs) of the Euler theory in the sense of \cite{FreedNote}. In \cite{CGPT20}, the possible algebraic inputs include pivotal fusion categories and certain nonsemisimple pivotal categories. 

In the case when the input category $\mathcal{C}$ is spherical fusion, the extended partition functions in both \cite{Liu19} and \cite{CGPT20} fit into our framework as follows. We choose an embedding of $(H, \Gamma)$ into a connected $\mathcal{C}$-decorated 3-alterfold $(M, \Sigma, \Gamma)$ such that $\partial H$ is mapped to $\Sigma$ and $H$ is mapped to the $B$-colored region. Therefore, the partition function is well define on connected $\mathcal{C}$-decorated 3-alterfolds with $B$-colored regions homeomorphic to a multi-handlebody, and independent of the topology of the $A$-colored regions. More precisely, we reinterpret their theorem in terms of $\mathcal{C}$-decorated 3-alterfolds as follows.

\begin{theorem}[\cite{Liu19,CGPT20}]\label{thm:B-handlebody}
Let $\mathcal{C}$ be a spherical fusion category and $\zeta\ne 0$. 
Then there exists a unique partition function $Z$ maps connected(M is connected) $\mathcal{C}$-decorated 3-alterfold $(M, \Sigma, \Gamma)$ with $R_{B}$ homeomorphic to a multi-handlebody to ground field $\k$, subject to the following:
\begin{itemize}
    \item If $R_{B}=\emptyset$, $Z(M,\emptyset,\emptyset)=1$.
    \item \textbf{Homeomorphims:} The evaluation $Z(M, \Sigma, \Gamma)$ only depends on the orientation perserving homeomorphism class of $(R_{B}, \Gamma)$.
    \item \textbf{Planar Graphical Calculus:} If $\Gamma'$ and $\Gamma$ are diagrams on $\Sigma$ that are identical outside of a contractible region $D$, and $\Gamma_{D}:=\Gamma\cap D$ equals to $\Gamma_{D}':=\Gamma'\cap D$ as morphisms in $\mathcal{C}$. Then
    $$Z(M, \Sigma, \Gamma)=Z(M, \Sigma, \Gamma').$$
    \item \textbf{Move 2:} Suppose $(M, \Sigma', \Gamma')$ is derived from $(M, \Sigma, \Gamma)$ by applying Move $2$, then
    $$Z(M, \Sigma', \Gamma')=\zeta Z(M, \Sigma, \Gamma).$$
    \item \textbf{Move 3:} Suppose $(M, \Sigma', \Gamma')$ is derived from $(M, \Sigma, \Gamma)$ by applying Move $3$, then
    $$Z(M, \Sigma', \Gamma')=\frac{1}{\zeta} Z(M, \Sigma, \Gamma).$$
\end{itemize}
\end{theorem}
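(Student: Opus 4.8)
The plan is to realize $Z$ as the extended (``surface'') graphical evaluation of \cite{Liu19,CGPT20}, and then to obtain uniqueness by a normal-form argument that uses Moves~$2$ and $3$ to collapse every $B$-handlebody down to $R_{B}=\emptyset$.

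\textbf{Existence.} I would \emph{define} $Z(M,\Sigma,\Gamma)$ to be the scalar attached to the pair $(R_{B},\,\Gamma|_{\partial R_{B}})$ by the graphical calculus on multi-handlebodies of \cite{Liu19} (equivalently \cite{CGPT20} with spherical fusion input); this manifestly depends only on the orientation-preserving homeomorphism class of $(R_{B},\Gamma)$. Recall how the scalar is computed: choose a complete system of meridian disks cutting each genus-$g$ component of $R_{B}$ into a $3$-ball; cut $\Gamma$ transversely along the boundary circles, inserting at every cut a sum of dual bases $\phi,\phi'$ as in the Unit Decomposition (Lemma~\ref{lem:unit1}); evaluate the resulting closed $\mathcal{C}$-diagram on each boundary sphere by ordinary planar graphical calculus; and multiply the product of these scalars by $\zeta^{1-g}$ over the components. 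Independence of the cut system and of isotopies of $\Gamma$ — hence the Homeomorphisms clause — is precisely the well-definedness proved in \cite{Liu19,CGPT20}: it reduces to the fact that any two cut systems of a handlebody are joined by handle-slides and (de)stabilizations, under which the inserted dual bases transform compatibly by the Handle Slide Lemma and Lemmas~\ref{lem:unit1}--\ref{lem:unit3}. The remaining clauses read off the recipe: for $R_{B}=\emptyset$ it is an empty product, so $Z=1$; Planar Graphical Calculus holds because one may take the cut system disjoint from the contractible region $D$, so that replacing $\Gamma_{D}$ by the equal morphism $\Gamma_{D}'$ is an identity taking place inside a single sphere-evaluation; a Move~$2$ along a disk $D$ is realized by adjoining $D$ to the cut system, which raises $\sum_{i}(1-g_{i})$ by one — regardless of whether $D$ is separating — while performing exactly the prescribed dual-basis insertion, hence multiplies $Z$ by $\zeta$; and a Move~$3$ deletes a genus-$0$ component carrying the empty diagram, whose contribution is $\zeta^{1-0}=\zeta$, hence divides $Z$ by $\zeta$.

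\textbf{Uniqueness.} Let $Z'$ satisfy all the clauses, and let $(M,\Sigma,\Gamma)$ have $R_{B}$ a multi-handlebody with components of genera $g_{1},\dots,g_{n}$. Using Homeomorphisms, isotope $\Gamma$ so that a complete meridian system of $R_{B}$ meets it transversely in edge points only, then apply Move~$2$ once along each of its $N:=\sum_{i}g_{i}$ disks: by the Move~$2$ clause this multiplies $Z'$ by $\zeta^{N}$ and produces an alterfold whose $R_{B}$ is $n$ disjoint $3$-balls carrying closed boundary diagrams $\Gamma_{1}'',\dots,\Gamma_{n}''$. Isotoping each $\Gamma_{i}''$ into a disk and applying Planar Graphical Calculus ($\k$-linearly) replaces it by its scalar $\lambda_{i}\in\End_{\mathcal{C}}(\1)=\k$; then $n$ applications of Move~$3$ remove the balls and divide $Z'$ by $\zeta^{n}$, leaving $R_{B}=\emptyset$ where $Z'=1$. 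Unwinding,
\[
Z'(M,\Sigma,\Gamma)=\zeta^{\,n-N}\prod_{i=1}^{n}\lambda_{i}=\prod_{i=1}^{n}\zeta^{\,1-g_{i}}\lambda_{i},
\]
which does not involve $Z'$; hence $Z'=Z$.

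\textbf{Main obstacle.} The one substantive point is the well-definedness of the evaluation recipe in the existence step — equivalently, the Homeomorphisms clause — everything else being bookkeeping with powers of $\zeta$ and repeated use of the Unit Decomposition. I would invoke \cite{Liu19,CGPT20} for this rather than reprove it; the delicate part there is tracking how the inserted dual bases respond to a change of cut system, which is exactly what the sliding identities above control.
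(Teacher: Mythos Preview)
Your proposal is correct and matches the paper's own sketch essentially line for line: define $Z$ by the cut--evaluate--fill recipe of \cite{Liu19,CGPT20}, reduce well-definedness (the Homeomorphisms clause) to the fact that any two complete disk systems are related by handle slides and adding a trivial disk, and obtain uniqueness because the axioms force exactly this computation. The paper likewise invokes \cite{Liu19,CGPT20} for the well-definedness step rather than reproving it, so your identification of that as the one substantive point is on the mark.
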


The proof of the theorem is straight forward, we sketch the proof here for completeness. For a more detailed proof, we refer readers to \cite{Liu19} and \cite{CGPT20}.

\begin{proof}[Sketch of the proof:]Given a 3-alterfold with $R_{B}$ homeomorphic to a multi-handlebody, one can apply a sequence of Move $2$ on a set of complete cutting disks to make the resulting $B$-colored region homeomorphic to a disjoint union of $3$-balls. Then one can apply the planar graphical calculus to evaluate the closed diagrams on the boundaries of the 3-balls above, and apply Move $3$ to make $R_{B}=\emptyset$ afterwards. 
In this way, one gets a scalar in $\k$. Now we argue that the scalar is well-defined, i.e., it is independent of the choice of the set of complete cutting disks. It is well known \cite{Kup96} that two such sets of disks can be related by a sequence of the following moves: (a) adding a contractible disk and (b) handle slide one disk on top of another. Thus one only need to show the evaluation is invariant on both moves, which can be shown by induction on the genus $g$ of the multi-handlebody. To determine the power of $\zeta$, one solves the equation coming from the cancellation of $2$-$3$-handles. The uniqueness of the partition function follows from the definition.
\end{proof}

It is natural to ask how Move $0$ and Move $1$ affect the evaluation. In \cite{CGPT20}, they gave a partial answer to this question.
\begin{proposition}\cite[Proposition~5.3]{CGPT20}\label{pro:stringgenus}
Let $(M, \Sigma, \Gamma)$ be a connected $\mathcal{C}$-decorated 3-alterfold with $R_{B}(M, \Sigma)$ homeomorphic to a multi-handlebody. Suppose $S$ is an embedded arc in $R_{B}(M, \Sigma)$ such that along $S$, Move 1 is applicable and results in another $\mathcal{C}$-decorated 3-alterfold $(M, \Sigma', \Gamma')$ with $R_{B}(M, \Sigma')$ a multi-handlebody.Then
\begin{equation}\label{eq:moveone}
Z(M, \Sigma', \Gamma')=\frac{1}{\zeta} Z(M, \Sigma, \Gamma).
\end{equation}
\end{proposition}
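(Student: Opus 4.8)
\emph{Proof proposal.} The plan is to localize the move near the arc $S$ and reduce it, modulo invertible powers of $\zeta$, to a single application of Move~2 together with the Unit Decomposition (Lemma~\ref{lem:unit1}).

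First I would reduce to the case where the relevant $B$-region is a single $3$-ball $\bB^3$ carrying the empty diagram. Using the hypothesis that $R_B\setminus S_\epsilon$ is a multi-handlebody, together with standard handlebody theory, one chooses a complete system of meridian disks $\Delta$ of $R_B$ disjoint from $S$; cutting $R_B$ along $\Delta$ exhibits it as a disjoint union of $3$-balls, with $S$ lying, as a boundary-parallel (hence unknotted) arc, in one of them. Each disk of $\Delta$ is disjoint from $S_\epsilon$, so it remains a valid Move~2 disk after Move~1 is performed, and Move~1 along $S$ commutes with Move~2 along $\Delta$ (their supports are disjoint); hence performing Move~2 along all of $\Delta$ multiplies $Z$ by one and the same power of $\zeta$ before and after Move~1, and leaves the other balls untouched. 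Since $S$ is boundary-parallel, we may slide its endpoints along the separating surface so that $S_\epsilon$ lies in a small ball $N$ disjoint from the diagram; the diagram then plays no role and, by Planar Graphical Calculus, contributes only a scalar that occurs identically on both sides. It therefore suffices to prove $Z(V,C)=\zeta^{-1}Z(\bB^3,\emptyset)$, where $Z(\bB^3,\emptyset)=\zeta$ by Theorem~\ref{thm:B-handlebody}, $V=\bB^3\setminus S_\epsilon$ is the solid torus produced by Move~1, and $C\subset\partial V$ is the $\Omega$-colored belt circle.

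For this, observe that $C$ is a longitude of $V$: it is a meridian of the drilled tube $S_\epsilon$ and hence generates $H_1(V)\cong\bZ$. In particular $C$ is dual to a meridian disk $D$ of $V$ with $|\partial D\cap C|=1$, and $D$ may be chosen with $\partial D$ contained in $N$. Apply Move~2 along $D$: this multiplies $Z$ by $\zeta$, cuts $V$ along $D$ back into a $3$-ball, and severs $C$ at the single point $\partial D\cap C$, capping it by a sum of dual bases $\phi,\phi'$. The resulting closed diagram $\widetilde C$ on the boundary sphere is, by the definition of the $\Omega$-color, $\sum_{X_i\in\Irr(\mathcal C)}d_i$ times an $X_i$-arc capped at its two ends by dual bases of $\hom(\1,X_i)$ and $\hom(X_i,\1)$; such an arc evaluates to $\dim\hom(\1,X_i)=\delta_{i,0}$, so $\widetilde C=d_0\,\id_{\1}=\id_{\1}$, precisely the unit-object term of the Unit Decomposition (Lemma~\ref{lem:unit1}). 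By Planar Graphical Calculus $\widetilde C$ may be erased, recovering $(\bB^3,\emptyset)$. Hence $Z(\bB^3,\emptyset)=\zeta\,Z(V,C)$, i.e.\ $Z(V,C)=\zeta^{-1}Z(\bB^3,\emptyset)$. Unwinding the first reduction — the auxiliary powers of $\zeta$ coming from Move~2 along $\Delta$ and the diagram scalar appear identically on both sides — yields \eqref{eq:moveone}.

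The main obstacle is the topological input in the reduction: verifying that, given that $R_B\setminus S_\epsilon$ is a multi-handlebody, a complete meridian system of $R_B$ can be chosen disjoint from $S$ and that $S$ is then unknotted in the resulting ball, so that $V$ is a solid torus and the computation above applies; closely tied to this is arranging that the compressing disk $D$ can be routed away from the diagram. The categorical content, by contrast, is the single use of Lemma~\ref{lem:unit1}, and tracking how the various $\zeta$'s and the diagram scalar pass through the reduction is routine once the topology is in place.
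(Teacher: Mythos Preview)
Your argument is correct and rests on the same core mechanism as the paper: a $1$-$2$ handle cancellation in which one applies Move~2 along a disk $D$ meeting the $\Omega$-colored belt circle $C$ in a single point, so that the severed $\Omega$-circle evaluates to $1$ and the topology is restored. The difference is where $D$ is found. The paper locates $D$ directly as a cutting disk of $R_B(M,\Sigma')$—its existence being precisely the statement that $C$ is primitive in the handlebody $R_B(M,\Sigma')$, which holds because attaching a $2$-handle along $C$ recovers the handlebody $R_B(M,\Sigma)$—and a single Move~2 along this $D$ already yields \eqref{eq:moveone} (Figure~\ref{fig:my_label}). Your route instead first cuts $R_B(M,\Sigma)$ into balls along a meridian system $\Delta$ disjoint from $S$, and only then finds $D$ inside the resulting local solid torus. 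This is valid but circuitous: the obstacle you flag (existence of such a $\Delta$, and unknottedness of $S$ in its ball) is genuine but is equivalent to the same primitivity fact—once you know $R_B(M,\Sigma')$ is a handlebody, cutting it along the properly embedded disks $\Delta$ still yields a multi-handlebody, forcing $B\setminus S_\epsilon$ to be a solid torus and hence $S$ to be unknotted in $B$. Once that is granted, your extra Move~2's along $\Delta$ and the commutation bookkeeping simply cancel on both sides. The paper's direct placement of $D$ in $R_B(M,\Sigma')$ bypasses all of that.
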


This can be seen as follows. Given the arc $S$ satisfying the condition in Proposition \ref{pro:stringgenus}, there exists a cutting disk $D$ in $R_{B}(M, \Sigma')$ that intersects the $\Omega$-colored circle on the belt of $S_{\epsilon}$ only once. One gets Equation \eqref{eq:moveone} immediately by the $1$-$2$-handle cancellation. In \cite{Liu19} and \cite{JLW17}, this property is called the string-genus relation in the sense that if one sees an $\Omega$-colored string bound a hole, then they can be removed simultaneously.

\begin{figure}
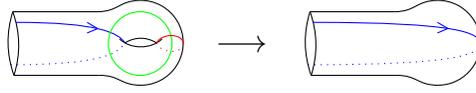

    \centering
    \StabAndAfter
    \caption{$1$-$2$-handle cancellation}
    \label{fig:my_label}
\end{figure}

Recall from Remark \ref{rem:changecolor} that via a sequence of Moves 0 and 1, one can always change the $B$-colored region of an arbitrary 3-alterfold into a multi-handlebody. Thus, it is natural to ask whether we can extend the partition function to arbitrary $\mathcal{C}$-decorated 3-alterfolds that is invariant under Moves 0 and 1. The following theorem gives an affirmative answer to this question.

\begin{theorem}\label{thm:partition function}
Let $\mathcal{C}$ be a spherical fusion category over $\k$, and $\zeta \in \k^\times$ be a nonzero scalar. There exists a unique partition function $Z$ from $\mathcal{C}$-decorated 3-alterfolds to the ground field $\k$ satisfying the following conditions.
\begin{itemize}
    \item \textbf{Normalization:} If $R_{B}=\emptyset$, $Z(M,\emptyset,\emptyset)=1$.
    \item \textbf{Disjoint Union:} Suppose $(M, \Sigma, \Gamma)$ and $(M', \Sigma', \Gamma')$ are two $\mathcal{C}$-decorated 3-alterfolds. Then
    $$Z(M\sqcup M', \Sigma \sqcup \Sigma', \Gamma\sqcup\Gamma')=Z(M, \Sigma, \Gamma)Z(M', \Sigma', \Gamma').$$
     \item \textbf{Homeomorphims:} The evaluation $Z(M, \Sigma, \Gamma)$ only depends on the orientation perserving homeomorphism class of $(R_{B}, \Gamma)$.
    \item \textbf{Planar Graphical Calculus:} If $\Gamma'$ and $\Gamma$ are diagrams on $\Sigma$ that are identical outside of a contractible region $D$, and $\Gamma_{D}:=\Gamma\cap D$ equals to $\Gamma_{D}':=\Gamma'\cap D$ as morphisms in $\mathcal{C}$. Then
    $$Z(M, \Sigma, \Gamma)=Z(M, \Sigma, \Gamma').$$
    \item \textbf{Move 0:} Suppose $(M, \Sigma', \Gamma')$ is derived from $(M, \Sigma, \Gamma)$ by applying Move $0$, then
    $$Z(M, \Sigma', \Gamma')=\zeta\mu Z(M, \Sigma, \Gamma).$$
    \item \textbf{Move 1:} Suppose $(M, \Sigma', \Gamma')$ is derived from $(M, \Sigma, \Gamma)$ by applying Move $1$, then
    $$Z(M, \Sigma', \Gamma')=\frac{1}{\zeta} Z(M, \Sigma, \Gamma).$$
    \item \textbf{Move 2:} Suppose $(M, \Sigma', \Gamma')$ is derived from $(M, \Sigma, \Gamma)$ by applying Move $2$, then
    $$Z(M, \Sigma', \Gamma')=\zeta Z(M, \Sigma, \Gamma).$$
    \item \textbf{Move 3:} Suppose $(M, \Sigma', \Gamma')$ is derived from $(M, \Sigma, \Gamma)$ by applying Move $3$, then
    $$Z(M, \Sigma', \Gamma')=\frac{1}{\zeta} Z(M, \Sigma, \Gamma).$$
\end{itemize}
\end{theorem}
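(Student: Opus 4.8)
The plan is to prove uniqueness first, then construct $Z$ by extending the partition function of Theorem~\ref{thm:B-handlebody} (available when $R_B$ is a multi-handlebody) to all $\mathcal{C}$-decorated $3$-alterfolds. By the Disjoint Union and Normalization clauses it suffices throughout to treat $(M,\Sigma,\Gamma)$ with $M$ connected and $R_B\neq\emptyset$.

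\emph{Uniqueness.} I would invoke Remark~\ref{rem:changecolor}: pick a handle decomposition of $R_B$ and apply Moves $0,1,2$ to its handles to reach an alterfold whose $B$-region is a disjoint union of $3$-balls, then use Planar Graphical Calculus to evaluate the resulting closed diagrams on those spheres to scalars, and finally Move $3$ to fill the balls, reaching $R_B=\emptyset$ where the value is $1$. Each step multiplies the prospective value of $Z$ by the prescribed factor ($\zeta\mu$, $\zeta^{-1}$, $\zeta$, an element of $\k$, or $\zeta^{-1}$), so $Z(M,\Sigma,\Gamma)$ is forced, and Disjoint Union then pins it down on all alterfolds.

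\emph{Existence.} For the construction, fix $(M,\Sigma,\Gamma)$ and, using Remark~\ref{rem:changecolor}, choose a finite sequence $\sigma$ of Moves $0,1,2$ transforming it into $(M,\Sigma_\sigma,\Gamma_\sigma)$ with $R_B$ a multi-handlebody (for instance a union of balls); set
\[
Z(M,\Sigma,\Gamma):=\lambda_\sigma^{-1}\,Z_{\mathrm{hb}}(M,\Sigma_\sigma,\Gamma_\sigma),
\]
where $Z_{\mathrm{hb}}$ is the partition function of Theorem~\ref{thm:B-handlebody}, $\lambda_\sigma$ is the product over $\sigma$ of the prescribed multipliers, and we extend multiplicatively over connected components. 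The whole point is well-definedness. I would argue that any two such reductions $\sigma,\sigma'$ are connected by a finite chain of elementary modifications: isotopy of the move data, transposition of two moves with disjoint support, handle slides among the added $A$-handles, and birth/death of a canceling pair (a Move~$0$ together with a Move~$1$ tubing the new ball to $\Sigma$, a Move~$1$/Move~$2$ pair, or a Move~$2$/Move~$3$ pair)---this is the Cerf-theoretic calculus relating handle decompositions of a fixed manifold relative to its boundary, in the spirit of the cutting-disk calculus used in Theorem~\ref{thm:B-handlebody}. Each elementary modification preserves $\lambda_\bullet^{-1}Z_{\mathrm{hb}}(\bullet)$: isotopies and transpositions because $Z_{\mathrm{hb}}$ is a homeomorphism invariant; the $1$-handle slide because of the Handle Slide Lemma already built into $Z_{\mathrm{hb}}$ (slides of a Move~$2$ disk across a Move~$1$ tube, and so on, being covered by the same calculus underlying Theorem~\ref{thm:B-handlebody}); and canceling pairs by direct inspection. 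Here the delicate check is the Move~$0$/Move~$1$ pair: adding a ball and then tubing it to $\Sigma$ changes $(R_B,\Gamma)$ only up to homeomorphism together with the appearance of one contractible $\Omega$-colored circle, which Planar Graphical Calculus evaluates to $\mu=\sum_i d_i^2$; since $\lambda_\bullet^{-1}$ simultaneously changes by $(\zeta\mu)^{-1}\zeta=\mu^{-1}$, the value is unchanged (indeed this is precisely what forces the Move~$0$ multiplier to be $\zeta\mu$). The Move~$1$/Move~$2$ and Move~$2$/Move~$3$ cancellations are consistent for the reasons already present in Theorem~\ref{thm:B-handlebody}: they alter $\Gamma$ only up to Planar Graphical Calculus (via Lemma~\ref{lem:unit1}), with scalars $\zeta^{-1}\zeta=1$ and $\zeta\zeta^{-1}=1$. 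Because $Z_{\mathrm{hb}}$ already satisfies Homeomorphism, Planar Graphical Calculus, and Moves $1,2,3$ on multi-handlebody alterfolds (Theorem~\ref{thm:B-handlebody} and Proposition~\ref{pro:stringgenus}), each check reduces to statements known there.

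\emph{Verifying the axioms, and the main obstacle.} Finally I would check that the extended $Z$ has the listed properties. Normalization is the empty reduction; Disjoint Union holds because reductions of the summands combine and $Z_{\mathrm{hb}}$ is multiplicative; Homeomorphism holds because an orientation-preserving homeomorphism of $(R_B,\Gamma)$ carries a reduction to a reduction with the same $\lambda$; Planar Graphical Calculus and Moves $2,3$ follow by choosing the reduction supported away from the relevant disk or ball, so the move can be performed first, landing in multi-handlebody territory where $Z_{\mathrm{hb}}$ behaves as stated; and Moves $0,1$ hold essentially by construction, since prepending the move to a reduction multiplies $\lambda$ by exactly the prescribed factor. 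The main obstacle is the well-definedness step: assembling the correct calculus of reduction moves and verifying---with careful bookkeeping of $\zeta$ and $\mu$---that every elementary relation is respected, the crux being that the factor $\mu$ must sit precisely in the Move~$0$ multiplier for the Move~$0$/Move~$1$ cancellation to close up.
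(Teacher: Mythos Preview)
Your approach is correct in outline but takes a genuinely different route from the paper's. Where you propose to use general sequences of Moves $0$--$2$ and relate any two such reduction sequences via a Cerf-theoretic calculus of handle moves (isotopy, transposition, handle slides, birth/death of canceling pairs), the paper instead fixes a specific class of reductions coming from \emph{triangulations} of $\overline{R_B}$. Given a triangulation $\Delta$ in generic position with $\Gamma$, one applies Move~$0$ at each interior vertex, Move~$1$ along each interior edge, and Move~$2$ across each interior face, arriving at an alterfold whose $B$-region is a union of balls indexed by tetrahedra; the value $Z(M,\Sigma,\Gamma;\Delta)$ is then defined by the resulting state-sum contraction (with the appropriate $\zeta$, $\mu$ prefactors). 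Triangulation-independence is established not by Cerf theory but by Lickorish's combinatorial result that any two triangulations of a compact $3$-manifold with nonempty boundary are related by elementary shellings and inverse shellings (together with Pachner moves in the interior, handled as in Turaev's book); one checks invariance under each of the three shelling types directly, and this is where the scalar bookkeeping you correctly identify as the crux is carried out.

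The trade-off: your Cerf-theory route is conceptually natural and makes explicit that only handle-calculus relations are at play, but making it rigorous requires a precise relative Cerf statement for handle decompositions of $R_B$ rel $\partial R_B$, and the individual handle-slide checks (e.g.\ sliding a Move~$2$ disk across a Move~$1$ tube in a general configuration) need to be spelled out beyond the cutting-disk calculus already present in Theorem~\ref{thm:B-handlebody}. The paper's triangulation route trades that generality for a concrete combinatorial backbone with ready-made invariance theorems, at the cost of a case analysis over the three shelling types. Your uniqueness argument and your verification of the axioms once well-definedness is in hand match the paper's.
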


In the spherical fusion category case, we remark that the quon language and the theory by Costantino-Geer-Patureau-Turaev(CGPT) can be embedded into our alterfold theory.
Later, we see that link surgery produces a surgery move for 3-alterfold (see Theorem \ref{thm:linksurgery} for surgery move and Theorem \ref{thm:partition function2} in Section 3.3 for an alternative description of Theorem \ref{thm:partition function}). 
We summarize the correspondence in the following table.

\begin{align}\label{tab:topmove}
\begin{tabular}{c|c|c|c|c}
    Move & Region $A$ & Region $B$ & Quon Terminology & CGPT\\
    \hline
    Move 2 & Add $2$-handle & Cancel $1$-handle & \small Unit Decomposition & Cutting\\
    \hline
    Move 1 & Add $1$-handle & Cancel $2$-handle & String-Genus & \small Red Capping/Digging\\
    \hline 
    \small Surgery Move & None & Link Surgery & None & None
\end{tabular}
\end{align}

We outline the proof of our main theorem of this section as follows. Given a $\mathcal{C}$-decorated 3-alterfold $(M, \Sigma, \Gamma)$, by finitely many steps of Move $0$ and Move $1$, one can make the $B$-colored region a multi-handlebody, which reduces to the case of Theorem \ref{thm:B-handlebody}. Then we show the evaluation of the partition function does not depend on the way that $A$-colored $0/1$-handles been added. We implement this idea using the language of triangulation.

\begin{definition}
Let $(M, \Sigma, \Gamma)$ be a $\mathcal{C}$-decorated 3-alterfold, and let $\Delta$ be a triangulation of the closure of $R_{B}$. We say $\Delta$ is a triangulation of 
$(M, \Sigma, \Gamma)$ if its restriction on the surface, $\Sigma \cap \Delta$, is in generic position with $\Gamma \subset \Sigma$. More precisely, we require that
\begin{itemize}
\item The vertices of $\Delta$ in $\partial R_{B}$ do not intersect $\Gamma$,
\item The edges of $\Delta$ do not meet the coupons of $\Gamma$ and meet the strings of $\Gamma$ transversely.
\end{itemize}  
A simplex in $\Delta$ is said to be a boundary simplex if it lies in $\Sigma$, otherwise, we call it inner simplex.
\end{definition}
\begin{remark}
Given a triangulation $\Delta$ of $(M, \Sigma, \Gamma)$, the coupons of $\Gamma$ lies in the interior of faces of $\Delta\cap\Sigma$.
\end{remark}

Let $\Delta$ be a triangulation of a $\mathcal{C}$-decorated 3-alterfold $(M, \Sigma, \Gamma)$. We say $\Delta'\subset\Delta$ is a \textit{partial triangulation} of $\Delta$ if $\Delta'$ is closed under the boundary maps of $\Delta$. 
For instance, if $\Delta^{\le 2}\subset \Delta$ is the subset consisting of all vertices, edges and faces of $\Delta$, then $\Delta^{\le 2}$ is a partial triangulation of $\Delta$.

\begin{definition}
Let $\Delta$ be a partial triangulation of some triangulation of $(M, \Sigma, \Gamma)$, we define a new 3-alterfold $(M, \Sigma^{\Delta}, \Gamma^{\Delta})$ as follows:
\begin{itemize}
\item 
Firstly, applying Move $0$ to $(M, \Sigma, \Gamma)$ with respect to each vertex in $\Delta$ lying in the interior of $R_{B}$, we get a $\mathcal{C}$-decorated 3-alterfold $(M, \Sigma^{(1)}, \Gamma)$.
\item 
Secondly, applying Move $1$ to $(M, \Sigma^{(1)}, \Gamma)$ with respect to each edges in $\Delta$ not contained in $\Sigma^{(1)}$, we get a $\mathcal{C}$-decorated 3-alterfold $(M, \Sigma^{(2)}, \Gamma^{(2)})$.
\item 
Finally, we apply Move $2$ to $(M, \Sigma^{(2)}, \Gamma^{(2)})$ with respect to each faces in $\Delta$ not contained in $\Sigma^{(2)}$. The result is the desired $\mathcal{C}$-decorated 3-alterfold $(M, \Sigma^{\Delta}, \Gamma^{\Delta})$.
\end{itemize}
\end{definition}

\begin{remark}
Let $\Delta$ be a triangulation of $(M, \Sigma, \Gamma)$. Then $(M, \Sigma^{\Delta}, \Gamma^{\Delta})=(M, \Sigma^{\Delta^{\le 2}}, \Gamma^{\Delta^{\le 2}})$, since the construction of $(M, \Sigma^{\Delta}, \Gamma^{\Delta})$ does not depend on the tetrahedrons in $\Delta$.
\end{remark}

Let $\Delta$ be a triangulation, then $R_{B}(M, \Sigma^{\Delta})$ is homeomorphic to a disjoint union of $3$-balls embedded in components $M_{1}, M_{2},\ldots M_{n}$ of $M$, which means we can evaluate the partition function to each of the connected components $(M_i, \Sigma_{i}^{\Delta}, \Gamma_{i}^{\Delta})$ using the rules defined in Theorem \ref{thm:B-handlebody}. We define
\begin{equation}\label{eq:partiontrig}
Z(M, \Sigma, \Gamma; \Delta)=\mu^{-|V|}\zeta^{-|F|+|E|-|V|}\prod_{1\le i \le n}Z(M_{i}, \Sigma^{\Delta}_{i}, \Gamma^{\Delta}_{i})
\end{equation}
where $|V|, |E|$ and $ |F|$ are the number of inner vertices, inner edges, and inner faces of $\Delta$. Later we will see the scalar $\mu^{-|V|}\zeta^{-|F|+|E|-|V|}$ is necessary to make our definition independent of the choice of the triangulation.

By the definition, one obtain
$$Z(M, \Sigma, \Gamma; \Delta)=\prod_{1\le i\le n} Z(M_i, \Sigma_{i}, \Gamma_{i};\Delta|_{M_{i}})$$
where $\Delta|_{M_i}$ is the triangulation $\Delta$ restricted to the $i$-th component, which is a triangulation of $(M_{i}, \Sigma_i, \Gamma_i)$. 

To be more precise, we introduce some notations and give an explicit formula for $Z(M, \Sigma, \Gamma; \Delta)$. Without loss of generality, we may assume $M$ is connected. At each inner faces $f\in F$, the Move $2$ produces a sum of dual basis, we denote this vector by $\star_{f}\in H_{f}\otimes H_{f}^{*}$. Notice each simplex is oriented and naturally one has $H_{f}^{*}=H_{\bar{f}}$, where $\bar{f}$ is $f$ with the opposite orientation. On the other hand, the connected components $B_{t}$ (which are $3$-balls) of $R_{B}(M, \Sigma^{\Delta})$ are in one-to-one correspondence with the tetrahedra $t \in T$. Let $I(t)$ be the set of inner faces of $\partial t$. For each $t$, the sequences of Move 2 described above results in a graph on $\partial B_t$ with a single uncolored coupon on each inner face $f \in I(t)$, and the edges of this graph are segments of the loops corresponding to Move 1 (colored by $\sum_{X \in \Irr(\mathcal{C})} X$) or segments of the original graph $\Gamma$ on $\Sigma$. We denote this partially colored graph in $\partial B_t$ by $\Gamma_t$, and identify it with a vector in $\bigotimes_{f \in I(t)} H_f^*$ (in the sense that coupons of $\Gamma_t$ can be filled with corresponding morphisms in $H_f$ to result in a number). With the notations above, $Z(M, \Sigma^{\Delta}, \Gamma^{\Delta})$ can be computed by "filling the coupons", i.e.
\begin{equation}\label{eq:contraction}
Z(M, \Sigma^{\Delta}, \Gamma^{\Delta})=\zeta^{|T|} \cdot \left\langle\bigotimes_{t\in T}\Gamma_{t}\,, \ \bigotimes_{f\in F}\star_{f}\right\rangle\,,  
\end{equation}
where $\langle\cdot, \cdot \rangle$ is the evaluation (or contraction) of tensors.

\begin{proposition}\label{prop:turaev-viro}
Let $\Delta$ be a triangulation of $(M, \emptyset, \emptyset)$ and $R_{A}(M, \emptyset)=\emptyset$. Then
$$Z(M, \emptyset, \emptyset; \Delta)= TV_{\mathcal{C}}(M)$$
where $TV_{\mathcal{C}}(M)$ is the Turaev-Viro invariant of $M$ associated to $\mathcal{C}$. In particular, the evaluation does not depend on the triangulation in this case.
\end{proposition}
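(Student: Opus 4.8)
The plan is to unfold the definition of $Z(M,\emptyset,\emptyset;\Delta)$ from \eqref{eq:partiontrig} and \eqref{eq:contraction}, check that the powers of $\zeta$ cancel, and recognize the remaining expression as the Turaev--Viro (Barrett--Westbury) state sum of $\cC$. Since $\Sigma=\emptyset$ and $R_A(M,\emptyset)=\emptyset$, no simplex of $\Delta$ lies in $\Sigma$, so every simplex is inner; hence $|V|,|E|,|F|,|T|$ are the total numbers of vertices, edges, faces, tetrahedra of $\Delta$. Assuming $M$ connected (the disjoint union axiom and multiplicativity of $TV_{\cC}$ reduce the general case to this one), substituting \eqref{eq:contraction} into \eqref{eq:partiontrig} gives
\[
Z(M,\emptyset,\emptyset;\Delta)=\mu^{-|V|}\,\zeta^{\,-|F|+|E|-|V|+|T|}\,\Bigl\langle\bigotimes_{t\in T}\Gamma_t\,,\ \bigotimes_{f\in F}\star_f\Bigr\rangle .
\]
Because $M$ is a closed $3$-manifold, $\chi(M)=|V|-|E|+|F|-|T|=0$, so the exponent of $\zeta$ equals $-\chi(M)=0$ and the $\zeta$-factor disappears; it then remains to identify $\mu^{-|V|}\langle\bigotimes_t\Gamma_t,\bigotimes_f\star_f\rangle$ with $TV_{\cC}(M)$.

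Next I would describe the graph $\Gamma_t$ on $\partial B_t\cong\bS^2$. After Moves $0,1,2$, the $A$-colored region is a regular neighbourhood of the $2$-skeleton $\Delta^{\le 2}$, and $B_t$ is the tetrahedron $t$ with collar neighbourhoods of its faces, edges and vertices removed. On $\partial B_t$ one reads off: an uncoloured coupon for each of the four faces of $t$ (produced by the Move-$2$ dual bases), joined by six arcs, one per edge of $t$, which are segments of the $\Omega$-coloured belt circles produced by Move $1$. Since each face of $t$ contains exactly three of its edges and each edge lies in exactly two of its faces, $\Gamma_t$ is the complete graph $K_4$ with all six edges $\Omega$-coloured and all four trivalent vertices uncoloured: filling the coupons by elements of the spaces $H_f$ and evaluating by the spherical graphical calculus on $\partial B_t$ yields precisely a tetrahedral coefficient (a ``$6j$-symbol'') of $\cC$.

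Then I would carry out the contraction. Expanding each belt circle as $\sum_i d_i X_i$ forces one simple object $c(e)\in\Irr(\cC)$ per edge $e$ of $\Delta$, globally consistent because there is a single belt circle per edge, with total weight $\prod_{e\in E}d_{c(e)}$; and each $\star_f=\sum_\alpha\phi_{f,\alpha}\otimes\phi_{f,\alpha}'$ glues the coupon index of $\Gamma_{t_1}$ to that of $\Gamma_{t_2}$, where $t_1,t_2$ are the two tetrahedra meeting along the face $f$ (with opposite induced orientations, so $H_f^*=H_{\bar f}$). The outcome is
\[
\Bigl\langle\bigotimes_{t}\Gamma_t\,,\ \bigotimes_{f}\star_f\Bigr\rangle=\sum_{c\colon E\to\Irr(\cC)}\ \prod_{e\in E}d_{c(e)}\ \sum_{\text{basis labellings of the faces}}\ \prod_{t\in T}\bigl|t\bigr|_{c}\,,
\]
the Turaev--Viro / Barrett--Westbury state sum. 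Hence $Z(M,\emptyset,\emptyset;\Delta)=\mu^{-|V|}\cdot(\text{state sum})=TV_{\cC}(M)$, and triangulation independence follows immediately from the Pachner-move invariance of $TV_{\cC}$ \cite{TurVir92,TurVir17}.

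The step I expect to require the most care is the analysis of $\Gamma_t$ together with the contraction: one must match this paper's graphical conventions --- the orientations and directions of the arcs on $\partial B_t$, the dualities occurring at each trivalent coupon, and the normalizations dictated by the spherical pivotal structure --- with the standard convention for $6j$-symbols, and verify that summing over the Move-$2$ dual bases $\star_f$ reproduces exactly the ``multiplicity index'' summation in the Turaev--Viro state sum for categories with higher-dimensional Hom-spaces. One should also check that no residual normalization beyond $\mu^{-|V|}$ survives --- this is exactly where the cancellation $\zeta^{-\chi(M)}=1$ from the first step is used. Once this dictionary is fixed the identification is routine bookkeeping, and the final assertion (independence of $\Delta$) is then inherited from classical Turaev--Viro theory.
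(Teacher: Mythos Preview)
Your proposal is correct and follows essentially the same route as the paper: unfold the definitions \eqref{eq:partiontrig} and \eqref{eq:contraction}, use $\chi(M)=0$ for a closed $3$-manifold to kill the $\zeta$-power, expand the $\Omega$-coloured belt circles as $\sum_i d_i X_i$ to obtain a sum over edge-colourings, identify the resulting contraction as the product of $6j$-symbols in the Turaev--Viro state sum, and inherit triangulation independence from classical Turaev--Viro theory. Your description of $\Gamma_t$ as the tetrahedral graph $K_4$ and of the $\star_f$-contraction as the gluing of multiplicity indices is more explicit than the paper's brief reference to \cite[p.~348]{Tur94}, but the argument is the same.
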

\begin{proof}
By equation \eqref{eq:partiontrig}, $Z(M, \emptyset, \emptyset; \Delta)=\mu^{-|V|}\zeta^{-|F|+|E|-|V|}Z(M, \emptyset^{\Delta}, \emptyset^{\Delta})$. Decomposing the $\Omega$-color created by Move $1$ as $\sum_{X\in \Irr(\mathcal{C})}d_{X}\text{id}_{X}$. The equation \eqref{eq:contraction} can be written as a summation over the set of coloring of inner edges $c: E\rightarrow \Irr(\mathcal{C})$.
\begin{align*}
Z(M, \emptyset^{\Delta}, \emptyset^{\Delta})=\sum_{c}\dim(c)\left\langle\bigotimes_{t\in T}\Gamma_{t}^{c}\,, \ \bigotimes_{f\in F}\star_{f}^{c}\right\rangle
\end{align*}
where $\dim(c)=\prod_{e}d(c(e))$, and $\Gamma^{c}_{t}$ and $\star^{c}_{f}$ be the corresponding summands. Suppose $\partial f=e_{1}^{f}+e_{2}^{f}+e_{3}^{f}$, and choose a basis on $H_{f}=\hom(\1, c(e_1)\otimes c(e_2)\otimes c(e_3))$. Then
$$\left\langle\bigotimes_{t\in T}\Gamma_{t}^{c}\,, \ \bigotimes_{f\in F}\star_{f}^{c}\right\rangle$$
equals to the sum of the product of $6j$-symbols (see \cite[page 348]{Tur94}). Comparing with the definition of Turaev-Viro invariant via $6j$-symbols in \cite[page 348]{Tur94}. We have
\begin{align*}
Z(M, \emptyset, \emptyset; \Delta)=&\mu^{-|V|}\zeta^{-|F|+|E|-|V|}Z(M, \emptyset^{\Delta}, \emptyset^{\Delta})\\
=&\mu^{-|V|}\zeta^{-|F|+|E|-|V|}\mu^{|V|}\zeta^{|T|}TV_{\mathcal{C}}(M)\\
=&\zeta^{-\chi(M)}TV_{\mathcal{C}}(M)\\
=&TV_{\mathcal{C}}(M)
\end{align*}

\noindent as any odd-dimensional compact manifold without boundary has 0 Euler characteristic due to Poincar\'e duality. Finally, the invariance of $Z(M, \emptyset, \emptyset; \Delta)$, i.e., the independence on the choice of triangulations, follows from the invariance of the Turaev-Viro invariant.
\end{proof}
\begin{theorem}\label{thm:trigindep}
The evaluation $Z(M, \Sigma, \Gamma; \Delta)$ does not depend on the triangulation $\Delta$.
\end{theorem}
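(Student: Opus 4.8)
The plan is to show that $Z(\cdot;\Delta)$ is invariant under each of a short, standard list of local moves relating triangulations, and then to invoke the fact that any two admissible triangulations are connected by such moves. Since the formula just before the theorem gives $Z(M,\Sigma,\Gamma;\Delta)=\prod_i Z(M_i,\Sigma_i,\Gamma_i;\Delta|_{M_i})$, I would reduce to $M$ connected. I would then apply Pachner's theorem for the triangulated compact manifold-with-boundary $\overline{R_B}$ together with a general-position argument: two triangulations $\Delta_1,\Delta_2$ of $(M,\Sigma,\Gamma)$ can be joined by a finite sequence of moves of three kinds --- (i) the interior Pachner moves $2\leftrightarrow 3$ and $1\leftrightarrow 4$, supported in a $3$-ball disjoint from $\Sigma$; (ii) the boundary Pachner-type moves of $\overline{R_B}$, i.e.\ flips and subdivisions of $\Delta\cap\Sigma$ together with their bulk partners, supported near $\Sigma$ but away from $\Gamma$; and (iii) ``string-slide'' moves that isotope an edge of $\Delta\cap\Sigma$ across a single string of $\Gamma$, which are forced in order to keep $\Delta\cap\Sigma$ generic with respect to $\Gamma$ along the path. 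Each move is supported in a $3$-ball $B\subseteq\overline{R_B}$, and by \eqref{eq:partiontrig}--\eqref{eq:contraction} the resulting change of $Z(\cdot;\Delta)$ localizes to the change of the explicit factor $\mu^{-|V|}\zeta^{-|F|+|E|-|V|}$ multiplied by the ratio of the partial contractions $\langle\bigotimes_{t\subseteq B}\Gamma_t,\ \bigotimes_{f\subseteq B}\star_f\rangle$ over the two local triangulations of $B$. It therefore suffices to verify, move by move, that this ratio cancels the change of the explicit factor.

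For a move of type (i) the ball $B$ lies in the interior of $R_B$ and misses $\Gamma$, so the partial contraction on $B$ is literally the Turaev--Viro state sum for $B$ with fixed boundary triangulation. For $2\leftrightarrow 3$ I would invoke the Biedenharn--Elliott (pentagon) identity --- or, equivalently and in keeping with the machinery already set up, deduce it from Theorem \ref{thm:B-handlebody}: performing Move $0$ and Move $1$ along the vertices and interior edges common to the two triangulations of $B$ turns $R_B(B)$ into a handlebody, after which the two triangulations furnish two complete cutting systems of the same $B$-handlebody, which evaluate equally by the well-definedness in that theorem (the extra Move $1$ on the diagonal edge being governed by Proposition \ref{pro:stringgenus}); an Euler-characteristic bookkeeping shows the exponent of $\zeta$ is unchanged and no $\mu$ appears. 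For $1\leftrightarrow 4$ the new interior vertex contributes a factor $\mu$ through the orthogonality/bubble identity (which itself follows from Lemma \ref{lem:unit1} and the Planar Graphical Calculus on the bubble sphere), and this $\mu$ is exactly cancelled by the $\mu^{-1}$ coming from $\mu^{-|V|}$; the $\zeta$-exponents again match. This reduces the interior case to the classical Turaev--Viro identities already implicit in Proposition \ref{prop:turaev-viro}.

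For moves of types (ii) and (iii), $B$ meets $\Sigma$ in a disk $D$ and $\Gamma\cap B$ is a trivial tangle (a few unknotted, unlinked strands, possibly carrying one coupon). I would again reduce to Theorem \ref{thm:B-handlebody} and Proposition \ref{pro:stringgenus}: carry out the handle attachments of the construction of $(M,\Sigma^\Delta,\Gamma^\Delta)$ restricted to $B$ in an order for which the $B$-colored region inside $B$ is a multi-handlebody at every stage at which $\partial B$ has already been made a $2$-sphere. Then the two local triangulations once more give two complete cutting systems of the same $B$-handlebody, the Move $1$ steps contribute $\zeta^{-1}$ each by Proposition \ref{pro:stringgenus}, and the well-definedness of $Z$ forces the two evaluations to agree up to the bookkeeping factor, which cancels $\mu^{-|V|}\zeta^{-|F|+|E|-|V|}$. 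For a move of type (iii) none of $|V|,|E|,|F|$ changes, and after the cuts the two decorated cutting spheres differ only by an isotopy of $\Gamma$ supported inside $D$, so they are equal by the Homeomorphism and Planar Graphical Calculus axioms of Theorem \ref{thm:B-handlebody}. Since every admissible triangulation can be reached from a fixed one by such moves, the independence of $Z(\cdot;\Delta)$ on $\Delta$ follows, which is exactly the statement needed to define the partition function $Z$ of Theorem \ref{thm:partition function}.

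The step I expect to be the main obstacle is the organization of the boundary case (ii)--(iii): producing a workable complete set of moves relating generic triangulations of the decorated manifold-with-boundary $(\overline{R_B},\Gamma)$, and, for each one, exhibiting an order of the handle attachments that keeps the $B$-colored region a multi-handlebody throughout, so that Theorem \ref{thm:B-handlebody} and Proposition \ref{pro:stringgenus} remain applicable; a secondary nuisance is the appearance of ``$S^2\times I$''-type intermediate regions in the $1\leftrightarrow 4$ / Move-$0$ analysis, which is why I route that case through the classical bubble identity rather than through Theorem \ref{thm:B-handlebody}. Once this is set up, the remaining verification is routine bookkeeping of the exponents of $\mu$ and $\zeta$ together with the graphical-calculus axioms, and the interior $2\leftrightarrow 3$ case is immediate.
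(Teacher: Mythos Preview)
Your proposal is broadly sound but takes a different route from the paper. The paper does \emph{not} use interior/boundary Pachner moves plus string-slides; instead it invokes Lickorish's theorem (Theorem~\ref{thm:lickorish}) that any two triangulations of a compact PL manifold with nonempty boundary are related by elementary \emph{shellings} and inverse shellings (plus simplicial isomorphism). In dimension $3$ these come in three types, according to whether the free face $\sigma_1$ is a $2$-, $1$-, or $0$-simplex, and the paper checks invariance for each type by a direct local computation with the contraction formula \eqref{eq:contraction}. For Type~1 and Type~2 this is immediate from Theorem~\ref{thm:B-handlebody} and Proposition~\ref{pro:stringgenus} respectively (exactly the tools you propose); Type~3 is reduced to Type~1 together with the relative bistellar theorem (Theorem~\ref{thm:relativepachner}) fixing the boundary, which in turn is handled by the classical Turaev--Viro argument.

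The advantage of the shelling approach is that it sidesteps precisely the obstacle you flag: there is no need to assemble an ad hoc ``boundary Pachner'' package, since each shelling already removes or adds a single tetrahedron meeting $\Sigma$ in a prescribed way, and the local analysis is uniform. Your decomposition into (i)+(ii)+(iii) could be made to work, but item~(ii) would require either citing a less standard result (Pachner moves for manifolds with boundary allowing boundary change) or deriving it from Lickorish anyway. One point where your outline is arguably more careful than the paper: you explicitly isolate the ``string-slide'' moves needed to keep the boundary triangulation generic with respect to $\Gamma$, whereas the paper leaves this to an implicit general-position argument. Your handling of the interior $1\leftrightarrow 4$ via the bubble identity and of $2\leftrightarrow 3$ via two cutting systems of the same handlebody is correct and matches the spirit of the paper's Type~2 and Type~1 computations.
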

To prove the theorem, we recall a results by Newman \cite{New26}, Pachner \cite{Pac91} and Lickorish \cite{Lic99} on relating two triangulations of a $3$-manifold with non-empty boundary using local moves called shellings and inverse shellings. We briefly recall the definition of these local moves following Lickorish \cite{Lic99}.

\begin{definition}
Suppose $\sigma_{0}$ and $\sigma_{1}$ are simplices of a combinatorial $n$-manifold $M$ with boundary $\partial M$, such that the join $T=\sigma_{0}\star \sigma_{1}$ is an $n$-simplex of $M$ satisfying $\sigma_{0}\cup \partial M=\partial \sigma_{0}$ and $\sigma_{1}\star\partial M\subset \partial M$. The manifold $M'$ obtained from $M$ by elementary shelling from $\sigma_{1}$ is the closure of $M-T$. $M$ is also said to be obtained from $M'$ by an elementary inverse shelling.
\end{definition}

\begin{theorem}\cite[Theorem 5.10]{Lic99}\label{thm:lickorish}
Two connected combinatorial $n$-manifolds with non-empty boundary are piecewise linearly homeomorphic if and only if they are related by a sequence of elementary shellings, inverse shellings and a simplicial isomorphism.
\end{theorem}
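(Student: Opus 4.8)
The plan is to prove the two implications separately; the forward direction is routine and the converse carries all the weight.

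\emph{The ``if'' direction.} It suffices to show that a single elementary shelling induces a PL\nobreakdash-homeomorphism, since simplicial isomorphisms evidently do and an inverse shelling is the inverse of a shelling. Unwinding the definition, an elementary shelling removes an $n$\nobreakdash-simplex $T=\sigma_0\star\sigma_1$ that meets $\partial M$ in an $(n-1)$\nobreakdash-ball $D$ which is a union of facets of $T$ (those faces of $T$ not containing $\sigma_0$), and meets the rest of $M$ only along the complementary $(n-1)$\nobreakdash-ball $D':=\overline{\partial T\setminus D}=\sigma_0\star\partial\sigma_1$; moreover $T$ is the unique $n$\nobreakdash-simplex incident to the faces it owns exclusively. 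Hence $M=M'\cup_{D'}T$, where $M'$ is the result of the shelling, $D'\subset\partial M'$ is an $(n-1)$\nobreakdash-ball, and $T$ is an $n$\nobreakdash-ball. Since $(T,D')$ is a PL $n$\nobreakdash-ball together with a PL $(n-1)$\nobreakdash-ball in its boundary, it is a trivial ball pair, so by uniqueness of collars (equivalently PL regular neighborhood theory) attaching $T$ to $M'$ along $D'$ is PL\nobreakdash-homeomorphic to attaching a collar of $D'$, which changes nothing up to PL\nobreakdash-homeomorphism. Thus every shelling, inverse shelling and simplicial isomorphism, and hence every finite composite of them, is a PL\nobreakdash-homeomorphism.

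\emph{The ``only if'' direction: reductions.} Suppose $M_1$ and $M_2$ are PL\nobreakdash-homeomorphic. By definition there are subdivisions $M_1'$ of $M_1$ and $M_2'$ of $M_2$ and a simplicial isomorphism $M_1'\cong M_2'$. Since ``related by elementary shellings, inverse shellings and simplicial isomorphisms'' is an equivalence relation on combinatorial manifolds (reflexive, symmetric by reversing sequences and swapping shellings with inverse shellings, transitive by concatenation), it is enough to prove: for a connected combinatorial $n$\nobreakdash-manifold $M$ with $\partial M\neq\emptyset$ and any subdivision $M'$ of $M$, the pair $M,M'$ is so related; then $M_1\rightsquigarrow M_1'\cong M_2'\rightsquigarrow M_2$. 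To treat a general subdivision, I would invoke the Alexander--Newman theorem \cite{New26}: $M'$ and $M$, having a common subdivision, are related by a finite alternating sequence of stellar subdivisions and inverse stellar subdivisions. Being closed under inverses, it therefore suffices to realize one stellar subdivision $M\rightsquigarrow M^{\ast}$ --- starring $M$ at a simplex $\sigma$ --- by elementary shellings and inverse shellings. The crucial localization is that $M^{\ast}$ and $M$ coincide outside the closed star $B:=\overline{\operatorname{st}}(\sigma,M)$, a combinatorial $n$\nobreakdash-ball, and agree on $\partial B$; so the entire problem is a purely local replacement of the ball $B$ by its reconed triangulation $B^{\ast}$. (One could equally route through Pachner's bistellar moves \cite{Pac91} in place of stellar subdivisions; the localization step is the same.)

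\emph{The ``only if'' direction: the crux.} The remaining task is to perform the local replacement $B\rightsquigarrow B^{\ast}$ using only boundary shellings, and this is exactly where $\partial M\neq\emptyset$ does essential work. I envision three steps. (i) \emph{Bring $B$ to the boundary}: pick a PL arc in $M$ from a point of $\partial B$ to $\partial M$, let $N$ be a regular neighborhood of the union of that arc with $B$ --- still a combinatorial $n$\nobreakdash-ball meeting $\partial M$ in an $(n-1)$\nobreakdash-ball of its boundary --- and show that $M$ and $\overline{M\setminus N}$ are related by elementary shellings and inverse shellings; equivalently, that an ``exposed'' combinatorial $n$\nobreakdash-ball inside $M$ can be added or removed by such moves. (ii) \emph{Do the starring at the boundary}: in the resulting model $\widehat M$, where the star of $\sigma$ now sits as a subcomplex of the boundary, the stellar subdivision at $\sigma$ is a short explicit sequence of elementary shellings followed by elementary inverse shellings --- the bistellar reading of a stellar move, now legitimate because the interior faces of $\overline{\operatorname{st}}(\sigma)$ have become free boundary faces. (iii) \emph{Undo the bridge} of step (i) by the reverse sequence, landing on $M^{\ast}$. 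The main obstacle, and the only genuinely delicate point of the theorem, is step (i): a PL $n$\nobreakdash-ball need not be shellable, so one cannot simply peel $N$ off; instead one first pads $N$ out by inverse shellings into a shellable ball and then shells it away, which is possible by an induction on the number of simplices combined with Newman's expansion results (a combinatorial ball becomes shellable after suitable expansions) and the collapsibility of a PL $n$\nobreakdash-ball relative to a boundary $(n-1)$\nobreakdash-ball. Granting step (i), the three steps chain together into the required sequence of shellings, inverse shellings and simplicial isomorphisms from $M_1$ to $M_2$, completing the proof.
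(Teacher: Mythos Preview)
The paper does not prove this theorem at all; it merely quotes it as \cite[Theorem~5.10]{Lic99} and uses it as a black box in the proof of Theorem~\ref{thm:trigindep}. So there is no ``paper's own proof'' to compare against---the paper relies entirely on Lickorish's result.

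That said, your outline is in fact close in spirit to Lickorish's actual argument: the reduction via Alexander--Newman/Pachner to realizing a single stellar (or bistellar) move by shellings, and the key trick of using $\partial M\neq\emptyset$ to bridge the star of $\sigma$ out to the boundary where the move becomes a sequence of boundary shellings, are the main ideas in \cite{Lic99}. However, your write-up is a sketch, not a proof: you explicitly flag step~(i) as ``the main obstacle'' and then simply grant it. The passage about padding $N$ out by inverse shellings into a shellable ball ``by an induction on the number of simplices combined with Newman's expansion results'' is a gesture at an argument, not an argument; nonshellable balls exist in dimension~$\geq 3$, and making precise why inverse shellings suffice to repair this is exactly the content you are omitting. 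If you intend this as a self-contained proof you must fill in step~(i) carefully; if you intend it as an exposition of Lickorish's proof you should say so and point to the relevant lemmas in \cite{Lic99} rather than leave the crux unproved.
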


\begin{theorem}\cite[page 352, Theorem 2.1.2]{Tur94}\label{thm:relativepachner}
Two triangulations of a compact $3$-manifold $M$ coinciding in $\partial M$ are related by a finite sequence of bistellar moves and an ambient isotopy in $M$ identical on $\partial M$.

\end{theorem}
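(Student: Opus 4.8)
This theorem is the relative form of Pachner's theorem; its closed-manifold version \cite{Pac91} asserts that two PL-homeomorphic triangulations of a closed manifold are joined by a finite sequence of bistellar moves. The plan is to derive the relative statement from the closed one, together with the shelling calculus recorded in Theorem~\ref{thm:lickorish} and the common-subdivision results of Newman \cite{New26}.

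First I would double: form $DM = M \cup_{\partial M} M$ and observe that, since $\Delta_1$ and $\Delta_2$ agree on $\partial M$, gluing two copies of each yields genuine triangulations $D\Delta_1, D\Delta_2$ of the closed $3$-manifold $DM$ which moreover coincide near the mirror copy of $\partial M$. The closed Pachner theorem then supplies a finite bistellar path from $D\Delta_1$ to $D\Delta_2$ inside $DM$. The substance of the argument is to push this path off the mirror. To prepare for that, I would first apply interior bistellar moves to $\Delta_1$ and $\Delta_2$ so that a fixed bicollar $\partial M \times [-1,1]$ of the mirror carries one and the same standard triangulation in both doubles; this is legitimate because the two triangulations already agree on $\partial M$ and a prescribed subdivision of a collar can be manufactured by interior moves. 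Then, exploiting that every bistellar move is supported in the closed star of a single simplex, I would normalize the path so that it never disturbs the standard bicollar, trading any offending move across the symmetric copy or cancelling it against a later move by means of the local cancellation lemmas that already underlie the closed Pachner theorem. What remains is a sequence of bistellar moves inside one copy of $M$ (together with its half-bicollar) relating $\Delta_1$ to $\Delta_2$ up to the ambient isotopy of $M$, fixed on $\partial M$, that straightens the bicollar; this is exactly the asserted conclusion, and it accounts for the isotopy appearing in the statement.

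An alternative route, avoiding doubling, is to invoke the relative form of Alexander's common-subdivision theorem: $\Delta_1$ and $\Delta_2$ admit a common subdivision obtained from each by iterated stellar subdivisions performed at simplices interior to $M$; one then checks that a stellar subdivision at an interior simplex of a PL $3$-manifold is realized by finitely many bistellar moves, because the closed star of such a simplex is a PL $3$-ball and any two triangulations of a PL $3$-ball agreeing on its boundary $2$-sphere are bistellar equivalent through moves fixing that sphere. Here Theorem~\ref{thm:lickorish} is the convenient tool, since elementary shellings and inverse shellings of a top simplex translate directly into bistellar moves one dimension lower, so the shelling calculus controls the ball-rel-boundary statement.

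Either way, the hard part will be the boundary bookkeeping: neither route gives for free that the moves can be confined away from a collar of $\partial M$, and the crux is precisely the claim that two triangulations of a PL $3$-ball coinciding on the boundary $2$-sphere are connected by bistellar moves fixing that sphere. This low-dimensional relative statement is proved by an induction that feeds the $2$-dimensional case and the handle/shelling structure of the $3$-ball back into the argument; since it is exactly the content of Pachner's and Turaev's work, in the body of the paper we simply cite \cite[Theorem~2.1.2]{Tur94} rather than reproduce this machinery.
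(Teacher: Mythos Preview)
The paper does not prove this theorem at all: it is quoted verbatim from Turaev's book and used as a black box in the proof of Theorem~\ref{thm:trigindep} (the Type~3 shelling case). So there is nothing to compare against; your final paragraph already correctly anticipates this, noting that the body of the paper simply cites \cite[Theorem~2.1.2]{Tur94}. Your sketch of a doubling argument and the alternative via relative Alexander/stellar subdivisions is a reasonable outline of how such a result is established, and you are right that the delicate point is the ball-rel-boundary statement, but none of that machinery appears in the present paper.
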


When the dimension $n=3$, the shelling(resp. inverse shelling) with respect to a tetrahedron $T$ can be classified to the following $3$-types depending on the number of faces of $T$ are being attached to the resulting(original) manifold $M$.
\begin{itemize}
\item \textbf{Type 1:}
$$
\vcenter{\hbox{
\begin{tikzpicture}
\coordinate (A) at (0, 1);
\coordinate (B) at (-0.866, -.5);
\coordinate (C) at (0.866, -.5);
\draw (0, 0) node{\tiny{$\sigma_{1}$}};
\draw (A)--(B);
\draw (C)--(A);
\draw (B)--(C);
\draw[->] (1,0.5)--(3,0.5); \node[above] at (2,0.5) {inv. shelling};
\draw[<-] (1,0)--(3,0); \node[below] at (2,0) {shelling};
\end{tikzpicture}}}
\vcenter{\hbox{
\begin{tikzpicture}
\coordinate (A) at (0, 1);
\coordinate (B) at (-0.866, -.5);
\coordinate (C) at (0.866, -.5);
\draw (0, 0) node{\tiny{$\sigma_{1}$}};
\draw[dashed] (A)--(B);
\draw (C)--(A);
\draw (B)--(C);
\draw (-1, 0.8) node[left]{\tiny{$\sigma_0$}}--(A);
\draw (-1, 0.8)--(B);
\draw (-1, 0.8)--(C);
\end{tikzpicture}}}
$$
\item \textbf{Type 2:}
$$
\vcenter{\hbox{
\begin{tikzpicture}
\draw (-1, 0)--(0, 1)--(1, 0)--(0, -1)--(-1, 0);
\draw (0.5, 0.1) node{\tiny{$\sigma_{1}$}};
\draw (-1, 0)--(1, 0);
\draw[->] (1,0.25)--(3,0.25); \node[above] at (2,0.25) {inv. shelling};
\draw[<-] (1,-0.25)--(3,-0.25); \node[below] at (2,-0.25) {shelling};
\end{tikzpicture}}}
\vcenter{\hbox{
\begin{tikzpicture}
\draw (-1, 0)--(0, 1)--(1, 0)--(0, -1)--(-1, 0);
\draw (0.5, 0.1) node{\tiny{$\sigma_{1}$}};
\draw[dashed] (-1, 0)--(1, 0);
\draw(0, -1)--(0, 1);
\draw (-0.1, -0.5) node{\tiny{$\sigma_0$}};
\end{tikzpicture}}}
$$
\item \textbf{Type 3:}
$$
\vcenter{\hbox{
\begin{tikzpicture}
\coordinate (A) at (0, 1);
\coordinate (B) at (-0.866, -.5);
\coordinate (C) at (0.866, -.5);
\draw (0.1, 0.1) node{\tiny{$\sigma_{1}$}};
\draw (A)--(B);
\draw (C)--(A);
\draw (B)--(C);
\draw (0, 0)--(A);
\draw (0, 0)--(B);
\draw (0, 0)--(C);
\draw[->] (1,0.5)--(3,0.5); \node[above] at (2,0.5) {inv. shelling};
\draw[<-] (1,0)--(3,0); \node[below] at (2,0) {shelling};
\end{tikzpicture}}}
\vcenter{\hbox{
\begin{tikzpicture}
\coordinate (A) at (0, 1);
\coordinate (B) at (-0.866, -.5);
\coordinate (C) at (0.866, -.5);
\draw (0.1, 0.1) node{\tiny{$\sigma_{1}$}};
\draw (A)--(B);
\draw (C)--(A);
\draw (B)--(C);
\draw[dashed] (0, 0)--(A);
\draw[dashed] (0, 0)--(B);
\draw[dashed] (0, 0)--(C);
\draw (-0.3, 0) node{\tiny{$\sigma_{0}$}};
\end{tikzpicture}}}
$$
\end{itemize}
In light of Theorem \ref{thm:lickorish}, we only need to show $Z(M, \Sigma, \Gamma;\Delta)$ is invariant under the shelling and inverse shelling moves.

\begin{proof}[Proof of Theorem \ref{thm:trigindep}]
Let $\Delta$ and $\Delta'$ be two triangulations of $(M, \Sigma, \Gamma)$, i.e., triangulations of the closure of $R_{B}(M, \Sigma)$ in generic position with $\Gamma$. By Theorem \ref{thm:lickorish}, $\Delta$ and $\Delta'$ can be related by a sequence of elementary shelling and inverse elementary shelling moves. In order to prove the theorem, we only need to show $Z(M, \Sigma, \Gamma;\Delta)=Z(M, \Sigma, \Gamma; \Delta')$ when $\Delta'$ is obtained from $\Delta$ by one elementary shelling from $\sigma_1$ and a simplicial isomorphism $\tau$ that maps the closure of $R_{B}(M, \Sigma)-t_{0}$ to $R_{B}(M, \Sigma)$. We denote the set of inner simplices of $\Delta$ (resp.~$\Delta'$) by $V, E, F, T$ (resp.~$V', E', F', T'$).

\textbf{Type 1.} Suppose $\sigma_1$ is a $2$-simplex. Index the simplices such that $V=V'$, $E'=E$, $F'=F-\{\sigma_1\}$. Let $\{t_0, t_1\}$ (resp.~$\{t_1^{'}\}$) be the set of tetrahedra adjacent to $\sigma_{1}$, then $T-\{t_0, t_1\}=T'-\{t_1^{'}\}$. In addition, $t_1^{'}=t_{0}\cup t_{1}$ in $R_{B}(M, \Sigma)$. By definition, 
\begin{align*}
Z(M, \Sigma, \Gamma; \Delta)=&\zeta^{-|V|+|E|-|F|+|T|}\left\langle\bigotimes_{t\in T}\Gamma_{t}\,, \ \bigotimes_{f\in F}\star_{f}\right\rangle \\
=&\zeta^{-|V|+|E|-|F|+|T|}\left\langle \Gamma_{t_{0}}\otimes \Gamma_{t_1}\otimes\bigotimes_{t\in T'}\Gamma_{t}\,, \ (\bigotimes_{f\in F'}\star_{f})\otimes \star_{\sigma_1}\right\rangle \\
=&\zeta^{-|V|+|E|-|F|+|T|}\left\langle \Gamma_{t_{0}}\otimes \Gamma_{t_1}\,, \left\langle \bigotimes_{t\in T'}\Gamma_{t}\,, \ \bigotimes_{f\in F'}\star_{f}\right\rangle \otimes \star_{\sigma_1}\right\rangle \\
\end{align*}

Notice $t_{0}\cup t_{1}$ is homeomorphic to a $3$-ball in $R_{B}$, by Theorem \ref{thm:B-handlebody}, one has
$$\left\langle \Gamma_{t_{0}}\otimes \Gamma_{t_1}, -\otimes \star_{\sigma_1}\right\rangle=\langle \Gamma_{t_{1}}^{'}, -\rangle.$$
In fact, when evaluating the left hand side, we need to multiply by $\zeta^2$ for the two $B$-colored 3-balls, while on the right hand side, we multiply by two copies of $\zeta$: one from the one $B$-colored 3-ball and one from Move 2. As a consequence, they cancel with each other.

Therefore,
$$Z(M, \Sigma, \Gamma; \Delta)=\zeta^{-|V'|+|E'|-|F'|+|T'|}\left\langle \Gamma_{t_1^{'}}\,, \left\langle \bigotimes_{t\in T'}\Gamma_{t}\,, \ \bigotimes_{f\in F'}\star_{f}\right\rangle \right\rangle=Z(M, \Sigma, \Gamma; \Delta').$$

\textbf{Type 2.} Suppose $\sigma_1$ is an $1$-simplex. Index the simplices such that $V'=V$, $E'=E-\{\sigma_1\}$. For faces and tetrahedra, let $\{f_0, f_1, \ldots, f_n\}$ (resp.~$\{f_2',\ldots, f_{n}'\}$) be the set of faces adjacent to $\sigma_1$ in $\Delta$ (resp.~$\Delta'$), and $\{t_0, t_1, \ldots, t_n\}$ (resp.~$\{t_1', \ldots, t_n'\}$) the set of tetrahedra adjacent to $\Sigma_1$ in $\Delta$ (resp.~$\Delta'$). Then 
$$
F^{\perp} := F'-\{f_2',\ldots, f_{n}'\}=F-\{f_0, f_1, \ldots, f_n\}
$$
and
$$ 
T^{\perp} := T'-\{t_1', \ldots, t_n'\}=T-\{t_0, t_1, \ldots, t_n\}\,.
$$ 
In addition, $\displaystyle \bigcup_{1\le i\le n} t_i'=\bigcup_{0\le i\le n}t_i$ in $R_{B}(M, \Sigma)$.  By definition, we have

\begin{align*}
&Z(M, \Sigma, \Gamma; \Delta)\\
=&\zeta^{-|V|+|E|-|F|+|T|}\left\langle\bigotimes_{t\in T}\Gamma_{t}\,, \ \bigotimes_{f\in F}\star_{f}\right\rangle \\
=&\zeta^{-|V|+|E|-|F|+|T|}\left\langle \left(\bigotimes_{0 \le i\le n}\Gamma_{t_i} \right) \otimes \bigotimes_{t\in T^\perp}\Gamma_{t}\,,  \left(\bigotimes_{f\in F^\perp}\star_{f}\right) \otimes \bigotimes_{0\le i\le n}\star_{f_i}\right\rangle\\
=&\zeta^{-|V|+|E|-|F|+|T|}\left\langle \bigotimes_{0 \le i\le n}\Gamma_{t_i}\,, \left\langle \bigotimes_{t\in T^\perp} \Gamma_{t}\,, \ \bigotimes_{f\in F^\perp} \star_{f}\right\rangle \otimes \bigotimes_{0\le i\le n}\star_{f_i}\right\rangle\,. \\
\end{align*}
Notice $\displaystyle \bigcup_{0\le i\le n}t_i$ is homeomorphic to a $3$-ball and $\sigma_1$ is an untwisted arc embedded in it. By Proposition \ref{pro:stringgenus}, one has
$$\left\langle \bigotimes_{0 \le i\le n}\Gamma_{t_i}\,, - \otimes \bigotimes_{0\le i\le n}\star_{f_i}\right\rangle=\left\langle \bigotimes_{1 \le i\le n}\Gamma_{t_i'}\,, - \otimes \bigotimes_{2\le i\le n}\star_{f_i'}\right\rangle$$
As before, when evaluating the left hand side, we need to multiply by $\zeta$ for the $(n+1)$ $B$-colored 3-balls, while on the right hand side, we multiply $\zeta$ as well, the $\zeta$ appear in applying Move $1$ is cancelled by applying one Move $2$ to make the $B$-colored region homeomorphic to a $3$-ball.

Therefore,
\begin{align*}
Z(M, \Sigma, \Gamma; \Delta)=&\zeta^{-|V|+|E|-|F|+|T|}\left\langle \bigotimes_{1 \le i\le n}\Gamma_{t_i'}\,, \left\langle \bigotimes_{t\in T^\perp}\Gamma_{t}\,, \ \bigotimes_{f\in F^\perp} \star_{f}\right\rangle \otimes \bigotimes_{2\le i\le n}\star_{f_i'}\right\rangle\\
=&Z(M, \Sigma, \Gamma; \Delta').   
\end{align*}

\textbf{Type 3.} Let $\Delta''$ be a triangulation of $M$ by applying Type $1$ inverse shelling with respect to boundary face $\sigma_{0}$. Then the triangulation of $\Delta''$ and $\Delta'$ are identical on the boundary. By Theorem \ref{thm:relativepachner}, $\Delta''$ and $\Delta'$ are related by a sequence of bistella moves. Thus
$$Z(M, \Sigma, \Gamma;\Delta')=Z(M, \Sigma, \Gamma;\Delta'')$$
by the identical argument as in \cite{Tur94}. On the other hand, $\Delta''$ and $\Delta$ are related by a shelling move of Type $1$. Therefore,
$$Z(M, \Sigma, \Gamma;\Delta')=Z(M, \Sigma, \Gamma;\Delta'')=Z(M, \Sigma, \Gamma;\Delta).$$

Theorem \ref{thm:trigindep} shows that we established a unique well-defined partition function 
$$Z(M, \Sigma, \Gamma):=Z(M, \Sigma, \Gamma; \Delta)$$
on $\mathcal{C}$-decorated 3-alterfolds. To prove Theorem \ref{thm:partition function}, we only need to show this partition function has the claimed properties.
\end{proof}

\begin{proof}[Proof of Theorem \ref{thm:partition function}]
We show the partition function satisfies all properties listed in Theorems \ref{thm:B-handlebody} and $\ref{thm:partition function}$.

\textbf{Disjoint union:} Suppose $\Delta$ and $\Delta'$ are triangulations of $(M, \Sigma, \Gamma)$ and $(M', \Sigma', \Gamma')$ respectively. Then $\Delta\sqcup\Delta'$ is a triangulation of $(M\sqcup M', \Sigma \sqcup \Sigma', \Gamma\sqcup \Gamma')$. The property follows by evaluating $(M\sqcup M', \Sigma \sqcup \Sigma', \Gamma\sqcup \Gamma')$ with triangulation $\Delta\sqcup\Delta'$.

\textbf{Homeomorphism:} If two 3-alterfold has homeomorphic $B$-colored region and diagrams on the separating surface, one can choose homemorphic triangulations, thus evaluate to the same scalar.

\textbf{Planar Graphical Calculus:} Suppose If $\Gamma'$ and $\Gamma$ are diagrams on $\Sigma$ that are identical outside of a contractible region $D$, and $\Gamma_{D}:=\Gamma\cap D$ equals to $\Gamma_{D}':=\Gamma'\cap D$ as morphisms in $\mathcal{C}$. We can choose a triangulation $\Delta$ of $R_{B}(M, \Sigma)$ such that the region $D$ is in the interior of a boundary face. Therefore, All closed diagrams appeared in $(M, \Sigma^{\Delta}, \Gamma^{\Delta})$ and $(M, \Sigma^{\Delta}, \Gamma'^{\Delta})$ are equal as morphisms in $\mathcal{C}$.

\textbf{Move $i$:} Suppose $(M, \Sigma', \Gamma')$ is derived from $(M, \Sigma, \Gamma)$ by Move $i$. We may choose a triangulation $\Delta$ of $(M, \Sigma, \Gamma)$ such that the $i$-handle described in Move $i$ retracts to one of the $i$-simplex belongs to the triangulation(relative to the boundary). Since $(M, \Sigma, \Gamma)$ appeared in the steps of evaluating $M$. The property follows.

\end{proof}

\begin{remark}\label{rem:homeingore}
In the proof of Theorem \ref{thm:partition function}, we see that given a triangulation on an alterfold, $Z$ can be computed by evaluating diagrams on $B$-colored balls after applying Moves $0$-$2$ to the $B$-colored region. Then Theorem \ref{thm:trigindep} already implies the Homeomorphism property in Theorem \ref{thm:partition function}. In Theorem \ref{thm:partition function2}, we will describe another complete set of Moves that uniquely determines the partition function.
\end{remark}

\begin{theorem}[Embedded Surface Invariants]\label{thm:surface}
Suppose $\Sigma\subset M$ is an embedded closed surface of an oriented closed $3$-manifold $M$,
We have that $Z(M, \Sigma\times\{-\epsilon, \epsilon\}, \emptyset)$ is a topological invariant of the pair $(\Sigma, M)$, where $(M, \Sigma\times\{-\epsilon, \epsilon\}, \emptyset)$ is the alterfold by blowing up $\Sigma$ to an $A$-colored region.
\end{theorem}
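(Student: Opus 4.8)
The plan is to deduce the statement directly from the \textbf{Homeomorphism} clause of Theorem \ref{thm:partition function}, once we check that the alterfold in question is canonically attached to the pair $(M,\Sigma)$. First I would record the standing assumption implicit in the notation $\Sigma\times\{-\epsilon,\epsilon\}$: the surface $\Sigma$ is two-sided in $M$, and since $M$ is oriented this forces $\Sigma$ to be orientable. Fix a tubular neighbourhood $N\cong\Sigma\times[-1,1]$ of $\Sigma$ in $M$, and for $0<\epsilon<1$ set $W_\epsilon=\Sigma\times(-\epsilon,\epsilon)$. Then $\Sigma_\epsilon:=\partial(\Sigma\times[-\epsilon,\epsilon])=\Sigma\times\{-\epsilon\}\sqcup\Sigma\times\{\epsilon\}$ is a closed embedded surface separating $M\setminus\Sigma_\epsilon$ into $W_\epsilon$ and $U_\epsilon:=M\setminus\overline{W_\epsilon}$; colouring $W_\epsilon$ by $A$ and every component of $U_\epsilon$ by $B$ gives a genuine alternating colouring (across each of the two copies of $\Sigma$ one passes from the $A$-region $W_\epsilon$ to the $B$-region $U_\epsilon$, and no two $B$-components are mutually adjacent), and one orients the two copies of $\Sigma$ as the boundary of the $A$-region $W_\epsilon$, which carries the restriction of the orientation of $M$; this is exactly the data required in the definition of a $3$-alterfold. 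With the empty diagram this produces the $\mathcal{C}$-decorated $3$-alterfold $(M,\Sigma_\epsilon,\emptyset)$, whose $B$-coloured region is $R_B=U_\epsilon$, oriented by $M$.

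Next I would argue that $(R_B,\emptyset)$ is well defined up to orientation-preserving homeomorphism, independently of the auxiliary choices. For two parameters $0<\epsilon'<\epsilon<1$ the radial homeomorphism of $[-1,1]$ carrying $[-\epsilon,\epsilon]$ onto $[-\epsilon',\epsilon']$, extended by the identity outside $N$, is an orientation-preserving homeomorphism of $M$ taking $U_\epsilon$ onto $U_{\epsilon'}$, so the choice of $\epsilon$ is immaterial. Likewise, any two tubular neighbourhoods of $\Sigma$ are ambient isotopic by the uniqueness of tubular neighbourhoods, with the isotopy orientation-preserving, so the choice of $N$ does not affect the orientation-preserving homeomorphism type of $(R_B,\emptyset)$. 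Finally, if $\varphi\colon M\to M'$ is an orientation-preserving homeomorphism with $\varphi(\Sigma)=\Sigma'$, then $\varphi(N)$ is a tubular neighbourhood of $\Sigma'$, so by the previous two remarks $\varphi$ induces an orientation-preserving homeomorphism of $B$-coloured regions $R_B\to R_B'$ respecting the (empty) diagrams. Applying the \textbf{Homeomorphism} property of Theorem \ref{thm:partition function}, which asserts that $Z(M,\Sigma,\Gamma)$ depends only on the orientation-preserving homeomorphism class of $(R_B,\Gamma)$, we conclude $Z(M,\Sigma\times\{-\epsilon,\epsilon\},\emptyset)=Z(M',\Sigma'\times\{-\epsilon,\epsilon\},\emptyset)$, which is the assertion.

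The point is that all the topological content is already packaged in Theorem \ref{thm:partition function} (equivalently, in Theorem \ref{thm:trigindep}); the only thing to verify here is bookkeeping, namely that ``blowing up $\Sigma$ to an $A$-coloured region'' is a canonical operation producing a legitimate alterfold, and that its $B$-coloured region depends functorially, up to orientation-preserving homeomorphism, on the pair $(M,\Sigma)$. The mild subtlety worth spelling out is the verification that the induced colouring is alternating and that the two parallel copies of $\Sigma$ receive consistent orientations as $\partial R_A$ --- both immediate once one records $R_A=\Sigma\times(-\epsilon,\epsilon)$ --- together with the observation that the argument is uniform whether $\Sigma$ is separating in $M$ (so $R_B$ has two components) or non-separating (so $R_B$ is connected). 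I do not expect a genuinely hard step in this proof.
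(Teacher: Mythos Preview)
Your proposal is correct and follows the same approach as the paper, which simply states that the result follows from Theorem \ref{thm:partition function}. You have spelled out the bookkeeping details (well-definedness of the blow-up, independence of tubular neighbourhood and $\epsilon$, and the application of the Homeomorphism clause) that the paper leaves implicit.
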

\begin{proof}
    It follows from Theorem \ref{thm:partition function}.
\end{proof}

\begin{theorem}[Invariants for 3-manifold with boundary]\label{thm:3mboundary}
Let $M$ be an oriented compact $3$-manifold with boundary, with $\mathcal{C}$-diagram $\Gamma$ on its boundary. 
Let $M'$ be an arbitrary $3$-alterfold with its $B$-colored region homeomorphic to $M$. 
Then the quantity $Z(M', \partial M, \Gamma)$ is a topological invariant of $(M, \Gamma)$.
\end{theorem}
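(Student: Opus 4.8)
The plan is to deduce the statement directly from the Homeomorphism property in Theorem \ref{thm:partition function}, which asserts that $Z(M',\Sigma,\Gamma)$ depends only on the orientation-preserving homeomorphism class of the pair $(R_B(M',\Sigma),\Gamma)$. Since by hypothesis $R_B(M')\cong M$ with the separating-surface diagram being $\Gamma$, the value $Z(M',\partial M,\Gamma)$ will be seen to depend only on $(M,\Gamma)$ and not on the ambient alterfold. Two points must be addressed: first, that at least one admissible alterfold $M'$ exists, so that the quantity is defined; second, that the choice of $M'$ is immaterial and that the resulting number is invariant under orientation-preserving homeomorphisms of the pair $(M,\Gamma)$.

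For existence, given $(M,\Gamma)$ with $M$ compact oriented with nonempty boundary and $\Gamma$ a $\mathcal{C}$-diagram on $\partial M$, I would form a closed oriented $3$-manifold $\widehat{M}$ by capping each boundary component of $M$ with a handlebody (or, most economically, by taking the double $\widehat{M}=M\cup_{\partial M}\overline{M}$). Take $\Sigma$ to be the image of $\partial M$, oriented as the boundary of the $A$-colored region (equivalently, the reverse of the boundary orientation of $M$), and color the copy of $M$ by $B$; then $\Gamma$, read from the $B$-side, is precisely the given $\mathcal{C}$-diagram, as in Example \ref{exa:sphere}. This produces a $\mathcal{C}$-decorated $3$-alterfold $(\widehat{M},\Sigma,\Gamma)$ with $R_B\cong M$, so the collection of admissible $M'$ is nonempty and $Z(M',\partial M,\Gamma)$ makes sense.

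Next, if $M'_1$ and $M'_2$ are two $\mathcal{C}$-decorated $3$-alterfolds whose $B$-colored regions are identified with $M$ by orientation-preserving homeomorphisms carrying the separating-surface diagrams to $\Gamma$, then $(R_B(M'_1),\Gamma)$ and $(R_B(M'_2),\Gamma)$ are orientation-preserving homeomorphic as pairs, whence the Homeomorphism property of Theorem \ref{thm:partition function} gives $Z(M'_1,\partial M,\Gamma)=Z(M'_2,\partial M,\Gamma)$. Hence the number depends only on $(M,\Gamma)$; denote it $Z(M,\Gamma)$. Finally, an orientation-preserving homeomorphism of pairs $f\colon (M_1,\Gamma_1)\to (M_2,\Gamma_2)$ transports any admissible alterfold for the source into one for the target, so the same invocation of the Homeomorphism property yields $Z(M_1,\Gamma_1)=Z(M_2,\Gamma_2)$. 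Thus $Z(M',\partial M,\Gamma)$ is a topological invariant of $(M,\Gamma)$, which proves the theorem (and specializes, for $\Gamma=\emptyset$, to Theorem \ref{thm:surface}).

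The only genuinely delicate point is a bookkeeping one rather than a conceptual obstacle: keeping the orientation conventions straight. The separating surface $\Sigma=\partial M$ must be oriented so that it bounds the $A$-colored side, which is the opposite of the boundary orientation of $R_B=M$, and one must check that under this convention the given $\mathcal{C}$-diagram $\Gamma$ on $\partial M$ is exactly what is read off from the $B$-colored region (rather than its image in $\mathcal{C}^{\mathrm{op}}$). Once these conventions are fixed, the homeomorphisms used above are genuinely orientation-preserving on the pair $(R_B,\Gamma)$, and Theorem \ref{thm:partition function} applies verbatim.
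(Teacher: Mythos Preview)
Your proof is correct and follows exactly the paper's approach: the paper's proof consists of the single sentence ``It is an immediate consequence of the homeomorphisms property,'' and you have simply unpacked that consequence carefully (existence of an admissible $M'$, independence of the choice, and compatibility with homeomorphisms of pairs). Your elaboration of the orientation conventions is a welcome addition but not logically required beyond what the paper assumes.
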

\begin{proof}
It is an immediate consequence of the homeomorphisms property.
\end{proof}

\begin{remark}

In \cite{FarSch22}, Farnsteiner and Schweigert studied $\cC$-decorated 3-manifolds with free boundaries and embedded $\cZ(\cC)$-colored tensor networks, together with a choice of graph skeleton. 
By assuming Hypothesis 16, they argued the existence of certain partition function that is independent of the choice of the skeleton. In particular, one gets invariants of $\cC$-decorated 3-manifolds with free boundary under assumptions. 
In Theorem \ref{thm:3mboundary}, the existence of the partition function proposed in \cite{FarSch22} is established for $\cC$-decorated 3-manifolds with free boundary without assumption. 
Moreover, by Theorem \ref{thm:equivcenter}, if there is a $\cZ(\cC)$-colored tensor network in the 3-manifold with boundary, we can replace it by a $\cC$-decorated $A$-colored multihandlebody and close the free boundary (arbitrarily with $A$-colored multihandlebodies) to obtain a 3-alterfold. Then by Theorem \ref{thm:partition function}, we get the general partition function proposed in \cite{FarSch22} without assumption. 
\end{remark}

\begin{remark}
Kawahigashi, Sato, Wakui \cite{KSW05} also defined a partition function on 3-manifold with free boundary.
The partition function depends on a triangulation of the free boundary, i.e. the separating surface in 3-alterfold.
They also did the computation when the free boundary is torus. 
Theorem \ref{thm:partition function} shows that we are able to do computation for arbitrary separating surfaces.
\end{remark}

\begin{theorem}[Handle Slides]\label{thm:handleslides}
Let $(M, \Sigma, \Gamma)$ be a $\mathcal{C}$-decorated 3-alterfold and $C\subset \Gamma$ be an $\Omega$-colored circle. Suppose $(M, \Sigma, \Gamma')$ is established from handle sliding another curve of $\Gamma$ along $C$. Then
$$Z(M, \Sigma, \Gamma)=Z(M, \Sigma, \Gamma').$$
\end{theorem}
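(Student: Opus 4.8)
In the special case where the slide is supported inside an embedded disk $D\subset\Sigma$ — that is, the sliding band $b$ and the part of $C$ it is slid over both lie in $D$ and $D$ contains no other part of $\Gamma$ — the statement is immediate: $\Gamma\cap D$ and $\Gamma'\cap D$ are equal as morphisms in $\mathcal C$ by the Handle Slide Lemma of Section~2 (which the paper deduces directly from the Unit Decomposition, Lemma~\ref{lem:unit1}), so $Z(M,\Sigma,\Gamma)=Z(M,\Sigma,\Gamma')$ by the \emph{Planar Graphical Calculus} property of Theorem~\ref{thm:partition function}. The real content is the general case, where a regular neighborhood $W\subset\Sigma$ of $C$ together with the band $b$ is an annulus whose core is isotopic to $C$ and possibly essential in $\Sigma$; here Planar Graphical Calculus does not apply to $W$ directly, and the plan is to bridge this gap using the triangulation description of $Z$ and the contraction formula \eqref{eq:contraction}.

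\textbf{Localization and compatible triangulations.} The slide changes $\Gamma$ only inside $W$, with $\Gamma=\Gamma'$ on $\Sigma\setminus W$ and $\Gamma,\Gamma'$ agreeing on $\partial W$. I would choose a triangulation $\Delta$ of $(M,\Sigma,\Gamma)$ adapted to the move: $C$ and $b$ transverse to the $1$-skeleton of $\Delta\cap\Sigma$, $\partial W$ contained in that $1$-skeleton, and the whole band $b$ together with the contact arc where it meets $C$ contained in the interior of a single boundary face $f_0$. The same simplices furnish a triangulation $\Delta'$ of $(M,\Sigma,\Gamma')$, and $\Delta,\Delta'$ coincide as triangulations of $\overline{R_B}$, differing only in the decorations carried by the finitely many boundary faces meeting $W$. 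By Theorem~\ref{thm:trigindep} we may evaluate $Z(M,\Sigma,\Gamma)$ from $\Delta$ and $Z(M,\Sigma,\Gamma')$ from $\Delta'$; in \eqref{eq:contraction} all the tensors $\star_f$ and all the local tensors $\Gamma_t$ over tetrahedra disjoint from $W$ are literally identical, so the claim reduces to the identity of the partial contractions over the tetrahedra meeting $W$ against the $\star_f$ of the faces lying in $W$.

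\textbf{The local check in the cut-open picture.} After applying Moves $0,1,2$ along $\Delta$, the circle $C$ is cut by the Move-$2$ disks into a cyclic chain of arcs, one in each boundary $3$-ball $B_t$ met by $W$, and likewise for the slid arc of $\gamma$ and for the two boundary-parallel arcs of $\partial W$; the band $b$ sits in the single ball $B_{t_0}\supset f_0$, on whose boundary sphere the diagram is genuinely planar. There I apply the Handle Slide Lemma of Section~2 — valid on $S^2$ as an identity of morphisms in $\mathcal C$ — to drag the slid arc of $\gamma$ once along the arc of $C$ in $B_{t_0}$, producing two ``transition points'' where $\gamma$ crosses a cut. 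Repeatedly pushing these transition points through the Move-$2$ cuts into the neighboring balls (each such move is again an instance of Lemma~\ref{lem:unit1}, applied to the segment of $C$ crossing that cut, which carries the $\Omega$-color) transports the dragged strand successively through all the balls $B_t$ met by $W$, and after travelling once around $C$ reproduces exactly $\Gamma'\cap W$. Each intermediate step replaces a diagram on a boundary sphere by an equal morphism in $\mathcal C$, or is an ambient isotopy of $(R_B,\Gamma)$, so the contracted value is unchanged; hence $Z(M,\Sigma,\Gamma)=Z(M,\Sigma,\Gamma')$.

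\textbf{Main obstacle and an alternative.} The crux is exactly this passage from the local lemma to the surface statement: one application of the Handle Slide Lemma only wraps $\gamma$ around a single arc of $C$, whereas the slide requires wrapping around all of $C$, which the chosen triangulation necessarily splits across several balls once $C$ is essential in $\Sigma$; the bookkeeping of transporting the two transition points consistently through all the Move-$2$ cuts — and checking that every push-through is an instance of Lemma~\ref{lem:unit1} rather than something requiring a braiding on $\mathcal C$ — is where the work lies, the rest being a bare application of the properties in Theorem~\ref{thm:partition function}. A shortcut worth checking is to first perform a Move~$1$ along a short unknotted arc of $R_B$ meeting $C$ transversely so as to render the relevant region effectively planar, carry out the now elementary planar slide, and then remove the auxiliary handle again by the string--genus relation, Proposition~\ref{pro:stringgenus}; I would verify whether this genuinely simplifies the topology near $C$ or merely relocates the same bookkeeping.
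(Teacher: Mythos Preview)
Your diagnosis of the difficulty is right, but the cure via triangulation is far heavier than needed, and the step you yourself flag as the crux is a genuine gap. The Handle Slide Lemma of Section~2 requires a \emph{closed} $\Omega$-circle; after the Move-2 cuts, $C$ has become a chain of open arcs terminating at dual-basis coupons, and in each summand each such arc carries a fixed simple $X_j$, not the $\Omega$-color --- so there is nothing in $B_{t_0}$ to which that lemma applies. Replacing the arc of $\gamma$ in $B_{t_0}$ by one that runs parallel to the arc of $C$ is then neither an isotopy of $\Gamma$ on $\Sigma$ nor an equality of morphisms in $\cC$, so you cannot claim it preserves $Z$; the subsequent ``push through the cuts'' inherits this problem, since moving an extra strand across a cut changes which tensor product the coupons there act on and is not an instance of Lemma~\ref{lem:unit1} as stated. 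Your Move~1 alternative does not help either: adding an $A$-handle along an arc of $R_B$ alters $\Sigma$ but does not make the curve $C$ bound a disk on it.

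The paper's argument needs no triangulation and never cuts $C$. Take a small disk $D_1\subset\Sigma$ containing the $X$-string together with only a \emph{segment} of $C$. Apply Planar Graphical Calculus in $D_1$: expand $\Omega=\sum_j d_jX_j$ and apply Lemma~\ref{lem:unit1} to the parallel pair $X,X_j$, inserting two dual-basis coupons joined by a short $X_k$-edge; both the $X$-string and the $X_j$-arc now terminate at these coupons. Now isotope the diagram on $\Sigma$ by sliding the two coupons once around the long $X_j$-edge (the remainder of $C$): this drags the attached $X$-strands once around $C$ and, being an ambient isotopy of $(R_B,\Gamma)$, preserves $Z$. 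Finally apply Planar Graphical Calculus in a second small disk $D_2$ on the far side to re-sum the coupons back into an $\Omega$-circle with the $X$-string now in the slid position. Two local PGC steps and one global isotopy; the essentialness of $C$ is absorbed entirely by the isotopy, not by chopping $C$ into pieces.
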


\begin{proof} Graphically, we have
$$
\vcenter{\hbox{\scalebox{0.6}{
\begin{tikzpicture}
\begin{scope}
\draw[gray](-1.75, -1.25) rectangle (0.25, 1.25);
\draw[gray] (0.45, 1) node{\tiny$D_1$};
\end{scope}
\draw[blue, ->-=0.5] (-1, 2.5)--(-1, -2.5);
\draw (-1, 0) node[left]{\tiny{$X$}};
\begin{scope}[color=red, xshift=1cm]
\draw (-1.5, -0.5)--(-1.5, 0.5);
\draw (-1.5, 0.5) arc (180:90:1);
\draw[dashed] (-0.5, 1.5)--(0.5, 1.5);
\draw (0.5, 1.5) arc (90:0:1);
\draw (1.5, 0.5)--(1.5, -0.5);
\draw (1.5, -0.5) arc (0:-90:1);
\draw[dashed] (0.5, -1.5)--(-0.5, -1.5);
\draw (-0.5, -1.5) arc (-90:-180:1);
\end{scope}
\end{tikzpicture}
}}}
=\sum_{j, k}d_{j}d_{k}
\vcenter{\hbox{\scalebox{0.6}{
\begin{tikzpicture}
\begin{scope}
\draw[gray](-1.75, -1.25) rectangle (0.25, 1.25);
\draw[gray] (0.45, 1) node{\tiny$D_1$};
\end{scope}
\draw[blue, ->-=0.15, ->-=0.9] (-1, 2.5)--(-1, -2.5);
\draw (-1, 1.5) node[left]{\tiny{$X$}};
\draw (-1, -1.5) node[left]{\tiny{$X$}};
\begin{scope}[color=blue, xshift=1cm]
\draw[->-=0.7] (-1.5, 0.5) arc (180:90:1);
\draw[dashed] (-0.5, 1.5)--(0.5, 1.5);
\draw (0.5, 1.5) arc (90:0:1);
\draw (1.5, 0.5)--(1.5, -0.5);
\draw (1.5, -0.5) arc (0:-90:1);
\draw[dashed] (0.5, -1.5)--(-0.5, -1.5);
\draw[->-=0.3] (-0.5, -1.5) arc (-90:-180:1);
\end{scope}
\draw (2.5, 0) node[left] {\tiny {$X_{j}$}};
\fill[white] (-1.5, -1) rectangle (0, 1);
\draw (-1.25, 0.45) rectangle (-0.25, 1);
\draw (-0.75, 0.725){};
\draw (-1.25, -0.45) rectangle (-0.25, -1);
\draw (-0.75, -0.725){};
\draw[blue, ->-=0.5] (-0.75, 0.45)--(-0.75,-0.45);
\draw (-1, 0) node{\tiny {$X_k$}};
\end{tikzpicture}}}}
=\sum_{j, k}d_{j}d_{k}\vcenter{\hbox{\scalebox{0.6}{
\begin{tikzpicture}
\begin{scope}[color=blue, xshift=1cm]
\draw (-1.5, -0.5)--(-1.5, 0.5);
\draw (-1.5, 0.5) arc (180:90:1);
\draw[dashed] (-0.5, 1.5)--(0.5, 1.5);
\draw[-<-=0.3] (0.5, 1.5) arc (90:0:1);
\draw[-<-=0.7] (1.5, -0.5) arc (0:-90:1);
\draw[dashed] (0.5, -1.5)--(-0.5, -1.5);
\draw (-0.5, -1.5) arc (-90:-180:1);
\end{scope}
\draw (-1, 0) node[]{\tiny $X_{k}$};
\begin{scope}[color=blue, xshift=1cm]
\draw (-2, 2.5) ..controls (-2, 1.5) and (-1.5, 1.866)..(-1.5, 1.866);
\draw (-1, 2) arc (90:120:1);
\draw[dashed] (-1, 2)--(1, 2);
\draw[->-=0.5] (1, 2) arc (90:0:1);
\draw[->-=0.5] (2, 1)--node[right]{\tiny {$X$}}(2, -1);
\draw[->-=0.5] (2, -1) arc (0:-90:1);
\draw[dashed] (1, -2)--(-1, -2);
\begin{scope}[yscale=-1]
\draw (-2, 2.5) ..controls (-2, 1.5) and (-1.5, 1.866)..(-1.5, 1.866);
\draw (-1, 2) arc (90:120:1);
\end{scope}
\end{scope}
\begin{scope}[xshift=3.5cm]
\fill[white] (-1.5, -1) rectangle (0, 1);
\draw (-1.25, 0.45) rectangle (-0.25, 1);
\draw (-0.75, 0.725){};
\draw (-1.25, -0.45) rectangle (-0.25, -1);
\draw (-0.75, -0.725){};
\draw[blue, ->-=0.5] (-0.75, 0.45)--(-0.75,-0.45);
\draw (-1, 0) node{\tiny{$X_j$}};
\end{scope}
\draw (-1, 1.7) node{\tiny{$X$}};
\draw (-1, -1.7) node{\tiny{$X$}};
\begin{scope}[xshift=3.5cm]
\draw[gray](-1.75, -1.25) rectangle (0.25, 1.25);
\draw[gray] (0.45, 1) node{\tiny$D_2$};
\end{scope}
\end{tikzpicture}
}}}
=
\vcenter{\hbox{\scalebox{0.6}{
\begin{tikzpicture}
\begin{scope}[xshift=3.5cm]
\draw[gray](-1.75, -1.25) rectangle (0.25, 1.25);
\draw[gray] (0.45, 1) node{\tiny$D_2$};
\end{scope}
\begin{scope}[color=red, xshift=1cm]
\draw (-1.5, -0.5)--(-1.5, 0.5);
\draw (-1.5, 0.5) arc (180:90:1);
\draw[dashed] (-0.5, 1.5)--(0.5, 1.5);
\draw (0.5, 1.5) arc (90:0:1);
\draw (1.5, 0.5)--(1.5, -0.5);
\draw (1.5, -0.5) arc (0:-90:1);
\draw[dashed] (0.5, -1.5)--(-0.5, -1.5);
\draw (-0.5, -1.5) arc (-90:-180:1);
\end{scope}
\begin{scope}[color=blue, xshift=1cm]
\draw (-2, 2.5) ..controls (-2, 1.5) and (-1.5, 1.866)..(-1.5, 1.866);
\draw (-1, 2) arc (90:120:1);
\draw[dashed] (-1, 2)--(1, 2);
\draw (1, 2) arc (90:0:1);
\draw[->-=0.5] (2, 1)--(2, -1);
\draw (2, -1) arc (0:-90:1);
\draw[dashed] (1, -2)--(-1, -2);
\begin{scope}[yscale=-1]
\draw (-2, 2.5) ..controls (-2, 1.5) and (-1.5, 1.866)..(-1.5, 1.866);
\draw (-1, 2) arc (90:120:1);
\end{scope}
\end{scope}
\draw (3, 0) node[right]{\tiny$X$};
\end{tikzpicture}
}}}
$$
The first equality comes from applying Planar Graphical Calculus on the contractible region $D_{1}$ and $D_{2}$ respectively indicated by the gray box in the picture. The middle equality comes from isotopy of diagram $\Gamma$. 
\end{proof}

\begin{example}
The following is an example of handle sliding for a curve:
$$\PipeSlide$$
\end{example}

\begin{example}
We compute the alterfold of $B$-colored $3$-sphere $(\bS^{3}(B), \emptyset, \emptyset)$. The $3$-sphere can be decompose to one $0$-handle and one $3$-handle, therefore:
$$Z(\bS^{3}(B), \emptyset, \emptyset)=\mu^{-1}\zeta^{-1}Z(\bS^{3}, \partial \bB^{3}, \emptyset)=\mu^{-1}Z(\bS^{3}(A), \emptyset, \emptyset)=\mu^{-1},$$
where the first equality comes from applying Move $0$ and the second equality comes from applying Move $1$.
\end{example}

\subsection{Link Surgery}
Let $L$ be a framed link embedded in the $B$-colored region of a $\cC$-decorated $3$-alterfold $(M, \Sigma, \Gamma)$, we define two operation changes the decorated alterfold:
\begin{itemize}
    \item \textbf{Link surgery:} Removing a tubular neighborhood $L_{\epsilon}$ of $L$ and gluing back a solid torus switching the longitude and the meridian(with respect to the framing of $L$). We denote the resulting alterfold by $(M_{L}, \Sigma, \Gamma)$.
    \item \textbf{Alterfold surgery:} Changing the coloring of a tubular neighborhood $L_{\epsilon}$ of $L$ to $A$, and put an $\Omega$-colored circle on the longitude of each component of $\partial L_{\epsilon}$(with respect to the framing of $L$). We denote the resulting alterfold by $(M, \Sigma\sqcup \partial L_{\epsilon}, \Gamma\sqcup \Gamma_{L})$.
\end{itemize}
We remark that the link surgery move changes the topology of the ambient manifold $M$ while the alterfold surgery does not. We define the following surgery move interchanging these two resulting decorated alterfold.

The next theorem discusses the relation between the quantity $Z(M_{L}, \Sigma, \Gamma)$ and $Z(M, \Sigma\sqcup \partial L_{\epsilon}, \Gamma\sqcup \Gamma_{L})$.

\begin{theorem}\label{thm:linksurgery}
Let $(M, \Sigma, \Gamma)$ be a $\mathcal{C}$-decorated $3$-alterfold, and $L\subset M$ is a framed link embedded in the $B$-colored region. Then
$$Z(M_L, \Sigma, \Gamma)=\mu^{-|L|}Z(M, \Sigma\sqcup \partial L_{\epsilon}, \Gamma\sqcup \Gamma_{L})$$
where $(M_L, \Sigma, \Gamma)$ and $Z(M, \Sigma\sqcup \partial L_{\epsilon}, \Gamma\sqcup \Gamma_{L})$ are defined as above.
\end{theorem}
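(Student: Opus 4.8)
The plan is to produce the alterfold $(M,\Sigma\sqcup\partial L_{\epsilon},\Gamma\sqcup\Gamma_{L})$ directly inside $M_{L}$ by recoloring the surgery solid torus, so that the only inputs needed are the Move~$0$/Move~$1$ behavior of $Z$ and its homeomorphism invariance from Theorem~\ref{thm:partition function}.

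First I would record the topological setup. Since $L$ lies in the interior of $R_{B}(M,\Sigma)$, the tubular neighborhood $L_{\epsilon}$ is disjoint from $\Sigma$ and from $\Gamma$; and by construction $M_{L}$ is obtained from $M\setminus\operatorname{int}(L_{\epsilon})$ by gluing back a solid torus $V=V_{1}\sqcup\cdots\sqcup V_{|L|}$ along a map sending the meridian of each $V_{i}$ to the framing longitude of $\partial L_{i,\epsilon}$. Each $V_{i}$ inherits the $B$-color, and $M_{L}\setminus\operatorname{int}(V)=M\setminus\operatorname{int}(L_{\epsilon})$ as oriented manifolds with boundary.

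Next, working one component at a time, I recolor $V_{i}$ from $B$ to $A$. Every solid torus admits a handle decomposition with one $0$-handle and one $1$-handle, so I choose a $3$-ball $P_{i}\subset V_{i}$, apply Move~$0$ to $P_{i}$, and then apply Move~$1$ to an arc $S_{i}\subset V_{i}\setminus P_{i}$ with endpoints on $\partial P_{i}$ for which $P_{i}$ thickened along $S_{i}$ fills up $V_{i}$ (up to an ambient isotopy of $M_{L}$, which $Z$ respects). After these two moves $V_{i}$ is $A$-colored and carries an $\Omega$-colored circle $C_{i}$ on the belt of the $1$-handle; since the belt circle bounds the co-core disk of the $1$-handle inside $V_{i}$, it is a meridian of $V_{i}$. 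By the Move~$0$ and Move~$1$ clauses of Theorem~\ref{thm:partition function}, carrying out this recoloring on all $|L|$ components multiplies $Z$ by $(\zeta\mu)^{|L|}(\zeta^{-1})^{|L|}=\mu^{|L|}$. Writing $(M_{L},\Sigma',\Gamma')$ for the resulting decorated $3$-alterfold, we obtain
$$Z(M_{L},\Sigma,\Gamma)=\mu^{-|L|}\,Z(M_{L},\Sigma',\Gamma').$$

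Finally I identify $(M_{L},\Sigma',\Gamma')$ with $(M,\Sigma\sqcup\partial L_{\epsilon},\Gamma\sqcup\Gamma_{L})$ at the level of $(R_{B},\text{diagram})$. After the recoloring, $R_{B}(M_{L},\Sigma')=R_{B}(M_{L},\Sigma)\setminus\operatorname{int}(V)=R_{B}(M,\Sigma)\setminus\operatorname{int}(L_{\epsilon})=R_{B}(M,\Sigma\sqcup\partial L_{\epsilon})$, with $\Gamma$ on $\Sigma$ in either description. On the remaining boundary components, the meridian $C_{i}$ of $V_{i}$ is, under the surgery identification $\partial V_{i}\cong\partial L_{i,\epsilon}$, the framing longitude of $\partial L_{i,\epsilon}$, which is exactly where $\Gamma_{L}$ places its $\Omega$-colored circle (any residual isotopy on the torus is absorbed by the Planar Graphical Calculus clause). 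Thus $(R_{B}(M_{L},\Sigma'),\Gamma')$ and $(R_{B}(M,\Sigma\sqcup\partial L_{\epsilon}),\Gamma\sqcup\Gamma_{L})$ are orientation-preservingly homeomorphic, so the Homeomorphism clause of Theorem~\ref{thm:partition function} gives $Z(M_{L},\Sigma',\Gamma')=Z(M,\Sigma\sqcup\partial L_{\epsilon},\Gamma\sqcup\Gamma_{L})$, and combining with the displayed identity finishes the proof.

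The step I expect to be the crux is the middle one: checking that the $0$-handle--plus--$1$-handle decomposition of the surgery torus really places the $\Omega$-circle on a \emph{meridian} of $V_{i}$, and that this meridian is precisely the framing longitude of the original component $L_{i}$. This is exactly the geometric content of surgery invariance; once it is in place, all of the categorical content has already been packaged into how $Z$ transforms under Moves~$0$ and~$1$. A minor technical point to dispatch is arranging $P_{i}\cup(S_{i})_{\epsilon}$ to fill $V_{i}$, which is harmless since $Z$ is a homeomorphism invariant.
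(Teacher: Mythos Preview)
Your proposal is correct and follows essentially the same argument as the paper: recolor each surgery solid torus in $M_L$ from $B$ to $A$ via a Move~$0$ and a Move~$1$, observe that the resulting $\Omega$-circle sits on the meridian of the glued-in torus and hence on the framing longitude of $\partial L_{i,\epsilon}$, and then invoke the Homeomorphism property of $Z$ to match with the alterfold-surgery side. The only difference is expository: you are slightly more explicit about why the belt of the $1$-handle is a meridian and about the $\mu^{|L|}$ bookkeeping, but the logical structure is identical.
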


\begin{proof}
By the definition of link surgery, $R_{B}(M_{L}, \Sigma)$ is obtained from $R_{B}(M, \Sigma)$ by taking out the tubular neighborhood $L_{\epsilon}$ of $L$, then for each link component, gluing back a solid torus $\mathbb{T}_i$ in a way that the meridian of the torus is identified to the longitude of $L_{\epsilon}$. Let $\mathbb{T}:=\sqcup_{i} \mathbb{T}_i$, we have $R_{B}(M_{L}, \Sigma)\setminus \mathbb{T}$ homeomorphic to $R_{B}(M, \Sigma)\setminus L_{\epsilon}$. Since each $T_i$ decomposes to a $0$-handle and a $1$-handle. Applying Move $0$ and Move $1$ to each $\mathbb{T}_i$, we obtain a $\mathcal{C}$-decorated 3-alterfold 
$$(M_{L}, \Sigma', \Gamma'):=(M', \Sigma\sqcup \partial \mathbb{T}, \Gamma\sqcup \Gamma_\mathbb{T})$$
where $\Gamma_\mathbb{T}$ are union of $\Omega$ colored circles on the meridian of each $\mathbb{T}_i$ coming from Move $1$. 

Notice $R_{B}(M', \Sigma')$ is homeomorphic to $R_{B}(M, \Sigma\sqcup \partial L_{\epsilon})$. In addition, this homeomorphism maps the meridians of $\mathbb{T}$ to the longitudes of $L_{\epsilon}$. Therefore, by the homeomorphism property in Theorem \ref{thm:partition function}, we have that
$$Z(M, \Sigma\sqcup \partial L_{\epsilon}, \Gamma\cup \Gamma_{L})=Z(M', \Sigma', \Gamma')=\mu^{|L|}Z(M', \Sigma, \Gamma).$$
\end{proof}

\begin{corollary}
Let $L$ and $L'$ be two framed links in $M$.
If $M_{L}\cong M_{L'}$, then 
$$\mu^{-|L|}Z(M, \partial L_{\epsilon}, \Gamma_{L})=\mu^{-|L'|}Z(M, \partial L_{\epsilon}', \Gamma_{L'}).$$
In particular, this defines a surgery invariant.
\end{corollary}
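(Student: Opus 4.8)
The plan is to obtain the Corollary as a direct consequence of Theorem \ref{thm:linksurgery} together with the Homeomorphism property of Theorem \ref{thm:partition function}. The point is that alterfold surgery along a framed link does not change the ambient manifold, while link surgery does, and Theorem \ref{thm:linksurgery} identifies, up to the normalizing factor $\mu^{-|L|}$, the partition function of the alterfold-surgered object with $Z$ of the genuinely surgered manifold $M_L$. Since $Z$ of a $B$-colored closed $3$-manifold with no separating surface and no decoration is a homeomorphism invariant, the two quantities attached to $L$ and $L'$ must coincide whenever $M_L\cong M_{L'}$.

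First I would apply Theorem \ref{thm:linksurgery} to the $\cC$-decorated $3$-alterfold $(M,\emptyset,\emptyset)$, with $M$ taken to be the $B$-colored region, together with the framed link $L$, obtaining
\[
Z(M_L,\emptyset,\emptyset)=\mu^{-|L|}\,Z(M,\partial L_\epsilon,\Gamma_L),
\]
and likewise for $L'$, obtaining $Z(M_{L'},\emptyset,\emptyset)=\mu^{-|L'|}\,Z(M,\partial L_\epsilon',\Gamma_{L'})$. Next, the hypothesis $M_L\cong M_{L'}$, interpreted as an orientation-preserving homeomorphism (the natural notion for oriented surgery), provides an orientation-preserving homeomorphism between the $B$-colored regions, which here are all of $M_L$ and $M_{L'}$, carrying the empty decorations to one another; hence the Homeomorphism property of Theorem \ref{thm:partition function} gives $Z(M_L,\emptyset,\emptyset)=Z(M_{L'},\emptyset,\emptyset)$. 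Combining the three equalities yields $\mu^{-|L|}Z(M,\partial L_\epsilon,\Gamma_L)=\mu^{-|L'|}Z(M,\partial L_\epsilon',\Gamma_{L'})$. The last sentence then follows formally: the scalar $\mu^{-|L|}Z(M,\partial L_\epsilon,\Gamma_L)$ depends only on the orientation-preserving homeomorphism type of the surgered manifold $M_L$, so assigning it to the surgered manifold is well defined independent of the surgery presentation, i.e.\ it is a surgery invariant; and, as noted in the introduction, it recovers the Reshetikhin--Turaev invariant of $\cZ(\cC)$ up to a power of $\zeta$.

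There is no serious obstacle here; this is a formal corollary, and the only care needed is bookkeeping. One must insist that $M_L\cong M_{L'}$ be orientation-preserving, since the Homeomorphism property is stated for orientation-preserving homeomorphism classes of $(R_B,\Gamma)$; one must make sure the $\Omega$-colored circles $\Gamma_L$ are placed on the longitudes of $\partial L_\epsilon$ with respect to the given framings, matching precisely the normalization in Theorem \ref{thm:linksurgery}; and the degenerate case $|L|=0$ reduces to the tautology $Z(M,\emptyset,\emptyset)=Z(M,\emptyset,\emptyset)$.
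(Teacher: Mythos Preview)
Your proof is correct and takes essentially the same approach as the paper: apply Theorem \ref{thm:linksurgery} to obtain $Z(M_L,\emptyset,\emptyset)=\mu^{-|L|}Z(M,\partial L_\epsilon,\Gamma_L)$ and likewise for $L'$, then invoke the Homeomorphism property of $Z$ on the $B$-colored closed manifold to equate the two. The paper's own proof is just the one-line observation that $Z$ is a topological invariant of the $B$-colored manifold; your version spells out the steps explicitly (and adds some inessential remarks about the RT invariant), but the logic is identical.
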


\begin{proof}
It follows from the partition function $Z$ defines a topological invariant for manifold (considered as alterfold with $B$-color).
\end{proof}

\begin{remark}
In Section $4$, we will show the category of $A$-colored tubes is braided equivalent to the Drinfeld center $\mathcal{Z(C)}$ of $\mathcal{C}$, under this equivalence, solid torus with a $\Omega$-colored longitude is identified as the $\Omega$-color of $\mathcal{Z(C)}$. In the case of $M=\bS^{3}$, this is the Reshetikhin-Turaev invariant of $\mathcal{Z(C)}$.
\end{remark}

\begin{corollary}\label{cor:TVlink}
Let $L$ be a framed link in $\bS^3$. 
Then
$$\mu^{-|L|}Z(\bS^{3}, \partial L_{\epsilon}, \Gamma_{L})=TV_{\mathcal{C}}(\bS_{L}^{3}).$$
\end{corollary}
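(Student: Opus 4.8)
The plan is to read this off as a one-line combination of two results already established in the paper: the link-surgery formula of Theorem \ref{thm:linksurgery} and the Turaev--Viro identification of Proposition \ref{prop:turaev-viro}. First I would apply Theorem \ref{thm:linksurgery} to the (degenerate) $\mathcal{C}$-decorated $3$-alterfold $(\bS^{3},\emptyset,\emptyset)$ in which the separating surface is empty and the whole sphere is taken to be $B$-colored, so that $R_{B}(\bS^{3},\emptyset)=\bS^{3}$ is exactly the region containing the framed link $L$. The theorem then gives
$$
Z\big((\bS^{3})_{L},\emptyset,\emptyset\big)=\mu^{-|L|}\,Z(\bS^{3},\partial L_{\epsilon},\Gamma_{L}),
$$
where $(\bS^{3})_{L}$ denotes the result of the link-surgery move, again regarded as a fully $B$-colored alterfold with empty separating surface.

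The second step is to identify $(\bS^{3})_{L}$ with the classical surgered manifold $\bS_{L}^{3}$: by the definition of link surgery recalled just before Theorem \ref{thm:linksurgery} — excising a tubular neighborhood $L_{\epsilon}$ and regluing a solid torus that interchanges meridian and longitude with respect to the framing — this is precisely Dehn surgery on $\bS^{3}$ along the framed link $L$. Since the separating surface is empty and the entire manifold remains $B$-colored, we have $R_{A}(\bS_{L}^{3},\emptyset)=\emptyset$, so Proposition \ref{prop:turaev-viro} applies (invoking Theorem \ref{thm:trigindep} so that $Z(\bS_{L}^{3},\emptyset,\emptyset)$ is known to be independent of the chosen triangulation) and yields $Z(\bS_{L}^{3},\emptyset,\emptyset)=TV_{\mathcal{C}}(\bS_{L}^{3})$. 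Combining this with the displayed equality gives $\mu^{-|L|}Z(\bS^{3},\partial L_{\epsilon},\Gamma_{L})=TV_{\mathcal{C}}(\bS_{L}^{3})$, which is the claim.

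I do not expect any genuine obstacle here: the corollary is a formal consequence of Theorems \ref{thm:linksurgery} and \ref{thm:trigindep} together with Proposition \ref{prop:turaev-viro}. The single point worth a sentence of care is the legitimacy of applying Theorem \ref{thm:linksurgery} in the monochromatic case $\Sigma=\emptyset$; this is consistent because the quantity $Z(M,\emptyset,\emptyset)$ with $R_{A}=\emptyset$ is exactly the object controlled by Proposition \ref{prop:turaev-viro}, so the two statements compose without friction. (If one prefers to avoid the degenerate alterfold entirely, the same argument runs verbatim after first applying Move $0$ to introduce a tiny $A$-colored ball disjoint from $L$, which changes both sides by the same factor $\zeta\mu$ and is undone at the end.)
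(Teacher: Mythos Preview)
Your proposal is correct and matches the paper's approach exactly: the paper's proof is the single line ``It follows from Proposition~\ref{prop:turaev-viro} and Theorem~\ref{thm:linksurgery},'' and you have spelled out precisely how those two results compose (apply link surgery to the all-$B$ alterfold $(\bS^3,\emptyset,\emptyset)$, then identify the result via the Turaev--Viro proposition).
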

\begin{proof}
    It follows from Proposition \ref{prop:turaev-viro} and Theorem \ref{thm:linksurgery}.
\end{proof}

To end this subsection, we give an alternative equivalent characterization of the partition function using link surgery:
\begin{theorem}\label{thm:partition function2}
Let $\cC$ be a spherical fusion category over $\k$, and $\zeta$ be a nonzero scalar. There exist a unique partition function $Z$ from $\cC$-decorated $3$-alterfolds to the ground field $\k$ subject to \textbf{Disjoint Union, Planar Graphical Calculus, Move 0-3} described in Theorem \ref{thm:partition function} and
\begin{itemize}
    \item \textbf{$\bS^{3}$ Normalization:} Let $\bS^3(A)$ be the $A$-colored $3$-sphere,
$$Z(\bS^{3}(A), \emptyset, \emptyset)=1.$$
    \item \textbf{Surgery Move:} Let $L$ be a framed link in the $B$-colored region of $\cC$-decorated $3$-alterfold $(M, \Sigma, \Gamma)$. Then
    $$Z(M_L, \Sigma, \Gamma)=\mu^{-|L|}Z(M, \Sigma\sqcup \partial L_{\epsilon}, \Gamma\sqcup \Gamma_{L})$$
    where $(M_L, \Sigma, \Gamma)$ and $(M, \Sigma\sqcup \partial L_{\epsilon}, \Gamma\sqcup \Gamma_{L})$ are $3$-alterfold derived from link surgery and alterfold surgery along $L$ in $M$ respectively.
\end{itemize}
\end{theorem}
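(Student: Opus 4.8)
The plan is to deduce both halves of the statement from results already established. \textbf{Existence} is essentially immediate: the partition function $Z$ produced by Theorem \ref{thm:partition function} satisfies \textbf{Disjoint Union}, \textbf{Planar Graphical Calculus} and \textbf{Move 0--3} verbatim; since $\bS^{3}(A)$ is by definition the alterfold $(\bS^{3},\emptyset,\emptyset)$ with empty $B$-region, the \textbf{$\bS^{3}$ Normalization} is the case $R_{B}=\emptyset$ of the Normalization axiom of Theorem \ref{thm:partition function}; and the \textbf{Surgery Move} is precisely Theorem \ref{thm:linksurgery}. So essentially all the work goes into \textbf{uniqueness}.

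For uniqueness I would let $Z'$ be any partition function satisfying the new list of conditions and argue $Z'=Z$. First I would reduce an arbitrary $(M,\Sigma,\Gamma)$ (connected, by \textbf{Disjoint Union}) with $R_{B}\neq\emptyset$ to the all-$A$-colored alterfold $(M,\emptyset,\emptyset)$ exactly as in Remark \ref{rem:changecolor}: choose a handle decomposition of $\overline{R_{B}}$, apply Moves $0,1,2$ to its $0$-, $1$-, $2$-handles so that $R_{B}$ becomes a disjoint union of $3$-balls, evaluate the resulting closed $\cC$-diagrams on their boundary spheres to scalars by \textbf{Planar Graphical Calculus}, and fill all the $3$-balls by \textbf{Move 3}. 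Each step multiplies the partition function by a scalar (a power of $\zeta$, a power of $\mu$, or the numerical value of a closed $\cC$-diagram) dictated by the step and not by the choice of $Z$ versus $Z'$; so if $c$ is the resulting product then $Z'(M,\Sigma,\Gamma)=c\,Z'(M,\emptyset,\emptyset)$ and $Z(M,\Sigma,\Gamma)=c\,Z(M,\emptyset,\emptyset)=c$, the last equality being the Normalization axiom for $Z$. Hence everything comes down to showing $Z'(M,\emptyset,\emptyset)=1$ for every closed oriented $3$-manifold $M$ (with $R_{B}=\emptyset$).

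To settle the closed case I would argue by surgery. For $M=\bS^{3}$ this is the \textbf{$\bS^{3}$ Normalization}. For general $M$, invoke the Lickorish--Wallace theorem to write $M\cong\bS^{3}_{L}$ for a framed link $L\subset\bS^{3}$, and apply the \textbf{Surgery Move} with ambient alterfold the all-$B$-colored $\bS^{3}$ (so that $L$ lies in its $B$-region): this expresses $Z'$ of the all-$B$-colored $\bS^{3}_{L}$ as $\mu^{-|L|}$ times $Z'(\bS^{3},\partial L_{\epsilon},\Gamma_{L})$. The alterfold $(\bS^{3},\partial L_{\epsilon},\Gamma_{L})$ has nonempty $B$-region, namely the link exterior, so the reduction of the previous paragraph applies and computes it as an explicit scalar times $Z'(\bS^{3}(A))=1$; the identical computation with $Z$ in place of $Z'$ yields the same scalar, using $Z(\bS^{3}(A),\emptyset,\emptyset)=1$. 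Finally the reduction of the previous paragraph also relates $Z'$ of the all-$B$-colored $\bS^{3}_{L}$ to $Z'(\bS^{3}_{L},\emptyset,\emptyset)$ by a scalar, and the same relation for $Z$ has $Z(\bS^{3}_{L},\emptyset,\emptyset)=1$; combining these identities for $Z'$ and comparing with those for $Z$ forces $Z'(\bS^{3}_{L},\emptyset,\emptyset)=Z(\bS^{3}_{L},\emptyset,\emptyset)=1$. This closes the argument, so $Z'(M,\emptyset,\emptyset)=1$ for all closed $M$, whence $Z'=Z$.

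I expect the main obstacle to be conceptual rather than computational: Moves $0$--$3$ and \textbf{Planar Graphical Calculus} can only act when $R_{B}\neq\emptyset$, so the $\bS^{3}$ normalization by itself does not reach closed $3$-manifolds other than $\bS^{3}$, and it is exactly the \textbf{Surgery Move} together with the Lickorish--Wallace presentation that supplies the missing bridge, by re-presenting a closed manifold through an alterfold surgery whose underlying alterfold \emph{does} carry a nonempty $B$-region and is therefore reducible to $\bS^{3}(A)$ by the earlier moves. The remaining care is routine: tracking the accumulated scalar $c$ through the reductions, and using that Moves $0$--$3$ leave the underlying $3$-manifold unchanged so that the all-$A$ reduction of $(\bS^{3},\partial L_{\epsilon},\Gamma_{L})$ really is $\bS^{3}(A)$.
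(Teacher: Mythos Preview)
Your existence argument and the reduction of uniqueness to the statement $Z'(M(A),\emptyset,\emptyset)=1$ for every closed connected $M$ are both correct and match the paper's strategy. The gap is in your final step. When you pass from the all-$B$-colored $\bS^3_L$ to $\bS^3_L(A)$ by Moves $0$--$3$, the accumulated scalar $d$ is exactly $Z(\bS^3_L(B),\emptyset,\emptyset)=TV_{\cC}(\bS^3_L)$, and this can vanish (the Turaev--Viro invariant of a closed $3$-manifold is not always nonzero). When $d=0$ your equation $d\cdot Z'(\bS^3_L(A))=d$ places no constraint on $Z'(\bS^3_L(A))$, so the comparison with $Z$ does not force $Z'(\bS^3_L(A))=1$.

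The paper circumvents this by a choice of alterfold whose reduction scalar is \emph{guaranteed} nonzero. It introduces an auxiliary link $L'$ consisting of $0$-framed meridians, one for each component of $L$, and works with the alterfold in $\bS^3_L$ whose $B$-region is the exterior of $L\sqcup L'$ in $\bS^3$. This $B$-region is identical to that of $(\bS^3,\partial L_\epsilon\sqcup\partial L'_\epsilon,\Gamma_L\sqcup\Gamma_{L'})$, so the same sequence of Moves reduces both to all-$A$ with the same scalar $c''$. But $c''$ equals the value of the $\bS^3$-alterfold, which one computes (using the already-established $Z$, its Homeomorphism property, and the Kirby cancellation $\bS^3_{L\sqcup L'}\cong\bS^3$) to be $\mu^{2n-1}\neq 0$. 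On the other hand the $\bS^3_L$-alterfold can be evaluated via the Surgery Move on $L$ together with Moves $0$ and $1$ on the surgery solid tori, again yielding $\mu^{2n-1}$. Dividing, one gets $Z'(\bS^3_L(A))=1$. The auxiliary $L'$ is the missing idea: it replaces your possibly-zero scalar $TV_{\cC}(M)$ by the always-nonzero $\mu^{2n-1}$.
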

\begin{proof}
We only need to show these moves imply the Normalization property in Theorem \ref{thm:partition function} by Remark \ref{rem:homeingore}. 
In order to show this, it suffices to show for all connected oriented $3$-manifold $M$, $Z(M(A), \emptyset, \emptyset)=1$ where $M(A)$ is $A$-colored 3-manifold $M$ in the alterfold.

Without loss of generality, we may assume $M=\bS^{3}_{L}$, i.e., can be obtained by doing link surgery along $L=L_1\sqcup L_{2}\cdots \sqcup L_{n}$ in $\bS^{3}$. Let $L'=L'_1\sqcup L'_{2}\cdots \sqcup L'_{n}$ be disjoint union of $0$-framed unknot in $\bS^{3}\setminus L$, each of which links only with the corresponding components of $L$ as follows
$$\begin{tikzpicture}
\draw (0, -1) to (0, 1);
\path[fill=white](-0.17, -0.33) rectangle (0.17, -0.13);
\draw (0, 0) [partial ellipse=-250:70:0.5 and 0.25];
\draw (0, 0.8) node[right]{\tiny$L_i$};
\draw (-0.7, 0) node{\tiny$L'_i$};
\end{tikzpicture}$$
We compute 
$$Z(\bS^{3}, \partial L_{\epsilon}\sqcup \partial L'_{\epsilon}, \Gamma_{L}\sqcup\Gamma_{L'})=\mu^{2n}Z(\bS^{3}(B), \emptyset, \emptyset)=\mu^{2n-1}\ne 0.$$
The first equality comes applying Surgery Move to $L\sqcup L'$, note that by Kirby calculus \cite{Kir78,FenRou79}, $\bS^3_{L\sqcup L'}\cong\bS^{3}$. 
Moreover, note that each component of $\bS^{3}_{L}-(\bS^{3}-L_{\epsilon})$ is a $B$-colored genus-1 handlebody, so by applying a Move $0$ and a Move $1$ to change the color of each such handlebody to $A$, we have
\begin{equation}\label{eq:surgery-L}
Z(\bS^{3}_{L}, -\partial (\bS^{3}-L_{\epsilon})\sqcup \partial L'_{\epsilon}, \Gamma_{L}\sqcup\Gamma_{L'}) = \mu^{n} Z(\bS^{3}_{L}, \partial L'_{\epsilon}, \Gamma_{L'})\,.
\end{equation}
Since the alterfolds $(\bS^{3}_{L}, -\partial (\bS^{3}-L_{\epsilon})\sqcup \partial L'_{\epsilon}, \Gamma_{L}\sqcup\Gamma_{L'})$ and $(\bS^{3}, \partial L_{\epsilon}\sqcup \partial L'_{\epsilon}, \Gamma_{L}\sqcup\Gamma_{L'})$ have exactly the same $B$-colored region (together with the $\cC$-decoration on the separating surfaces), so one can apply Moves $0$-$3$ to change their color to $A$. 
As a consequence,
\begin{equation}\label{eq:changecolor}
Z(\bS^{3}_{L}, -\partial (\bS^{3}-L_{\epsilon})\sqcup \partial L'_{\epsilon}, \Gamma_{L}\sqcup\Gamma_{L'})=\mu^{2n-1}Z(\bS^{3}_{L}(A), \emptyset, \emptyset)\,.
\end{equation}
Now by Equation \eqref{eq:surgery-L} and the Surgery Move, we have
\begin{align*}
& Z(\bS^{3}_{L}, -\partial (\bS^{3}-L_{\epsilon})\sqcup \partial L'_{\epsilon}, \Gamma_{L}\sqcup\Gamma_{L'})
= \mu^{n}Z(\bS^{3}_{L}, \partial L'_{\epsilon}, \Gamma_{L'})\\
=& Z(\bS^{3}, \partial L_{\epsilon}\sqcup \partial L'_{\epsilon}, \Gamma_{L}\sqcup\Gamma_{L'})
= \mu^{2n-1}\,.
\end{align*}
Finally, comparing with Equation \eqref{eq:changecolor}, we have $Z(\bS^{3}_{L}(A), \emptyset, \emptyset)=1$.
\end{proof}

\subsection{Ideal Triangulation and Invariants of Pseudo-Manifolds}
The Turaev-Viro invariant has been extended to pseudo $3$-manifold in \cite{BenPet96}. In this section, we discuss the relation between the 3-alterfold and the pseudo $3$-manifold. The partition function recovers the Turaev-Viro invariants for pseudo $3$-manifold.

By a \textit{pseudo $3$-manifold}, we mean a topological space $M$ such that at each point $P \in M$, its neighborhood $U_{P}$ is homeomorphic to a cone of an closed compact oriented surface $\Sigma_{P}$. A point $P \in M$ is called \textit{singular} if $\Sigma_{P}$ is not homeomorphic to $\bS^2$. 
In particular, a $3$-manifold is a pseudo $3$-manifold with no singular points. On the other hand, one can obtain a unique $3$-manifold with boundary up to homeomorphism by removing a tubular neighborhood for every singular point.

A triangulation $\Delta$ of a pseudo 3-manifold $M$ consists of a disjoint union $X=\bigsqcup_{t\in T} t$ and a collection of homeomorphisms $\Psi$ between pair of faces such that the  quotient space $X\slash \Psi$ is homeomorphic to $M$. 
The \textit{vertices, edges, faces} and \textit{tetrahedra} in $\Delta$ are respectively the quotients of vertices, edges, faces, and tetrahedra in $X$. 
From the definition, all singular points of $M$ must be vertices. 
Non-singular vertices are called \textit{inner}.

\begin{definition}\cite{CheYan18}
Let $\mathcal{C}$ be a spherical fusion category, and $\Delta$ be a triangulation of a pseudo $3$-manifold $M$. The Turaev-Viro invariant of $M$ is defined as the sum
$$TV_{\mathcal{C}}(M, \Delta)=\mu^{-|V|}\sum_{c}\dim(c)\left\langle\bigotimes_{t\in T}\Gamma_{t}^{c}\,, \ \bigotimes_{f\in F}\star_{f}^{c}\right\rangle$$
where $|V|$ is the number of inner vertices, and $\Gamma_{t}^{c}$, $\star_{c}^{f}$ and $\dim(c)$ are as in the proof of Proposition \ref{prop:turaev-viro}.
\end{definition}

\begin{remark}
The original definition in \cite{CheYan18} is of the case of representation category of quantum group $\mathcal{U}_{q}(\mathfrak{sl}_{2})$ and the formula is written in terms of $6j$-symbols.
The definition generalizes to arbitrary spherical fusion categories.
\end{remark}

\begin{theorem}\cite{CheYan18}\label{thm:pseudotv}
The quantity $TV_{\mathcal{C}}(M, \Delta)$ depends only on the pseudo-manifold $M$, not on the triangulation $\Delta$. \qed
\end{theorem}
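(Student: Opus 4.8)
The plan is to exhibit $TV_{\mathcal{C}}(M,\Delta)$ as an explicit topological prefactor times the alterfold partition function of a fixed $3$-alterfold, and then to quote the triangulation-independence already established in Theorem~\ref{thm:trigindep}. Given a triangulation $\Delta$ of the pseudo-$3$-manifold $M$, remove a small open cone neighborhood of each singular vertex to obtain a compact oriented $3$-manifold with boundary $N$, with $\partial N=\bigsqcup_{P\text{ singular}}\Sigma_{P}$; the homeomorphism type of $N$ depends only on $M$, and $M$ is recovered from $N$ by coning off the boundary surfaces. Embed $N$ as the $B$-colored region of a $3$-alterfold $(\widehat{M},\partial N,\emptyset)$ (for instance cap off each boundary surface with an $A$-colored handlebody). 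The triangulation $\Delta$ then induces a cell decomposition of $\overline{R_{B}}=\overline{N}$ whose $3$-cells are the truncated tetrahedra obtained by chopping each tetrahedron of $\Delta$ at its singular corners; triangulating each truncated tetrahedron without adding any new vertices (a pulling triangulation) produces an honest triangulation $\Delta_{N}$ of $(\widehat{M},\partial N,\emptyset)$ in the sense of the definition above — the genericity and orientation conditions there are vacuous since the decorating diagram is empty. The one fact to keep in mind is that the \emph{inner} vertices of $\Delta_{N}$ are exactly the \emph{non-singular} vertices of $\Delta$, the inner edges of $\Delta_{N}$ comprise the (truncated) edges of $\Delta$ together with the edges created inside the truncated tetrahedra, and the inner faces of $\Delta_{N}$ comprise the (truncated) faces of $\Delta$ together with the faces created inside.

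\textbf{Matching the state sums.} Unwinding \eqref{eq:partiontrig} and \eqref{eq:contraction} for $(\widehat{M},\partial N,\emptyset;\Delta_{N})$ gives $Z(\widehat{M},\partial N,\emptyset;\Delta_{N})=\mu^{-|V_{i}|}\,\zeta^{\,|T_{i}|-|F_{i}|+|E_{i}|-|V_{i}|}\,\big\langle\bigotimes_{t}\Gamma_{t},\ \bigotimes_{f}\star_{f}\big\rangle$, where the subscript $i$ denotes inner simplices of $\Delta_{N}$. The exponent of $\zeta$ equals $-(|V_{i}|-|E_{i}|+|F_{i}|-|T_{i}|)=\chi(\partial N)-\chi(N)=\tfrac12\chi(\partial N)$ by Poincar\'e--Lefschetz duality for the odd-dimensional $N$, so it is a topological invariant of $M$. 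Now expand the $\Omega$-colors on the inner edges as $\sum_{X\in\Irr(\mathcal{C})}d_{X}\,\mathrm{id}_{X}$ to rewrite the contraction as a sum over colorings of the inner edges (as in the proof of Proposition~\ref{prop:turaev-viro}); carrying out the summations over the colors of the edges created inside each truncated tetrahedron, and the contractions along the faces created inside it, reconstructs precisely the $6j$-type weight $\Gamma_{t}$ attached to the corresponding tetrahedron of $\Delta$. (This is the invariance of the contraction under internal subdivision, i.e.\ the same Pachner-move argument used in the proof of Theorem~\ref{thm:trigindep}; it is immediate if one instead computes $Z$ directly from the ideal triangulation of $N$.) Since $|V_{i}(\Delta_{N})|=|V(\Delta)|$ and the residual inner edges and faces of $\Delta_{N}$ are in bijection with the edges and faces of $\Delta$, one obtains $\big\langle\bigotimes_{t}\Gamma_{t},\ \bigotimes_{f}\star_{f}\big\rangle=\sum_{c}\dim(c)\,\big\langle\bigotimes_{t\in T}\Gamma_{t}^{c},\ \bigotimes_{f\in F}\star_{f}^{c}\big\rangle$, which is exactly the state sum in the definition of $TV_{\mathcal{C}}(M,\Delta)$. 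Hence $TV_{\mathcal{C}}(M,\Delta)=\zeta^{-\tfrac12\chi(\partial N)}\,Z(\widehat{M},\partial N,\emptyset;\Delta_{N})$.

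\textbf{Conclusion.} By Theorem~\ref{thm:trigindep}, $Z(\widehat{M},\partial N,\emptyset;\Delta_{N})$ does not depend on the triangulation $\Delta_{N}$, hence a fortiori not on $\Delta$; equivalently, one may invoke Theorem~\ref{thm:3mboundary} directly, since $Z(\widehat{M},\partial N,\emptyset)$ is a topological invariant of $N$ and $N$ is determined by $M$. The prefactor $\zeta^{-\tfrac12\chi(\partial N)}$ depends only on $M$. Therefore $TV_{\mathcal{C}}(M,\Delta)$ depends only on the pseudo-manifold $M$.

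\textbf{Main obstacle.} The conceptual reduction is immediate once the dictionary between the pseudo-manifold side and the alterfold side is in place; the real work is the bookkeeping in the second paragraph. Concretely one must: set up the truncated-then-triangulated decomposition $\Delta_{N}$ precisely and check that removing cone neighborhoods and then triangulating the truncated tetrahedra creates no inner vertices (the point on which the $\mu$-power hinges) while tracking the effect on inner edges and faces; verify the Euler-characteristic computation giving the $\zeta$-power; and prove that re-summing the internal colors of each triangulated truncated tetrahedron faithfully reproduces the unreduced $6j$-weight $\Gamma_{t}$. Making the comparison of state-sum conventions between the pseudo-manifold definition and the alterfold contraction formula airtight — ideally by first isolating the contraction formula for $Z$ with respect to an ideal triangulation of $N$, which is the natural home for this argument — is where essentially all of the genuine content lies.
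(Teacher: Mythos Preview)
Your strategy is exactly the paper's: express $TV_{\mathcal{C}}(M,\Delta)$ as a topological prefactor times $Z$ of the alterfold obtained by capping off $N=M\setminus(\text{singular cones})$ with $A$-colored handlebodies, then invoke the already-established invariance of $Z$. Your prefactor $\zeta^{-\frac12\chi(\partial N)}$ agrees with the paper's $\zeta^{\chi(M)-|\Lambda(M)|}$ via the identity $\chi(M)-|\Lambda(M)|=-\tfrac12\chi(\partial N)$.

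The one genuine difference is in how the state sums are matched. You first subdivide each truncated tetrahedron into honest simplices to get a triangulation $\Delta_N$, plug into \eqref{eq:partiontrig}--\eqref{eq:contraction}, and then must argue that re-summing the internal colors collapses the subdivision back to the single truncated-tetrahedron weight $\Gamma_t$. The paper skips this detour entirely: rather than producing a simplicial $\Delta_N$, it invokes Theorem~\ref{thm:partition function} and computes $Z$ directly by applying Move~0 at each inner vertex of $\Delta$, Move~1 along each (possibly truncated) edge, and Move~2 across each (possibly polygonal truncated) face. The resulting $B$-colored balls \emph{are} the truncated tetrahedra, and the diagrams on their boundaries are immediately the summands in the definition of $TV_{\mathcal{C}}(M,\Delta)$ --- no internal re-summation is needed. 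This is precisely the shortcut you flag in your ``main obstacle'' paragraph (``ideally by first isolating the contraction formula for $Z$ with respect to an ideal triangulation''), and it removes essentially all of the bookkeeping you identify as the hard part.
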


\begin{remark}
In \cite{CheYan18}, the theorem is proved by checking the quantity is invariant under $0$-$2$ and $2$-$3$ move of triangulations. 
In our framework, we can interpret this quantity by the partition function evaluate on certain $\mathcal{C}$-decorated 3-alterfold and the invariance follows.
\end{remark}

\newcommand{\sing}{\Lambda}
\begin{proof}
We denote the set of singular points of $M$ by $\sing(M)$. For each singular point $P\in \sing(M)$, let $H_{P}$ be a multi-handlebody with its boundary homeomorphic to $\Sigma_{P}$ and $f_{P}:\Sigma_{P}\rightarrow \partial H_{P}$ be a homeomorphism. We define a $3$-manifold
$$M^{\dagger}:=(M\setminus \bigsqcup_{P \in \sing(M)}U_{P}) \sqcup (\bigsqcup_{P\in \sing(M)}H_{P})\slash \{f_{P}| P \in \sing(M)\},$$
We claim
$$TV_{\mathcal{C}}(M, \Delta)=\zeta^{\chi(M)-|\sing(M)|}Z(M^{\dagger}, \bigsqcup_{P\in \sing(M)}\Sigma_{P}, \emptyset)$$
where the $A$-colored regions are the union of multi-handlebodies $H_{P}$.

To prove the claim, we first compute the right hand side. By Theorem \ref{thm:partition function}, we can apply Moves 0-3 to change the $B$-colored region of the 3-alterfold into $3$-balls, each of which arises from a truncated tetrahedron in $\Delta$ associated to singular points. Note that in this process, we also change the $\cC$-decoration.

To get the left hand side, we apply Move $0$ to each inner vertex of $\Delta$. 
Then we apply Move $1$ to each edge (possibly truncated). Lastly, we apply Move $2$ to each face (possibly polygon after truncation). 
This process results in a 3-alterfold $(M^{\dagger}, \Sigma^{\Delta}, \Gamma=\emptyset^{\Delta})$ with diagram $\Gamma^{c}$ on the boundary of $B$-colored balls corresponds to the truncated tetrahedrons that are filled in with $\star_{f}^{c}$. Therefore, we have
\begin{align*}
Z(M^{\dagger}, \bigsqcup_{P\in \sing(M)}\Sigma_{P}, \emptyset)=&\mu^{-|V|}\zeta^{-|V|+|E|-|F|}Z(M^{\dagger}, \Sigma^{\Delta}, \Gamma)\\
=&\mu^{-|V|}\zeta^{-|V|+|E|-|F|}\sum_{c}\dim(c)\left\langle\bigotimes_{t\in T}\Gamma_{t}^{c}\,, \ \bigotimes_{f\in F}\star_{f}^{c}\right\rangle \zeta^{|T|}\\
=&TV_{\mathcal{C}}(M, \Delta)\zeta^{-|V|+|E|-|F|+|T|}\\
=&TV_{\mathcal{C}}(M, \Delta)\zeta^{-|V|+|E|-|F|+|T|-|\sing(M)|+|\sing(M)|}\\
=&\zeta^{-\chi(M)+\sing(M)}TV_{\mathcal{C}}(M, \Delta).
\end{align*}
The claim follows.

Since $\chi(M)$, $|\sing(M)|$ and $B$-colored region of $(M^{\dagger}, \bigsqcup_{P\in \sing(M)}\Sigma_{P}, \emptyset)$ depend only on the pseudo-manifold $M$, not on the triangulation $\Delta$. The theorem follows.
\end{proof}

\section{Topologized Drinfeld Center}
In this section, we define, for any spherical fusion category $\mathcal{C}$, the associated tube category $\cA$ using the partition function $Z$ defined in the previous section and show that the tube category is braided monoidal equivalent to the Drinfeld center $\cZ(\cC)$ of $\mathcal{C}$. 
We assume that in this section, the ambient space $M=\bS^{3}$.

\subsection{The Tube Category $\mathcal{A}$}
To define the category $\cA$, we first define the topological building blocks of $\cA$, namely, $\mathcal{C}$-disks and cornered $\mathcal{C}$-handlebodies.

\begin{definition}[$\mathcal{C}$-Disk]
 By a $\mathcal{C}$-disk $D$, we mean an oriented pointed disk with marked points (different from the base point) on the boundary colored by objects of $\mathcal{C}$.  We put a $\$$ sign on the base point and read the circle along the orientation. The boundary $\partial D$ of a $\mathcal{C}$-disk $D$ is called a $\mathcal{C}$-circle.
\end{definition}

For example, a $\mathcal{C}$-disk with marked points on the boundary colored by $X, Y, V$ is given by
 $$\raisebox{-0.4cm}{\begin{tikzpicture}
\draw  (-1, 1.25) .. controls +(0, 0.3) and +(0, 0.3).. (1, 1.25);
\draw  (1,1.25)  .. controls +(0, -0.3) and +(0, -0.3).. (-1, 1.25) node [pos=0.5, below] {\tiny $Y$} node [pos=0.2, below] {\tiny $V$} node [pos=0.8, below] {\tiny $X$};
\draw (-1.25, 1.25) node{\tiny{$\$$}};
\end{tikzpicture}}\,,$$ 
where we orient the above disk counter-clockwisely. 

\begin{definition}[Cornered $\mathcal{C}$-Handlebody]
A cornered handlebody is a pair $\displaystyle \left(H, \bD=\bigsqcup_{i} D_i\right)$, where $H$ is a (multi-)handlebody and $\mathbb{D}$ is disjoint union of finitely many oriented pointed disks $D_j$ on the boundary $\partial{H}$ of $H$. 
    
By a cornered $\mathcal{C}$-handlebody $(H, \bD, \Gamma)$, we mean a cornered (multi-)handlebody $\displaystyle \left(H, \bD=\bigsqcup_{i} D_i\right)$ together with a tensor diagram $\Gamma$ of $\mathcal{C}$ such that $D_i$ is a $\cC$-disk, the edges of $\Gamma$ is in the complement of $\bD$, meeting the boundary circles $\partial D_i$ transversely at the marked points, and the color of the edges is compatible with the color of the marked points.
\end{definition}

\begin{definition}[Gluing of Cornered $\mathcal{C}$-Handlebody]
Suppose that $(H, \bD, \Gamma)$ and $(H', \bD', \Gamma')$ are two cornered $\mathcal{C}$-handlebodies.
Let $\bD_1 \subset \bD$ and $\bD_2 \subset \bD'$ be subsets of $\bD$ and $\bD'$, and suppose $f: \bD_1 \to \bD_2$ is an orientation reversing homeomorphism preserving boundary conditions (i.e., $\$$-sign, marked points and their colors). Then we can glue the two cornered $\mathcal{C}$-handlebodies via $f$ and the resulting cornered $\mathcal{C}$-handlebody is denoted by $(H\cup_f H', \bD\sqcup \bD'\backslash (\bD_1 \sqcup \bD_2), \Gamma \cup \Gamma')$.
\end{definition}

Now we can define a category using $\cC$-disks and cornered $\cC$-handlebodies embedded in $\bS^3 = \mathbb{R}^3\cup \{\infty\}$. 
For such cornered $\mathcal{C}$-handlebody $(H, \bD, \Gamma)$, we can view $(\bS^3, \partial H)$ as a 3-alterfold(with boundary) with the interior (resp.~exterior) of $H$ colored by $A$ (resp.~$B$). In particular, if $\partial H=\emptyset$, the $\mathcal{C}$-diagram $\Gamma$ is closed, $(M, \partial H, \Gamma)$ is a $\mathcal{C}$-decorated 3-alterfold.
We also assume that the image of $\bD$ in $H$ is a subset of $\mathbb{R}^2 \times \{0, 1\}$ with the restriction of the embedding of $\bD$ on its connected components to $\mathbb{R}^2\times \{1\}$ (resp.~$\mathbb{R}^2\times \{0\}$) orientation preserving (resp.~orientation reversing). For example, a cornered $\cC$-handlebody in $\bS^3$ can depicted as:
\begin{align*}
\vcenter{\hbox{
\scalebox{0.7}{\begin{tikzpicture}[scale=0.35, baseline=.5ex]
\draw (-3, 0) [partial ellipse=0:360:2 and 0.8];
\draw (3, 0) [partial ellipse=0:360:2 and 0.8];
\draw[dashed] (0, -5)[partial ellipse=0:180:2 and 0.8];
\draw (0, -5)[partial ellipse=180:360:2 and 0.8];
\draw (5, 0)..controls (5,-2) and (2,-4).. (2, -5);
\draw (-5, 0)..controls (-5,-2) and (-2,-4).. (-2, -5);
\draw (-1, 0) ..controls (-.5, -2) and (.5, -2)..(1, 0);
\path [fill=black, opacity=0.2] (5, 0)..controls (5,-2) and (2,-4).. (2, -5)--(-2, -5).. controls (-2,-4) and (-5,-2) .. (-5, 0);
\path [fill=black, opacity=0.2] (-3, 0) [partial ellipse=0:180:2 and 0.8] (3, 0) [partial ellipse=0:180:2 and 0.8] (0, -5)[partial ellipse=180:360:2 and 0.8]; 
\draw [fill=white] (-1, 0) ..controls (-.5, -2) and (.5, -2)..(1, 0);
\draw[blue, ->-=0.5] (3, -0.85) node [above] {\tiny{$Y$}} ..controls (3,-3) and (1,-5).. (1, -5.7);
\draw[blue, ->-=0.5] (-3, -0.85) node [above] {\tiny{$X$}}..controls (-3,-3) and (-1,-5).. (-1, -5.7);
\draw[blue, ->-=0.3] (0,-4.8)[partial ellipse=90:196:0.5 and 3.3];
\draw[blue, dashed] (0,-5)[partial ellipse=16:90:0.5 and 3.5] node [above]{\tiny $V$};
\end{tikzpicture}}}}
\end{align*}
where $\Gamma$ is the union of three strings.
We will simply use the following graph without color:
\begin{align*}
\vcenter{\hbox{
\scalebox{0.7}{\begin{tikzpicture}[scale=0.35, baseline=.5ex]
\draw (-3, 0) [partial ellipse=0:360:2 and 0.8];
\draw (3, 0) [partial ellipse=0:360:2 and 0.8];
\draw[dashed] (0, -5)[partial ellipse=0:180:2 and 0.8];
\draw (0, -5)[partial ellipse=180:360:2 and 0.8];
\draw (5, 0)..controls (5,-2) and (2,-4).. (2, -5);
\draw (-5, 0)..controls (-5,-2) and (-2,-4).. (-2, -5);
\draw (-1, 0) ..controls (-.5, -2) and (.5, -2)..(1, 0);
\draw[blue, ->-=0.5] (3, -0.85) node [above, black] {\tiny{$Y$}} ..controls (3,-3) and (1,-5).. (1, -5.7);
\draw[blue, ->-=0.5] (-3, -0.85) node [above, black] {\tiny{$X$}}..controls (-3,-3) and (-1,-5).. (-1, -5.7);
\draw[blue, ->-=0.3] (0,-4.8)[partial ellipse=90:196:0.5 and 3.3];
\draw[blue, dashed] (0,-5)[partial ellipse=16:90:0.5 and 3.5] node [above, black]{\tiny $V$};
\node at (-5.5, -2.5) {\tiny $B$}; \node at (-1.5, -2.5) {\tiny $A$};
\end{tikzpicture}}}}
\end{align*}
If there is no confusion, we will also suppress the labels $A$ and $B$.

\begin{definition}\label{def:A0} 
Let $\cA_0$ be a category such that
\begin{enumerate}[(1)]
\item Objects are formal direct sums of finite unions of horizontal $\mathcal{C}$-disks in the $xy$-plane of radius $\displaystyle \frac{1}{4}$ with their centers placed at integer points on the $x$-axis, and base points placed on the left side (i.e., at $\displaystyle \mathbb{Z}-\frac{1}{4}$ on the $x$-axis).  

\item The morphism space $\hom_{\cA_{0}}(O_1, O_2)$ between $\mathcal{C}$-disks $O_1$ and $O_2$ are $\k$-linear span of cornered $\mathcal{C}$-handlebodies $(H, \bD, \Gamma)$ embedded in $\mathbb{R}^2\times [0, 1]$ such that $\bD=O_1\cup \overline{O_2}$, $\bD \cap \left(\mathbb{R}^2 \times \{1\}\right)=O_1$, and $\bD \cap \left(\mathbb{R}^2 \times \{0\}\right)=O_2$, modulo isotopy relative to the corners. The morphism space $\hom_{\mathcal{A}_{0}}(\bigoplus_{i}O_i, \bigoplus_{j}O_j)$ equals to the set of matrices with the $ij$-th entry taking values in $\hom_{\mathcal{A}}(O_{i}, O_{j})$.

The composition of morphisms are defined to be the composition of the matrix multiplication and the vertical gluing of the cornered $\mathcal{C}$-handlebodies (and isotope to $\mathbb{R}^2 \times [0,1]$ afterwards).
\end{enumerate}
\end{definition}
\renewcommand{\CIRCLE}{O}

\begin{remark}
Suppose $X \in \cC$, there is an object in $\cA_0$ depicted as
$\vcenter{\hbox{\begin{tikzpicture}
\draw (-1, 1.25) .. controls +(0, 0.5) and +(0, 0.5).. (1, 1.25);
\draw (1,1.25)  .. controls +(0, -0.5) and +(0, -0.5).. (-1, 1.25) node [pos=0.5, below] {\tiny $X$};
\end{tikzpicture}}}$, which is denoted by $\CIRCLE_X$.
Notice that the empty set and unmarked $\mathcal{C}$-disk should be treated as different objects in $\cA_0$.
We denote by $\1_{\emptyset}$ the empty set and \CIRCLE$_{\1}$ the unmarked $\mathcal{C}$-disk. A morphism between them can be depicted as
$\raisebox{-0.4cm}{
\begin{tikzpicture}[scale=0.8]
\begin{scope}[shift={(0, -2.5)}]
\draw [ dashed] (-1, 1.25) .. controls +(0, 0.3) and +(0, 0.3).. (1, 1.25);
\draw    (1,1.25)  .. controls +(0, -0.3) and +(0, -0.3).. (-1, 1.25);
\end{scope}
\draw (-1, -1.25) .. controls +(0, 1.2) and +(0, 1.2).. (1, -1.25);
\end{tikzpicture}}\in \hom_{\cA_0}(\1_{\emptyset}, \CIRCLE_{\1})$.
\end{remark}

\begin{remark}
Suppose $X\in \mathcal{C}$, The identity morphism of $O_{X}$ is depicted as the cylinder
\raisebox{-1.5em}{\scalebox{0.5}{\begin{tikzpicture}[scale=0.3]
\draw (0,5) [partial ellipse=0:360:2 and 0.8];
\draw (-2, 5)--(-2, -4);
\draw (2, 5)--(2, -4);
\draw[dashed] (0,-4) [partial ellipse=0:180:2 and 0.8];
\draw (0,-4) [partial ellipse=180:360:2 and 0.8];
\draw[blue, ->-=0.5] (0, 4.2)--(0, -4.8) node [below, black] {\tiny $X$};
\end{tikzpicture}}}.
Note that, the orientation of the top and bottom corners are opposite to each other.
\end{remark}

\begin{remark}
Note that for any $X$, $Y \in \cC$, $O_{XY}\cong O_{YX}$ by the following isomorphisms.

\[
\vcenter{\hbox{\scalebox{0.8}{
\begin{tikzpicture}[scale=0.35]
\draw (0,5) [partial ellipse=0:360:2 and 0.8];
\draw (-2, 5)--(-2, -4);
\draw (2, 5)--(2, -4);
\draw[dashed] (0,-4) [partial ellipse=0:180:2 and 0.8];
\draw (0,-4) [partial ellipse=180:360:2 and 0.8] node[below, white]{\tiny{$X$}};
\begin{scope}[yshift=0.01cm]
\draw[blue, ->-=0.2] (0.5, 4.2) node[above, black]{\tiny{$Y$}}--(0.5, -4.8);
\draw[blue, ->-=0.2] (-0.5, 4.2)node[above, black]{\tiny{$X$}}--(-0.5, -4.8);
\end{scope}
\begin{scope}[shift={(0, 0)}, xscale=5.4, yscale=4]
\draw [blue, dashed] (0,0) [partial ellipse=0:180:0.37 and 0.25];
\draw [white, line width=4pt] (0,0) [partial ellipse=270:250:0.37 and 0.25];
\draw [blue] (0,0) [partial ellipse=180:360:0.37 and 0.25];
\begin{scope}[shift={(0, -0.2)}]
\path [fill=white] (-0.2, -0.23) rectangle (0.3, 0.24);
\draw[blue] (0.09,-0.23)..controls +(0,0.2) and +(-0.2,-0.1)..(0.3,0.05);
\draw[blue] (-0.09,0.24)..controls (-0.09,-0) and (-0.09,-0.04)..(-0.2,-0.01);
\draw[blue] (-0.09, -0.23)..controls (-0.09, 0) and (0.09, 0)..(0.09, 0.24);
\end{scope}
\end{scope}
\end{tikzpicture}}}}
\hspace{1cm}
;
\hspace{1cm}
\vcenter{\hbox{\scalebox{0.8}{
\begin{tikzpicture}[scale=0.35]
\draw (0,5) [partial ellipse=0:360:2 and 0.8];
\draw (-2, 5)--(-2, -4);
\draw (2, 5)--(2, -4);
\draw[dashed] (0,-4) [partial ellipse=0:180:2 and 0.8];
\draw (0,-4) [partial ellipse=180:360:2 and 0.8] node[below, white]{\tiny{$X$}};
\begin{scope}[yshift=0.01cm]
\draw[blue, ->-=0.2] (0.5, 4.2) node[above, black]{\tiny{$X$}}--(0.5, -4.8);
\draw[blue, ->-=0.2] (-0.5, 4.2)node[above, black]{\tiny{$Y$}}--(-0.5, -4.8);
\end{scope}
\begin{scope}[shift={(0, 0)}, xscale=-5.4, yscale=4]
\draw [blue, dashed] (0,0) [partial ellipse=0:180:0.37 and 0.25];
\draw [white, line width=4pt] (0,0) [partial ellipse=270:250:0.37 and 0.25];
\draw [blue] (0,0) [partial ellipse=180:360:0.37 and 0.25];
\begin{scope}[shift={(0, -0.2)}]
\path [fill=white] (-0.2, -0.23) rectangle (0.3, 0.24);
\draw[blue] (0.09,-0.23)..controls +(0,0.2) and +(-0.2,-0.1)..(0.3,0.05);
\draw[blue] (-0.09,0.24)..controls (-0.09,-0) and (-0.09,-0.04)..(-0.2,-0.01);
\draw[blue] (-0.09, -0.23)..controls (-0.09, 0) and (0.09, 0)..(0.09, 0.24);
\end{scope}
\end{scope}
\end{tikzpicture}}}}.
\]
\end{remark}

Now we endow $\mathcal{A}_{0}$ with a monoidal structure via horizontal juxtaposition. 
\begin{enumerate}
    \item The tensor of objects is to put the $\mathcal{C}$-disks together disjointly. For example,
    \begin{align*}
    O_{X}\otimes O_{Y}=\vcenter{\hbox{
\begin{tikzpicture}
\draw (-1, 1.25) .. controls +(0, 0.5) and +(0, 0.5).. (1, 1.25);
\draw (1,1.25)  .. controls +(0, -0.5) and +(0, -0.5).. (-1, 1.25) node [pos=0.5, below] {\tiny $X$};
\begin{scope}[shift={(2.5, 0)}]
    \draw (-1, 1.25) .. controls +(0, 0.5) and +(0, 0.5).. (1, 1.25);
\draw (1,1.25)  .. controls +(0, -0.5) and +(0, -0.5).. (-1, 1.25) node [pos=0.5, below] {\tiny $Y$};
\end{scope}
\end{tikzpicture}}}\,.
\end{align*}
\item The tensor of morphisms is obtained from putting the cornered handlebodies together disjointly. For example,
$$
\scalebox{0.7}{\begin{tikzpicture}[scale=0.35]
\draw (0,5) [partial ellipse=0:360:2 and 0.8];
\draw (-2, 5)--(-2, -4);
\draw (2, 5)--(2, -4);
\draw[dashed] (0,-4) [partial ellipse=0:180:2 and 0.8];
\draw (0,-4) [partial ellipse=180:360:2 and 0.8];
\draw[blue, ->-=0.5] (0, 4.2) node[above,black]{\tiny{${X}$}}->(0, 0.5);
\draw[blue, ->-=0.5] (0, -0.5)--(0, -4.8)node[below, black]{\tiny{${Y}$}};
\draw [blue , ->-=0.5](0, .95) [partial ellipse=-75:0:2 and 0.8];
\draw [blue, dashed](0, .95) [partial ellipse=0:180:2 and 0.8];
\draw [blue](0, .95) [partial ellipse=180:255:2 and 0.8];
\node [draw, fill=white] (0, 0){\tiny $f$};
\begin{scope}[shift={(5, 0)}]
\draw (0,5) [partial ellipse=0:360:2 and 0.8];
\draw (-2, 5)--(-2, -4);
\draw (2, 5)--(2, -4);
\draw[dashed] (0,-4) [partial ellipse=0:180:2 and 0.8];
\draw (0,-4) [partial ellipse=180:360:2 and 0.8];
\draw[blue, ->-=0.5] (0, 4.2) node[above, black]{\tiny{${X'}$}}->(0, 0.5);
\draw[blue, ->-=0.5] (0, -0.5)--(0, -4.8)node[below, black]{\tiny{${Y'}$}};
\draw [blue, ->-=0.5](0, .95) [partial ellipse=-75:0:2 and 0.8];
\draw [blue, dashed](0, .95) [partial ellipse=0:180:2 and 0.8];
\draw [blue](0, .95) [partial ellipse=180:255:2 and 0.8];
\node [draw, fill=white] (0, 0){\tiny $g$};
\end{scope}
\end{tikzpicture}}
$$
\end{enumerate}
Note that the empty set is the tensor unit for the monoidal structure and $\cA_0$ is a strict monoidal category. Moreover, the rigidity of $\cA_0$ follows from the rigidity of $\mathcal{C}$, and the pivotal structure of $\cA_0$ is inherited from $\mathcal{C}$.
In this way, $\cA_0$ is a strict pivotal monoidal category, and for any endomorphism $f \in \hom_{\cA_0}(O, O)$ of an object $O \in \cA_0$, its pivotal trace, $\Tr_{\cA_{0}}(f) \in \hom_{\cA_{0}}(\1_{\emptyset},\1_{\emptyset})$, is defined in the usual way. 

As is mentioned above, any morphism in $\hom_{\cA_{0}}(\emptyset,\emptyset)$ is linear combination of cornered $\cC$-handlebodies with empty corner, each of which gives rise to a $\cC$-decorated 3-alterfold. Hence, we can extend the partition function $Z$ in Theorem \ref{thm:partition function} linearly to such morphisms, which inspires the following definition.

\begin{definition}
For any $O \in \cA_0$, and any $f \in \hom_{\cA_0}(O, O)$, we define the (numerical) trace of $f$, denoted by $\tr(f)$, to be
\[\tr(f) := Z(\Tr_{\cA_{0}}(f)) = Z(\ev_{O} \circ (f\otimes \id_{O^*}) \circ \coev_{O}) \in \k\,.\]
\textbf{We assume that $\zeta=1$ in this section.}
\end{definition}

For example, if $X \in \cC$, and $\vcenter{\hbox{
\begin{tikzpicture}[scale=0.2]
\draw (0,5) [partial ellipse=0:360:2 and 0.8];
\draw (-2, 5)--(-2, -4);
\draw (2, 5)--(2, -4);
\draw[dashed] (0,-4) [partial ellipse=0:180:2 and 0.8];
\draw (0,-4) [partial ellipse=180:360:2 and 0.8];
\draw[blue, ->-=0.5] (0, 4.2) node[above, black]{\tiny{${X}$}}->(0, 3);
\draw[blue, ->-=0.5] (0, -3.5)--(0, -4.8) node[below, black]{\tiny{${X}$}};
\node (0, 0){\tiny $\Gamma$};
\end{tikzpicture}}}\in \hom_{\cA_0}(\CIRCLE_X, \CIRCLE_X)$ is an endomorphism of $O_X$ with $\Gamma$ a $\cC$-diagram. 
Then 
\begin{align*}
\tr\left( \vcenter{\hbox{\scalebox{0.7}{
\begin{tikzpicture}[scale=0.3]
\draw (0,5) [partial ellipse=0:360:2 and 0.8];
\draw (-2, 5)--(-2, -4);
\draw (2, 5)--(2, -4);
\draw[dashed] (0,-4) [partial ellipse=0:180:2 and 0.8];
\draw (0,-4) [partial ellipse=180:360:2 and 0.8];
\draw[blue, ->-=0.5] (0, 4.2) node[above, black]{\tiny{${X}$}}->(0, 3);
\draw[blue, ->-=0.5] (0, -3.5)--(0, -4.8) node[below, black]{\tiny{${X}$}};
\node (0, 0){\tiny $\Gamma$};
\end{tikzpicture}}}}\right)=Z\left(\vcenter{\hbox{\scalebox{0.8}{ \begin{tikzpicture}[scale=0.6]
\draw [ dashed] (-1, 1.25) .. controls +(0, 0.3) and +(0, 0.3).. (1, 1.25);
\draw [ dashed]  (1,1.25)  .. controls +(0, -0.3) and +(0, -0.3).. (-1, 1.25);
\begin{scope}[shift={(0, -1)}]
\end{scope}
\node at (0, 0) {\tiny $\Gamma$};
\draw [blue, ->-=0.5] (0, 0.7) --(0, 1) .. controls +(0,1) and +(0, 1) .. (2.5, 1) --(2.5, -1.5) node [below] {\tiny $X$} (0, -1.2) --(0, -1.5) .. controls +(0,-0.6) and +(0, -0.6) .. (2.5, -1.5);
\begin{scope}[shift={(0, -2.5)}]
\draw [  dashed] (-1, 1.25) .. controls +(0, 0.3) and +(0, 0.3).. (1, 1.25);
\draw [  dashed]  (1,1.25)  .. controls +(0, -0.3) and +(0, -0.3).. (-1, 1.25);
\end{scope}
\draw  (-1, 1.25)--(-1, -1.25) (1, 1.25)--(1, -1.25);
\begin{scope}[shift={(2.5, 0)}]
\draw [ dashed] (-1, 1.25) .. controls +(0, 0.3) and +(0, 0.3).. (1, 1.25);
\draw [  dashed]  (1,1.25)  .. controls +(0, -0.3) and +(0, -0.3).. (-1, 1.25);
\begin{scope}[shift={(0, -2.5)}]
\draw [ dashed] (-1, 1.25) .. controls +(0, 0.3) and +(0, 0.3).. (1, 1.25);
\draw [dashed] (1,1.25)  .. controls +(0, -0.3) and +(0, -0.3).. (-1, 1.25);
\end{scope}
\draw  (-1, 1.25)--(-1, -1.25) (1, 1.25)--(1, -1.25);
\end{scope}
\draw (1, 1.25) .. controls +(0, 0.3) and +(0, 0.3) .. (1.5, 1.25);
\draw  (-1, 1.25) .. controls +(0, 1.4) and +(0, 1.4) .. (3.5, 1.25);
\draw (1, -1.25) .. controls +(0, -0.3) and +(0, -0.3) .. (1.5, -1.25);
\draw (-1, -1.25) .. controls +(0, -1.4) and +(0, -1.4) .. (3.5, -1.25);
\end{tikzpicture}}} }\right)\,.
\end{align*}

\begin{definition}[Tube category]
Let $\cA_1$ be the category $\cA_0$ modulo the negligible morphism with respect to the trace $\tr$, To be precise, the objects of $\cA_1$ and $\cA_0$ are the same and 
\[ \hom_{\cA_1}(O_1, O_2)=\hom_{\cA_0}(O_1, O_2)/\mathcal{I}(O_1, O_2)\]
where the negligible ideal
\begin{align*}
\mathcal{I}(O_1, O_2)=\{f\in \hom_{\cA_0}(O_1, O_2)\mid \tr(fg)=0, \forall g \in \hom_{\cA_0}(O_2, O_1)\}.
\end{align*}
The tube category $\mathcal{A}$ is defined to be the idempotent completion(Karoubi envelope) of $\cA_1$.
\end{definition}

\begin{remark}Notice $\1_{\emptyset}$ in $\cA_{0}$ descends to a tensor unit in $\cA$, and $\hom_{\cA}(\1_\emptyset, \1_\emptyset)\cong \k$. Thus, the trace $\tr$ descends to $\cA$ and coincides with the pivotal trace $\Tr_{\cA}$ of $\cA$. Consequently, the trace $\Tr_{\cA}$ is nondegenerate.

It can be seen that $\cA$ is a strict spherical monoidal category. Indeed, for any endomorphism $f \in \hom_{\cA_0}(O,O)$, $\Tr_{\cA_{0}}(f)$ and $\Tr_{\cA_{0}}(f^*)$ are isotopic $\mathcal{C}$-decorated 3-alterfolds, so by Theorem \ref{thm:partition function}, $\Tr_{\cA}(f) = \Tr_{\cA}(f^*)$.
\end{remark}

\begin{notation}
For morphisms in $\cA$, we will be using their representatives in $\cA_{0}$.
\end{notation}

\begin{theorem}\label{thm:tubemove}
Suppose $f, g$ are two morphisms in $\mathcal{A}$ that differ by one of the local moves described in Theorem \ref{thm:partition function}(Move $0, 1, 2, 3$, Homeomorphism relative to the boundary, Planar Graphical Calculus), then $f=g$.

\end{theorem}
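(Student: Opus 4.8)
The plan is to exploit the way $\cA$ is built: equality of two morphisms is detected by the trace pairing $\tr$, and $\tr$ is computed by the partition function $Z$ of a \emph{closed} $\cC$-decorated $3$-alterfold, to which Theorem \ref{thm:partition function} applies directly. So the whole statement should fall out of the locality of the moves together with that theorem.

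First I would reduce to a trace identity. Recall $\cA_1=\cA_0/\mathcal{I}$ with $\mathcal{I}(O_1,O_2)=\{f\in\hom_{\cA_0}(O_1,O_2):\tr(fg)=0\ \text{for all}\ g\in\hom_{\cA_0}(O_2,O_1)\}$, and $\cA$ is the Karoubi envelope of $\cA_1$. Since idempotent completion is full and faithful, it suffices to check equality already in $\cA_1$, i.e.\ to show that for the given representatives $f,g\in\hom_{\cA_0}(O_1,O_2)$ one has $\tr(fh)=\tr(gh)$ for every $h\in\hom_{\cA_0}(O_2,O_1)$. For the moves that rescale $Z$, ``$f$ and $g$ differ by the move'' is read as including the normalizing scalar of Theorem \ref{thm:partition function}, so that the asserted identity $f=g$ is the scalar-corrected relation; with $\zeta=1$ as fixed in this section, only Move $0$ carries a nontrivial such scalar, namely $\mu$, while Moves $1,2,3$ and Homeomorphism rel boundary and Planar Graphical Calculus are scalar-free.

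Next I would unwind the trace. By definition $\tr(fh)=Z(\Tr_{\cA_0}(f\circ h))$, and $\Tr_{\cA_0}(f\circ h)$ has empty corner, hence is an honest $\cC$-decorated $3$-alterfold $(\bS^3,\Sigma_{fh},\Gamma_{fh})$ whose $A$-region is the handlebody obtained by stacking $f$ over $h$ and closing up with the (co)evaluation cups of $\cA_0$. The key geometric observation is that the local move relating $f$ to $g$ — a planar calculus replacement inside a disk on the separating surface, the attachment of an $i$-handle along a cell in a small ball, or a homeomorphism supported in a ball rel its boundary — is supported in a ball $B\subset\bS^3$ which, after a small isotopy, is disjoint from the strands contributed by $h$ and by the trace closure. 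Therefore $(\bS^3,\Sigma_{gh},\Gamma_{gh})$ is obtained from $(\bS^3,\Sigma_{fh},\Gamma_{fh})$ by applying the identical move inside $B$, and Theorem \ref{thm:partition function} relates the two values of $Z$: they are equal for Homeomorphism-rel-boundary and Planar Graphical Calculus, equal for Moves $1,2,3$ because $\zeta^{\pm1}=1$, and related by the factor $\mu$ for Move $0$, which is precisely the normalization built into the statement. In every case $\tr(fh)=\tr(gh)$ for all $h$, so $f=g$ in $\cA$ by nondegeneracy of $\tr$ (and hence in the Karoubi envelope).

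The step I expect to be the main obstacle is the localization: one must check carefully that each move of Theorem \ref{thm:partition function}, performed on a cornered $\cC$-handlebody representing a morphism of $\cA_0$, can genuinely be confined to a ball disjoint from the corners $\bD$, from any $h$ composed with it, and from the trace closure — equivalently, that all of these moves commute with the vertical gluing and with the pivotal trace $\Tr_{\cA_0}$. This is geometrically clear move by move (the defining ball of a move can always be shrunk away from the gluing disks and the $(co)$evaluation cups), but it is the point where the argument should be made explicit, particularly for Move $2$, whose inserted $\phi\otimes\phi'$ must be placed entirely inside such a ball. A minor secondary point is verifying that nothing is lost in passing through the Karoubi envelope, which is immediate since $\cA_1\hookrightarrow\cA$ is full and faithful and the trace pairing on $\cA_1$ is nondegenerate by construction.
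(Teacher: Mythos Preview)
Your proposal is correct and follows exactly the approach the paper intends: the paper's proof is the single sentence ``It follows from Theorem \ref{thm:partition function},'' and what you have written is precisely the unpacking of that sentence via the nondegeneracy of $\tr$ and the locality of the moves under trace closure. Your care with the scalar for Move $0$ (which is $\mu$ since $\zeta=1$) and your identification of the localization step as the only point requiring attention are both on target.
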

\begin{proof}
It follows from Theorem \ref{thm:partition function}.
\end{proof}

\begin{proposition}\label{prop:objtensor}
Let $X$, $Y$ be objects in $\mathcal{C}$.
In the category $\cA$, we have
\[ O_X\otimes O_Y\cong \bigoplus_{i=0}^r O_{XX_i Y X_i^*}\,.\]
\end{proposition}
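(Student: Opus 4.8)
The plan is to prove the isomorphism by exhibiting it explicitly: for each simple object $X_i$ I would produce a pair of morphisms $\iota_i\in\hom_{\cA}(O_{XX_iYX_i^*},\, O_X\otimes O_Y)$ and $\pi_i\in\hom_{\cA}(O_X\otimes O_Y,\, O_{XX_iYX_i^*})$ satisfying $\pi_i\circ\iota_j=\delta_{ij}\,\id_{O_{XX_iYX_i^*}}$ for all $i,j$ and $\sum_{i=0}^{r}\iota_i\circ\pi_i=\id_{O_X\otimes O_Y}$. Since $\cA$ is additive, these two relations say precisely that $(\pi_i)_i\colon O_X\otimes O_Y\to\bigoplus_i O_{XX_iYX_i^*}$ and $(\iota_i)_i$ are mutually inverse, which is the assertion. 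Geometrically, $\iota_i$ and $\pi_i$ will be "pair-of-pants" cornered $\cC$-handlebodies: genus-zero handlebodies carrying three $\cC$-disks on their boundary, namely the two disks underlying $O_X$ and $O_Y$ on one side and the four-marked disk underlying $O_{XX_iYX_i^*}$ on the other, with the string colored $X$ running to the $X$-leg, the string colored $Y$ running to the $Y$-leg, and an $X_i$-colored arc joining the two marked points labelled $X_i$ and $X_i^*$, routed once around the $O_Y$-leg. (A power of $d_i$ will be absorbed into $\pi_i$ to normalize, and the placement of the base points on the $\cC$-disks must be pinned down so that the identities hold on the nose.)

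For the completeness relation $\sum_i\iota_i\pi_i=\id_{O_X\otimes O_Y}$ I would begin with $\id_{O_X\otimes O_Y}$, represented by the disjoint union of the cylinders $\mathrm{Cyl}_X$ and $\mathrm{Cyl}_Y$, and apply Move $1$ along an embedded arc $S$ in the $B$-region joining the two cylinders. Since $\zeta=1$ throughout this section, Theorem \ref{thm:tubemove} shows that the result still represents $\id_{O_X\otimes O_Y}$ in $\cA$; concretely it is $\mathrm{Cyl}_X$ and $\mathrm{Cyl}_Y$ joined by a $1$-handle, equipped with an $\Omega$-colored belt circle $C$. Writing $C=\sum_i d_i\,C_i$ with $C_i$ the same circle colored by $X_i$ (the defining property of the $\Omega$-color, as used in Lemma \ref{lem:unit1}) and isotoping each summand rel corners, one recognizes the summand carrying belt color $X_i$ as $\iota_i\circ\pi_i$; summing over $i$ yields the relation. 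This step needs only the definition of the $\Omega$-color, Theorem \ref{thm:tubemove}, and an explicit isotopy identification, so it should be routine once the $\iota_i,\pi_i$ are defined.

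For the orthogonality relation, vertically composing the two pairs of pants produces a genus-one cornered handlebody which, besides the through-strings colored $X$ and $Y$, carries the arcs colored $X_i$ and $X_j$. I would simplify it by applying Move $2$ to a disk in the $B$-region chosen so that its boundary meets the diagram only along the $X_i$- and $X_j$-arcs: the inserted pair of dual bases then lies in $\hom_{\cC}(X_i\otimes X_j^{*},\1)\cong\hom_{\cC}(X_i,X_j)$, so semisimplicity forces the factor $\delta_{ij}$, and for $i=j$ this basis is one-dimensional, after which planar graphical calculus, the handle slide property, the string--genus relation (Proposition \ref{pro:stringgenus}), and a final isotopy — all legitimate by Theorem \ref{thm:tubemove} — straighten the surviving diagram to the identity cylinder $\mathrm{Cyl}_{XX_iYX_i^*}$. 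This last step is the main obstacle. One has to choose the cutting disk correctly: it must realize the compression of the extra genus and must cross the two wrapping arcs while missing the through-strings, and one then has to check that the resulting picture genuinely straightens to the identity cylinder with the correct matching of marked-point colors and base points. In parallel one must keep the scalar bookkeeping consistent — the powers of $d_i$ built into $\pi_i$ and the factors of $\mu$ that enter through Moves $0$ and $1$ — so that the two relations hold exactly and not merely up to a nonzero constant. Granting these verifications, the two relations deliver the claimed isomorphism.
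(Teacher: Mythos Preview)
Your approach is essentially the paper's, organized componentwise rather than in bulk. The paper packages your $\pi_i$ into a single morphism $f_{X,Y}=\sum_i\pi_i$ and your $\iota_j$ (carrying the weight $d_j$) into $f'_{X,Y}$, then checks directly that $f_{X,Y}f'_{X,Y}=\sum_i\id_{O_{XX_iYX_i^*}}$ by one application of Move~2 and that $f'_{X,Y}f_{X,Y}=\id_{O_X\otimes O_Y}$ by one application of Move~1 --- exactly your orthogonality and completeness relations.

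You are overcomplicating the orthogonality step. Once the compressing disk for Move~2 is chosen so that its boundary meets each of the $X_i$- and $X_j$-arcs once and misses the $X,Y$ through-strings, the resulting handlebody is already the identity cylinder on $O_{XX_iYX_i^*}$ (when $i=j$); no handle slides or string--genus relations are required afterward. Likewise, your worry about tracking powers of $\mu$ is unfounded: Move~0 is never used, and since $\zeta=1$ throughout this section, Moves~1 and~2 introduce no scalars at all. The only normalization to watch is the single factor $d_j$, which the paper places on $f'_{X,Y}$ and which cancels against the one-dimensional dual basis in $\hom_{\cC}(X_i,X_j)$ exactly as you anticipate.
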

\begin{proof}
We define two morphisms $f_{X, Y}, f_{X, Y}'$ as follows:
\[f_{X,Y}=\sum_{i=0}^r 
\raisebox{1.5em}{\scalebox{0.75}{\begin{tikzpicture}[scale=0.35, baseline=.5ex]
\draw (-3, 0) [partial ellipse=0:360:2 and 0.8];
\draw (3, 0) [partial ellipse=0:360:2 and 0.8];
\draw[dashed] (0, -5)[partial ellipse=0:180:2 and 0.8];
\draw (0, -5)[partial ellipse=180:360:2 and 0.8];
\draw (5, 0)..controls (5,-2) and (2,-4).. (2, -5);
\draw (-5, 0)..controls (-5,-2) and (-2,-4).. (-2, -5);
\draw (-1, 0) ..controls (-.5, -2) and (.5, -2)..(1, 0);
\draw[blue, ->-=0.5] (3, -0.85) node [above, black] {\tiny{$Y$}} ..controls (3,-3) and (1,-5).. (1, -5.7);
\draw[blue, ->-=0.5] (-3, -0.85) node [above, black] {\tiny{$X$}}..controls (-3,-3) and (-1,-5).. (-1, -5.7);
\draw[blue, ->-=0.5] (0,-4.8)[partial ellipse=90:196:0.5 and 3.3];
\draw[blue, dashed] (0,-5)[partial ellipse=16:90:0.5 and 3.5] node [above, black]{\tiny $X_i$};
\end{tikzpicture}}}
\quad\text{and}\quad
f_{X,Y}'=\sum_{j=0}^r d_{j}
\raisebox{-1em}{\scalebox{0.75}{\begin{tikzpicture}[scale=0.35, baseline=.5ex]
\draw[dashed] (-3, 0) [partial ellipse=0:180:2 and 0.8];
\draw[dashed] (3, 0) [partial ellipse=0:180:2 and 0.8];
\draw (-3, 0) [partial ellipse=0:-180:2 and 0.8];
\draw (3, 0) [partial ellipse=0:-180:2 and 0.8];
\draw (0, 5)[partial ellipse=0:180:2 and 0.8];
\draw (0, 5)[partial ellipse=0:-180:2 and 0.8];
\draw (5, 0)..controls (5,2) and (2,4).. (2, 5);
\draw (-5, 0)..controls (-5,2) and (-2,4).. (-2, 5);
\draw (-1, 0)..controls (-.5, 2) and (.5, 2)..(1, 0);
\draw[blue, -<-=0.5] (3, -0.8) node [below, black] {\tiny{$Y$}} ..controls (3,1) and (1,3).. (1, 4.34);
\draw[blue,  -<-=0.5] (-3, -0.8) node [below, black] {\tiny{$X$}} ..controls (-3,1) and (-1,3).. (-1, 4.34);
\draw[blue, -<-=0.5] (0,5)[partial ellipse=270:192:0.5 and 3.5];
\draw[blue, dashed] (0,5)[partial ellipse=12:-90:0.5 and 3.5] node [black, below] {\tiny{$X_j$}};
\end{tikzpicture}}}.\]
We show they are inverse to each other by taking compositions.
\[f_{X, Y}f_{X, Y}'=
\sum_{{i, j}=0}^r d_{j}\ 
\scalebox{0.75}{\begin{tikzpicture}[scale=0.35,baseline=0em]
\draw[dashed] (0, -5)[partial ellipse=0:180:2 and 0.8];
\draw (0, -5)[partial ellipse=180:360:2 and 0.8];
\draw (5, 0)..controls (5,-2) and (2,-4).. (2, -5);
\draw (-5, 0)..controls (-5,-2) and (-2,-4).. (-2, -5);
\draw (-1, 0)..controls (-.5, -2) and (.5, -2)..(1, 0);
\draw[blue, ->-=0.5] (3, -0.8)..controls (3,-3) and (1,-5).. (1, -5.65) node[below, black]{\tiny{$Y$}};
\draw[blue, ->-=0.5] (-3, -0.8)..controls (-3,-3) and (-1,-5).. (-1, -5.65)node[below, black]{\tiny{$X$}};
\draw[blue, -<-=0.5] (0,-5)[partial ellipse=90:192:0.5 and 3.5];
\draw[blue, dashed] (0,-5)[partial ellipse=12:90:0.5 and 3.5];
\draw[dashed] (-3, 0) [partial ellipse=0:180:2 and 0.8];
\draw[dashed] (3, 0) [partial ellipse=0:180:2 and 0.8];
\draw (-3, 0) [partial ellipse=0:-180:2 and 0.8];
\draw (3, 0) [partial ellipse=0:-180:2 and 0.8];
\draw (0, 5)[partial ellipse=0:180:2 and 0.8];
\draw (0, 5)[partial ellipse=0:-180:2 and 0.8];
\draw (5, 0)..controls (5,2) and (2,4).. (2, 5);
\draw (-5, 0)..controls (-5,2) and (-2,4).. (-2, 5);
\draw (-1, 0)..controls (-.5, 2) and (.5, 2)..(1, 0);
\draw[blue] (3, -0.8)..controls (3,1) and (1,3).. (1, 4.34);
\draw[blue] (-3, -0.8)..controls (-3,1) and (-1,3).. (-1, 4.34);
\draw[blue, -<-=0.5] (0,5)[partial ellipse=270:192:0.5 and 3.5];
\draw[blue, dashed] (0,5)[partial ellipse=12:-90:0.5 and 3.5];
\draw (0, -0.8) node {\tiny{$X_i$}};
\draw (0, 0.8) node {\tiny{$X_j$}};
\end{tikzpicture}}
=\sum_{i=0}^r\ 
\scalebox{0.75}{\begin{tikzpicture}[yscale=0.35, xscale=0.5, baseline=4em]
\draw[dashed] (0, 0) [partial ellipse=0:180:2 and 0.8];
\draw (0, 0) [partial ellipse=0:-180:2 and 0.8];
\draw (0, 10) [partial ellipse=0:180:2 and 0.8];
\draw (0, 10) [partial ellipse=0:-180:2 and 0.8];
\draw (-2, 0)--(-2, 10);
\draw (2, 0)--(2, 10);
\draw[blue, -<-=0.5] (-1, -0.75)node[below, black]{\tiny{$X$}} --(-1, 9.35);
\draw[blue, -<-=0.5] (1, -0.75) node[below, black]{\tiny{$Y$}}--(1, 9.35);
\draw [blue, dashed, -<-=0.5] (0.5, 0.7)node[below, black]{\tiny{$X_i^*$}} --(0.5, 10.55);
\draw[blue, -<-=0.5] (0, -0.8) node[below, black]{\tiny{$X_i$}}--(0, 9.2);
\end{tikzpicture}}=\sum_{i=0}^r\id_{O_{XX_iYX_i^*}}
,\]
where the second equality comes from applying Move 2.

\begin{align*}
f_{X, Y}'f_{X, Y}=&
\sum_{{j}=0}^r d_{j}
\scalebox{0.7}{
\begin{tikzpicture}[scale=0.35, baseline=0]
\begin{scope}[yshift=-5cm]
\draw (0, 0.8) node{\tiny{$X_j$}};
\draw[dashed] (-3, 0) [partial ellipse=0:180:2 and 0.8];
\draw[dashed] (3, 0) [partial ellipse=0:180:2 and 0.8];
\draw (-3, 0) [partial ellipse=0:-180:2 and 0.8];
\draw (3, 0) [partial ellipse=0:-180:2 and 0.8];
\draw (5, 0)..controls (5,2) and (2,4).. (2, 5);
\draw (-5, 0)..controls (-5,2) and (-2,4).. (-2, 5);
\draw (-1, 0)..controls (-.5, 2) and (.5, 2)..(1, 0);
\draw[blue] (3, -0.8)node[below, black]{\tiny{$Y$}} ..controls (3,1) and (1,3).. (1, 4.34);
\draw[blue] (-3, -0.8) node[below, black]{\tiny{$X$}} ..controls (-3,1) and (-1,3).. (-1, 4.34);
\draw[blue, -<-=0.5] (0,5)[partial ellipse=270:192:0.5 and 3.5];
\draw[blue, dashed] (0,5)[partial ellipse=12:-90:0.5 and 3.5];
\end{scope}
\begin{scope}[yshift=5cm]
\draw (-3, 0) [partial ellipse=0:360:2 and 0.8];
\draw (3, 0) [partial ellipse=0:360:2 and 0.8];
\draw[dashed] (0, -5)[partial ellipse=0:180:2 and 0.8];
\draw (0, -5)[partial ellipse=180:360:2 and 0.8];
\draw (5, 0)..controls (5,-2) and (2,-4).. (2, -5);
\draw (-5, 0)..controls (-5,-2) and (-2,-4).. (-2, -5);
\draw (-1, 0)..controls (-.5, -2) and (.5, -2)..(1, 0);
\draw[blue, ->-=0.4] (3, -0.8)..controls (3,-3) and (1,-5).. (1, -5.65) ;
\draw[blue, ->-=0.4] (-3, -0.8)..controls (-3,-3) and (-1,-5).. (-1, -5.65);
\draw[blue, ->-=0.4] (0,-5)[partial ellipse=90:192:0.5 and 3.5];
\draw[blue, dashed] (0,-5)[partial ellipse=12:90:0.5 and 3.5];
\end{scope}
\end{tikzpicture}}
=
\scalebox{0.7}{
\begin{tikzpicture}[scale=0.35,baseline=0em]
\begin{scope}[yshift=-5cm]
\draw (0, 0.8) node{\tiny{$X_j$}};
\draw[dashed] (-3, 0) [partial ellipse=0:180:2 and 0.8];
\draw[dashed] (3, 0) [partial ellipse=0:180:2 and 0.8];
\draw (-3, 0) [partial ellipse=0:-180:2 and 0.8];
\draw (3, 0) [partial ellipse=0:-180:2 and 0.8];
\draw (5, 0)..controls (5,2) and (2,4).. (2, 5);
\draw (-5, 0)..controls (-5,2) and (-2,4).. (-2, 5);
\draw (-1, 0)..controls (-.5, 2) and (.5, 2)..(1, 0);
\draw[blue] (3, -0.8)node[below, black]{\tiny{$Y$}} ..controls (3,1) and (1,3).. (1, 4.34);
\draw[blue] (-3, -0.8) node[below, black]{\tiny{$X$}} ..controls (-3,1) and (-1,3).. (-1, 4.34);
\draw[red] (0,5)[partial ellipse=270:192:0.5 and 3.5];
\draw[red, dashed] (0,5)[partial ellipse=12:-90:0.5 and 3.5];
\end{scope}
\begin{scope}[yshift=5cm]
\draw(-3, 0) [partial ellipse=0:360:2 and 0.8];
\draw (3, 0) [partial ellipse=0:360:2 and 0.8];
\draw[dashed] (0, -5)[partial ellipse=0:180:2 and 0.8];
\draw (0, -5)[partial ellipse=180:360:2 and 0.8];
\draw (5, 0)..controls (5,-2) and (2,-4).. (2, -5);
\draw (-5, 0)..controls (-5,-2) and (-2,-4).. (-2, -5);
\draw (-1, 0)..controls (-.5, -2) and (.5, -2)..(1, 0);
\draw[blue, ->-=0.4] (3, -0.8)..controls (3,-3) and (1,-5).. (1, -5.65) ;
\draw[blue, ->-=0.4] (-3, -0.8)..controls (-3,-3) and (-1,-5).. (-1, -5.65);
\draw[red] (0,-5)[partial ellipse=90:192:0.5 and 3.5];
\draw[red, dashed] (0,-5)[partial ellipse=12:90:0.5 and 3.5];
\end{scope}
\end{tikzpicture}}
=
\scalebox{0.7}{\begin{tikzpicture}[scale=0.35,baseline=0em]
\draw (0,5) [partial ellipse=0:360:2 and 0.8];
\draw (-2, 5)--(-2, -4);
\draw (2, 5)--(2, -4);
\draw[dashed] (0,-4) [partial ellipse=0:180:2 and 0.8];
\draw (0,-4) [partial ellipse=180:360:2 and 0.8];
\draw[blue, ->-=0.5] (0, 4.2)--(0, -4.8) node [below, black] {\tiny $X$};
\begin{scope}[shift={(5, 0)}]
\draw (0,5) [partial ellipse=0:360:2 and 0.8];
\draw (-2, 5)--(-2, -4);
\draw (2, 5)--(2, -4);
\draw[dashed] (0,-4) [partial ellipse=0:180:2 and 0.8];
\draw (0,-4) [partial ellipse=180:360:2 and 0.8];
\draw[blue, ->-=0.5] (0, 4.2)--(0, -4.8) node [below, black] {\tiny $Y$};
\end{scope}
\end{tikzpicture}}
=\id_{O_X} \otimes \id_{O_Y},
\end{align*}
where the third equality comes from applying Move 1. This completes the proof of the proposition.
\end{proof}

\begin{corollary}\label{cor:singledisk}
Any object $O$ in $\cA_1$ is isomorphic to a finite direct sum $\displaystyle \bigoplus_{X}$\CIRCLE$_{X}$, i.e. a finite direct sum of $\mathcal{C}$-disks with a single object in $\mathcal{C}$.
\end{corollary}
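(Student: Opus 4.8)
The plan is to derive this as a formal consequence of Proposition~\ref{prop:objtensor}, after putting a general object into a convenient shape. The first step is the observation that every $\cC$-disk is isomorphic, in $\cA_{0}$ (hence in $\cA_{1}$), to a single-marked one: if the marked points of a $\cC$-disk are colored, in the cyclic order read off starting at the $\$$, by $Y_{1},\ldots,Y_{m}$, then it is isomorphic to $O_{Y_{1}\ot\cdots\ot Y_{m}}$ via the cornered $\cC$-handlebody which is a $3$-ball carrying the given disk on top, the disk $O_{Y_{1}\ot\cdots\ot Y_{m}}$ on the bottom, and $\Gamma$ the cable of $m$ parallel strands realizing the thick strand colored $Y_{1}\ot\cdots\ot Y_{m}$; the inverse is its vertical mirror image, and the two composites are isotopic rel corners to the respective identity cylinders. (When $m=0$ this is $O_{\1}$.) Applying this disk by disk, every object of $\cA_{1}$ becomes isomorphic to a finite formal direct sum $\bigoplus_{k}W_{k}$ in which each $W_{k}=O_{Z_{k,1}}\sqcup\cdots\sqcup O_{Z_{k,n_{k}}}$ is a finite disjoint union of single-marked disks.

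Next I would use that, by definition of the monoidal structure on $\cA_{0}$, such a disjoint union is the tensor product $O_{Z_{k,1}}\ot\cdots\ot O_{Z_{k,n_{k}}}$ (up to a horizontal repositioning of the disks along the $x$-axis, which is an isomorphism of $\cA_{0}$). Combining Proposition~\ref{prop:objtensor} with the first step, one has $O_{W}\ot O_{Z}\cong\bigoplus_{i=0}^{r}O_{W\ot X_{i}\ot Z\ot X_{i}^{*}}$, again a finite direct sum of single-marked disks. I would then induct on $n_{k}$: the cases $n_{k}\le 1$ are trivial, and for the inductive step, writing $O_{Z_{k,1}}\ot\cdots\ot O_{Z_{k,n}}\cong\bigoplus_{a}O_{Y_{a}}$ with finite index set, additivity of $\ot$ in each variable (which holds in $\cA_{1}$ since $\cA_{0}$ is additive with $\ot$ given by disjoint juxtaposition) together with the isomorphism above gives
\[
O_{Z_{k,1}}\ot\cdots\ot O_{Z_{k,n+1}}\;\cong\;\Bigl(\bigoplus_{a}O_{Y_{a}}\Bigr)\ot O_{Z_{k,n+1}}\;\cong\;\bigoplus_{a}\bigl(O_{Y_{a}}\ot O_{Z_{k,n+1}}\bigr)\;\cong\;\bigoplus_{a}\bigoplus_{i=0}^{r}O_{Y_{a}\ot X_{i}\ot Z_{k,n+1}\ot X_{i}^{*}}\,,
\]
a finite direct sum of single-marked disks. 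Performing this for each $W_{k}$ and summing over $k$ produces the required isomorphism $O\cong\bigoplus(\text{single-marked }\cC\text{-disks})$; the degenerate objects $0$ and $\1_{\emptyset}$ need no argument.

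\textbf{Main difficulty.} There is no essential obstacle here — the mathematical content is entirely in Proposition~\ref{prop:objtensor}, and everything else is bookkeeping. The points that require a little care are the identification in the first step of a multiply-marked $\cC$-disk with a single-marked one (via the cabling convention for a strand colored by a tensor product), and the distributivity of $\ot$ over $\oplus$ used in the induction; both become routine once the definitions of $\cA_{0}$ and its monoidal structure are unwound. It is also worth recording that the isomorphisms furnished by Proposition~\ref{prop:objtensor} are represented by honest cornered $\cC$-handlebodies (the $f_{X,Y}$, $f'_{X,Y}$ of its proof), so the conclusion already holds in $\cA_{1}$ and not merely in its idempotent completion $\cA$.
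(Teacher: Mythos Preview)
Your proposal is correct and follows essentially the same two-step structure as the paper: first reduce each multi-marked $\cC$-disk to a single-marked one, then induct on the number of disks using Proposition~\ref{prop:objtensor}. The only minor difference is that for the first step the paper invokes the semisimplicity of $\cC$ to decompose $Y_{1}\ot\cdots\ot Y_{k}$ into simples and builds the isomorphism from the resulting projections and inclusions, whereas your cabling argument achieves $O\cong O_{Y_{1}\ot\cdots\ot Y_{m}}$ more directly without appealing to semisimplicity; both arrive at the same conclusion and the inductive step is identical.
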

\begin{proof}
Suppose $O\in \cA_{1}$ consists of a single $\mathcal{C}$-disk, with marked points colored by $Y_1, \ldots, Y_k\in \cC$. By the semisimplicity of $\cC$, there exist simple objects $\{U_i\}$ (possibly with multiplicity) such that $Y_1 \otimes \cdots \otimes Y_k \cong \bigoplus U_i$, i.e., there are morphisms $p_i:Y_1\otimes\cdots \otimes Y_k\rightarrow U_{i}$ and $q_i:U_i\rightarrow Y_1\otimes\cdots\otimes Y_k$ such that $p_iq_j=\delta_{ij}\id_{U_i}$ and
$\sum_{i}{q_ip_i}=\id_{Y_{1}\otimes\cdots\otimes Y_{k}}$. Suppose $\Gamma_p$ and $\Gamma_q$ are the planar $\cC$-diagram representing $\sum p_i$ and $\sum q_i$ respectively. We can embed them to cylinders and get morphisms $H_p$ and $H_q$ in $\cA_1$ respectively, such that $H_{q} H_{p}=\id_{O}$ and $H_{p}H_{q}=\bigoplus \id_{O_{U_i}}$. In other words, $O \cong \bigoplus O_{U_{i}} \cong O_{Y_{k} \otimes \cdots \otimes Y_{k}}$.

The general case follows from doing induction on the number of $\cC$-disks and apply proposition $\ref{prop:objtensor}$.
\end{proof}

\begin{remark}\label{rem:homspace}
By Corollary \ref{cor:singledisk}, we have that any morphism space in $\cA_1$ is linearly isomorphic to the direct sum of $\hom_{\cA}(\CIRCLE_X, \CIRCLE_Y)$ for $X, Y\in\mathcal{C}$.
\end{remark}

\begin{theorem}\label{lem:morphismform}
Any morphism $H$ in $\hom_{\cA}(\CIRCLE_X, \CIRCLE_Y)$ is of the following form (up to isomorphism):
$$
\sum_{i=0}^r d_i\vcenter{\hbox{\scalebox{0.7}{
\begin{tikzpicture}[scale=0.35]
\draw (0,5) [partial ellipse=0:360:2 and 0.8];
\draw (-2, 5)--(-2, -4);
\draw (2, 5)--(2, -4);
\draw[dashed] (0,-4) [partial ellipse=0:180:2 and 0.8];
\draw (0,-4) [partial ellipse=180:360:2 and 0.8];
\draw[blue, ->-=0.5] (0, 4.2) node[above, black]{\tiny{${X}$}}->(0, 0.5);
\draw[blue, ->-=0.5] (0, -0.5)--(0, -4.8)node[below, black]{\tiny{${Y}$}};
\draw [blue , ->-=0.5](0, .95) [partial ellipse=-75:0:2 and 0.8] node [pos=0.5, below, black] {\tiny $X_i$};
\draw [blue, dashed](0, .95) [partial ellipse=0:180:2 and 0.8];
\draw [blue](0, .95) [partial ellipse=180:255:2 and 0.8];
\node [draw, fill=white] (0, 0){\tiny $f_i$};
\end{tikzpicture}}}}, \text{ simply represented by }
\vcenter{\hbox{\scalebox{0.7}{
\begin{tikzpicture}[scale=0.35]
\draw (0,5) [partial ellipse=0:360:2 and 0.8];
\draw (-2, 5)--(-2, -4);
\draw (2, 5)--(2, -4);
\draw[dashed] (0,-4) [partial ellipse=0:180:2 and 0.8];
\draw (0,-4) [partial ellipse=180:360:2 and 0.8];
\draw[blue, ->-=0.5] (0, 4.2) node[above, black]{\tiny{${X}$}}->(0, 0.5);
\draw[blue, ->-=0.5] (0, -0.5)--(0, -4.8)node[below, black]{\tiny{${Y}$}};
\draw [red](0, .95) [partial ellipse=-75:0:2 and 0.8];
\draw [red, dashed](0, .95) [partial ellipse=0:180:2 and 0.8];
\draw [red](0, .95) [partial ellipse=180:255:2 and 0.8];
\node [draw, fill=white] (0, 0){\tiny $f$};
\end{tikzpicture}}}}\,,
$$
where $\displaystyle f=\sum_{i=}^r f_i$ and $f_i\in \hom_{\mathcal{C}}(XX_i, X_iY)$.
\end{theorem}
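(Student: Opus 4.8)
The plan is to reduce an arbitrary morphism, step by step, to the displayed normal form, using the topological moves made legitimate by Theorem~\ref{thm:tubemove}. First, by Corollary~\ref{cor:singledisk} it suffices to treat a morphism in $\hom_{\cA}(\CIRCLE_X,\CIRCLE_Y)$, and by linearity one represented by a single cornered $\mathcal{C}$-handlebody $(H,\bD,\Gamma)$ with $\bD = O_X \sqcup \overline{O_Y}$. Throughout we are free to modify the pair $(R_B,\Gamma)$ — equivalently, the cornered handlebody $H$ together with its diagram — by Moves $0$–$3$, handle slides (Theorem~\ref{thm:handleslides}), homeomorphisms rel the corners, and Planar Graphical Calculus, without changing the class of the morphism in $\cA$ (recall $\zeta=1$ in this section).

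The first and main reduction is to bring $H$ to a solid cylinder. I would choose a handle decomposition of the complementary $B$-region and cancel its handles by Moves $0$, $1$ (the string--genus / $1$--$2$-handle cancellation of Proposition~\ref{pro:stringgenus}) and its remaining $1$-handles against Move $2$'s; concretely, for each handle of $H$ one cuts the strands of $\Gamma$ running over it by a Move $2$ and cancels the now-unlinked handle. After this $H$ is a solid cylinder $D^2\times[0,1]$ with $O_X$, $O_Y$ as its two ends and with $\Gamma$ supported on the annulus $A := \partial H\setminus(O_X\sqcup O_Y)$, together with finitely many $\Omega$-colored circles produced by the Move $1$'s. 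Using handle slides and Planar Graphical Calculus I would then move all coupons of $\Gamma$ into a single rectangle of $A$, push every contractible $\Omega$-circle into that rectangle (where it becomes a scalar $\mu$), and merge the essential ones — each bounding a meridian disk of the complementary solid region — into a single essential $\Omega$-circle $C$ encircling the waist of the cylinder and running through the coupon. (An alternative, more algebraic route to the same point runs through Proposition~\ref{prop:objtensor} and rigidity of $\cA$: $\hom_{\cA}(\CIRCLE_X,\CIRCLE_Y)\cong\bigoplus_i\hom_{\cA}(\1_\emptyset,\CIRCLE_{X^*X_iYX_i^*})$, so it is enough to identify $\hom_{\cA}(\1_\emptyset,\CIRCLE_Z)$ with $\hom_{\mathcal{C}}(\1,Z)$ by genus-reducing a one-port cornered handlebody; chasing the isomorphisms back produces the same picture.)

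With $H$ in this normal position, I would apply Move $2$ along the meridian disk $D$ of $R_B$ whose boundary is a vertical seam of $A$ chosen disjoint from the coupon and transverse to the strands of $\Gamma$ and to $C$. Cutting $A$ open along the seam converts $\Gamma$ (together with $C$) into a planar $\mathcal{C}$-diagram on a disk, i.e.\ an honest morphism of $\mathcal{C}$: the $X$-strand enters from $\partial O_X$, the $Y$-strand exits to $\partial O_Y$, and the severed copy of $C$ contributes the edge colored by the $\Omega$-color $\sum_i d_i X_i$, entering from one side of the seam and leaving from the other. Reassembling gives exactly the depicted picture, and restricting to the $X_i$-summand of the seam edge identifies the coupon as $f=\sum_i f_i$ with $f_i\in\hom_{\mathcal{C}}(XX_i,X_iY)$; the weights $d_i$ are precisely those in the $\Omega$-color and in Lemma~\ref{lem:unit1}.

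The hard part is the first reduction — showing that the underlying cornered handlebody of \emph{any} morphism can be brought to a cylinder by the moves, including the case where $H$ is knotted inside $\bS^3$ so that $R_B$ is not a priori a handlebody. Here one must use the full force of Theorem~\ref{thm:tubemove}: homeomorphism-invariance lets us replace $(R_B,\Gamma)$ by any homeomorphic model rel the corners, and — as in the proof of Theorem~\ref{thm:partition function2} — auxiliary Move $1$'s (equivalently link surgeries, Theorem~\ref{thm:linksurgery}) can trivialize the embedding before the handles are cancelled, with the various factors of $\mu$ cancelling against the subsequent Move~$2$'s. The remaining ingredients — merging the $\Omega$-circles and the final seam cut — are routine applications of handle slides, Planar Graphical Calculus, and rigidity of $\mathcal{C}$.
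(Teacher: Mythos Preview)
Your overall strategy matches the paper's: reduce the cornered handlebody to a solid cylinder, then normalize the diagram on the annulus. The paper's reduction is cleaner than yours: take a spine of $H$, project it to the plane, and apply Move~1 at each crossing (connecting the two tube-strands there); this unknots $H$ while raising its genus, after which Move~2 along compressing disks of the now-standard complement reduces the genus of $H$ to zero. By contrast, your ``handle decomposition of the complementary $B$-region'' is muddled --- taken literally it would eliminate $R_B$ rather than simplify $H$ --- and the $\Omega$-circle bookkeeping you propose is unnecessary, since all $\Omega$-circles and $\phi,\phi'$-coupons produced by the moves are simply absorbed into the diagram $\Gamma$ on the annulus.

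The genuine slip is in your final step. A vertical seam of the annulus $A = \partial D^2 \times [0,1]$ is an \emph{arc}, not a closed curve, so it cannot bound a disk in $R_B$ and Move~2 does not apply to it. The paper instead uses Planar Graphical Calculus via Lemma~\ref{lem:unit1}: choose a vertical arc $\gamma\subset A$ disjoint from the coupons and transverse to all strands, and apply the unit decomposition to the tensor product of the labels crossing $\gamma$. This replaces that bundle by a single $X_i$-colored strand (summed over $i$ with weight $d_i$), and what remains on the cut-open annulus is exactly a coupon $f_i\in\hom_{\mathcal C}(XX_i,X_iY)$. This is a purely two-dimensional move on the surface --- it does not touch the topology of $H$, and no Move~2 enters.
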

\begin{proof}
We first claim, by finitely many steps of local moves, one can make an arbitrary cornered $\mathcal{C}$-handlebody $H$ in $\hom_{\cA_1}(O_X, O_Y)$ homeomorphic to a cylinder. It can be seens as follows. Without loss of generality, we may assume $H$ is connected, otherwise, one can apply Move $1$ to connect to components.

Taking a spine of the handlebody $H$ (i.e., an embedded graph that $H$ retracts to) results in a knotted graph $G\subset \mathbb{R}\times [0, 1]$ which admit a planar presentation. To each crossing, we apply Move $1$ to result in an unknotted cornered handlebody. Then, we apply Move $2$ to reduce the genus of the handlebody to $0$.
Therefore, we can assume $H$ in the theorem is a cylinder.

\[
\vcenter{\hbox{\scalebox{0.5}{
\begin{tikzpicture}[scale=1]
\draw [line width=0.53cm] (0, 1.5)--(0, 2.2);
\draw [line width=0.53cm] (0,0) [partial ellipse=-0.1:180.1:1.5 and 1.5];
\draw [white, line width=0.5cm] (0,0) [partial ellipse=-0.1:180.1:1.5 and 1.5];
\begin{scope}[shift={(1.5, 0)}]
\draw [line width=0.53cm] (0,0) [partial ellipse=-0.1:180.1:1.5 and 1.5];
\draw [white, line width=0.5cm] (0,0) [partial ellipse=-0.1:180.1:1.5 and 1.5];
\draw [line width=0.53cm] (0, -1.5)--(0, -2.2);
\end{scope} % upper
\begin{scope}[shift={(1.5, 0)}]
\draw [line width=0.53cm] (0,0) [partial ellipse=180:360:1.5 and 1.5];
\draw [white, line width=0.5cm] (0,0) [partial ellipse=178:362:1.5 and 1.5];
\draw [white, line width=0.5cm] (0, -1.48)--(0, -2.22);
\draw (0, -2.2) [partial ellipse=180:360:0.26 and 0.13];
\draw[dashed] (0, -2.2) [partial ellipse=0:180:0.26 and 0.13];
\end{scope}
\draw [line width=0.53cm] (0,0) [partial ellipse=180:360:1.5 and 1.5];
\draw [white, line width=0.5cm] (0,0) [partial ellipse=178:362:1.5 and 1.5];
\draw [white, line width=0.5cm] (0, 1.48)--(0, 2.22);
\draw (0, 2.2) [partial ellipse=0:360:0.26 and 0.13];
\end{tikzpicture}}}}
\xrightarrow{\text{Move 1}}
\vcenter{\hbox{\scalebox{0.5}{
\begin{tikzpicture}[scale=1]
\draw [line width=0.43cm] (0.3, 0.6)--(1.2, 0.6);
\draw [line width=0.43cm] (0.3, -0.6)--(1.2, -0.6);
\draw [line width=0.53cm] (0, 1.5)--(0, 2.2);
\draw [line width=0.53cm] (0,0) [partial ellipse=-0.1:180.1:1.5 and 1.5];
\draw [white, line width=0.5cm] (0,0) [partial ellipse=-0.1:180.1:1.5 and 1.5];
\begin{scope}[shift={(1.5, 0)}]
\draw [line width=0.53cm] (0,0) [partial ellipse=-0.1:180.1:1.5 and 1.5];
\draw [white, line width=0.5cm] (0,0) [partial ellipse=-0.1:180.1:1.5 and 1.5];
\draw [line width=0.53cm] (0, -1.5)--(0, -2.2);
\end{scope} % upper
\begin{scope}[shift={(1.5, 0)}]
\draw [line width=0.53cm] (0,0) [partial ellipse=180:360:1.5 and 1.5];
\draw [white, line width=0.5cm] (0,0) [partial ellipse=178:362:1.5 and 1.5];
\draw [white, line width=0.5cm] (0, -1.48)--(0, -2.22);
\draw (0, -2.2) [partial ellipse=180:360:0.26 and 0.13];
\draw[dashed] (0, -2.2) [partial ellipse=0:180:0.26 and 0.13];
\end{scope}
\draw [line width=0.53cm] (0,0) [partial ellipse=180:360:1.5 and 1.5];
\draw [white, line width=0.5cm] (0,0) [partial ellipse=178:362:1.5 and 1.5];
\draw [white, line width=0.5cm] (0, 1.48)--(0, 2.22);
\draw (0, 2.2) [partial ellipse=0:360:0.26 and 0.13];
\draw [line width=0.4cm, white] (0.3, 0.6)--(1.2, 0.6);
\draw [line width=0.4cm, white] (0.3, -0.6)--(1.2, -0.6);
\draw[red] (0.75, 0.6) [partial ellipse=90:270:0.107 and 0.21];
\draw[red, dashed] (0.75, 0.6) [partial ellipse=-90:90:0.107 and 0.21];
\draw[red] (0.75, -0.6) [partial ellipse=90:270:0.107 and 0.21];
\draw[red, dashed] (0.75, -0.6) [partial ellipse=-90:90:0.107 and 0.21];
\end{tikzpicture}}}}
\]
Suppose $\Gamma$ is a tensor diagram on a cylinder $H$.
We take a vertical line $\gamma$ on the surface of the cylinder such that $\gamma$ is disjoint from the coupons of the tensor diagram $\Gamma$ and all strings intersect  $\gamma$ transversely.
Then by Lemma \ref{lem:unit1}, we obtained a tensor diagram $\Gamma'$ such that there is a single string colored by $X_i$ intersecting $\gamma$ transversely. This complete the proof.
\end{proof}

For any $X$, $Y \in \cC$, let 
$$\tube(X,Y) := \bigoplus_{i=0}^r \hom_{\mathcal{C}}(XX_i, X_iY)\,.$$
Define a map $\cyl: \tube(X,Y) \to \hom_{\cA}(\CIRCLE_X, \CIRCLE_Y)$ by
\begin{align}\label{eq:cyl}
\cyl(f) :=
\vcenter{\hbox{\scalebox{0.6}{
\begin{tikzpicture}[scale=0.35]
\draw (0,5) [partial ellipse=0:360:2 and 0.8];
\draw (-2, 5)--(-2, -4);
\draw (2, 5)--(2, -4);
\draw[dashed] (0,-4) [partial ellipse=0:180:2 and 0.8];
\draw (0,-4) [partial ellipse=180:360:2 and 0.8];
\draw[blue, ->-=0.5] (0, 4.2) node[above, black]{\tiny{${X}$}}->(0, 0.5);
\draw[blue, ->-=0.5] (0, -0.5)--(0, -4.8)node[below, black]{\tiny{${Y}$}};
\draw [red](0, .95) [partial ellipse=-75:0:2 and 0.8];
\draw [red, dashed](0, .95) [partial ellipse=0:180:2 and 0.8];
\draw [red](0, .95) [partial ellipse=180:255:2 and 0.8];
\node [draw, fill=white] (0, 0){\tiny $f$};
\end{tikzpicture}}}}\,,
\quad\forall f \in \tube(X,Y)\,.
\end{align}
Clearly, $\cyl$ is linear. In particular, if $g\in \hom_{\mathcal{C}}(X, Y)=\hom_{\cC}(X\1_{\cC}, \1_{\cC}Y)$, then
$\cyl(g)=\vcenter{\hbox{\scalebox{0.6}{
\begin{tikzpicture}[scale=0.35]
\draw (0,5) [partial ellipse=0:360:2 and 0.8];
\draw (-2, 5)--(-2, -4);
\draw (2, 5)--(2, -4);
\draw[dashed] (0,-4) [partial ellipse=0:180:2 and 0.8];
\draw (0,-4) [partial ellipse=180:360:2 and 0.8];
\draw[blue, ->-=0.5] (0, 4.2) node[above, black]{\tiny{${X}$}}->(0, 0.5);
\draw[blue, ->-=0.5] (0, -0.5)--(0, -4.8)node[below, black]{\tiny{${Y}$}};
\node [draw, fill=white] (0, 0){\tiny $g$};
\end{tikzpicture}}}}$. Therefore, if $\displaystyle f=\sum_{i=0}^r f_i \in \tube(X,Y)$ with $f_i\in \hom_{\mathcal{C}}(XX_i, X_iX)$, then by applying Move 2, we have $\Tr_{\cA}(\cyl(f))=\Tr_{\mathcal{C}}(f_0)$.

Since for any $X, Y \in \cC$, the restriction of $\cyl$ on $\hom_{\mathcal{C}}(X, Y)$ preserves the composition of morphisms, and $\cyl(\id_X) = \id_{O_{X}}$, it gives rise to a functor from $\mathcal{C}$ to $\cA$ sending $X \in \cC$ to $O_X \in \cA$.

\begin{remark}
Let $\Theta_{X}=\displaystyle \bigoplus_{i=0}^r\hom_{\mathcal{C}}(XX_i, XX_i)$.
We define a trace $\Tr_{\Theta}$ on $\Theta_X$ by
\begin{align*}
    \Tr_{\Theta}(f) := \sum_{i=0}^r d_i\Tr_{\mathcal{C}}(f_i),
\end{align*}
where $\displaystyle f=\sum_{i=0}^r f_i$ and $f_i\in \hom_{\mathcal{C}}(XX_i, XX_i)$.
\end{remark}

\begin{lemma}\label{lem:nondegenerate}
The map $\cyl$ is a linear isomorphism between $\tube(X,Y)$ and $\hom_{\cA}(\CIRCLE_X, \CIRCLE_Y)$ for any $X, Y\in \mathcal{C}$.
\end{lemma}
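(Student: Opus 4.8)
The plan is as follows. Surjectivity of $\cyl$ is already in hand: Theorem \ref{lem:morphismform} says that every morphism in $\hom_{\cA}(O_X, O_Y)$ is, up to isomorphism, of the form $\cyl(f)$ with $f \in \tube(X,Y)$ — and the same applies to $\hom_{\cA}(O_Y, O_X)$. So it remains to prove that $\cyl$ is injective, and I would deduce this from the defining property of the negligible ideal together with an explicit computation of a trace pairing.

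First I would translate injectivity into nondegeneracy of a bilinear form. By the construction of $\cA_1$, an element $f \in \tube(X,Y)$ lies in $\ker(\cyl)$ exactly when $\cyl(f)$ is negligible, i.e.\ $\tr\bigl(k \circ \cyl(f)\bigr) = 0$ for every $k \in \hom_{\cA_0}(O_Y, O_X)$. Since $\tr$ descends to $\cA_1$ and, by Theorem \ref{lem:morphismform}, every morphism class in $\hom_{\cA}(O_Y, O_X)$ is of the form $\cyl(g)$ with $g \in \tube(Y,X)$, this condition is equivalent to the vanishing (in $g$) of
\[ B(f,g) \;:=\; \tr\bigl(\cyl(g) \circ \cyl(f)\bigr)\,. \]
Hence $\ker(\cyl)$, restricted to $\tube(X,Y)$, is precisely the left radical of the bilinear form $B \colon \tube(X,Y) \times \tube(Y,X) \to \k$, and it suffices to show that $B$ is nondegenerate.

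To compute $B$, write $f = \sum_{i=0}^{r} f_i$ with $f_i \in \hom_{\cC}(XX_i, X_iY)$ and $g = \sum_{j=0}^{r} g_j$ with $g_j \in \hom_{\cC}(YX_j, X_jX)$. Stacking the cylinders $\cyl(f)$ and $\cyl(g)$ yields a cylinder carrying the two coupons $f, g$ and two $\Omega$-colored circles; by Theorem \ref{lem:morphismform} this equals $\cyl(h)$ for a unique $h \in \tube(X,X)$, and, as noted above, $B(f,g) = \Tr_{\cA}(\cyl(h)) = \Tr_{\cC}(h_0)$, the $\cC$-trace of the $X_0 = \1$ component of $h$. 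The core of the argument is the diagrammatic reduction of this stacked picture to $\cyl(h)$: use the handle-slide relation (Theorem \ref{thm:handleslides}) to bring the two $\Omega$-circles alongside each other, Lemma \ref{lem:unit1} to resolve the $\Omega$-color into simple summands, and Moves $1$ and $2$ to cancel the resulting surplus genus. This should produce a formula of the shape
\[ B(f,g) \;=\; \sum_{i=0}^{r} d_i\, \Tr_{\cC}\!\bigl(\widehat{g}_{i} \circ f_i\bigr)\,, \]
where $\widehat{g}_i \in \hom_{\cC}(X_iY, XX_i)$ is obtained from the component $g_{i^*}$ (with $X_{i^*} \cong X_i^{*}$) via the pivotal isomorphism $\hom_{\cC}(YX_{i^*}, X_{i^*}X) \cong \hom_{\cC}(X_iY, XX_i)$ coming from the rigidity of $\cC$; the essential point is that, after resolving the two $\Omega$-circles, a nonzero vacuum component $h_0$ survives only when the two simple loops fuse to $\1$, i.e.\ only the $i^{*}$-th summand of $g$ is paired against $f_i$.

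Granting the formula, nondegeneracy of $B$ is immediate: over an algebraically closed field of characteristic zero each $d_i \neq 0$, and in the semisimple category $\cC$ with nondegenerate trace the composition pairing $\hom_{\cC}(XX_i, X_iY) \times \hom_{\cC}(X_iY, XX_i) \to \k$, $(\phi,\psi) \mapsto \Tr_{\cC}(\psi\phi)$, is nondegenerate; thus $B$ is an orthogonal direct sum over $i$ of nonzero scalar multiples of nondegenerate pairings, so its left radical is trivial, and $\cyl$ is injective, hence bijective. The step I expect to be the main obstacle is precisely the diagrammatic reduction behind the displayed formula for $B$: tracking the two $\Omega$-circles faithfully through the handle slides, and verifying both that the off-diagonal contributions ($j \neq i^{*}$) vanish and that the surviving weight is the claimed nonzero scalar. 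Everything else is formal.
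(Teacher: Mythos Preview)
Your proposal is correct and follows essentially the same route as the paper: surjectivity from Theorem \ref{lem:morphismform}, then injectivity by showing the trace pairing $B(f,g)=\Tr_{\cA}(\cyl(f)\cyl(g))$ is nondegenerate, which in turn reduces to the nondegeneracy of $\Tr_{\cC}$ together with $d_i\neq 0$. The paper packages your formula as $\Tr_{\cA}(\cyl(f)\cyl(g))=\Tr_{\Theta}(f\rho(g))$ with $\Tr_{\Theta}=\sum_i d_i\Tr_{\cC}$ and $\rho$ exactly the pivotal isomorphism you describe; the diagrammatic reduction you flag as the main obstacle is in fact dispatched by a single Move~2 (no handle slides or Move~1 are needed), so that step is easier than you anticipate.
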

\begin{proof}
By Theorem \ref{lem:morphismform}, $\cyl$ is surjective, so it suffices to show that $\cyl$ is injective. Suppose $f\in \tube(X,Y)$ and $g\in \tube(Y,X)$.
By the Move 2, we have
\begin{align*}
    \Tr_{\cA}(\cyl(f) \cyl(g))=\Tr_{\Theta}(f \rho(g)),
\end{align*}
where 
\begin{align*}
\rho(g)=\raisebox{-1cm}{
\begin{tikzpicture}
\draw [blue, -<-=0.1,, -<-=0.9] (0.2, -1)--(0.2, 0.4).. controls +(0, 0.5) and +(0, 0.5).. (0.8,0.4)--(0.8, -1);
\draw [blue, ->-=0.1, , ->-=0.9] (-0.2, 1)--(-0.2, -0.4).. controls +(0, -0.5) and +(0, -0.5).. (-0.8,-0.4)--(-0.8, 1);
\draw [fill=white] (-0.5, -0.4) rectangle (0.5, 0.4);
\node at (0, 0) {$g$};
\end{tikzpicture}}.
\end{align*}
Since $\Tr_{\cC}$ is nondegenerate, and $\rho$ is an isomorphism, so for all $f\in \tube(X,Y)$, there exists $g\in \tube(Y,X)$ such that $\Tr_{\cA}(\cyl(f) \cyl(g))=\Tr_{\Theta}(f \rho(g))\ne 0$. Thus, $\cyl$ is injective, and we are done.
\end{proof}

\begin{proposition}\label{prop:localfinite}
The category $\cA$ is locally finite, i.e. the morphism spaces are finite-dimensional.
\end{proposition}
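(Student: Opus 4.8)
The plan is to reduce the statement to the single-disk Hom spaces $\hom_{\cA}(O_X, O_Y)$ with $X, Y \in \cC$, where finite-dimensionality is immediate from Lemma \ref{lem:nondegenerate}, and then to propagate this through direct sums and the Karoubi envelope.

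First I would treat the building blocks. By Lemma \ref{lem:nondegenerate}, the map $\cyl$ restricts to a linear isomorphism between $\tube(X,Y)$ and $\hom_{\cA}(O_X, O_Y)$, and by definition $\tube(X,Y) = \bigoplus_{i=0}^r \hom_{\cC}(XX_i, X_iY)$. Since $\cC$ is a fusion category, it has only finitely many isomorphism classes of simple objects $X_0, \dots, X_r$, and each morphism space $\hom_{\cC}(XX_i, X_iY)$ is finite-dimensional over $\k$; hence $\tube(X,Y)$ is a finite direct sum of finite-dimensional spaces and therefore finite-dimensional, and so is $\hom_{\cA}(O_X, O_Y)$.

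Next I would pass to arbitrary objects of $\cA_1$. By Corollary \ref{cor:singledisk}, every object of $\cA_1$ is isomorphic to a finite direct sum $\bigoplus_{a} O_{X_a}$ of single-disk objects, and by Remark \ref{rem:homspace} any morphism space of $\cA_1$ is linearly isomorphic to a finite direct sum of spaces of the form $\hom_{\cA}(O_{X_a}, O_{Y_b})$; since Hom spaces are invariant under isomorphism of objects, this shows that every Hom space of $\cA_1$ is finite-dimensional. Finally, an object of $\cA$ is a pair $(O,p)$ with $O\in\cA_1$ and $p=p^2\in\hom_{\cA_1}(O,O)$, and $\hom_{\cA}((O,p),(O',q)) = q\circ\hom_{\cA_1}(O,O')\circ p$ is a linear subspace of $\hom_{\cA_1}(O,O')$, hence finite-dimensional. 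This establishes that $\cA$ is locally finite.

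I do not expect a genuine obstacle here: the real content was already packaged into Lemma \ref{lem:nondegenerate}, which identifies $\hom_{\cA}(O_X,O_Y)$ with the explicitly finite-dimensional tube space $\tube(X,Y)$. The only mildly delicate points are the bookkeeping for direct-sum objects (handled by Corollary \ref{cor:singledisk} and Remark \ref{rem:homspace}) and the observation that passing to the idempotent completion can only cut Hom spaces down to subspaces, so finite-dimensionality is preserved.
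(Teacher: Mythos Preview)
Your proof is correct and follows essentially the same approach as the paper: reduce to single-disk objects via Corollary~\ref{cor:singledisk}, invoke Lemma~\ref{lem:nondegenerate} to identify $\hom_{\cA}(O_X,O_Y)$ with the finite-dimensional space $\tube(X,Y)$, and then observe that passing to the Karoubi envelope only produces subspaces. Your version is more explicit about the direct-sum and idempotent-completion bookkeeping, but the underlying argument is the same.
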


\begin{proof}
Note that any object in $\cA$ is a subobject in $\cA_1$.
By Corollary \ref{cor:singledisk}, it suffices to consider the morphism space $\hom_{\cA}(\CIRCLE_X, \CIRCLE_Y)$ for some $X, Y\in\mathcal{C}$.
By Lemma \ref{lem:nondegenerate}, we have that $\dim \hom_{\cA}(\CIRCLE_X, \CIRCLE_Y)= \dim T(X,Y)$. 
Note that $ \dim \hom_{\mathcal{C}}(X X_i, X_i Y)$ is finite. 
Thus $\cA$ is locally finite.
\end{proof}

\begin{remark}
Note that the tube category $\mathcal{A}$ has a canonical braid structure, which is defined to be the braiding of tubes. For instance, the braid map $c_{O_{X}, O_{Y}}$ is depicted as
\begin{align*}
\vcenter{\hbox{
\begin{tikzpicture}
\draw  (1.6, 0) .. controls +(0, 0.3) and +(0, 0.3) .. (2.4, 0);
\draw (1.6, 0) .. controls +(0,-0.3) and +(0, -0.3) .. (2.4, 0);
\draw  (1.6, 0) .. controls +(0, -0.3) and +(0.3, 0) ..(1, -0.7) .. controls +(-0.7, 0) and +(0,0.7) ..(-0.4, -2);
\draw  (2.4, 0) .. controls +(0, -0.7) and +(0.7, 0) ..(1, -1.3) .. controls +(-0.3, 0) and +(0,0.3) ..(0.4, -2);
\draw [blue,->-=0.1, ->-=0.9] (2, -0.21) .. controls +(0, -0.5) and +(0.5, 0) ..(1, -1) .. controls +(-0.4, 0) and +(0,0.4) ..(0, -2)--(0, -2.2) node [below, black] {\tiny $Y$};
\draw [ dashed] (-0.4, -2) .. controls +(0, 0.3) and +(0, 0.3) .. (0.4, -2);
\draw (-0.4, -2) .. controls +(0,-0.3) and +(0, -0.3) .. (0.4, -2); % right tube
\path [fill=white]  (0.15,-0.69) rectangle (1.85, -1.31);
\draw (-0.4, 0) .. controls +(0, 0.3) and +(0, 0.3) .. (0.4, 0);
\draw  (-0.4, 0) .. controls +(0,-0.3) and +(0, -0.3) .. (0.4, 0);
\draw  (0.4, 0) .. controls +(0, -0.3) and +(-0.3, 0) ..(1, -0.7) .. controls +(0.7, 0) and +(0,0.7) ..(2.4, -2);
\draw  (-0.4, 0) .. controls +(0, -0.7) and +(-0.7, 0) ..(1, -1.3) .. controls +(0.3, 0) and +(0,0.3) ..(1.6, -2);
\draw [blue,->-=0.5] (0, -0.21) .. controls +(0, -0.5) and +(-0.5, 0) ..(1, -1) .. controls +(0.5, 0) and +(0,0.4) ..(2, -2)--(2, -2.2) node [below, black] {\tiny $X$};
\draw [dashed] (1.6, -2) .. controls +(0, 0.3) and +(0, 0.3) .. (2.4, -2);
\draw  (1.6, -2) .. controls +(0,-0.3) and +(0, -0.3) .. (2.4, -2); % left tube
\end{tikzpicture}}}.
\end{align*}
Therefore, $\cA$ can be realized as a braided monoidal category equipped with this canonical braid structure.
\end{remark}

Later we will see $\cA$ is a braided fusion category, and the next lemma actually implies the modularity of $\cA$.

\begin{lemma}\label{lem:zkirby}
In the tube category $\cA$, we have that 
\begin{align*}
\vcenter{\hbox{\scalebox{0.5}{
\begin{tikzpicture}[xscale=0.8, yscale=0.6]
\draw [line width=0.83cm] (0,0) [partial ellipse=0:180.1:2 and 1.5];
\draw [white, line width=0.8cm] (0,0) [partial ellipse=0:180.1:2 and 1.5];
\draw [red] (0,0) [partial ellipse=0:180:2 and 1.5];
\path [fill=white](-0.65, 0) rectangle (0.65, 2.5);
\begin{scope}[shift={(0,3)}]
\draw (0,0) [partial ellipse=0:360:0.6 and 0.3];
\end{scope}
\draw (-0.6, 3)--(-0.6, 0) (0.6, 3)--(0.6, 0); 
\draw [blue, -<-=0.3] (0, 0)--(0, 2.7) node[pos=0.3, right, black]{\tiny$X_{i}$};% upper one
\draw [blue] (0, 0)--(0, -3.3);
\draw (-0.6, -3)--(-0.6, 0) (0.6, -3)--(0.6, 0);
\draw [line width=0.83cm] (0,0) [partial ellipse=180:360:2 and 1.5];
\draw [white, line width=0.8cm] (0,0) [partial ellipse=178:362:2 and 1.5];
\draw [red] (0,0) [partial ellipse=178:362:2 and 1.5];
\begin{scope}[shift={(0,-3)}]
\draw [dashed](0,0) [partial ellipse=0:180:0.6 and 0.3];
\draw (0,0) [partial ellipse=180:360:0.6 and 0.3];
\end{scope}
\end{tikzpicture}}}}
=\mu\delta_{i, 0}\vcenter{\hbox{ \scalebox{0.6}{
\begin{tikzpicture}[xscale=0.6,yscale=0.8]
\begin{scope}[shift={(0, -1.5)}]
\draw [ dashed] (-1, 1.25) .. controls +(0, 0.3) and +(0, 0.3).. (1, 1.25);
\draw    (1,1.25)  .. controls +(0, -0.3) and +(0, -0.3).. (-1, 1.25);
\draw (-1, 1.25) .. controls +(0, 1.2) and +(0, 1.2).. (1, 1.25);
\end{scope}
\begin{scope}[shift={(0, 1.5)}]
\draw (-1, 1.25) .. controls +(0, 0.3) and +(0, 0.3).. (1, 1.25);
\draw    (1,1.25)  .. controls +(0, -0.3) and +(0, -0.3).. (-1, 1.25);
\draw (-1, 1.25) .. controls +(0, -1.2) and +(0, -1.2).. (1, 1.25);
\end{scope}
\end{tikzpicture}}}}
\end{align*}
\end{lemma}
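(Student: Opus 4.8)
The plan is to evaluate the left‑hand side directly with the topological moves of Theorem \ref{thm:partition function} (recall $\zeta=1$ throughout this section), and to show it is the claimed scalar multiple of $\eta\circ\epsilon$, where $\epsilon\in\hom_\cA(O_\1,\1_\emptyset)$ and $\eta\in\hom_\cA(\1_\emptyset,O_\1)$ are the morphisms represented by a $3$‑ball carrying a single unmarked $\cC$‑disk as a corner (so the displayed morphism on the right is $\eta\circ\epsilon$).

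First I would read off the left‑hand side: it is the morphism in $\hom_\cA(O_{X_i},O_{X_i})$ whose $A$‑colored region is the disjoint union of the identity tube on $O_{X_i}$ with an unknotted solid torus $T$ linking the core $X_i$‑strand exactly once and carrying an $\Omega$‑colored longitude. By Theorem \ref{thm:linksurgery} applied to the $0$‑framed unknotted core $L$ of $T$ (whose framed longitude is exactly the $\Omega$‑circle), the invariant of an alterfold containing this $A$‑colored $\Omega$‑decorated $T$ equals $\mu$ times the invariant obtained by performing honest link surgery along $L$; since $L$ is a $0$‑framed unknot linking the $X_i$‑core once, that surgery replaces $\bS^3$ by $\bS^1\times\bS^2$ with the $X_i$‑strand now running once around the $\bS^1$‑factor. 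As morphisms of $\cA$ live in $\bS^3$, this reduction is to be carried out after closing up with $\Tr_\cA$ and testing against morphisms, which is legitimate since $\Tr_\cA$ is nondegenerate.

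Next I would evaluate the configuration in $\bS^1\times\bS^2$. There the two solid tori form a genus‑one Heegaard splitting, and a Move $0$ followed by a Move $1$ on the $B$‑colored solid torus turns it into a $3$‑ball while multiplying the invariant by $\zeta\mu\cdot\zeta^{-1}=\mu$; this leaves on the resulting sphere a planar diagram consisting of the $\Omega$‑circle created by Move $1$ together with the image of the $X_i$‑strand. Because that strand ran around the $\bS^1$‑factor, it is severed in this reduction, so planar evaluation forces a pair of dual bases living in $\hom_\cC(X_i,\1)$ and $\hom_\cC(\1,X_i)$ — a space that is $\k$ when $i=0$ and $0$ otherwise, which is the origin of $\delta_{i,0}$. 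For $i=0$ the residual $\Omega$‑circle bounds an empty disk and evaluates to $\sum_j d_j^2=\mu$, the $3$‑ball is filled by a Move $3$, and the remainder is exactly the closed‑up form of $\eta\circ\epsilon$; collecting the factors $\mu$ (surgery), $\mu^{-1}$ (inverting the Heegaard reduction) and $\mu$ (the empty $\Omega$‑loop) with the Kronecker delta gives the coefficient $\mu\,\delta_{i,0}$. To upgrade this from an identity of traces to an identity of morphisms, one runs the same computation with an extra test morphism inserted: for $i=0$ one finds $\Tr_\cA(\mathrm{LHS}\circ\cyl(\id_{X_k}))=\mu\,d_k=\mu\,\Tr_\cA((\eta\circ\epsilon)\circ\cyl(\id_{X_k}))$ for every $k$, and since $\{\cyl(\id_{X_k})\}_{k}$ is a basis of $\hom_\cA(O_\1,O_\1)$ nondegeneracy of $\Tr_\cA$ finishes the case $i=0$; for $i\neq0$ all these pairings vanish, so the left‑hand side is zero.

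The hard part will be the topological bookkeeping: making the surgery comparison precise at the level of morphisms rather than only of closed invariants, and — above all — checking carefully that after the Move $0$/Move $1$ reduction in $\bS^1\times\bS^2$ the $X_i$‑strand is cut in exactly the claimed way, so that the factor $\hom_\cC(X_i,\1)$, hence the Kronecker delta, genuinely appears and the powers of $\mu$ add up to exactly one. An alternative purely categorical route avoids surgery: by Theorem \ref{lem:morphismform} and Lemma \ref{lem:nondegenerate} the left‑hand side is $\cyl(f)$ for a unique $f\in\tube(X_i,X_i)$; expanding the $\Omega$‑circle as $\sum_j d_j X_j$, pulling each resulting loop onto the tube, and collapsing the ensuing bubbles with Lemma \ref{lem:unit1} and sphericity identifies $f$ component by component, and one finds that only the component admitting a nonzero map into the relevant hom‑space survives, with total coefficient $\mu\,\delta_{i,0}$.
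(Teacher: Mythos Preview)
Your approach — close up with $\Tr_\cA$, invoke Theorem~\ref{thm:linksurgery} to trade the $\Omega$-torus for genuine $0$-surgery, evaluate in $\bS^1\times\bS^2$, and then lift back to a morphism identity via nondegeneracy of the trace — is a genuinely different route from the paper's. The paper never leaves the morphism level: by Theorem~\ref{thm:tubemove} each Move is already an equality of morphisms in $\cA$, so one simply applies Move~1 to bridge the tube to the torus, performs handle slides so that the two $\Omega$-curves combine and contribute the factor $\mu$, applies a single Move~2 whose disk cuts the vertical $X_i$-strand (this is precisely where $\delta_{i,0}$ enters), and finishes with Move~1 in reverse to obtain the pair of half-balls. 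No surgery, no trace pairing, no nondegeneracy argument.

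Your surgery argument can be salvaged, but the reduction you describe in $\bS^1\times\bS^2$ is wrong as written. You claim ``a Move~$0$ followed by a Move~$1$ on the $B$-colored solid torus turns it into a $3$-ball'': it does not. Move~0 inserts an $A$-ball and Move~1 attaches an $A$-$1$-handle; neither severs any strand, and no combination of them collapses a $B$-colored solid torus to a ball (you are left with $T^2\times I$ or another solid torus). The step that actually does the work is Move~2 along the meridian disk of the $B$-solid torus: its boundary meets the $X_i$-labelled longitude exactly once, so it inserts a dual-basis pair in $\hom_\cC(X_i,\1)\otimes\hom_\cC(\1,X_i)$ and hence yields $\delta_{i,0}$; a subsequent Move~3 then empties $B$. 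With $\zeta=1$ these contribute no scalar, so the only $\mu$ present is the one from surgery — your ``$\mu\cdot\mu^{-1}\cdot\mu$'' accounting is spurious even though it happens to land on the right total. Finally, for $i\neq 0$ you must pair against a basis of $\hom_\cA(O_{X_i},O_{X_i})$ (not of $\hom_\cA(O_\1,O_\1)$) and argue each pairing vanishes; the same Move~2 does this, but you should say so rather than leave it as ``all these pairings vanish''. The ``purely categorical route'' sketched at the end is too vague to evaluate.
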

\begin{proof}
We compute,
\begin{align*}
\vcenter{\hbox{\scalebox{0.5}{
\begin{tikzpicture}[xscale=0.8, yscale=0.6]
\draw [line width=0.83cm] (0,0) [partial ellipse=0:180.1:2 and 1.5];
\draw [white, line width=0.8cm] (0,0) [partial ellipse=0:180.1:2 and 1.5];
\draw [red] (0,0) [partial ellipse=0:180:2 and 1.5];
\path [fill=white](-0.65, 0) rectangle (0.65, 2.5);
\begin{scope}[shift={(0,3)}]
\draw (0,0) [partial ellipse=0:360:0.6 and 0.3];
\end{scope}
\draw (-0.6, 3)--(-0.6, 0) (0.6, 3)--(0.6, 0); 
\draw [blue, -<-=0.3] (0, 0)--(0, 2.7) node[pos=0.3, right, black]{\tiny$X_{i}$};% upper one
\draw [blue] (0, 0)--(0, -3.3);
\draw (-0.6, -3)--(-0.6, 0) (0.6, -3)--(0.6, 0);
\draw [line width=0.83cm] (0,0) [partial ellipse=180:360:2 and 1.5];
\draw [white, line width=0.8cm] (0,0) [partial ellipse=178:362:2 and 1.5];
\draw [red] (0,0) [partial ellipse=178:362:2 and 1.5];
\begin{scope}[shift={(0,-3)}]
\draw [dashed](0,0) [partial ellipse=0:180:0.6 and 0.3];
\draw (0,0) [partial ellipse=180:360:0.6 and 0.3];
\end{scope}
\end{tikzpicture}}}}
=
\vcenter{\hbox{\scalebox{0.5}{
\begin{tikzpicture}[scale=0.8]
\draw [line width=0.63cm] (0,0) [partial ellipse=0:180.1:2 and 1.2];
\draw [white, line width=0.6cm] (0,0) [partial ellipse=0:180.1:2 and 1.2];
\draw [red] (0,0) [partial ellipse=0:180:2 and 1.2];
\path [fill=white](-0.65, 0) rectangle (0.65, 2.5);
\begin{scope}[shift={(0,2.5)}]
\draw (0,0) [partial ellipse=0:360:0.6 and 0.3];
\end{scope}
\draw (-0.6, 2.5)--(-0.6, 0) (0.6, 2.5)--(0.6, 0); 
\draw [blue, -<-=0.3] (0, 0)--(0, 2.2) node[pos=0.3, right, black]{\tiny$X_{i}$};% upper one
\draw (-0.6, -2.5)--(-0.6, 0) (0.6, -2.5)--(0.6, 0);
\draw [blue] (0, 0)--(0, -2.8);
\draw [line width=0.63cm] (0,0) [partial ellipse=180:360:2 and 1.2];
\draw [white, line width=0.6cm] (0,0) [partial ellipse=178:362:2 and 1.2];
\draw [red] (0,0) [partial ellipse=178:362:2 and 1.2];
\begin{scope}[shift={(0,-2.5)}]
\draw [dashed](0,0) [partial ellipse=0:180:0.6 and 0.3];
\draw (0,0) [partial ellipse=180:360:0.6 and 0.3];
\end{scope}
\path [fill=white](-1.7, -0.5) rectangle (-0.5, 0.5);
\draw (-1.2, 0.5)--(-0.6, 0.5) (-1.2, -0.5)--(-0.6, -0.5);
\draw [red] (-0.9, 0) [partial ellipse=90:270:0.2 and 0.5] ;
\draw [red, dashed] (-0.9, 0) [partial ellipse=-90:90:0.2 and 0.5] ;
\end{tikzpicture}}}}
=\mu
\vcenter{\hbox{\scalebox{0.5}{
\begin{tikzpicture}[scale=0.8]
\draw [line width=0.63cm] (0,0) [partial ellipse=0:180.1:2 and 1.2];
\draw [white, line width=0.6cm] (0,0) [partial ellipse=0:180.1:2 and 1.2];
\draw [red] (0,0) [partial ellipse=0:180:2 and 1.2];
\path [fill=white](-0.65, 0) rectangle (0.65, 2.5);
\begin{scope}[shift={(0,2.5)}]
\draw (0,0) [partial ellipse=0:360:0.6 and 0.3];
\end{scope}
\draw (-0.6, 2.5)--(-0.6, 0) (0.6, 2.5)--(0.6, 0); 
\draw [blue, -<-=0.3] (0, 0)--(0, 2.2) node[pos=0.3, right, black]{\tiny$X_{i}$};% upper one
\draw (-0.6, -2.5)--(-0.6, 0) (0.6, -2.5)--(0.6, 0);
\draw [blue] (0, 0)--(0, -2.8);
\draw [line width=0.63cm] (0,0) [partial ellipse=180:360:2 and 1.2];
\draw [white, line width=0.6cm] (0,0) [partial ellipse=178:362:2 and 1.2];
\draw [red] (0,0) [partial ellipse=178:362:2 and 1.2];
\begin{scope}[shift={(0,-2.5)}]
\draw [dashed](0,0) [partial ellipse=0:180:0.6 and 0.3];
\draw (0,0) [partial ellipse=180:360:0.6 and 0.3];
\end{scope}
\path [fill=white](-1.7, -0.5) rectangle (-0.5, 0.5);
\draw (-1.2, 0.5)--(-0.6, 0.5) (-1.2, -0.5)--(-0.6, -0.5);
\end{tikzpicture}}}}
=\mu\delta_{i, 0}
\vcenter{\hbox{\scalebox{0.5}{
\begin{tikzpicture}[scale=0.8]
\begin{scope}[shift={(0,2.5)}]
\draw (0,0) [partial ellipse=0:360:0.6 and 0.3];
\end{scope}
\draw (-0.6, 2.5)--(-0.6, 0) (0.6, 2.5)--(0.6, 0); % upper one
\draw (-0.6, -2.5)--(-0.6, 0) (0.6, -2.5)--(0.6, 0);
\begin{scope}[shift={(0,0)}]
\draw [dashed, red](0,0) [partial ellipse=0:180:0.6 and 0.3];
\draw [red] (0,0) [partial ellipse=180:360:0.6 and 0.3];
\end{scope}
\begin{scope}[shift={(0,-2.5)}]
\draw [dashed](0,0) [partial ellipse=0:180:0.6 and 0.3];
\draw (0,0) [partial ellipse=180:360:0.6 and 0.3];
\end{scope}
\end{tikzpicture}}}}
=\mu\delta_{i, 0}\vcenter{\hbox{ \scalebox{0.6}{
\begin{tikzpicture}[xscale=0.6,yscale=0.8]
\begin{scope}[shift={(0, -1.5)}]
\draw [ dashed] (-1, 1.25) .. controls +(0, 0.3) and +(0, 0.3).. (1, 1.25);
\draw    (1,1.25)  .. controls +(0, -0.3) and +(0, -0.3).. (-1, 1.25);
\draw (-1, 1.25) .. controls +(0, 1.2) and +(0, 1.2).. (1, 1.25);
\end{scope}
\begin{scope}[shift={(0, 1.5)}]
\draw (-1, 1.25) .. controls +(0, 0.3) and +(0, 0.3).. (1, 1.25);
\draw    (1,1.25)  .. controls +(0, -0.3) and +(0, -0.3).. (-1, 1.25);
\draw (-1, 1.25) .. controls +(0, -1.2) and +(0, -1.2).. (1, 1.25);
\end{scope}
\end{tikzpicture}}}}.
\end{align*}
The first equality comes from applying Move $1$, the second equality comes from handle slides,  the third comes from applying Move $2$, and the last equality comes from applying Move $1$ in the opposite direction.
This completes the proof.
\end{proof}

\begin{remark}
By Lemma \ref{lem:zkirby}, for any object $X\in \mathcal{C}$, we have that
\begin{align*}
\vcenter{\hbox{\scalebox{0.5}{
\begin{tikzpicture}[xscale=0.8, yscale=0.6]
\draw [line width=0.83cm] (0,0) [partial ellipse=0:180.1:2 and 1.5];
\draw [white, line width=0.8cm] (0,0) [partial ellipse=0:180.1:2 and 1.5];
\draw [red] (0,0) [partial ellipse=0:180:2 and 1.5];
\path [fill=white](-0.65, 0) rectangle (0.65, 2.5);
\begin{scope}[shift={(0,3)}]
\draw (0,0) [partial ellipse=0:360:0.6 and 0.3];
\end{scope}
\draw (-0.6, 3)--(-0.6, 0) (0.6, 3)--(0.6, 0); 
\draw [blue] (0, 0)--(0, 2.7);% upper one
\draw (-0.6, -3)--(-0.6, 0) (0.6, -3)--(0.6, 0);
\draw [blue, ->-=0.1] (0, 0)--(0, -3.3) node [right, pos=0.1, black] {\tiny $X$};
\draw [line width=0.83cm] (0,0) [partial ellipse=180:360:2 and 1.5];
\draw [white, line width=0.8cm] (0,0) [partial ellipse=178:362:2 and 1.5];
\draw [red] (0,0) [partial ellipse=178:362:2 and 1.5];
\begin{scope}[shift={(0,-3)}]
\draw [dashed](0,0) [partial ellipse=0:180:0.6 and 0.3];
\draw (0,0) [partial ellipse=180:360:0.6 and 0.3];
\end{scope}
\end{tikzpicture}}}}
=\mu\vcenter{\hbox{\scalebox{0.5}{
\begin{tikzpicture}[xscale=1,yscale=1.4]
\begin{scope}[shift={(0, -1.5)}]
\draw [ dashed] (-1, 1.25) .. controls +(0, 0.3) and +(0, 0.3).. (1, 1.25);
\draw    (1,1.25)  .. controls +(0, -0.3) and +(0, -0.3).. (-1, 1.25);
\draw (-1, 1.25) .. controls +(0, 1.2) and +(0, 1.2).. (1, 1.25);
\draw [blue] (0, 1)node [below, black] {\tiny $X$} --(0,1.5) ;
\node [draw, fill=white] at(0, 1.5) {\tiny $\varphi$};
\end{scope}
\begin{scope}[shift={(0, 1.5)}]
\draw (-1, 1.25) .. controls +(0, 0.3) and +(0, 0.3).. (1, 1.25);
\draw    (1,1.25)  .. controls +(0, -0.3) and +(0, -0.3).. (-1, 1.25);
\draw (-1, 1.25) .. controls +(0, -1.2) and +(0, -1.2).. (1, 1.25);
\draw [blue] (0, 1)node [above, black] {\tiny $X$} --(0,0.8);
\node [draw, fill=white] at(0, 0.7) {\tiny $\varphi'$};
\end{scope}
\end{tikzpicture}}}}
\end{align*}
where $\varphi$ is a basis of $\hom( \1, X)$ and $\varphi'$ is its dual basis.
\end{remark}

\subsection{Equivalence of Tube Category and the Drinfeld Center}
In this section, we show the tube category $\cA$ is equivalent to the Drinfeld center $\mathcal{Z(C)}$ as a braided monoidal category.

We define an assignment $F_0$ from $\cA_1$ to $\mathcal{C}$ as follows. On the object level, $F_{0}(O_{Y_1Y_2\cdots Y_k})=\oplus_{i}X_i^{*}Y_1Y_2\cdots Y_kX_i$, and $F_0(O\otimes O')=F_{0}(O)\otimes F_{0}(O')$. On the morphism level, to each $\cC$-handlebody $H: O\rightarrow O'$ as a morphism in $\mathcal{\cA}_0$ we define a $F_0(H)$ to be a $\mathcal{C}$-diagram in a rectangular region in the $y=1$ plane with marked points on the top edge and bottom edge respectively as follows.

First isotope $H$ such that the $y$-coordinate of $H$ is less than $1$. For any $\cC$-disk $D_n^1$ centered at $(n,0,1)$, connect it with the disk centered at $(n,1,2)$ by the mapping cylinder of a homeomorphism of $D_n^1$ which stretches $\partial D_n^1$ between the \$-sign and the first marked point it encounters (going counter-clockwisely), so that their distance on the circle is larger than $\pi/4$. For the ``lower'' $\cC$-disks, i.e., those with center on the line $z=0$, $y=0$, we attach them to the $y=1$ plane in the similar way so that the centers of their images are on the line $z=-1$, $y=1$.

Then to each upper(resp. lower) disk attached to $(n, 1, 2)$(resp. $(n', 1, -1)$), put a lower(resp. upper) semi-circle colored by a simple object $X_n$(resp. $X_{n'}$) on the $y=1$ plane, centered at $(n, 1, 2)$(resp. $(n', -1, -1)$) with radius $1/3$, multiply a factor of $d_{X_{n}}$.  Summing over all possible colorings. This result in a cornered $\mathcal{C}$- handlebody $\tilde{H}$ with its corner a rectangular $\mathcal{C}$-disk.

Lastly, we apply finite steps of Move $1$ and Move $2$ to make it a planar $\mathcal{C}$-diagram on the $y=1$ plane, which defines a morphism in $\hom_{C}(F(O), F(O'))$. The existence of such moves can be seen by a similar argument as in the proof of Theorem $\ref{lem:morphismform}$.

To show $F_0$ well-defined, we only need to show that different sequences of Moves result in the same morphism in $\hom(F_0(O), F_0(O'))$. For all $g\in \hom(F_0(O'), F_0(O))$, let $\boxed{g}$ be the rectangle containing a string diagram representing $g$ with a single coupon. Gluing the boundary of $\boxed{g}$ and $\tilde{H}$ result in a 3-alterfold $\boxed{g}\#\tilde{H}$. By definition,
$$Z(\boxed{g}\# \tilde{H})=\tr_{\mathcal{C}}(F_{0}(H)g).$$
On one hand, the partition function is invariant under Move $1$ and Move $2$. On the other hand, the trace on the right hand defines a non-degenerate form. Therefore, $F_{0}(H)$ defines a unique morphism in $\hom(F_0(O), F_0(O'))$.

In the rest of the paper, we will present $F_0(H)$ by the cornered handlebody $\tilde{H}$ with a single rectangle corner. This will simplify the computation. E.g.
\begin{align*}
F_0\left(
\vcenter{\hbox{\scalebox{0.7}{
\begin{tikzpicture}[scale=0.35]
\draw (0,5) [partial ellipse=0:360:2 and 0.8];
\draw (-2, 5)--(-2, -4);
\draw (2, 5)--(2, -4);
\draw[dashed] (0,-4) [partial ellipse=0:180:2 and 0.8];
\draw (0,-4) [partial ellipse=180:360:2 and 0.8];
\draw[blue, ->-=0.5] (0, 4.2) node[above, black]{\tiny{${X}$}}->(0, 0.5);
\draw[blue, ->-=0.5] (0, -0.5)--(0, -4.8)node[below, black]{\tiny{${Y}$}};
\draw [red](0, .95) [partial ellipse=-75:0:2 and 0.8];
\draw [red, dashed](0, .95) [partial ellipse=0:180:2 and 0.8];
\draw [red](0, .95) [partial ellipse=180:255:2 and 0.8];
\node [draw, fill=white] (0, 0){\tiny $f$};
\end{tikzpicture}}}}
\right)
=\sum_{i, j=0}^rd_{i}^{1/2} d_j^{1/2}
\vcenter{\hbox{\scalebox{0.7}{\begin{tikzpicture}[scale=0.35]
\draw [dashed] (0,5) [partial ellipse=0:360:2 and 0.8];
\draw (-2, 5)--(-2, -4);
\draw (2, 5)--(2, -4);
\draw[dashed] (0,-4) [partial ellipse=0:180:2 and 0.8];
\draw [dashed] (0,-4) [partial ellipse=180:360:2 and 0.8];
\draw[blue, ->-=0.5] (0, 7) node[above, black]{\tiny{${X}$}}->(0, 0.5);
\draw[blue, ->-=0.5] (0, -0.5)--(0, -6)node[below, black]{\tiny{${Y}$}};
\draw [red](0, .95) [partial ellipse=-75:0:2 and 0.8];
\draw [red, dashed](0, .95) [partial ellipse=0:180:2 and 0.8];
\draw [red](0, .95) [partial ellipse=180:255:2 and 0.8];
\node [draw, fill=white] (0, 0){\tiny $f$};
\draw [blue] (0, -6) [partial ellipse=40:0:2.5 and 4] node [below, black] {\tiny $X_{i}$};
\draw [blue, dashed, ->-=0.6] (0, -6) [partial ellipse=135:45:2.5 and 4];
\draw [blue] (0, -6) [partial ellipse=135:180:2.5 and 4] node [below, black] {\tiny $X_{i}^*$};
\draw [blue] (0, 7) [partial ellipse=-40:0:2.5 and 4] node [above, black] {\tiny $X_{j}$};
\draw [blue, dashed, ->-=0.6] (0, 7) [partial ellipse=-45:-135:2.5 and 4];
\draw [blue] (0, 7) [partial ellipse=-140:-180:2.5 and 4] node [above, black] {\tiny $X_{j}^*$};
\draw (-4,-6) rectangle (4, 7);
\end{tikzpicture}}}}
\end{align*}
The summand on the right-hand side is understood as a $xz$-plane projection of the following $3$-d diagram.
\begin{align*}
\vcenter{\hbox{\scalebox{0.7}{\begin{tikzpicture}[scale=0.35]
\draw (4, -5)--(-4,-6)--(-4, 7) --(4, 8)  (4, -5)--( 4, -2.5) (4, 4)--(4, 8);
\draw [dashed]( 4, -2.5)--(4, 4);
\draw [dashed] (0,5) [partial ellipse=0:360:2 and 0.8];
\draw (-2, 5)..controls +(0,-1) and +(0,1)..(3,1.5)--(3,0.5)..controls +(0,-1) and +(0,1)..(-2,-4);
\draw (2, 5)..controls +(0,-1) and +(0,1)..(7,1.5)--(7,0.5)..controls +(0,-1) and +(0,1)..(2,-4);
\draw [blue] (0, 5)..controls +(0,-1) and +(0,1)..(5,1.5)--(5,0.5)..controls +(0,-1) and +(0,1)..(0,-4);
\draw[dashed] (0,-4) [partial ellipse=0:180:2 and 0.8];
\draw [dashed] (0,-4) [partial ellipse=180:360:2 and 0.8];
\draw[blue, ->-=0.5] (0, 7.5) node[above, black]{\tiny{${X}$}}->(0, 5);
\draw[blue, ->-=0.5] (0, -4)--(0, -5.5)node[below, black]{\tiny{${Y}$}};
\draw [red](5, 1.25) [partial ellipse=-75:0:2 and 0.8];
\draw [red, dashed](5, 1.25) [partial ellipse=0:180:2 and 0.8];
\draw [red](5, 1.25) [partial ellipse=180:255:2 and 0.8];
\node [draw, fill=white] at (5, 0.5){\tiny $f$};
\draw [blue] (0, -6) [partial ellipse=40:10:2.5 and 4] node [below, black] {\tiny $X_{i}$};
\draw [blue,dashed, ->-=0.5] (0, -6) [partial ellipse=90:40:2.5 and 4];
\draw [blue] (0, -6) [partial ellipse=90:179:2.5 and 4] node [below, black] {\tiny $X_{i}^*$};
\draw [blue] (0, 7) [partial ellipse=-40:10:2.5 and 4] node [above, black] {\tiny $X_{j}$};
\draw [blue,dashed, ->-=0.5] (0, 7) [partial ellipse=-40:-90:2.5 and 4];
\draw [blue] (0, 7) [partial ellipse=-90:-182:2.5 and 4] node [above, black] {\tiny $X_{j}^*$};
\end{tikzpicture}}}}
\end{align*}

The following is an example of the image of a cornered $\mathcal{C}$-handlebody with $3$ upper disks and $2$ lower disks.

\begin{align*}
F_0\left(
\vcenter{\hbox{\begin{tikzpicture}[xscale=0.25, yscale=0.35]
\draw (-3, 0) [partial ellipse=0:360:2 and 0.8];
\draw (3, 0) [partial ellipse=0:360:2 and 0.8];
\draw (9, 0) [partial ellipse=0:360:2 and 0.8];
\draw[dashed] (0, -5)[partial ellipse=0:180:2 and 0.8];
\draw (0, -5)[partial ellipse=180:360:2 and 0.8];
\draw[dashed] (6, -5)[partial ellipse=0:180:2 and 0.8];
\draw (6, -5)[partial ellipse=180:360:2 and 0.8];
%\draw (5, 0)..controls (5,-2) and (2,-4).. (2, -5);
\draw (11, 0)..controls (11,-2) and (8,-4).. (8, -5);
\draw (-5, 0)..controls (-5,-2) and (-2,-4).. (-2, -5);
\draw (-1, 0) ..controls (-.5, -2) and (.5, -2)..(1, 0);
\draw (5, 0) ..controls (5.5, -2) and (6.5, -2)..(7, 0);
\draw (2, -5) ..controls (2.5, -3) and (3.5, -3)..(4, -5);
\node at (3, -2.5) { $\Gamma$};
\end{tikzpicture}}}\right)
=\sum_{i_1, \ldots, i_5=0}^r \prod_{j=1}^5 d_{i_j}^{1/2}
\vcenter{\hbox{\scalebox{0.5}{\begin{tikzpicture}[yscale=0.35, xscale=0.4]
\draw [dashed](-3, 0) [partial ellipse=0:360:2 and 0.8];
\draw [dashed](-3, 3) [partial ellipse=0:360:2 and 0.8];
\draw [blue, dashed, ->-=0.6] (-3, 5) [partial ellipse=-40:-140:2.5 and 4];
\draw [blue] (-3, 5) [partial ellipse=-140:-180:2.5 and 4] node [above, black] {\tiny $X_{i_1}^*$};
\draw [blue] (-3, 5) [partial ellipse=-40:0:2.5 and 4] node [above, black] {\tiny $X_{i_1}$};
\draw (-5, 0)--(-5, 3) (-1, 0)--(-1, 3);
\draw [blue,-<-=0.7] (-3,-0.8)--(-3,5) node [above, black] {\tiny $Y_{1}$};
\begin{scope}[shift={(6, 0)}]
\draw [dashed](-3, 0) [partial ellipse=0:360:2 and 0.8];
\draw [dashed](-3, 3) [partial ellipse=0:360:2 and 0.8];
\draw [blue, dashed, ->-=0.6] (-3, 5) [partial ellipse=-40:-140:2.5 and 4];
\draw [blue] (-3, 5) [partial ellipse=-140:-180:2.5 and 4]node [above, black] {\tiny $X_{i_2}^*$};
\draw [blue] (-3, 5) [partial ellipse=-40:0:2.5 and 4] node [above, black] {\tiny $X_{i_2}$};
\draw (-5, 0)--(-5, 3) (-1, 0)--(-1, 3);
\draw [blue, -<-=0.7] (-3,-0.8)--(-3,5) node [above, black] {\tiny $Y_{2}$};
\end{scope}
\begin{scope}[shift={(12, 0)}]
\draw [dashed](-3, 0) [partial ellipse=0:360:2 and 0.8];
\draw [dashed](-3, 3) [partial ellipse=0:360:2 and 0.8];
\draw [blue, dashed, ->-=0.6] (-3, 5) [partial ellipse=-40:-140:2.5 and 4];
\draw [blue] (-3, 5) [partial ellipse=-140:-180:2.5 and 4] node [above, black] {\tiny $X_{i_3}^*$};
\draw [blue] (-3, 5) [partial ellipse=-40:0:2.5 and 4] node [above, black] {\tiny $X_{i_3}$};
\draw (-5, 0)--(-5, 3) (-1, 0)--(-1, 3);
\draw [blue, -<-=0.7] (-3,-0.8)--(-3,5) node [above, black] {\tiny $Y_{3}$};
\end{scope}
\begin{scope}[shift={(3, -5)}]
\draw [dashed](-3, 0) [partial ellipse=0:360:2 and 0.8];
\draw [dashed](-3, -3) [partial ellipse=0:360:2 and 0.8];
\draw [blue, dashed, ->-=0.6] (-3, -5) [partial ellipse=40:140:2.5 and 4];
\draw [blue] (-3, -5) [partial ellipse=140:180:2.5 and 4]node [below, black] {\tiny $X_{i_4}^*$};
\draw [blue] (-3, -5) [partial ellipse=40:0:2.5 and 4]node [below, black] {\tiny $X_{i_4}$};
\draw (-5, 0)--(-5, -3) (-1, 0)--(-1, -3);
\draw [blue, ->-=0.9] (-3,-0.8)--(-3,-5) node [below, black] {\tiny $Y_{4}$};
\end{scope}
%\draw[dashed] (0, -5)[partial ellipse=0:180:2 and 0.8];
%\draw [dashed] (0, -5)[partial ellipse=180:360:2 and 0.8];
%\draw[dashed] (6, -5)[partial ellipse=0:180:2 and 0.8];
%\draw (6, -5)[partial ellipse=180:360:2 and 0.8];
\begin{scope}[shift={(9, -5)}]
\draw [dashed](-3, 0) [partial ellipse=0:360:2 and 0.8];
\draw [dashed](-3, -3) [partial ellipse=0:360:2 and 0.8];
\draw [blue, dashed, ->-=0.6] (-3, -5) [partial ellipse=40:140:2.5 and 4];
\draw [blue] (-3, -5) [partial ellipse=140:180:2.5 and 4] node [below, black] {\tiny $X_{i_5}^*$};
\draw [blue] (-3, -5) [partial ellipse=40:0:2.5 and 4]node [below, black] {\tiny $X_{i_5}$};
\draw (-5, 0)--(-5, -3) (-1, 0)--(-1, -3);
\draw [blue, ->-=0.9] (-3,-0.8)--(-3,-5) node [below, black] {\tiny $Y_{5}$};
\end{scope}
\draw (11, 0)..controls (11,-2) and (8,-4).. (8, -5);
\draw (-5, 0)..controls (-5,-2) and (-2,-4).. (-2, -5);
\draw (-1, 0) ..controls (-.5, -2) and (.5, -2)..(1, 0);
\draw (5, 0) ..controls (5.5, -2) and (6.5, -2)..(7, 0);
\draw (2, -5) ..controls (2.5, -3) and (3.5, -3)..(4, -5);
\node at (3, -2.5) { $\Gamma$};
\draw (-7, -10) rectangle (13, 5);
\end{tikzpicture}}}}.
\end{align*}
In a summary, we see that the topological meaning of $F_0$ is to glue the morphism onto the $xz$-plane.
We say $F_0$ is a {\it handle-on} assignment.

\begin{remark}
The factor $d_i^{1/2}$ is not essential in this construction. If $d_i^{1/2}$ is not in the ground field $\k$, one can replace this factor by $d_i$ to each upper(or lower) half circles colored by $X_i$.
\end{remark}

\begin{proposition}\label{prop:f0functor}
    The assignment $F_0:\cA_1\to \mathcal{C}$ is a strict monoidal functor which induces a strict monoidal functor from $\mathcal{A}$ to $\cC$.
    We say $F_0$ is a handle-on functor from $\cA_1$ to $\mathcal{C}$.
\end{proposition}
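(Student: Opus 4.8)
The plan is to verify the three defining properties of a strict monoidal functor for $F_0$ — compatibility with composition, compatibility with identities, and compatibility with tensor product — and then check that it descends to the quotient $\cA$ and the Karoubi envelope.

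\textbf{Composition.} First I would show $F_0(H_2 \circ H_1) = F_0(H_2) \circ F_0(H_1)$. Recall $F_0(H)$ is obtained from $H$ by attaching half-circles colored by simple objects $X_n$ to the upper and lower $\cC$-disks, forming the cornered handlebody $\tilde H$, and then reducing to a planar diagram via Moves $1$ and $2$. The composite $H_2 \circ H_1$ is the vertical gluing of the two handlebodies along their common $\cC$-disks. The key observation is that in $\widetilde{H_2 \circ H_1}$, the half-circles attached at the gluing disks together form full circles colored by simple objects, weighted by $d_{X_i}$ — this is precisely the configuration produced by applying Move $1$ (an $\Omega$-colored circle on the belt of a tube) followed by decomposing the $\Omega$-color as $\sum_i d_i \id_{X_i}$. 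More directly: gluing $\tilde H_1$ to $\tilde H_2$ along the $\cC$-disk at $(n,1,2)$, the lower semicircle of $\tilde H_1$ and the upper semicircle of $\tilde H_2$ with the same color $X_i$ close up, and summing over colorings with weight $d_{X_i}^{1/2}\cdot d_{X_i}^{1/2} = d_{X_i}$ realizes Lemma \ref{lem:unit1} (Unit Decomposition). Hence $\widetilde{H_2\circ H_1}$ is obtained from the planar composite $F_0(H_2)\circ F_0(H_1)$ (viewed as a single cornered handlebody) by a sequence of Moves $1$ and $2$; since the well-definedness argument already given shows the resulting morphism in $\hom_\cC(F_0(O), F_0(O''))$ is independent of the chosen sequence of such moves, the two agree. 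I would spell this out by using the nondegeneracy of $\Tr_\cC$: for any test morphism $g$, $Z(\boxed{g}\#\widetilde{H_2\circ H_1}) = \tr_\cC(F_0(H_2\circ H_1)\,g)$, and the same alterfold can be re-read after sliding/cancelling handles as $\tr_\cC(F_0(H_2)F_0(H_1)\,g)$, so the two morphisms coincide.

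\textbf{Identities, tensor, and descent.} For identity morphisms: the identity $\id_{O_X}$ is the cylinder with a single string colored $X$; applying the construction attaches an upper semicircle $X_j^*,X_j$ and a lower semicircle $X_i^*,X_i$, and then Move $2$ (applied twice, once at top and once at bottom) collapses each pair to the identity by the bubble-removal rule, leaving the planar diagram $\id_{\oplus_i X_i^* X X_i}$ up to reindexing — so $F_0(\id_{O_X}) = \id_{F_0(O_X)}$; more generally for $\CIRCLE_X$ with $X$ a tensor word this is the same computation. For tensor: since $F_0(O\otimes O') = F_0(O)\otimes F_0(O')$ holds on objects by definition, and the tensor of morphisms in $\cA_0$ is horizontal juxtaposition of cornered handlebodies while the tensor of their images is horizontal juxtaposition of the corner diagrams, strict monoidality $F_0(H\otimes H') = F_0(H)\otimes F_0(H')$ is immediate from the fact that the attaching of half-circles and the Moves $1,2$ used for planarization are all local and can be performed independently in the two juxtaposed pieces. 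Finally, to pass from $\cA_1$ to $\cA$: one checks $F_0$ kills the negligible ideal $\mathcal{I}$ — if $f\in\mathcal{I}(O_1,O_2)$ then $\Tr_\cC(F_0(f)g) = Z(\boxed{g}\#\tilde f) = \tr(f\,\cyl(\text{--})) = 0$ for all $g$ by the displayed identity $Z(\boxed{g}\#\tilde H) = \tr_\cC(F_0(H)g)$ together with surjectivity of $\cyl$ (Lemma \ref{lem:nondegenerate}), so $F_0(f)=0$ by nondegeneracy of $\Tr_\cC$; hence $F_0$ factors through $\cA_1/\mathcal{I}$, and a strict monoidal functor automatically extends to the Karoubi envelope $\cA$ by $F_0(O,p) := (F_0(O), F_0(p))$.

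\textbf{Main obstacle.} The routine parts are strict monoidality on the nose (juxtaposition) and the identity check. The genuinely delicate step is composition: one must argue carefully that the $d_{X_i}^{1/2}$-weighted half-circles glue to exactly the Unit Decomposition of Lemma \ref{lem:unit1} (with full weight $d_{X_i}$) and that the resulting reshuffling of Moves $1$ and $2$ does not change the planarized morphism. This is where the independence-of-moves lemma (established via the nondegeneracy of $\Tr_\cC$ and the partition-function identity $Z(\boxed{g}\#\tilde H)=\tr_\cC(F_0(H)g)$) does the real work; I would present composition by translating everything into that trace pairing and invoking Theorem \ref{thm:partition function}, rather than manipulating diagrams by hand.
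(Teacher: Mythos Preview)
Your approach is essentially the paper's: the paper proves composition by a direct pictorial argument (isotopy followed by Move~1 to cancel each intermediate bridge tube, whose $\Omega$-belt is exactly the closed-up pair of $d_i^{1/2}$-weighted semicircles you describe), identities via Move~2, and strict monoidality by definition of $F_0$ on tensors. Your version simply unpacks the Move~1 step through the Unit Decomposition lemma and the trace pairing $Z(\boxed{g}\#\tilde H)=\Tr_\cC(F_0(H)g)$, which is fine.

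You also supply the descent to $\cA$ (annihilation of negligibles, Karoubi extension), which the paper's proof omits. One correction there: you invoke surjectivity of $\cyl$, but $\cyl$ only targets single-disk hom-spaces $\hom_\cA(O_X,O_Y)$, so this does not directly cover general $O_1,O_2$. The clean argument is more direct: for any $g\in\hom_\cC(F_0(O_2),F_0(O_1))$, the complement of $f$ inside the closed alterfold $\boxed{g}\#\tilde f$---namely $\boxed{g}$ together with the attached semicircles and connecting cylinders---is itself a cornered $\cC$-handlebody $h\in\hom_{\cA_0}(O_2,O_1)$, so $\Tr_\cC(F_0(f)g)=Z(\boxed{g}\#\tilde f)=\tr_\cA(fh)=0$ by negligibility of $f$, and nondegeneracy of $\Tr_\cC$ finishes.
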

\begin{proof}
It clear that $F_0(\id_{O})=\id_{F_0(O)}$, this can be seen by applying Move $2$ to each cylinder components of $\id_O$.

Then we show $F_0$ preserves the composition. Suppose $H_1$ and $H_2$ are morphisms in $\hom(O, O')$ and $\hom(O', O'')$ respectively. To each $\cC$-disks of $O'$, we apply isotopy and a Move $1$ to $F(H_2)F(H_1)$ as follows to derive $F(H_2H_1)$.
\begin{align*}
\vcenter{\hbox{\scalebox{0.5}{\begin{tikzpicture}[scale=0.35]
\draw  (-4,-12)--(-4, 2)  (4, -6.5)--( 4, -2.5);
\draw [dashed]( 4, -2.5)--(4, 2)--(-4, 2) ( 4, -5.5)--(4, -12)--(-4,-12);
%\draw [dashed] (0,5) [partial ellipse=0:360:2 and 0.8];
\draw (3,1)..controls +(0,-1) and +(0,1)..(-2,-4);
\draw (7,1)..controls +(0,-1) and +(0,1)..(2,-4);
\draw [blue, ->-=0.6] (5,1)..controls +(0,-1) and +(0,1)..(0,-4)--(0, -6);
\draw[dashed] (0,-4) [partial ellipse=0:180:2 and 0.8];
\draw [dashed] (0,-4) [partial ellipse=180:360:2 and 0.8];
\draw [red] (0, -5) [partial ellipse=55:180:2.5 and 5];
\draw [red] (0, -5) [partial ellipse=-55:-180:2.5 and 5];
\draw [red] (0, -5) [partial ellipse=10:-10:2.5 and 5];
\draw [red,dashed] (0, -5) [partial ellipse=55:10:2.5 and 5];
\draw [red,dashed] (0, -5) [partial ellipse=-55:-10:2.5 and 5];
\begin{scope}[shift={(0, -2)}]
\draw[dashed] (0,-4) [partial ellipse=0:180:2 and 0.8];
\draw [dashed] (0,-4) [partial ellipse=180:360:2 and 0.8];
\draw (-2, -4) ..controls +(0,-1) and +(0,1)..(3,-9);
\draw (2,-4) ..controls +(0,-1) and +(0,1)..(7,-9);
\draw [blue] (0,-4) ..controls +(0,-1) and +(0,1)..(5,-9);
\end{scope}
\end{tikzpicture}}}}
\xrightarrow{\text{isotopy}}
\vcenter{\hbox{\scalebox{0.6}{\begin{tikzpicture}[scale=0.35]
\draw  (-2,-10)--(-2, 2)  ;
\draw [dashed]( 2, -2.5)--(2, 2)--(-2, 2) ( 2, -5.5)--(2, -10)--(-2,-10)(2, -5.5)--( 2, -2.5) ;
%\draw [dashed] (0,5) [partial ellipse=0:360:2 and 0.8];
\draw (4,2)--(4,-10)(8,2)--(8,-10);
\draw [blue, ->-=0.5] (6,1.2)--(6,-10.8);
\draw[dashed] (0,-4) [partial ellipse=0:180:0.8 and 2];
\draw [dashed] (0,-4) [partial ellipse=180:360:0.8 and 2];
\draw [white, thick] (4, -2)--(4, -6);
\draw (0, -2)--(4,-2) (0, -6)--(4, -6);
\draw[red] (2.5,-4) [partial ellipse=90:-90:0.8 and 2];
\draw[red, dashed] (2.5,-4) [partial ellipse=90:270:0.8 and 2];
\draw [ dashed](6,2) [partial ellipse=180:360:2 and 0.8];
\draw[ dashed] (6,2) [partial ellipse=0:180:2 and 0.8];
\draw [ dashed] (6,-10) [partial ellipse=180:360:2 and 0.8];
\draw[ dashed] (6,-10) [partial ellipse=0:180:2 and 0.8];
\end{tikzpicture}}}}
\xrightarrow{\text{Move 1}}
\vcenter{\hbox{\scalebox{0.6}{\begin{tikzpicture}[scale=0.35]
\draw  (-2,-10)--(-2, 2)  ;
\draw [dashed]( 2, -2.5)--(2, 2)--(-2, 2) ( 2, -5.5)--(2, -10)--(-2,-10)(2, -5.5)--( 2, -2.5) ;
%\draw [dashed] (0,5) [partial ellipse=0:360:2 and 0.8];
\draw (4,2)--(4,-10)(8,2)--(8,-10);
\draw [blue, ->-=0.5] (6,1.2)--(6,-10.8);
\draw [dashed](6,2) [partial ellipse=180:360:2 and 0.8];
\draw[ dashed] (6,2) [partial ellipse=0:180:2 and 0.8];
\draw [ dashed] (6,-10) [partial ellipse=180:360:2 and 0.8];
\draw[ dashed] (6,-10) [partial ellipse=0:180:2 and 0.8];
\end{tikzpicture}}}}
\end{align*}
Therefore, $F_0$ is a functor. Moreover, $F_0$ is a strict monoidal functor, since by definition, $F_0$ preserves tensor product and the tensor unit, and the structure morphisms are identities.
\end{proof}

\begin{remark}
It is easy to see that $F_{0}$ admit an adjoint functor $I:\mathcal{C}\rightarrow \mathcal{A}$, defined by $I(Y)=O_{Y}$, and on the morphism level,
\begin{align*}
I\left(
\vcenter{\hbox{\scalebox{0.6}{
\begin{tikzpicture}[scale=0.35]
\draw[blue, ->-=0.5] (0, 4.2) node[above, black]{\tiny{$X$}}->(0, 0.5);
\draw[blue, ->-=0.5] (0, -0.5)--(0, -4.8) node[below, black]{\tiny{$Y$}};
\node [draw, fill=white] (0, 0){\tiny $f$};
\end{tikzpicture}}}}\right)
=
\vcenter{\hbox{\scalebox{0.6}{
\begin{tikzpicture}[scale=0.35]
\draw (0,5) [partial ellipse=0:360:2 and 0.8];
\draw (-2, 5)--(-2, -4);
\draw (2, 5)--(2, -4);
\draw[dashed] (0,-4) [partial ellipse=0:180:2 and 0.8];
\draw (0,-4) [partial ellipse=180:360:2 and 0.8];
\draw[blue, ->-=0.5] (0, 4.2) node[above, black]{\tiny{${X}$}}->(0, 0.5);
\draw[blue, ->-=0.5] (0, -0.5)--(0, -4.8)node[below, black]{\tiny{${Y}$}};
\node [draw, fill=white] (0, 0){\tiny $f$};
\end{tikzpicture}}}}.
\end{align*}
This can be seen as follows, since all objects in $\mathcal{A}$ is isomorphic to $O_{X}$ for some object $X\in \cC$. We check
$$\hom_{\cA}(O_{X}, I(Y))=\hom_{\cA}(O_{X}, O_Y)=\hom_{\cC}(\bigoplus_{ i}X_i^{*}XX_i, Y)=\hom_{\cC}(F_0(O_X), Y).$$

In particular, $I(\id_{\1_{\mathcal{C}}})$ is the empty tube. 
\end{remark}
Now we recall the following lemma for semisimplicity of an algebra.
\begin{lemma}[Lemma 5.10 of \cite{Mug03a}]\label{lem:semisimple}
Let $K$ be a finite-dimensional $\k$-algebra with a nondegenerate trace map. If the trace map vanished on nilpotent elements, then $K$ is semisimple. 
\end{lemma}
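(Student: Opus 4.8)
The plan is to identify the unique obstruction to semisimplicity of $K$ with its Jacobson radical, and then to show that the two hypotheses force that radical to vanish. First I would recall the relevant structure theory: since $K$ is a finite-dimensional $\k$-algebra it is Artinian, so its Jacobson radical $J$ is a two-sided ideal which is moreover \emph{nilpotent}, and $K$ is semisimple if and only if $J=0$. This is the only nontrivial external input; it is classical Artin--Wedderburn theory and nothing about it is special to the present setting. Here ``nondegenerate trace map'' is understood in the usual sense that the symmetric bilinear form $\langle a,b\rangle := \tr(ab)$ on $K$ is nondegenerate.

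Granting this, the argument reduces to a one-line computation. Let $x\in J$. For every $y\in K$ the product $xy$ again lies in $J$, because $J$ is a two-sided ideal; since $J$ is nilpotent, each of its elements is nilpotent, so $xy$ is nilpotent. By the second hypothesis the trace annihilates nilpotent elements, whence $\tr(xy)=0$. As $y$ was arbitrary, nondegeneracy of $\langle\,\cdot\,,\,\cdot\,\rangle$ gives $x=0$. Therefore $J=0$ and $K$ is semisimple.

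I do not expect any genuine obstacle. The only point demanding a little care is invoking the correct form of the structure theorem, namely that the radical of a finite-dimensional algebra is \emph{nilpotent} rather than merely nil, so that $xy\in J$ really does imply that $xy$ is nilpotent; over an arbitrary field it is exactly finite-dimensionality that guarantees this. In the intended applications (for instance to the endomorphism algebras in the tube category $\cA$ equipped with the trace coming from the partition function $Z$), the two hypotheses of the lemma will be supplied by the homeomorphism invariance and the graphical-calculus/Move properties established for $Z$, so that semisimplicity of the relevant algebras — hence of $\cA$ — follows at once from this lemma.
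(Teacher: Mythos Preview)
Your argument is correct and is exactly the standard proof of this fact; it is also essentially the proof given in M\"uger's paper \cite{Mug03a}, from which the lemma is quoted. Note that the present paper does not supply its own proof of this lemma---it simply cites it---so there is nothing further to compare.
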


\begin{proposition}\label{prop:semisimple}
The tube category $\cA$ is semisimple.
\end{proposition}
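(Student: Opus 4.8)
The plan is to invoke M\"uger's criterion, Lemma \ref{lem:semisimple}: a finite–dimensional $\k$-algebra equipped with a nondegenerate trace is semisimple as soon as that trace annihilates every nilpotent element. So the proof has three parts: reduce semisimplicity of the category $\cA$ to semisimplicity of a single endomorphism algebra, check the hypotheses of Lemma \ref{lem:semisimple} for it, and then verify the vanishing on nilpotents by transporting it from $\cC$.

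First I would reduce to $\End_\cA(O_W)$ for a single $W\in\cC$. Since $\cA$ is idempotent complete with finite–dimensional hom-spaces (Proposition \ref{prop:localfinite}), $\cA$ is semisimple if and only if $\End_\cA(O)$ is a semisimple algebra for every object $O$: given semisimplicity of $\End_\cA(O)$, write $\id_O$ as a sum of primitive orthogonal idempotents, split them in $\cA$, and observe that each summand has endomorphism ring a finite–dimensional division algebra over the algebraically closed field $\k$, hence $\k$, so it is simple. By Corollary \ref{cor:singledisk}, together with the observation (proved by the same argument as that corollary, decomposing $X\oplus Y$ into simples) that $O_X\oplus O_Y\cong O_{X\oplus Y}$, every object of $\cA$ is isomorphic to $O_W$ for some single $W\in\cC$. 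Thus it suffices to show $K:=\End_\cA(O_W)$ is semisimple. The algebra $K$ is finite–dimensional, and the pivotal trace $\Tr_\cA$ restricts to a nondegenerate trace on it --- this nondegeneracy is precisely what was arranged by passing from $\cA_0$ to $\cA_1$ (quotienting by the negligible ideal) and then taking the Karoubi envelope. By Lemma \ref{lem:semisimple}, the proof is therefore reduced to showing that $\Tr_\cA$ vanishes on nilpotent elements of $K$.

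For the last step I would use the handle-on functor $F_0\colon\cA\to\cC$ of Proposition \ref{prop:f0functor}. If $f\in K$ is nilpotent then, $F_0$ being a (strict monoidal) functor, $F_0(f)\in\End_\cC\!\big(F_0(O_W)\big)=\End_\cC\!\big(\bigoplus_i X_i^*WX_i\big)$ is nilpotent; since $\cC$ is a fusion category this endomorphism algebra is semisimple, so $\Tr_\cC\!\big(F_0(f)\big)=0$. It then remains to compare the two traces. Here I would feed the identity morphism into the defining property of $F_0$ on morphisms (namely $Z(\boxed{g}\#\widetilde{H})=\Tr_\cC(F_0(H)g)$ for all $g$), obtaining $\Tr_\cC(F_0(f))=Z(\boxed{\id}\#\widetilde{H}_f)$; unwinding the construction of $\widetilde{H}_f$ (attaching the mapping cylinders and the $d_i$-weighted half-circles to the tube $\cyl$ for $f$, then applying Moves $1$ and $2$, all of which preserve $Z$ up to bookkeeping factors that are trivial because $\zeta=1$ in this section) identifies $Z(\boxed{\id}\#\widetilde{H}_f)$ with $Z$ of the closed-up tube carrying $f$ decorated by one extra $\Omega$-coloured meridian. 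By Lemma \ref{lem:zkirby} (an $\Omega$-meridian around an $A$-coloured tube projects onto the $\1$-component and contributes a factor $\mu$) this equals $\mu\,\Tr_\cA(f)$. Hence $\Tr_\cA(f)=\mu^{-1}\Tr_\cC(F_0(f))$ --- in any case a nonzero multiple of $\Tr_\cC(F_0(f))$ --- so $\Tr_\cA(f)=0$, and Lemma \ref{lem:semisimple} concludes.

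The main obstacle is precisely the trace comparison $\Tr_\cA(f)=\mu^{-1}\Tr_\cC(F_0(f))$: one must track through the definition of $F_0$ the base points, the opposite orientations of the top and bottom $\cC$-disks, the $d_i^{1/2}$ (or $d_i$) weights on the half-circles, and the $\Omega$-loop produced when closing up, and see exactly how Lemma \ref{lem:zkirby} (with a handle-slide) absorbs that loop. Note also why no short cut is available: one knows from Theorem \ref{lem:morphismform} and Lemma \ref{lem:nondegenerate} that $\Tr_\cA$ on $K\cong\bigoplus_i\hom_\cC(WX_i,X_iW)$ equals $\Tr_\cC$ of the $\1$-component, but that component need not be nilpotent when $f$ is (the tube product mixes the components), so one genuinely needs to ``spread $f$ out'' into the semisimple algebra $\End_\cC(F_0(O_W))$ via $F_0$. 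Everything else --- the reduction to a single $O_W$, finite-dimensionality, nondegeneracy of the trace, and functoriality of $F_0$ --- is formal.
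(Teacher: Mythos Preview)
Your proposal is correct and follows essentially the same route as the paper: reduce to endomorphism algebras of single-disk objects, apply M\"uger's criterion (Lemma~\ref{lem:semisimple}), transport nilpotence to $\cC$ via the functor $F_0$, and compare traces. The paper's proof is more terse at the trace-comparison step---it simply invokes ``the argument of Lemma~\ref{lem:nondegenerate}'' to assert $\Tr_\cA(\cyl(f))=\Tr_\cC(F_0(\cyl(f)))$---whereas you work through the closure explicitly and bring in Lemma~\ref{lem:zkirby} to handle the extra $\Omega$-meridian, arriving at a factor of $\mu^{-1}$; but since you correctly note that any nonzero multiple suffices, this difference is cosmetic.
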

\begin{proof}
We prove the proposition by showing all endomorphism algebra of $\cA_1$ is semisimple. By Remark \ref{rem:homspace}, we only have to consider the morphism space $\hom_{\cA}(\CIRCLE_X, \CIRCLE_X)$.
By Lemma \ref{lem:nondegenerate}, we assume that $\cyl(f)\in\hom_{\cA}(\CIRCLE_X, \CIRCLE_X)$ is nilpotent.
We only have to show $\Tr_{\cA}(\cyl(f))=0$.
Note that $F_0$ is a functor, we have that $F_0(\cyl(f))$ is nilpotent.
By the argument of Lemma \ref{lem:nondegenerate} again, we have that
\begin{align*}
\Tr_{\cA}(\cyl(f))=\Tr_{\mathcal{C}}(F_0(\cyl(f)))=0,
\end{align*}
By Lemma \ref{lem:semisimple}, we see that $\hom_{\cA}(\CIRCLE_X, \CIRCLE_X)$ is semisimple and $\cA$ is semisimple.
\end{proof}
\begin{remark}\label{rem:modularity}
    By Proposition \ref{prop:semisimple}, and the fact that $\hom_{\cA}(\1, \1)=\k$,  $\mathcal{A}$ is a braided spherical fusion category. Notice that Lemma \ref{lem:zkirby} implies that the M\"uger center $\mathcal{Z}_{2}(\mathcal{A})$ is trivial.
    Therefore, $\cA$ is a modular tensor category (see \cite{EGNO15} Proposition 8.20.12 for equivalent definitions of modularity).

    The invertibility of $S$-matrix defines a nondegenerate bilinear form on the Grothendieck ring $\mathcal{K}_{0}(\mathcal{A})$. Thus Lemma \ref{lem:zkirby} also implies $\sum_{X_{i}\in \Irr{\mathcal{C}}}d_{i}I(X_i)$ equals to the $\Omega$-color of $\mathcal{A}$ in $\mathcal{K}_{0}(\mathcal{A})$.
\end{remark}

Now we lift $F_0: \mathcal{A}\rightarrow \mathcal{C}$ to a braided monoidal functor $F:\cA\rightarrow \mathcal{Z(C)}$. We first define $F: \cA_1\rightarrow \mathcal{Z(C)}$, then $F$ naturally extends to $\mathcal{A}$ by the semisimplicity of $\mathcal{Z(C)}$.

To lift $F_0$ to a functor to $\mathcal{Z(C)}$, we assign each object $F_{0}(O)$ and $V\in \cC$ a morphims.
$$e_{F_{0}(O)}(V): F_{0}(O)\otimes V \rightarrow V\otimes F_{0}(O)$$
that presented by a diagram with a string colored by $V$ cross under the tubes representing $\id_{O}$. For instance, $e_{O_{X}}(V)$ is represented by 
\begin{align*}
\sum_{i,j}d_i^{1/2}d_j^{1/2}
\vcenter{\hbox{\scalebox{0.6}{\begin{tikzpicture}[scale=0.35]
\draw (4, -5)--(-4,-6)--(-4, 7) --(4, 8)  (4, -5)--( 4, -2.5) (4, 4)--(4, 8);
\draw [dashed]( 4, -2.5)--(4, 4);
\draw [dashed] (0,5) [partial ellipse=0:360:2 and 0.8];
\draw (-2, 5)..controls +(0,-1) and +(0,1)..(3,1.5)--(3,0.5)..controls +(0,-1) and +(0,1)..(-2,-4);
\draw (2, 5)..controls +(0,-1) and +(0,1)..(7,1.5)--(7,0.5)..controls +(0,-1) and +(0,1)..(2,-4);
\draw [blue] (0, 5)..controls +(0,-1) and +(0,1)..(5,1.5)--(5,0.5)..controls +(0,-1) and +(0,1)..(0,-4);
\draw[dashed] (0,-4) [partial ellipse=0:180:2 and 0.8];
\draw [dashed] (0,-4) [partial ellipse=180:360:2 and 0.8];
\draw[blue, ->-=0.5] (0, 7.5) node[above, black]{\tiny{${X}$}}->(0, 5);
\draw[blue, ->-=0.5] (0, -4)--(0, -5.5)node[below, black]{\tiny{${X}$}};
% \draw [red](5, 1.25) [partial ellipse=-75:0:2 and 0.8];
% \draw [red, dashed](5, 1.25) [partial ellipse=0:180:2 and 0.8];
% \draw [red](5, 1.25) [partial ellipse=180:255:2 and 0.8];
% \node [draw, fill=white] at (5, 0.5){\tiny $f$};
\draw [blue] (0, -6) [partial ellipse=40:10:2.5 and 4] node [below, black] {\tiny $X_{i}$};
\draw [blue,dashed, ->-=0.5] (0, -6) [partial ellipse=90:40:2.5 and 4];
\draw [blue] (0, -6) [partial ellipse=90:179:2.5 and 4] node [below, black] {\tiny $X_{i}^*$};
\draw [blue] (0, 7) [partial ellipse=-40:10:2.5 and 4] node [above, black] {\tiny $X_{j}$};
\draw [blue,dashed, ->-=0.5] (0, 7) [partial ellipse=-40:-90:2.5 and 4];
\draw [blue] (0, 7) [partial ellipse=-90:-182:2.5 and 4] node [above, black] {\tiny $X_{j}^*$};
\draw [blue] (0, 4) [partial ellipse=-90:-40:3.5 and 3];
\draw [blue,dashed] (0, 4) [partial ellipse=0:-40:3.5 and 3] ;
\draw [blue] (3.5, 4)--(3.5, 8) node [above, black]{\tiny $V$};
\draw [blue] (0, -3) [partial ellipse=90:180:3.2 and 4];
\draw [blue, ->-=0.1] (-3.2, -3)--(-3.2, -6);
\end{tikzpicture}}}}\,.
\end{align*}
Clearly, $e_{F_{0}(O)}(-)$ is a half-braiding in the sense of Section 2.2, since morphism in $\mathcal{C}$ can slide under the tubes, and the hexagon condition is obvious.
\begin{proposition}\label{prop:ffunctor}
The assignment
    \begin{align*}
        F:\mathcal{A}_1&\rightarrow \mathcal{Z(C)} \\
        O&\mapsto (F_{0}(O), e_{F_0(O)})
    \end{align*}
defines a braided monoidal functor $F$ from $\mathcal{A}_1$ to $\mathcal{Z(C)}$, which extend to a braided monoidal functor from $\cA$ to $\mathcal{Z(C)}$.
\end{proposition}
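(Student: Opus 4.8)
I would verify, in order, that $F$ is well defined on objects, well defined on morphisms, strict monoidal, and braided, and then extend it across the Karoubi completion. On objects there is almost nothing to do: the remark preceding the proposition already exhibits $e_{F_0(O)}(-)$ as a half-braiding on $F_0(O)$. Indeed $V\mapsto e_{F_0(O)}(V)$ is natural in $V$ since any morphism of $\cC$ slides freely underneath the tubes representing $\id_O$; $e_{F_0(O)}(\1)=\id$ by cancelling an empty tube; and the identity $(Y_1\otimes e_{F_0(O)}(Y_2))(e_{F_0(O)}(Y_1)\otimes Y_2)=e_{F_0(O)}(Y_1\otimes Y_2)$ is read off the picture. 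Hence $F(O)=(F_0(O),e_{F_0(O)})\in\cZ(\cC)$.

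\textbf{Well-definedness on morphisms.} Since on morphisms $F$ coincides with $F_0$, the content is that for every cornered $\cC$-handlebody $H\colon O\to O'$ the morphism $F_0(H)$ lies in $\hom_{\cZ(\cC)}(F(O),F(O'))$, i.e. $(F_0(H)\otimes\id_V)\,e_{F_0(O)}(V)=e_{F_0(O')}(V)\,(\id_V\otimes F_0(H))$ for all $V\in\cC$. Both sides are presented by the \emph{same} $\cC$-decorated $3$-alterfold, namely a $V$-coloured arc running underneath the cornered handlebody obtained by stacking $H$ with $\id_O$ (resp.\ with $\id_{O'}$); because $\id_{O'}\circ H=H=H\circ\id_O$, these two handlebodies are isotopic rel corners, the $V$-arc simply being pushed across $H$ through the $B$-coloured complement of $H$ in $\bS^3$. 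The equality then follows from the homeomorphism invariance of $Z$ in Theorem \ref{thm:partition function} (equivalently Theorem \ref{thm:tubemove}), together with the already established well-definedness of $F_0$, which guarantees the auxiliary Moves $1$ and $2$ used to planarize do not affect the outcome. This yields a functor $F\colon\cA_1\to\cZ(\cC)$ lifting $F_0$ along the forgetful functor $U\colon\cZ(\cC)\to\cC$.

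\textbf{Monoidal and braided structures.} Strict monoidality of $F_0$ is Proposition \ref{prop:f0functor}, so it remains to check that the assigned half-braidings respect tensor products: under the identification $F_0(O\otimes O')=F_0(O)\otimes F_0(O')$ one needs $e_{F_0(O\otimes O')}(V)=(e_{F_0(O)}(V)\otimes\id_{F_0(O')})(\id_{F_0(O)}\otimes e_{F_0(O')}(V))$, which is exactly the half-braiding defining $\otimes$ in $\cZ(\cC)$; topologically $O\otimes O'$ is the disjoint juxtaposition of the $\cC$-disks of $O$ and $O'$, so a $V$-arc passing under the tubes of $O\otimes O'$ may be isotoped to pass first under those of $O$ and then under those of $O'$, another instance of homeomorphism invariance. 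For the braiding one must identify $F_0\big(c^{\cA}_{O,O'}\big)$ with $c^{\cZ(\cC)}_{F(O),F(O')}=e_{F_0(O)}\big(F_0(O')\big)$; by Corollary \ref{cor:singledisk} it suffices to take $O=O_X$, $O'=O_Y$, where $c^{\cA}_{O_X,O_Y}$ is the tube-braiding depicted in the remark after Proposition \ref{prop:localfinite}. Applying the handle-on construction to that cornered handlebody and rearranging the resulting arcs via Moves $1$, $2$ and isotopy rel corners (Theorem \ref{thm:tubemove}) should produce precisely the diagram defining $e_{F_0(O_X)}(F_0(O_Y))$. I expect this last identification to be the main obstacle: it is the one genuinely picture-intensive verification, requiring careful bookkeeping of the $X_i$/$X_j$ half-circles created by $F_0$ and of the crossing data. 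Once it is done, $F$ is a braided monoidal functor on $\cA_1$ with no further hexagon checks needed, the hexagons in $\cZ(\cC)$ and $\cA_1$ already holding.

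\textbf{Extension to $\cA$.} Finally, $\cZ(\cC)$ is a fusion category by M\"uger, hence $\k$-linear, additive and idempotent-complete, while $\cA$ is by definition the Karoubi envelope of $\cA_1$; therefore the additive functor $F$ extends uniquely (up to natural isomorphism) to $\cA\to\cZ(\cC)$, and since the monoidal and braided structures of $\cA$ are those induced on the idempotent completion of $\cA_1$, the extended functor is again strict monoidal and braided. This completes the plan.
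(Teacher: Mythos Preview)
Your proposal is correct and follows essentially the same route as the paper: verify that $F_0(H)$ intertwines the half-braidings by the same isotopy/move argument used to show $F_0$ preserves composition, inherit monoidality from $F_0$, and check braidedness by applying the handle-on construction to $c^{\cA}_{O_X,O_Y}$ and comparing with $e_{F_0(O_X)}(F_0(O_Y))$. The only remark is that the braided check is easier than you anticipate---once the half-circles are attached, the resulting diagram \emph{is} literally $e_{F_0(O_X)}(F_0(O_Y))$ with no further Moves~1 or~2 required---and note that your compatibility equation has the tensor factors swapped (it should read $(\id_V\otimes F_0(H))\,e_{F_0(O)}(V)=e_{F_0(O')}(V)\,(F_0(H)\otimes\id_V)$).
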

\begin{proof}
We first show for all morphism $f\in \hom_{\cA}(O, O')$, $F_0(f)$ is a morphism in $\mathcal{Z(C)}$, i.e., we need to show for all $V\in \mathcal{C}$,
$$(\id_V\otimes F_0(f))e_{F_{0}(O)}=e_{F_{0}(O')}(F_0(f)\otimes \id_V).$$
This can be seen by the similar argument as in the proof of Proposition \ref{prop:f0functor}. Moreover, since $F_0$ is monoidal, $F$ is also monoidal. It remains to show $F$ is braided.

For all $O, O'\in \cA$, by definition, we have $F(c_{O, O'})=\tilde{c}_{O, O'}$ which is represented by the exactly same $\cC$-diagram in $\mathbb{R}^3$ as $e_{F_0(O)}(F_0(O'))$. In fact, in view of the previous discussion, the following picture is sufficient to prove $F$ is braided.
\begin{align*}
F(c_{O_X, O_Y})=F\left(\vcenter{\hbox{\scalebox{0.7}{
\begin{tikzpicture}
\draw  (1.6, 0) .. controls +(0, 0.3) and +(0, 0.3) .. (2.4, 0);
\draw (1.6, 0) .. controls +(0,-0.3) and +(0, -0.3) .. (2.4, 0);
\draw  (1.6, 0) .. controls +(0, -0.3) and +(0.3, 0) ..(1, -0.7) .. controls +(-0.7, 0) and +(0,0.7) ..(-0.4, -2);
\draw  (2.4, 0) .. controls +(0, -0.7) and +(0.7, 0) ..(1, -1.3) .. controls +(-0.3, 0) and +(0,0.3) ..(0.4, -2);
\draw [blue,->-=0.1, ->-=0.9] (2, -0.21) .. controls +(0, -0.5) and +(0.5, 0) ..(1, -1) .. controls +(-0.4, 0) and +(0,0.4) ..(0, -2)--(0, -2.2) node [below] {\tiny $Y$};
\draw [ dashed] (-0.4, -2) .. controls +(0, 0.3) and +(0, 0.3) .. (0.4, -2);
\draw (-0.4, -2) .. controls +(0,-0.3) and +(0, -0.3) .. (0.4, -2); % right tube
\path [fill=white]  (0.15,-0.69) rectangle (1.85, -1.31);
\draw (-0.4, 0) .. controls +(0, 0.3) and +(0, 0.3) .. (0.4, 0);
\draw  (-0.4, 0) .. controls +(0,-0.3) and +(0, -0.3) .. (0.4, 0);
\draw  (0.4, 0) .. controls +(0, -0.3) and +(-0.3, 0) ..(1, -0.7) .. controls +(0.7, 0) and +(0,0.7) ..(2.4, -2);
\draw  (-0.4, 0) .. controls +(0, -0.7) and +(-0.7, 0) ..(1, -1.3) .. controls +(0.3, 0) and +(0,0.3) ..(1.6, -2);
\draw [blue,->-=0.5] (0, -0.21) .. controls +(0, -0.5) and +(-0.5, 0) ..(1, -1) .. controls +(0.5, 0) and +(0,0.4) ..(2, -2)--(2, -2.2) node [below] {\tiny $X$};
\draw [dashed] (1.6, -2) .. controls +(0, 0.3) and +(0, 0.3) .. (2.4, -2);
\draw  (1.6, -2) .. controls +(0,-0.3) and +(0, -0.3) .. (2.4, -2); % left tube
\end{tikzpicture}}}}\right)
=\sum_{i,j,k, \ell=0}^r (d_id_jd_k d_{\ell})^{1/2}
\vcenter{\hbox{\scalebox{0.6}{
\begin{tikzpicture}[xscale=1.2]
\begin{scope}[shift={(2, 0)}]
\draw [dashed](0,0) [partial ellipse=0:360:0.4 and 0.2];
\draw [dashed](0,0.8) [partial ellipse=0:360:0.4 and 0.2];
\draw (0.4, 0)--(0.4, 0.8) (-0.4, 0)--(-0.4, 0.8) ;
\draw [blue, dashed, ->-=0.6](0,1.2) [partial ellipse=-140:-40:0.6 and 0.9];
\draw [blue](0,1.2) [partial ellipse=-40:0:0.6 and 0.9] node [above, black] {\tiny $X_j$};
\draw [blue](0,1.2) [partial ellipse=-140:-180:0.6 and 0.9] node [above, black] {\tiny $X_j^*$};
\draw [blue] (0, -0.2)--(0, 1.2) node [above, black] {\tiny $Y$};
\end{scope}
\draw  (1.6, 0) .. controls +(0, -0.3) and +(0.3, 0) ..(1, -0.7) .. controls +(-0.7, 0) and +(0,0.7) ..(-0.4, -2);
\draw  (2.4, 0) .. controls +(0, -0.7) and +(0.7, 0) ..(1, -1.3) .. controls +(-0.3, 0) and +(0,0.3) ..(0.4, -2);
\draw [blue,->-=0.1, ->-=0.9] (2, -0.21) .. controls +(0, -0.5) and +(0.5, 0) ..(1, -1) .. controls +(-0.4, 0) and +(0,0.4) ..(0, -2)--(0, -2.2) ;
\path [fill=white]  (0.15,-0.69) rectangle (1.85, -1.31);
\draw [dashed](0,0) [partial ellipse=0:360:0.4 and 0.2];
\draw [dashed](0,0.8) [partial ellipse=0:360:0.4 and 0.2];
\draw (0.4, 0)--(0.4, 0.8) (-0.4, 0)--(-0.4, 0.8) ;
\draw [blue, dashed, ->-=0.6](0,1.2) [partial ellipse=-140:-40:0.6 and 0.9];
\draw [blue](0,1.2) [partial ellipse=-40:0:0.6 and 0.9] node [above, black] {\tiny $X_i$};
\draw [blue](0,1.2) [partial ellipse=-140:-180:0.6 and 0.9] node [above, black] {\tiny $X_i^*$};
\draw [blue] (0, -0.2)--(0, 1.2) node [above, black] {\tiny $X$};
\draw  (0.4, 0) .. controls +(0, -0.3) and +(-0.3, 0) ..(1, -0.7) .. controls +(0.7, 0) and +(0,0.7) ..(2.4, -2);
\draw  (-0.4, 0) .. controls +(0, -0.7) and +(-0.7, 0) ..(1, -1.3) .. controls +(0.3, 0) and +(0,0.3) ..(1.6, -2);
\draw [blue,->-=0.5] (0, -0.21) .. controls +(0, -0.5) and +(-0.5, 0) ..(1, -1) .. controls +(0.5, 0) and +(0,0.4) ..(2, -2)--(2, -2.2);
\begin{scope}[shift={(0, -2)}]
\draw [dashed](0,0) [partial ellipse=0:360:0.4 and 0.2];
\draw [dashed](0,-0.8) [partial ellipse=0:360:0.4 and 0.2];
\draw (0.4, 0)--(0.4, -0.8) (-0.4, 0)--(-0.4, -0.8) ;
\draw [blue, dashed, ->-=0.6](0,-1.2) [partial ellipse=140:40:0.6 and 0.9];
\draw [blue](0,-1.2) [partial ellipse=40:0:0.6 and 0.9] node [below, black] {\tiny $X_k$};
\draw [blue](0,-1.2) [partial ellipse=140:180:0.6 and 0.9] node [below, black] {\tiny $X_k^*$};
\draw [blue] (0, 0.2)--(0, -1.2);
\end{scope} % left tube
\begin{scope}[shift={(2, -2)}]
\draw [dashed](0,0) [partial ellipse=0:360:0.4 and 0.2];
\draw [dashed](0,-0.8) [partial ellipse=0:360:0.4 and 0.2];
\draw (0.4, 0)--(0.4, -0.8) (-0.4, 0)--(-0.4, -0.8) ;
\draw [blue, dashed, ->-=0.6](0,-1.2) [partial ellipse=140:40:0.6 and 0.9];
\draw [blue](0,-1.2) [partial ellipse=40:0:0.6 and 0.9] node [below, black] {\tiny $X_\ell$};
\draw [blue](0,-1.2) [partial ellipse=140:180:0.6 and 0.9] node [below, black] {\tiny $X_\ell^*$};
\draw [blue] (0, 0.2)--(0, -1.2);
\end{scope}
\draw (-0.8,-3.2) rectangle (2.8,1.2);
\end{tikzpicture}}}}=e_{F_0(O_X)}(F_0(O_Y)).
\end{align*}

\end{proof}

To prove $F$ is an equivalence between $\cA$ and $\mathcal{Z(C)}$, we define an assignment $G$ on morphisms from $\hom_{\mathcal{Z(C)}}((X, e_X), (Y, e_Y))$ to $\hom_{\cA}(O_X, O_Y)$ by
\begin{align*}
G\left(
\vcenter{\hbox{\scalebox{0.8}{
\begin{tikzpicture}[scale=0.35]
\draw[blue, ->-=0.5] (0, 4.2) node[above, black]{\tiny{${(X, e_X)}$}}->(0, 0.5);
\draw[blue, ->-=0.5] (0, -0.5)--(0, -4.8) node[below, black]{\tiny{${(Y, e_Y)}$}};
\node [draw, fill=white] (0, 0){\tiny $f$};
\end{tikzpicture}}}}\right)
=\frac{1}{\mu}
\vcenter{\hbox{\scalebox{0.8}{
\begin{tikzpicture}[scale=0.35]
\draw (0,5) [partial ellipse=0:360:2 and 0.8];
\draw (-2, 5)--(-2, -4);
\draw (2, 5)--(2, -4);
\draw[dashed] (0,-4) [partial ellipse=0:180:2 and 0.8];
\draw (0,-4) [partial ellipse=180:360:2 and 0.8];
\draw[blue, ->-=0.5] (0, 4.2) node[above, black]{\tiny{${X}$}}->(0, 0.5);
\draw[blue, ->-=0.5] (0, -0.5)--(0, -4.8)node[below, black]{\tiny{${Y}$}};
\draw [red](0, 3) [partial ellipse=-75:0:2 and 0.8];
\draw [red, dashed](0, 3) [partial ellipse=0:180:2 and 0.8];
\draw [red](0, 3) [partial ellipse=180:255:2 and 0.8];
\node [draw, fill=white] (0, 0){\tiny $f$};
% \draw [red](0, -2) [partial ellipse=-75:0:2 and 0.8];
% \draw [red, dashed](0, -2) [partial ellipse=0:180:2 and 0.8];
% \draw [red](0, -2) [partial ellipse=180:255:2 and 0.8];
\end{tikzpicture}}}},
\end{align*}
where $f\in \hom_{\mathcal{Z(C)}}((X, e_X), (Y, e_Y))$ and the crossing in the diagram denote the the summation of morphisms $e_{X}(X_i)$ (see Notation in Theorem \ref{lem:morphismform}).

\begin{lemma}\label{lem:G-fg}
The assignment $G$ preserves compositions of morphisms.
\end{lemma}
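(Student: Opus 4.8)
Let $f\in\hom_{\mathcal{Z(C)}}((X,e_X),(Y,e_Y))$ and $g\in\hom_{\mathcal{Z(C)}}((Y,e_Y),(Z,e_Z))$. The plan is to compute $G(g)\circ G(f)$ by the vertical gluing of the two defining cylinders and then to simplify the resulting tube diagram by the topological moves of Theorem~\ref{thm:partition function} until it becomes the cylinder defining $G(g\circ f)$; throughout I would work with representatives in $\cA_0$ and pass the equalities down to $\cA$ via Theorem~\ref{thm:tubemove}. Stacking the cylinder representing $G(f)$ on top of the one representing $G(g)$, one sees that $G(g)\circ G(f)$ equals $\mu^{-2}$ times a single cylinder carrying the blue strand with two coupons, $f$ near the top (the strand being colored $X$ above it and $Y$ below it) and $g$ below it (the strand being colored $Z$ underneath), together with two $\Omega$-colored meridian circles: one linking the $X$-part of the strand through the half-braiding $e_X$, and one linking the $Y$-part through $e_Y$.

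The first simplification is to slide the lower $\Omega$-colored circle upward past the coupon $f$. Since $f$ is a morphism in $\mathcal{Z(C)}$ it satisfies $(f\otimes\id_V)\,e_X(V)=e_Y(V)\,(\id_V\otimes f)$ for all $V\in\cC$, which is exactly the statement that an $\Omega$-colored string may be pulled through the coupon $f$ (the half-braiding version of the handle-slide lemma proved in Section~2, combined with the remark that a red string through a half-braiding is still the $\Omega$-color). After this move both $\Omega$-circles link the $X$-part of the strand, are parallel with nothing between them, and the coupons $f$ and $g$ have become adjacent; by Planar Graphical Calculus they compose to $g\circ f\in\hom_{\cC}(X,Z)$ (viewed via the forgetful functor).

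It then remains to merge the two parallel $\Omega$-colored circles. I would establish the local identity that two parallel $\Omega$-colored circles encircling a strand equal $\mu$ times a single $\Omega$-colored circle through the same half-braiding: expanding $\Omega=\sum_i d_i X_i$, fusing the two crossings by the hexagon-type identity $e_X(X_i\otimes X_j)=(\id_{X_j}\otimes e_X(X_i))(e_X(X_j)\otimes\id_{X_i})$, and decomposing $X_i\otimes X_j$ into simples, the coefficient of the $X_k$-channel becomes $\sum_{i,j}d_id_j N_{ij}^{k}=d_k\sum_i d_i^{2}=\mu\,d_k$, which is precisely $\mu$ times the coefficient for a single $\Omega$-circle; alternatively this merge can be carried out purely by Moves~0--2 and handle slides exactly as in the proof of Lemma~\ref{lem:zkirby}. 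Combining the factor $\mu$ with the prefactor $\mu^{-2}$ yields $\mu^{-1}$ times the cylinder with a single $\Omega$-circle through $e_X$ and the single coupon $g\circ f$, which is by definition $G(g\circ f)$. (That $G(\id_{(X,e_X)})=\id_{O_X}$ follows by applying one Move~1 to remove the $\Omega$-circle, or may be deferred to the proof that $F\circ G$ and $G\circ F$ are the identities.)

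The main obstacle is this last step: getting the two-$\Omega$-circle merge right while tracking which half-braiding labels the crossing of the surviving circle. The delicate point is that fusing the two half-braidings requires the hexagon identity for $e_X$, and one must check that the dimension weights recombine to exactly $\mu$ — not merely to some scalar — so that the normalizing factor $1/\mu$ in the definition of $G$ is the correct one; organizing the argument as handle slides plus Moves on representatives in $\cA_0$, rather than as a bare categorical computation, keeps this bookkeeping transparent.
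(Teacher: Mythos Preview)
Your main argument is correct and is essentially the paper's argument spelled out in detail: the paper's one-line justification ``as $f$ and $g$ are morphisms in $\mathcal{Z(C)}$'' encodes precisely your two sub-steps (slide the second meridian through the center morphism, then merge the two parallel $\Omega$-meridians with a factor of~$\mu$), and your fusion-coefficient computation $\sum_{i,j} d_i d_j N_{ij}^k = \mu\, d_k$ is the right way to justify the merge.

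However, your parenthetical claim that $G(\id_{(X,e_X)}) = \id_{O_X}$ ``by applying one Move~1 to remove the $\Omega$-circle'' is wrong on both counts. The meridian in $G(\id_{(X,e_X)})$ is not a bare $\Omega$-belt: it carries the half-braiding coupon $e_X(X_i)$ where it meets the $X$-strand, so the reverse of Move~1 does not apply. More importantly, $G(\id_{(X,e_X)})$ is \emph{not} $\id_{O_X}$ but rather the nontrivial idempotent $\mu^{-1}\sum_i d_i\,\cyl(e_X(X_i))$; the paper uses exactly this idempotent to define $G(X,e_X) := (O_X, G(\id_{(X,e_X)}))$ in the Karoubi envelope, and there the identity morphism of $(O_X, p)$ is $p$ itself, not $\id_{O_X}$. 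This does not affect your proof of the lemma (which only concerns preservation of composition), but you should drop that remark.
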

\begin{proof}
For any for $f\in \hom _{\mathcal{Z(C)}}((W, e_W), (Y, e_Y))$ and $g\in \hom _{\mathcal{Z(C)}}((X, e_X), (W, e_W) )$, we have
\begin{align*}
G(fg)=\frac{1}{\mu}
\vcenter{\hbox{\scalebox{0.7}{
\begin{tikzpicture}[scale=0.35]
\draw (0,5) [partial ellipse=0:360:2 and 0.8];
\draw (-2, 5)--(-2, -4);
\draw (2, 5)--(2, -4);
\draw[dashed] (0,-4) [partial ellipse=0:180:2 and 0.8];
\draw (0,-4) [partial ellipse=180:360:2 and 0.8];
\draw[blue, ->-=0.5] (0, 4.2) node[above, black]{\tiny{${X}$}}->(0, 0);
\draw[blue, ->-=0.5] (0, 0)--(0, -4.8)node[below, black]{\tiny{${Y}$}};
\draw [red](0, 3) [partial ellipse=-75:0:2 and 0.8];
\draw [red, dashed](0, 3) [partial ellipse=0:180:2 and 0.8];
\draw [red](0, 3) [partial ellipse=180:255:2 and 0.8];
\node [draw, fill=white] at (0, -1){\tiny $f$};
\node [draw, fill=white] at (0, 1){\tiny $g$};
\end{tikzpicture}}}}
=
\frac{1}{\mu^2}
\vcenter{\hbox{\scalebox{0.7}{
\begin{tikzpicture}[scale=0.35]
\draw (0,5) [partial ellipse=0:360:2 and 0.8];
\draw (-2, 5)--(-2, -4);
\draw (2, 5)--(2, -4);
\draw[dashed] (0,-4) [partial ellipse=0:180:2 and 0.8];
\draw (0,-4) [partial ellipse=180:360:2 and 0.8];
\draw[blue, ->-=0.3] (0, 4.2) node [above, black]{\tiny{${X}$}}->(0, 0);
\draw[blue, ->-=0.8] (0, 0)--(0, -4.8)node[below, black]{\tiny{${Y}$}};
\draw [red](0, 3.1) [partial ellipse=-75:0:2 and 0.8];
\draw [red, dashed](0, 3.1) [partial ellipse=0:180:2 and 0.8];
\draw [red](0, 3.1) [partial ellipse=180:255:2 and 0.8];
\draw [red](0, 0.5) [partial ellipse=-75:0:2 and 0.8];
\draw [red, dashed](0, 0.5) [partial ellipse=0:180:2 and 0.8];
\draw [red](0, 0.5) [partial ellipse=180:255:2 and 0.8];
\node [draw, fill=white] at (0, -1.5){\tiny $f$};
\node [draw, fill=white] at (0, 1.2){\tiny $g$};
\end{tikzpicture}}}}=G(f)G(g)\,,
\end{align*}
as $f$ and $g$ are morphisms in $\mathcal{Z(C)}$. 
\end{proof}

In particular, for any $(X,e_X) \in \mathcal{Z(C)}$,  $G(\id_{(X, e_X)}) = \mu^{-1}\cyl(\sum_{i}e_X(X_i))$ is an idempotent in $\cA_1$. Now for any $(X,e_X) \in \mathcal{Z(C)}$, define $G(X,e_X) := (O_X, G(\id_{(X, e_X)}) \in \cA$, then $G(\id_{(X, e_X)}) = \id_{(G(X, e_X)}$. So together with Lemma \ref{lem:G-fg}, we have $G: \mathcal{Z(C)} \to \cA$ is a functor.

We shall prove $GF\cong \id_{\cA}$ and $FG\cong \id_{\mathcal{Z(C)}}$ by the following two lemma. 
The prove is similar to Theorem 6.4 of \cite{Kir11}.

\begin{lemma}\label{lem:GF}
The functor $GF$ is isomorphic to the identity functor $\text{id}_{\cA}$.
\end{lemma}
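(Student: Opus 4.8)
The plan is to exhibit a natural isomorphism $\eta_O\colon O\xrightarrow{\sim} GF(O)$ for every object $O$ of $\cA$, check naturality, and verify compatibility with the monoidal structures. By Corollary \ref{cor:singledisk} it suffices to treat $O=\CIRCLE_X$ for $X\in\cC$, since every object of $\cA_1$ is a direct sum of such disks and $\cA$ is their idempotent completion; the general case then follows formally. For $O=\CIRCLE_X$ we have $F(\CIRCLE_X)=(F_0(\CIRCLE_X),e_{F_0(\CIRCLE_X)})$ with $F_0(\CIRCLE_X)=\bigoplus_i X_i^*XX_i$, and $GF(\CIRCLE_X)=(O_{F_0(\CIRCLE_X)},\,\mu^{-1}\cyl(\sum_j e_{F_0(\CIRCLE_X)}(X_j)))$. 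I would write down the candidate isomorphism $\eta_{X}\in\hom_\cA(\CIRCLE_X,\,GF(\CIRCLE_X))$ as a cornered $\cC$-handlebody: a cylinder from $O_X$ whose single $X$-string splits, by Lemma \ref{lem:unit1}, into the legs $X_i^*,X,X_i$ on the target disk $O_{F_0(\CIRCLE_X)}$, weighted appropriately by $d_i$, with a red ($\Omega$-colored) belt circle recording the idempotent cutting out $GF(\CIRCLE_X)$. Its inverse $\eta_X^{-1}$ is the vertically reflected handlebody that fuses the legs back into a single $X$-string.

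The core of the argument is to check $\eta_X^{-1}\eta_X=\id_{\CIRCLE_X}$ and $\eta_X\eta_X^{-1}=\id_{GF(\CIRCLE_X)}$. The first composition is a cylinder on $O_X$ in which the $X$-string bubbles out into the $X_i^*,X,X_i$ legs and comes back; by Move 1 (which removes the $\Omega$-colored belt at the cost of $\zeta^{-1}$, here $1$ since $\zeta=1$ in this section) together with Planar Graphical Calculus and Lemma \ref{lem:unit1}, this collapses to the identity tube. The second composition lands in $\hom_\cA(GF(\CIRCLE_X),GF(\CIRCLE_X))$ and, after applying Move 1 and Move 2 and using that $\mu^{-1}\cyl(\sum_j e_{F_0(\CIRCLE_X)}(X_j))$ is idempotent, reduces to the idempotent itself, which is exactly $\id_{GF(\CIRCLE_X)}$ in $\cA$; the defining half-braiding relation for $(F_0(\CIRCLE_X),e_{F_0(\CIRCLE_X)})$ is what makes the belt circle slide correctly here. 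I would then verify naturality: for $H\in\hom_\cA(\CIRCLE_X,\CIRCLE_Y)$ one must show $GF(H)\circ\eta_X=\eta_Y\circ H$. Using Theorem \ref{lem:morphismform} we may assume $H=\cyl(f)$ with $f\in\tube(X,Y)$; both sides are then cornered handlebodies differing only by isotopy and applications of Moves 1, 2 and Planar Graphical Calculus (the same kind of manipulation used in the proof of Proposition \ref{prop:f0functor}), hence equal by Theorem \ref{thm:tubemove}.

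Finally I would record monoidality of $\eta$: since $F_0$ and hence $F$ is strict monoidal (Proposition \ref{prop:f0functor}, \ref{prop:ffunctor}) and $G$ is built from $\cyl$ disk-by-disk, the isomorphisms $\eta_{\CIRCLE_X}$ assemble, via Proposition \ref{prop:objtensor} and Corollary \ref{cor:singledisk}, into a monoidal natural isomorphism $\id_\cA\Rightarrow GF$; this again reduces to a diagrammatic identity provable by the moves. The main obstacle I anticipate is the second composition $\eta_X\eta_X^{-1}=\id_{GF(\CIRCLE_X)}$: one has to carefully track the $\Omega$-colored belt circle through the half-braiding crossings and apply the handle-slide/naturality for half-braidings (the lemma following the Drinfeld center discussion) together with the exact scalar bookkeeping from Moves 1 and 2, so that the $\mu^{-1}$ normalization and the idempotent come out on the nose; the rest is routine graphical calculus invoking Theorem \ref{thm:tubemove}.
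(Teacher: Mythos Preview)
Your proposal is correct in spirit and follows the same overall strategy as the paper: reduce to single-disk objects via Corollary~\ref{cor:singledisk}, write down an explicit pair of inverse morphisms, and check naturality diagrammatically using Theorem~\ref{thm:tubemove}. The difference is in execution. Before constructing the natural isomorphism, the paper first simplifies $GF(\cyl(f))$ via Move~1 (Equation~\eqref{GFequiv}) to a morphism represented on the \emph{same} cylinder $O_X\to O_Y$, decorated only by $f$ plus a red ``semicircle'' at the top and one at the bottom. With that simplification, the natural isomorphism $h_{p_X}$ is just the cylinder carrying $p_X$ with a single semicircle at one end, and its inverse $h'_{p_X}$ has the semicircle at the other end; the two semicircles compose to a full $\Omega$-circle evaluating to $\mu$, so the inverse relation and the naturality check are one-liners. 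Your route instead works with the larger object $O_{F_0(\CIRCLE_X)}=\bigoplus_i O_{X_i^*XX_i}$ and a pair-of-pants $\eta_X$, which forces you into the more involved verification you correctly flag as the ``main obstacle'' (tracking the belt circle through the half-braiding and matching the idempotent). The paper's shortcut bypasses exactly that obstacle.

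Two smaller differences: the paper builds $h$ directly for arbitrary Karoubi objects $(O_X,\cyl(p_X))$ rather than treating only $\CIRCLE_X$ and appealing to idempotent completion formally; and the paper does not address monoidality of the natural isomorphism at all, since the lemma only asserts an isomorphism of plain functors --- your final paragraph is extra.
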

\begin{proof}
By Corollary \ref{cor:singledisk}, any object in $\cA$ is isomorphic to a subobject of some $O_{X}$. Moreover, by Remark \ref{rem:homspace}, morphisms between objects of form $O_X$ are of form $\cyl(f)$. Thus, we only need to construct a natural isomorphism $h: GF\to \id_{\cA}$ for objects of the form $(O_{X}, \cyl(p_X))$, where $p_X \in T(X,X)$ is a morphism such that $\cyl(p_X)$ is an idempotent in $\cA_1$.

By Move 1, for any $f \in T(X,Y)$, we have
\begin{equation}\label{GFequiv}
GF(\cyl(f))=\mu^{-1}\sum_{i,j=0}^r (d_i d_j)^{1/2}
\vcenter{\hbox{\scalebox{0.7}{
\begin{tikzpicture}[scale=0.8]
\begin{scope}[shift={(1.5,1.5)}]
\draw (-1, 1.25) .. controls +(0, 0.3) and +(0, 0.3).. (1, 1.25);
\draw  (1,1.25)  .. controls +(0, -0.3) and +(0, -0.3).. (-1, 1.25);
\end{scope}
\draw (0.5, 2.75) .. controls +(0, -0.5) and +(0, 0.5) .. (-1, 1.25);
\draw (2.5, 2.75) .. controls +(0, -0.5) and +(0, 0.5) .. (4, 1.25);
\draw (1, 1.25)  .. controls +(0, 0.5) and +(0, 0.5) ..  (2, 1.25);
\draw [blue, dashed] (1.5, 1.65) .. controls +(0.2, 0) and +(0, -0.3).. (1.8, 3) node [above, black] {\tiny $X_i$};
\draw [blue] (1.5, 1.65) .. controls +(-0.2, 0) and +(0, -0.3).. (1.2, 2.5);
\draw [dashed] (-1, 1.25) .. controls +(0, 0.3) and +(0, 0.3).. (1, 1.25);
\draw [dashed]  (1,1.25)  .. controls +(0, -0.3) and +(0, -0.3).. (-1, 1.25);
\begin{scope}[shift={(0, -2.5)}]
\draw  [dashed] (-1, 1.25) .. controls +(0, 0.3) and +(0, 0.3).. (1, 1.25);
\draw [dashed]  (1,1.25)  .. controls +(0, -0.3) and +(0, -0.3).. (-1, 1.25);
\end{scope}
\begin{scope}[shift={(3, 0)}]
\draw [dashed] (-1, 1.25) .. controls +(0, 0.3) and +(0, 0.3).. (1, 1.25);
\draw [dashed]  (1,1.25)  .. controls +(0, -0.3) and +(0, -0.3).. (-1, 1.25);
\end{scope}
\begin{scope}[shift={(3, -2.5)}]
\draw [dashed] (-1, 1.25) .. controls +(0, 0.3) and +(0, 0.3).. (1, 1.25);
\draw [dashed]  (1,1.25)  .. controls +(0, -0.3) and +(0, -0.3).. (-1, 1.25);
\end{scope}
\begin{scope}[shift={(0, -1)}]
\draw [red, dashed] (-1, 1.25) .. controls +(0, 0.3) and +(0, 0.3).. (1, 1.25);
\draw [red]  (1,1.25)  .. controls +(0, -0.3) and +(0, -0.3).. (-1, 1.25);
\end{scope}
\node [draw, fill=white] at (0, 0) {\tiny ${f}$};
\draw [blue, -<-=0.2, ->-=0.8](0, 0.3) --(0, 1) .. controls +(0, 0.5) and +(0, -0.5).. (0.9, 2.55) node [above, black] {\tiny $X$}  (0, -0.3) --(0, -1.5) .. controls +(0, -0.5) and +(0, 0.5).. (0.9, -2.95)node [below, black] {\tiny $Y$};
\draw  (-1, 1.25)--(-1, -1.25) (1, 1.25)--(1, -1.25);
\begin{scope}[shift={(3, 0)}]
\begin{scope}[shift={(0, -1)}]
\draw [red, dashed] (-1, 1.25) .. controls +(0, 0.3) and +(0, 0.3).. (1, 1.25);
\draw [red]  (1,1.25)  .. controls +(0, -0.3) and +(0, -0.3).. (-1, 1.25);
\end{scope}
\draw  (-1, 1.25)--(-1, -1.25) (1, 1.25)--(1, -1.25);
\end{scope}
\begin{scope}[shift={(1.5,-4)}]
\draw [dashed] (-1, 1.25) .. controls +(0, 0.3) and +(0, 0.3).. (1, 1.25);
\draw   (1,1.25)  .. controls +(0, -0.3) and +(0, -0.3).. (-1, 1.25);
\end{scope}
\draw  (0.5, -2.75) .. controls +(0, 0.5) and +(0, -0.5) .. (-1, -1.25);
\draw  (2.5, -2.75) .. controls +(0, 0.5) and +(0, -0.5) .. (4, -1.25);
\draw  (1, -1.25)  .. controls +(0, -0.5) and +(0, -0.5) ..  (2, -1.25);
\draw [blue] (1.5, -1.65) .. controls +(0.2, 0) and +(0, 0.3).. (1.8, -2.95) node [below, black] {\tiny $X_j$};
\draw [blue, dashed] (1.5, -1.65) .. controls +(-0.2, 0) and +(0, 0.3).. (1.2, -2.5);
\end{tikzpicture}}}}
=\mu^{-1}\sum_{i,j=0}^r (d_id_j)^{1/2}
\vcenter{\hbox{\scalebox{0.7}{
\begin{tikzpicture}[scale=0.35]
\draw (0,5) [partial ellipse=0:360:2 and 0.8];
\draw (-2, 5)--(-2, -4);
\draw (2, 5)--(2, -4);
\draw[dashed] (0,-4) [partial ellipse=0:180:2 and 0.8];
\draw (0,-4) [partial ellipse=180:360:2 and 0.8];
\draw[blue, ->-=0.5] (0, 4.2) node[above, black]{\tiny{${X}$}}->(0, 0.5);
\draw[blue, ->-=0.5] (0, -0.5)--(0, -4.8)node[below, black]{\tiny{${Y}$}};
\draw [red](0, 0.95) [partial ellipse=-75:0:2 and 0.8];
\draw [red, dashed](0, 0.95) [partial ellipse=0:180:2 and 0.8];
\draw [red](0, 0.95) [partial ellipse=180:255:2 and 0.8];
\node [draw, fill=white] (0, 0){\tiny $f$};
\draw [blue,dashed] (2, -2).. controls +(-0.2, 0.1) and +(0.2, 0.3).. (0.8, -3.3);
\draw [blue] (2, -2).. controls +(0,-0.4) and +(0.2, 0.2).. (0.8, -4.7) node [below, black] {\tiny $X_j$};
\draw [blue] (2, 3).. controls +(-0.2, -0.1) and +(0.2, -0.3).. (0.8, 4.3);
\draw [blue,dashed] (2, 3).. controls +(0,0.4) and +(0.2, -0.2).. (0.8, 5.7) node [above, black] {\tiny $X_i$};
\end{tikzpicture}}}}\,.
\end{equation}
For simplicity, we denote $GF(\cyl(f))$ by $\mu^{-1}\vcenter{\hbox{
\begin{tikzpicture}[scale=0.25]
\draw (0,5) [partial ellipse=0:360:2 and 0.8];
\draw (-2, 5)--(-2, -4);
\draw (2, 5)--(2, -4);
\draw[dashed] (0,-4) [partial ellipse=0:180:2 and 0.8];
\draw (0,-4) [partial ellipse=180:360:2 and 0.8];
\draw[blue, ->-=0.5] (0, 4.2) node[above, black]{\tiny{${X}$}}->(0, 0.5);
\draw[blue, ->-=0.5] (0, -0.5)--(0, -4.8)node[below, black]{\tiny{${Y}$}};
\draw [red](0, 0.95) [partial ellipse=-75:0:2 and 0.8];
\draw [red, dashed](0, 0.95) [partial ellipse=0:180:2 and 0.8];
\draw [red](0, 0.95) [partial ellipse=180:255:2 and 0.8];
\node [draw, fill=white] (0, 0){\tiny $f$};
\draw [red,dashed] (2, -2).. controls +(-0.2, 0.1) and +(0.2, 0.3).. (0.8, -3.3);
\draw [red] (2, -2).. controls +(0,-0.4) and +(0.2, 0.2).. (0.8, -4.7);
\draw [red] (2, 3).. controls +(-0.2, -0.1) and +(0.2, -0.3).. (0.8, 4.3);
\draw [red,dashed] (2, 3).. controls +(0,0.4) and +(0.2, -0.2).. (0.8, 5.7);
\end{tikzpicture}}}$. Using this notation, it is clear that the following two morphisms
\begin{align*}
h_{p_X}=\mu^{-1}
\vcenter{\hbox{\scalebox{0.7}{
\begin{tikzpicture}[scale=0.35]
\draw (0,5) [partial ellipse=0:360:2 and 0.8];
\draw (-2, 5)--(-2, -4);
\draw (2, 5)--(2, -4);
\draw[dashed] (0,-4) [partial ellipse=0:180:2 and 0.8];
\draw (0,-4) [partial ellipse=180:360:2 and 0.8];
\draw[blue, ->-=0.5] (0, 4.2) node[above, black]{\tiny{${X}$}}->(0, 0.5);
\draw[blue, ->-=0.5] (0, -0.5)--(0, -4.8)node[below, black]{\tiny{${X}$}};
\draw [red](0, 0.95) [partial ellipse=-75:0:2 and 0.8];
\draw [red, dashed](0, 0.95) [partial ellipse=0:180:2 and 0.8];
\draw [red](0, 0.95) [partial ellipse=180:255:2 and 0.8];
\node [draw, fill=white] (0, 0){\tiny $p_X$};
%\draw [red,dashed] (2, -2).. controls +(-0.2, 0.1) and +(0.2, 0.3).. (0.8, -3.3);
%\draw [red] (2, -2).. controls +(0,-0.4) and +(0.2, 0.2).. (0.8, -4.7);
\draw [red] (2, 3).. controls +(-0.2, -0.1) and +(0.2, -0.3).. (0.8, 4.3);
\draw [red,dashed] (2, 3).. controls +(0,0.4) and +(0.2, -0.2).. (0.8, 5.7);
\end{tikzpicture}}}}\,, \text{ and }
h_{p_X}'=
\vcenter{\hbox{\scalebox{0.7}{
\begin{tikzpicture}[scale=0.35]
\draw (0,5) [partial ellipse=0:360:2 and 0.8];
\draw (-2, 5)--(-2, -4);
\draw (2, 5)--(2, -4);
\draw[dashed] (0,-4) [partial ellipse=0:180:2 and 0.8];
\draw (0,-4) [partial ellipse=180:360:2 and 0.8];
\draw[blue, ->-=0.5] (0, 4.2) node[above, black]{\tiny{${X}$}}->(0, 0.5);
\draw[blue, ->-=0.5] (0, -0.5)--(0, -4.8)node[below, black]{\tiny{${X}$}};
\draw [red](0, 0.95) [partial ellipse=-75:0:2 and 0.8];
\draw [red, dashed](0, 0.95) [partial ellipse=0:180:2 and 0.8];
\draw [red](0, 0.95) [partial ellipse=180:255:2 and 0.8];
\node [draw, fill=white] (0, 0){\tiny $p_X$};
\draw [red,dashed] (2, -2).. controls +(-0.2, 0.1) and +(0.2, 0.3).. (0.8, -3.3);
\draw [red] (2, -2).. controls +(0,-0.4) and +(0.2, 0.2).. (0.8, -4.7);
\end{tikzpicture}}}}
\end{align*}
between $(O_X, \cyl(p_{X}))$ and $GF(O_X, \cyl(p_{X}))=(GF(O_X), GF(\cyl(p_X))$ are inverse to each other as morphisms in $\cA$, as the semicircles compose to an $\Omega$-colored circle, which evaluates to $\mu$ by planar graphical calculus.

Finally , for any morphism $\cyl(f)\in \hom_{\cA}((O_X, \cyl(p_X)), (O_Y, \cyl(q_Y)))$, we have
\begin{align*}
h_{q_{Y}}\circ GF(\cyl({f})) \circ h_{p_{X}}'=
\vcenter{\hbox{\scalebox{0.7}{
\begin{tikzpicture}[scale=0.35]
\draw (0,5) [partial ellipse=0:360:2 and 0.8];
\draw (-2, 5)--(-2, -4);
\draw (2, 5)--(2, -4);
\draw[dashed] (0,-4) [partial ellipse=0:180:2 and 0.8];
\draw (0,-4) [partial ellipse=180:360:2 and 0.8];
\draw[blue, ->-=0.3] (0, 4.2) node[above, black]{\tiny{${X}$}}->(0, 0.5);
\draw[blue, ->-=0.8] (0, -0.5)--(0, -4.8)node[below, black]{\tiny{${Y}$}};
\begin{scope}[shift={(0, -2)}]
\draw [red](0, 0.95) [partial ellipse=-75:0:2 and 0.8];
\draw [red, dashed](0, 0.95) [partial ellipse=0:180:2 and 0.8];
\draw [red](0, 0.95) [partial ellipse=180:255:2 and 0.8];
\node [draw, fill=white] (0, 0){\tiny $q_Y$};
\end{scope}
\begin{scope}[shift={(0, 0)}]
\draw [red](0, 0.95) [partial ellipse=-75:0:2 and 0.8];
\draw [red, dashed](0, 0.95) [partial ellipse=0:180:2 and 0.8];
\draw [red](0, 0.95) [partial ellipse=180:255:2 and 0.8];
\node [draw, fill=white] (0, 0){\tiny $f$};
\end{scope}
\begin{scope}[shift={(0, 2)}]
\draw [red](0, 0.95) [partial ellipse=-75:0:2 and 0.8];
\draw [red, dashed](0, 0.95) [partial ellipse=0:180:2 and 0.8];
\draw [red](0, 0.95) [partial ellipse=180:255:2 and 0.8];
\node [draw, fill=white] (0, 0){\tiny $p_X$};
\end{scope}
\end{tikzpicture}}}}
=\cyl(f)\,,
\end{align*}
which implies the naturality of $h$. This completes the proof of the lemma.
\end{proof}

\begin{lemma}\label{lem:FG}
The functor $FG$ is isomorphic to the identity functor $\id_{\mathcal{Z(C)}}$.
\end{lemma}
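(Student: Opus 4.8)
\emph{Proof proposal for Lemma~\ref{lem:FG}.}

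The plan is to construct the natural isomorphism $FG\cong\id_{\cZ(\cC)}$ directly, in a manner dual to the proof of Lemma~\ref{lem:GF}. Fix $(X,e_X)\in\cZ(\cC)$. By the definition of $G$ we have $G(X,e_X)=(O_X,p_X)$ with $p_X=\mu^{-1}\cyl\big(\sum_{i=0}^r e_X(X_i)\big)$ an idempotent in $\hom_{\cA_1}(O_X,O_X)$; hence $FG(X,e_X)$ is the image of the idempotent $q_X:=F_0(p_X)$ acting on $F_0(O_X)=\bigoplus_{i=0}^r X_i^*XX_i$, equipped with the half-braiding restricted from $e_{F_0(O_X)}$. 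This is a legitimate object of $\cZ(\cC)$: by Proposition~\ref{prop:ffunctor}, $q_X$ is an idempotent endomorphism of $(F_0(O_X),e_{F_0(O_X)})$ in $\cZ(\cC)$, so it commutes with $e_{F_0(O_X)}$. Writing $\iota_X\colon X\to F_0(O_X)$ for the inclusion of the $i=0$ summand (note $X_0^*XX_0=X$) and $\pi_X\colon F_0(O_X)\to X$ for the corresponding projection, we set $u_X:=q_X\circ\iota_X$ and $v_X:=\pi_X\circ q_X$, both factoring through $\operatorname{Im}(q_X)=FG(X,e_X)$.

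The first and main step is to show that $u_X\colon(X,e_X)\to FG(X,e_X)$ and $v_X\colon FG(X,e_X)\to(X,e_X)$ are morphisms in $\cZ(\cC)$, i.e.\ that they intertwine the half-braidings. Since $q_X$ already commutes with $e_{F_0(O_X)}$, this reduces to a compatibility between $\iota_X$ (resp.\ $\pi_X$) and the half-braidings after post- (resp.\ pre-)composition with $q_X$. Topologically, $e_{F_0(O_X)}(V)$ is the morphism obtained by running a $V$-colored string underneath the tubes of $\id_{O_X}$; when one restricts attention to the $i=0$ summand singled out by $\iota_X$ and $\pi_X$, this string interacts only with the $\Omega$-colored circle of $\cyl$ and with the half-braiding data $e_X(X_k)$ sitting inside $q_X$. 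Sliding the $V$-string through this configuration, using the handle slide through half-braidings (the Lemma following the definition of $\cZ(\cC)$ in Section~2.2) together with the defining property of the half-braiding $e_X$, produces precisely $e_X(V)$; this is the content-ful verification.

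It then remains to check $v_X\circ u_X=\id_{(X,e_X)}$, $u_X\circ v_X=\id_{FG(X,e_X)}$, and naturality. For the first identity one expands $q_X=\mu^{-1}F_0\big(\cyl(\sum_k e_X(X_k))\big)$ by the handle-on formula for $F_0\circ\cyl$ and extracts the $(0,0)$-block: pre- and post-composing with $\iota_X$ and $\pi_X$ closes the outer half-circles (now colored by $X_0=\1$) together with the $\Omega$-colored circle into a single closed $\Omega$-colored loop bounding a disk, which by Planar Graphical Calculus evaluates to $\mu$, cancelling the $\mu^{-1}$ in $p_X$; the residual diagram reduces to $e_X(\1)=\id_X$. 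This is exactly the ``$\Omega$-circle evaluates to $\mu$'' mechanism of Lemma~\ref{lem:GF}, and the computation of $u_X\circ v_X$ (which must give back $q_X$) uses the same moves. For naturality, given $f\in\hom_{\cZ(\cC)}((X,e_X),(Y,e_Y))$, Lemma~\ref{lem:G-fg} gives $p_Y\,G(f)=G(f)=G(f)\,p_X$, hence $q_Y\,FG(f)=FG(f)=FG(f)\,q_X$; therefore $v_Y\circ FG(f)\circ u_X=\pi_Y\circ F_0(G(f))\circ\iota_X$, and the same closing-up of the $\Omega$-loop shows this equals $f$. Combined with $u_Xv_X=\id$ and $v_Xu_X=\id$, this yields $FG(f)\circ u_X=u_Y\circ f$, so $u:=\{u_X\}\colon\id_{\cZ(\cC)}\Rightarrow FG$ is a natural isomorphism.

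I expect the genuine obstacle to be the half-braiding compatibility of the second paragraph: it requires translating the ``string-under-tubes'' description of $e_{F_0(O_X)}$ into an algebraic identity and invoking the axioms of the Drinfeld center at the right place, rather than merely juggling topological moves. The remaining computations are routine applications of Moves~1--2 and planar graphical calculus, parallel to those already carried out for Lemma~\ref{lem:GF}.
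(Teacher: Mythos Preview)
Your approach is the same as the paper's: build explicit inverse morphisms between $(X,e_X)$ and $FG(X,e_X)\subset\bigoplus_i X_i^*XX_i$ and verify naturality. The paper first simplifies $FG(f)$ to a planar expression $\mu^{-1}\sum_{i,j}d_i^{1/2}d_j^{1/2}$ times a string diagram carrying $f$ with an $X_j$-arc (half-braiding crossing) at one end and an $X_i$-arc at the other, and then takes $\upsilon_{X,e_X}$ and $\upsilon'_{X,e_X}$ to be the two halves of this picture with \emph{asymmetric} normalization (only $\upsilon'$ carries a factor $\mu^{-1}$). Your $u_X=q_X\iota_X$ and $v_X=\pi_X q_X$ are exactly these two halves, but both inherit the factor $\mu^{-1}$ from $q_X$.

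As a result your identities are off by $\mu$: the composite $v_Xu_X=\pi_Xq_X\iota_X$ is the $(0,0)$-block of the simplified $q_X$, which is $\mu^{-1}\id_X$, not $\id_X$; similarly $u_Xv_X=q_X\iota_X\pi_Xq_X=\mu^{-1}q_X$ and $v_Y\,FG(f)\,u_X=\mu^{-1}f$. The mechanism you describe --- the $\1$-colored half-circles closing up ``together with the $\Omega$-colored circle into a single closed $\Omega$-colored loop \dots\ which evaluates to $\mu$'' --- does not match the geometry: the $F_0$ half-circles lie on the plane around the disk attachments, whereas the $\Omega$-circle of $p_X$ is a meridian on the cylinder; they are not pieces of a common curve. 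Once the outer half-circles are $\1$-colored they simply disappear, and setting $i=j=0$ in the planar form trivializes both arcs via $e_X(\1)=\id_X$, leaving only the prefactor $\mu^{-1}$. The repair is immediate --- rescale one of $u_X,v_X$ by $\mu$, which is exactly the asymmetry the paper builds in --- but your stated justification would not lead you to it. Your instinct that compatibility of the components with the half-braidings is the substantive point is correct; the paper addresses this only implicitly, through the topological description of $e_{F_0(O_X)}$.
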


\begin{proof}
Let $f:(X, e_{X})\rightarrow (Y, e_{Y})$ be a morphism in $\mathcal{Z(C)}$, we compute $FG(f)$ as follows
\begin{align*}
FG(f)
&=\frac{1}{\mu}\sum_{i, j=0}^rd_{i}^{1/2}d_{j}^{1/2}
\vcenter{\hbox{\scalebox{0.6}{\begin{tikzpicture}[scale=0.35]
\draw [dashed] (0,5) [partial ellipse=0:360:2 and 0.8];
\draw (-2, 5)--(-2, -4);
\draw (2, 5)--(2, -4);
\draw[dashed] (0,-4) [partial ellipse=0:180:2 and 0.8];
\draw [dashed] (0,-4) [partial ellipse=180:360:2 and 0.8];
\draw[blue, ->-=0.5] (0, 7) node[above, black]{\tiny{${X}$}}->(0, 0.5);
\draw[blue, ->-=0.5] (0, -0.5)--(0, -6)node[below, black]{\tiny{${Y}$}};
\draw [red](0, 2) [partial ellipse=-75:0:2 and 0.8];
\draw [red, dashed](0, 2) [partial ellipse=0:180:2 and 0.8];
\draw [red](0, 2) [partial ellipse=180:255:2 and 0.8];
% \draw [red](0, -2) [partial ellipse=-75:0:2 and 0.8];
% \draw [red, dashed](0, -2) [partial ellipse=0:180:2 and 0.8];
% \draw [red](0, -2) [partial ellipse=180:255:2 and 0.8];
\node [draw, fill=white] at (0, 0){\tiny $f$};
\draw [blue] (0, -6) [partial ellipse=40:0:2.5 and 4] node [below, black] {\tiny $X_{i}$};
\draw [blue, dashed, ->-=0.6] (0, -6) [partial ellipse=135:45:2.5 and 4];
\draw [blue] (0, -6) [partial ellipse=135:180:2.5 and 4] node [below, black] {\tiny $X_{i}^*$};
\draw [blue] (0, 7) [partial ellipse=-40:0:2.5 and 4] node [above, black] {\tiny $X_{j}$};
\draw [blue, dashed, ->-=0.6] (0, 7) [partial ellipse=-45:-135:2.5 and 4];
\draw [blue] (0, 7) [partial ellipse=-140:-180:2.5 and 4] node [above, black] {\tiny $X_{j}^*$};
\draw (-4,-6) rectangle (4, 7);
\end{tikzpicture}}}}
=\frac{1}{\mu}
\sum_{i, j=0}^rd_{i}^{1/2}d_j^{1/2} \vcenter{\hbox{\scalebox{0.6}{
\begin{tikzpicture}[scale=0.35]
\draw[dashed] (0,5) [partial ellipse=0:360:2 and 0.8];
\draw (-2, 5)--(-2, -4);
\draw (2, 5)--(2, -4);
\draw[dashed] (0,-4) [partial ellipse=0:180:2 and 0.8];
\draw[dashed] (0,-4) [partial ellipse=180:360:2 and 0.8];
\draw[blue, ->-=0.4] (0, 7) node [above, black]{\tiny{${X}$}}--(0, 0.5);
\draw[blue, ->-=0.6] (0, -0.5)--(0, -6)node[below, black]{\tiny{${Y}$}};
\begin{scope}[yshift=1cm]
\draw [red](0, .95) [partial ellipse=-75:0:2 and 0.8];
\draw [red, dashed](0, .95) [partial ellipse=0:180:2 and 0.8];
\draw [red](0, .95) [partial ellipse=180:255:2 and 0.8];
\end{scope}
\node [draw, fill=white] at (0, 0) {\tiny ${f}$};
\draw [blue,->-=0.3] (2, 7).. controls (1.7, 7) and (1.4, 1.5) ..(0.4, 1.5);
\draw [blue] (-2, 7).. controls (-1.7, 7) and (-1.4, 1.5) ..(-0.4, 1.5);
\begin{scope}[yscale=-1]
\draw [blue, -<-=0.33] (2, 6).. controls (1.7, 6) and (1.4, 1.5) ..(0.4, 1.5);
\draw [blue] (-2, 6).. controls (-1.7, 6) and (-1.4, 1.5) ..(-0.4, 1.5);
\end{scope}
\draw (-3, 7) node [above]{\tiny{$X_{i}$}};
\draw (-3, -6) node [below]{\tiny{$X_{j}$}};
\draw (-4, -6) rectangle (4, 7);
\end{tikzpicture}}}}
=\frac{1}{\mu}\sum_{i, j}d_{i}^{1/2}d_{j}^{1/2}
\vcenter{\hbox{\scalebox{0.6}{\begin{tikzpicture}[scale=0.35]
\draw[blue, ->-=0.5] (0, 6) node[above, black]{\tiny{${X}$}}->(0, 0);
\draw[blue, ->-=0.5] (0, 0)--(0, -6)node[below, black]{\tiny{${Y}$}};
\node [draw, fill=white] at (0, 0){\tiny $f$};
\draw [blue,-<-=0.5] (0, -6) [partial ellipse=80:0:2.5 and 4] node [below, black] {\tiny $X_{i}$};
\draw [blue] (0, -6) [partial ellipse=100:180:2.5 and 4] node [below, black] {\tiny $X_{i}^*$};
\draw [blue,-<-=0.5] (0, 6) [partial ellipse=-80:0:2.5 and 4] node [above, black] {\tiny $X_{j}$};
\draw [blue] (0, 6) [partial ellipse=-100:-180:2.5 and 4] node [above, black] {\tiny $X_{j}^*$};
\end{tikzpicture}}}}.
\end{align*}
The second equality comes from sliding the blue arcs to the front along the red curves in the middle. The last equality comes from Move 2 with respect to the middle hole.

We define $\upsilon_{X, e_{X}}: (X, e_{X})\rightarrow GF(X, e_{X})$ and $\upsilon_{X, e_{X}}': GF(X, e_{X})\rightarrow (X, e_{X})$ as follows
$$\upsilon_{X, e_{X}}=\sum_j d_j^{1/2}
\vcenter{\hbox{ \scalebox{0.6}{\begin{tikzpicture}[scale=0.35]
\draw[blue, ->-=0.5] (0, 6) node[above, black]{\tiny{${X}$}}->(0, -6);
\draw [blue,-<-=0.5] (0, 6) [partial ellipse=-80:0:2.5 and 4] node [above, black] {\tiny $X_{j}$};
\draw [blue] (0, 6) [partial ellipse=-100:-180:2.5 and 4] node [above, black] {\tiny $X_{j}^*$};
\end{tikzpicture}}}}
\text{ and }
\upsilon_{X, e_{X}}'=
\mu^{-1}\sum_{i}d_{i}^{1/2}
\vcenter{\hbox{\scalebox{0.6}{\begin{tikzpicture}[scale=0.35]
\draw[blue, ->-=0.5] (0, 6) node[above, black]{\tiny{${X}$}}->(0, -6);
\draw [blue,-<-=0.5] (0, -6) [partial ellipse=80:0:2.5 and 4] node [below, black] {\tiny $X_{i}$};
\draw [blue] (0, -6) [partial ellipse=100:180:2.5 and 4] node [below, black] {\tiny $X_{i}^*$};
\end{tikzpicture}}}}
$$
By the planar graphical calculus, we have that
\begin{align*}
\upsilon_{X, e_{X}}\circ \upsilon_{X, e_{X}}'=FG(\id_{(X, e_X)}), \quad \upsilon_{X, e_{X}}'\circ \upsilon_{X, e_{X}}=\id_{(X, e_X)}.
\end{align*}
Lastly, the following equation implies the naturality of $\upsilon$, which completes the proof:
\[\upsilon_{Y, e_{Y}}'\circ FG(f)\circ \upsilon_{X, e_{X}}=
\frac{1}{\mu^{2}}
\vcenter{\hbox{ \scalebox{0.6}{\begin{tikzpicture}[scale=0.35]
\draw[blue, ->-=0.5] (0, 6) node[above, black]{\tiny{${X}$}}->(0, 0);
\draw[blue, ->-=0.5] (0, 0)--(0, -6)node[below, black]{\tiny{${Y}$}};
\node [draw, fill=white] at (0, 0){\tiny $f$};
\begin{scope}[shift={(0, 3)}]
\draw [red] (0.2, -1.5) .. controls +(1.6, 0) and +(1.6, 0) .. (0.2, 1.5);
\draw [red] (-0.2, -1.5) .. controls +(-1.6, 0) and +(-1.6, 0) .. (-0.2, 1.5);
\end{scope}
\begin{scope}[shift={(0, -3)}]
\draw [red] (0.2, -1.5) .. controls +(1.6, 0) and +(1.6, 0) .. (0.2, 1.5);
\draw [red] (-0.2, -1.5) .. controls +(-1.6, 0) and +(-1.6, 0) .. (-0.2, 1.5);
\end{scope}
\end{tikzpicture}}}}
=f\,.\qedhere\]
\end{proof}

\begin{theorem}\label{thm:equivcenter}
The tube category $\cA$ is braided monoidal equivalent to the Drinfeld center $\mathcal{Z(C)}$.
\end{theorem}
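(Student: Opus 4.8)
The plan is to assemble the functors and natural isomorphisms already constructed. By Proposition~\ref{prop:ffunctor}, $F\colon\cA\to\mathcal{Z(C)}$ is a braided monoidal functor, in fact strict monoidal since $F_0$ is; and the construction preceding Lemma~\ref{lem:GF} (via Lemma~\ref{lem:G-fg}) produces a functor $G\colon\mathcal{Z(C)}\to\cA$ with $G(\id_{(X,e_X)})=\id_{G(X,e_X)}$. Lemmas~\ref{lem:GF} and~\ref{lem:FG} then supply natural isomorphisms $GF\cong\id_\cA$ and $FG\cong\id_{\mathcal{Z(C)}}$, so $F$ is an equivalence of the underlying $\k$-linear categories with quasi-inverse $G$.

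It then remains only to upgrade this equivalence of categories to a braided monoidal equivalence. First I would invoke the standard categorical principle that a braided monoidal functor whose underlying functor is an equivalence is automatically a braided monoidal equivalence: one transports the monoidal structure of $\cA$ along the counit isomorphism $FG\cong\id_{\mathcal{Z(C)}}$ to make $G$ into a braided monoidal functor, after which the coherence and braiding-compatibility axioms for $G$ follow formally from those of $F$ together with the triangle/zigzag identities, and the natural isomorphisms of Lemmas~\ref{lem:GF} and~\ref{lem:FG} become monoidal natural isomorphisms. Applying this to the present $F$ and $G$ yields the theorem. One could alternatively observe that $\cA$ is already a modular tensor category by Remark~\ref{rem:modularity}, so the content of the theorem is precisely this braided monoidal equivalence with $\mathcal{Z(C)}$.

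The step I expect to require the most care — though it is really diagrammatic bookkeeping rather than a genuine obstacle — is verifying that the isomorphisms $h$ and $\upsilon$ from Lemmas~\ref{lem:GF} and~\ref{lem:FG} are compatible with the tensor structures, so that $GF\cong\id_\cA$ and $FG\cong\id_{\mathcal{Z(C)}}$ hold as \emph{monoidal} natural isomorphisms and not merely as natural isomorphisms of underlying functors. Concretely, one must check that the ``capping'' morphisms $h_{p_X},h'_{p_X}$ and $\upsilon_{X,e_X},\upsilon'_{X,e_X}$ intertwine horizontal juxtaposition. Since $F_0$ and $G$ are defined by gluing morphisms onto the $xz$-plane tube by tube, the semicircular arcs and $\Omega$-colored circles introduced in those proofs lie on disjoint tubes for a tensor product such as $O_X\otimes O_Y$, so they manifestly commute with juxtaposition, and the required identities reduce to the planar graphical calculus together with Moves~$1$ and~$2$ already used in Lemmas~\ref{lem:GF} and~\ref{lem:FG}. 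With that in hand, Theorem~\ref{thm:equivcenter} follows immediately.
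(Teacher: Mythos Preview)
Your proposal is correct and follows essentially the same approach as the paper: invoke Lemmas~\ref{lem:GF} and~\ref{lem:FG} to see that $F$ is an equivalence of categories, then use Proposition~\ref{prop:ffunctor} together with the standard fact that a braided monoidal functor which is an equivalence is automatically a braided monoidal equivalence. Your discussion of the monoidality of the natural isomorphisms $h$ and $\upsilon$ is more careful than the paper's own proof, which simply asserts the conclusion without that check.
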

\begin{proof}
By Lemma \ref{lem:GF} and \ref{lem:FG}, we see that  $F$ is an equivalence of categories. Moreover, by Proposition \ref{prop:ffunctor}, $F$, being a braided strict monoidal functor, is an equivalence of braided monoidal categories. In addition, $F$ is linear and exact by definition. Therefore, $F$ is an equivalence between braided fusion categories.
\end{proof}

\begin{remark}
It is important to notice that by our topologized Drinfeld center, the idempotents in $\hom_{\cA}(O, O)$ naturally give half-braidings in $\cC$.
\end{remark}

\begin{remark}
By the fact that $\cA$ is a modular tensor category, Theorem \ref{thm:equivcenter} presents a topological proof that the Drinfeld center is a modular tensor category.
\end{remark}

\subsection{TV Invariants and RT Invariants}

By Theorem \ref{thm:equivcenter}, we see that $\cA$ can be recognized as a topological definition of the Drinfeld center.
By Remark \ref{rem:modularity}, we see that $\vcenter{\hbox{\scalebox{0.6}{
\begin{tikzpicture}[xscale=0.8, yscale=0.6]
\draw [line width=0.83cm] (0,0) [partial ellipse=0:360.1:2 and 1.5];
\draw [white, line width=0.8cm] (0,0) [partial ellipse=-1:361.1:2 and 1.5];
\draw [red] (0,0) [partial ellipse=0:360:2 and 1.5];
\end{tikzpicture}}}}$ is the $\Omega$ color of $\cA$.
By identifying the tube category $\cA$ and the Drinfeld center,  we have

\begin{theorem}\label{thm:RT=TV}
Suppose $\mathcal{C}$ is a spherical fusion category and $M$ is a  compact oriented $3$-manifold  without boundary.
Then
$$TV_{\mathcal{C}}(M)=RT_{\mathcal{Z(C)}}(M).$$
\end{theorem}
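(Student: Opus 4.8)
\emph{Proof strategy.} The plan is to evaluate the partition function on a surgery presentation of $M$ and to recognize Turaev's surgery formula for the modular category $\cZ(\cC)$. By Lickorish--Wallace, write $M\cong\bS^3_L$ for a framed link $L=L_1\sqcup\cdots\sqcup L_n\subset\bS^3$. Corollary~\ref{cor:TVlink} already gives $TV_\cC(M)=\mu^{-|L|}Z(\bS^3,\partial L_\epsilon,\Gamma_L)$, so it suffices to identify $\mu^{-|L|}Z(\bS^3,\partial L_\epsilon,\Gamma_L)$ with $RT_{\cZ(\cC)}(M)$. The alterfold $(\bS^3,\partial L_\epsilon,\Gamma_L)$ has $A$-colored region a tubular neighborhood $L_\epsilon$ of $L$ --- a disjoint union of solid tori --- each carrying an $\Omega$-colored circle on its framed longitude and empty separating decoration otherwise.

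The key step is to read this alterfold through the tube category. A closed, decorated $A$-colored handlebody embedded in $\bS^3=\mathbb{R}^3\cup\{\infty\}$ is a morphism (here a scalar) in $\cA$, and under the braided monoidal equivalence $F\colon\cA\xrightarrow{\ \sim\ }\cZ(\cC)$ of Theorem~\ref{thm:equivcenter} it becomes a closed $\cZ(\cC)$-ribbon graph in $\bS^3$. By Remark~\ref{rem:modularity}, the $A$-colored solid torus with an $\Omega$-colored longitude represents, in $\mathcal{K}_0(\cA)\cong\mathcal{K}_0(\cZ(\cC))$, the Kirby color of the modular category $\cA$, namely $\Omega_{\cA}=\sum_i d_i\,I(X_i)$; thus $(\bS^3,\partial L_\epsilon,\Gamma_L)$ is precisely the framed link $L$ colored componentwise by the Kirby color of $\cZ(\cC)$ and evaluated in $\bS^3$. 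Together with the $\bS^3$-normalization (the empty $\bS^3$ contributing $\mu^{-1}$, consistent with $Z(\bS^3(B),\emptyset,\emptyset)=\mu^{-1}$ and Lemma~\ref{lem:zkirby}) this yields
\[
Z(\bS^3,\partial L_\epsilon,\Gamma_L)=\mu^{-1}\,\langle L;\Omega_{\cZ(\cC)}\rangle_{\cZ(\cC)},
\]
where $\langle\,\cdot\,\rangle_{\cZ(\cC)}$ is the unnormalized Reshetikhin--Turaev ribbon-graph bracket. Invariance of both sides under handle slides of $\Omega$-colored curves (Theorem~\ref{thm:handleslides}) and under the topological moves of Theorem~\ref{thm:linksurgery} keeps this identification consistent across changes of surgery presentation.

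It remains to compare with Turaev's surgery formula $RT_{\cD}(\bS^3_L)=\Delta_+^{-\sigma_+(L)}\Delta_-^{-\sigma_-(L)}\,\mathcal{D}^{\sigma_+(L)+\sigma_-(L)-|L|-1}\langle L;\Omega_{\cD}\rangle_{\cD}$, with $\mathcal{D}=\sqrt{\dim\cD}$, $\sigma_\pm(L)$ the numbers of positive/negative eigenvalues of the linking matrix, and $\Delta_\pm$ the Gauss sums. For $\cD=\cZ(\cC)$ one has $\dim\cZ(\cC)=(\dim\cC)^2=\mu^2$, so $\mathcal{D}=\mu$, and the multiplicative central charge of a Drinfeld center is trivial, so $\Delta_+=\Delta_-$; with $\Delta_+\Delta_-=\dim\cZ(\cC)$ this forces $\Delta_\pm=\mu$. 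Hence all anomaly factors collapse and $RT_{\cZ(\cC)}(\bS^3_L)=\mu^{-|L|-1}\langle L;\Omega_{\cZ(\cC)}\rangle=\mu^{-|L|}Z(\bS^3,\partial L_\epsilon,\Gamma_L)=TV_\cC(M)$. (Sanity checks: $M=\bS^3$ gives $\mu^{-1}=\mu^{-1}$; $M=\bS^1\times\bS^2$ from a $0$-framed unknot gives $1=1$.)

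The main obstacle is the second paragraph: turning the informal dictionary ``decorated $A$-colored handlebody $=$ morphism in $\cA\cong\cZ(\cC)$'' into a precise equality between the alterfold partition function of the blown-up link and the ribbon-graph evaluation in the modular category $\cZ(\cC)$, and in particular bookkeeping every power of $\mu$ --- the per-component $\mu^{-1}$ produced by the Move~$0$/Move~$1$ steps in Theorem~\ref{thm:linksurgery}, and the single global $\mu^{-1}$ coming from the ambient $\bS^3$ --- so that they reassemble exactly into Turaev's anomaly-free normalization for the center. The degenerate-linking-matrix case (manifolds with $b_1>0$) needs no special treatment here, since Turaev's formula holds for arbitrary surgery links and the $\mu$-exponent above simplifies to $-|L|-1$ independently of the nullity.
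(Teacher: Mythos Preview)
Your proof is correct and follows essentially the same route as the paper: write $M=\bS^3_L$, invoke Corollary~\ref{cor:TVlink} to get $TV_\cC(M)=\mu^{-|L|}Z(\bS^3,\partial L_\epsilon,\Gamma_L)$, and then use the braided equivalence $\cA\simeq\cZ(\cC)$ of Theorem~\ref{thm:equivcenter} together with Remark~\ref{rem:modularity} to read the alterfold as the $\Omega_{\cZ(\cC)}$-colored link evaluated by the Reshetikhin--Turaev bracket. The paper's proof is a single line citing these two results; your version spells out the point the paper leaves implicit, namely that the anomaly factors in Turaev's surgery formula collapse for a Drinfeld center (trivial multiplicative central charge, $\Delta_\pm=\mu$), so the normalization matches exactly --- this is a genuine and useful elaboration, and your ``main obstacle'' paragraph correctly isolates the only nontrivial bookkeeping.
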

\begin{proof}
Suppose that $M=\bS^3_L$, where $L$ is a framed link.
Then by Theorem \ref{thm:equivcenter} and Corollary \ref{cor:TVlink}, we have that
\begin{align*}
RT_{\mathcal{Z(C)}}(M)=\mu^{-|L|}Z(\bS^{3}, \partial L_{\epsilon}, \Gamma_{L})=TV_{\mathcal{C}}(\bS_{L}^{3})=TV_{\mathcal{C}}(M).
\end{align*}
This completes the proof.
\end{proof}
\begin{remark}
In the case $\mathcal{C}$ is a modular tensor category, it is well-known\cite{EGNO15, Mug03b} that $\mathcal{Z(C)}$ is braided equivalent to the Deligne tensor product $\mathcal{C}\boxtimes \mathcal{C}^{\text{rev}}$, as a corollary, one has
$$TV_{\mathcal{C}}(M)=RT_{\mathcal{C}}(M)RT_{\mathcal{C}}(-M).$$
\end{remark}

\begin{remark}
For Morita equivalent spherical fusion categories $\mathcal{C}$ and $\mathcal{D}$, it is well-known\cite{EGNO15, Mug03b} that $\mathcal{Z(C)}$ and $\mathcal{Z(D)}$ are braided equivalent. As a corollary, one has
$$TV_{\mathcal{C}}(M)=TV_{\mathcal{D}}(M).$$
\end{remark}

\begin{theorem}\label{thm:pseudotv=rt}
Suppose $\mathcal{C}$ is a spherical fusion category and $M$ is a pseudo $3$-manifold with singular locus $M_{sing}$. Suppose $M\setminus M_{sing}$ is homeomorphic to $\bS^{3}\setminus L$ for a link $L\subset \bS^{3}$. Then
\begin{align*}
TV_{\mathcal{C}} (M)= RT_{\mathcal{Z(C)}} (\bS^3, L_{I(\1)}),
\end{align*}
where $L_{I(\1)}$ is the $I(\1)$-colored framed link $L$.
\end{theorem}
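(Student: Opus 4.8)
The plan is to show that both sides of the asserted equality are equal to the value of the partition function $Z$ on a single $3$-alterfold, namely $\bS^3$ with an $A$-colored tubular neighbourhood $N(L)$ of $L$ and the empty diagram on $\partial N(L)$. Throughout one uses that $\zeta=1$ in this section.

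For the left-hand side, I would invoke the construction from the proof of Theorem \ref{thm:pseudotv}: fixing a triangulation $\Delta$ of the pseudo-manifold $M$ and letting $M^\dagger$ be the closed $3$-manifold obtained by excising an open cone-neighbourhood $U_P$ of each singular point $P\in M_{sing}$ and gluing back a handlebody $H_P$ with $\partial H_P\cong\Sigma_P$, one has
$$TV_{\mathcal{C}}(M)=\zeta^{\chi(M)-|M_{sing}|}\,Z\Bigl(M^\dagger,\,\bigsqcup_{P}\Sigma_P,\,\emptyset\Bigr)=Z\Bigl(M^\dagger,\,\bigsqcup_{P}\Sigma_P,\,\emptyset\Bigr),$$
where the $A$-colored region of the alterfold is $\bigsqcup_{P}H_P$, so the $B$-colored region is the closed complement of the singular locus. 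The hypothesis $M\setminus M_{sing}\cong\bS^3\setminus L$ forces this $B$-colored region to be homeomorphic to the link exterior $\bS^3\setminus N(L)$ (in particular every $\Sigma_P$ is a torus). Since by Theorem \ref{thm:partition function} the evaluation $Z$ depends only on the orientation-preserving homeomorphism type of the pair (B-colored region, diagram), one may re-present this alterfold keeping the same $B$-colored region but taking $M^\dagger=\bS^3$ and the $A$-colored region to be the standard tubular neighbourhood $N(L)\subset\bS^3$, with the empty diagram on $\partial N(L)$. This gives $TV_{\mathcal{C}}(M)=Z(\bS^3,\partial N(L),\emptyset)$ with $R_A=N(L)$.

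For the right-hand side, I would identify an undecorated $A$-colored solid torus in $\bS^3$, regarded as an object of the tube category, with $O_{\1}$; under the braided equivalence $\cA\simeq\mathcal{Z(C)}$ of Theorem \ref{thm:equivcenter} this is exactly the canonical Lagrangian object $I(\1)\in\mathcal{Z(C)}$, and blowing up $L$ to such solid tori is precisely cabling $L$ by $I(\1)$. It then remains to match $Z(\bS^3,\partial N(L),\emptyset)$ with the classical Reshetikhin--Turaev evaluation $RT_{\mathcal{Z(C)}}(\bS^3,L_{I(\1)})$; here I would present $\bS^3$ together with the $I(\1)$-colored link by a surgery diagram and use the Surgery Move of Theorem \ref{thm:linksurgery}, together with the identification of the $\Omega$-colored solid torus with the Kirby color of $\cA$ (Lemma \ref{lem:zkirby}, Remark \ref{rem:modularity}), to reduce the comparison to the normalization bookkeeping already carried out in the proofs of Corollary \ref{cor:TVlink} and Theorem \ref{thm:RT=TV}. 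Combining the two computations yields $TV_{\mathcal{C}}(M)=RT_{\mathcal{Z(C)}}(\bS^3,L_{I(\1)})$.

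I expect the main obstacle to be this last matching: making the passage from ``$Z$ of an alterfold with $A$-colored solid tori along a link in $\bS^3$'' to ``the classical $RT$ invariant of the corresponding colored link'' fully rigorous, including the correct powers of $\mu$ (the $\bS^3$-normalization of $RT$ versus the quantum dimension of $I(\1)$), the framing conventions, and the verification that the topological trace supplied by $Z$ agrees with the ribbon trace defining the $RT$ functor on $\mathcal{Z(C)}$. Everything else --- that the vertex-links of $M$ are tori so that $M^\dagger$ is a genuine manifold, that the $B$-colored region is the honest link exterior, and that the homeomorphism property lets one normalize $M^\dagger$ to $\bS^3$ --- is routine once the hypothesis is unwound.
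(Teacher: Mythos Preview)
Your proposal is correct and follows the paper's approach. The paper's proof is extremely terse: it simply writes
\[
TV_{\mathcal{C}}(M)=Z(\bS^3,\partial L_\epsilon,\emptyset)=RT_{\mathcal{Z(C)}}(\bS^3,L_{I(\1)}),
\]
invoking Theorem~\ref{thm:pseudotv} with $\zeta=1$ for the first equality, exactly as you outline. For the second equality the paper does not pass through surgery presentations or the Kirby-color identification; it relies directly on the fact (recorded in the remark after Proposition~\ref{prop:f0functor}) that the empty tube is $I(\1_{\cC})$, so that an undecorated $A$-colored solid torus along $L$ is, under the braided equivalence $\cA\simeq\mathcal{Z(C)}$ of Theorem~\ref{thm:equivcenter}, literally the $I(\1)$-cabled link, and $Z$ evaluates it to the Reshetikhin--Turaev link invariant by construction. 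Your surgery-move route would also work but is more machinery than needed; the normalization concern you flag is real but is absorbed by the convention that $Z(\bS^3(B))=\mu^{-1}$ matches the RT normalization of $\bS^3$.
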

\begin{proof}
By Theorem \ref{thm:pseudotv} and the assumption that $\zeta = 1$, we have that 
\begin{align*}
TV_{\mathcal{C}} (M) = Z(\bS^3, \partial L_\epsilon, \emptyset)=RT_{\mathcal{Z(C)}} (\bS^3, L_{I(\1)}).
\end{align*}
This completes the proof of the theorem.
\end{proof}

As an example, consider $\mathcal{C} = \cC(\mathfrak{sl}_2, q)$, the modular tensor category obtained from the quantum group $\mathcal{U}_{q}(\mathfrak{sl}_2)$, where $q$ is a $2r$-th root of unity (see \cite{Row05f} for details). After identifying $\mathcal{Z(C)}$ with $\mathcal{C}\boxtimes\mathcal{C}^{rev}$, $I(\1)$ is identified as $\bigoplus_{0\le i\le r-2} V_i\boxtimes V_{i}^{*}$ where $V_i$ corresponds to the $(i+1)$-dimensional irreducible representation of $\mathfrak{sl}_2$. On the other hand, $\langle L_{V_i}\rangle_{\mathcal{C}}$ can be computed using $J_{L, i}(q^2)$, the $i$-th colored Jones polynomial evaluated at $q^2$. Consequently, we have
\begin{corollary}[Theorem 1.1 in \cite{DKY18}]
Suppose $\mathcal{C}=\cC(\mathfrak{sl}_2, q)$ and $q$ is a primitive $2r$-th root of unity. Let $L$ be a framed link in $\bS^{3}$ and $\bS^{3}\setminus L$ be the pseudo $3$-manifold as in the previous theorem. Then
\begin{equation*} 
TV_{\mathcal{C}} (\bS^3 \setminus L) =\mu^{-1}\sum_{0\le i\le r-2}J_{L,i}(q^2)\overline{J_{L, i}(q^2)}\,. \qedhere
\end{equation*}
\end{corollary}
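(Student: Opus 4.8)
The plan is to deduce this corollary directly from Theorem \ref{thm:pseudotv=rt} by specializing $\cC = \cC(\mathfrak{sl}_2, q)$ and unwinding the right-hand side $RT_{\cZ(\cC)}(\bS^3, L_{I(\1)})$ into colored Jones polynomials. First I would recall, as stated just before the corollary, that for $\cC$ a modular tensor category the canonical braided equivalence $\cZ(\cC) \simeq \cC \boxtimes \cC^{\mathrm{rev}}$ holds, and that under this equivalence the object $I(\1)$ of the topologized Drinfeld center corresponds to the canonical Lagrangian algebra, which for a modular $\cC$ is $\bigoplus_{X \in \Irr(\cC)} X \boxtimes X^*$. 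For $\cC = \cC(\mathfrak{sl}_2, q)$ with $q$ a primitive $2r$-th root of unity, the simple objects are $V_0, \ldots, V_{r-2}$, so $I(\1) \cong \bigoplus_{0 \le i \le r-2} V_i \boxtimes V_i^*$.

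Next I would use multiplicativity of the Reshetikhin-Turaev invariant of a link in $\bS^3$ under the Deligne product: for a framed link $L$ in $\bS^3$ and a coloring of its components by objects of $\cC \boxtimes \cC^{\mathrm{rev}}$ of the form $Y \boxtimes Z$, the invariant factors as $\langle L_{Y}\rangle_{\cC} \cdot \langle L_{Z}\rangle_{\cC^{\mathrm{rev}}}$, and $\langle L_Z \rangle_{\cC^{\mathrm{rev}}} = \overline{\langle L_{Z^*}\rangle_{\cC}}$ (complex conjugation corresponds to passing to the reverse braiding, since $q \mapsto \bar q = q^{-1}$). By linearity (expanding the $I(\1)$-coloring over the direct sum) and the normalization $\mu = \FPdim(\cC)$ coming from $RT_{\cZ(\cC)}(\bS^3, \emptyset) = \mu^{-1}$ (as in Theorem \ref{thm:pseudotv=rt}, where $\zeta = 1$), this gives
\begin{equation*}
RT_{\cZ(\cC)}(\bS^3, L_{I(\1)}) = \mu^{-1} \sum_{0 \le i \le r-2} \langle L_{V_i}\rangle_{\cC}\, \overline{\langle L_{V_i}\rangle_{\cC}}\,.
\end{equation*}
Finally I would identify $\langle L_{V_i}\rangle_{\cC}$ with the $i$-th colored Jones polynomial $J_{L,i}(q^2)$, using the standard dictionary between the Reshetikhin-Turaev invariant of a link colored by the $(i+1)$-dimensional representation of $\mathfrak{sl}_2$ and $J_{L,i}$ evaluated at $q^2$ (with the appropriate framing normalization); citing \cite{Row05f} and \cite{DKY18} for the precise conventions. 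Combining with Theorem \ref{thm:pseudotv=rt} yields $TV_{\cC}(\bS^3 \setminus L) = \mu^{-1} \sum_{0 \le i \le r-2} J_{L,i}(q^2)\overline{J_{L,i}(q^2)}$, as claimed.

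The main obstacle I anticipate is purely bookkeeping rather than conceptual: matching normalizations across the three pictures — the self-contained alterfold normalization ($\zeta = 1$, $RT_{\cZ(\cC)}$ of the empty link in $\bS^3$ equal to $\mu^{-1}$), the standard RT normalization of colored Jones (often normalized so the unknot colored by $V_i$ gives $[i+1]_q$ or $1$), and the variable substitution $q \leftrightarrow q^2$ used in the colored Jones literature. I would handle this by fixing conventions once — taking the unnormalized $\langle \cdot \rangle_{\cC}$ (so the $V_i$-colored unknot evaluates to $d_i$) and checking the claimed identity on the unknot and a small example — after which the general case follows from the established multiplicativity and linearity. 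All of the genuinely new input (that $TV_\cC(\bS^3 \setminus L)$ equals the RT invariant of $L$ colored by the canonical Lagrangian algebra) is already packaged in Theorem \ref{thm:pseudotv=rt}, so no further topology is needed.
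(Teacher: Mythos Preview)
Your proposal is correct and follows essentially the same route as the paper: apply Theorem \ref{thm:pseudotv=rt}, use the braided equivalence $\cZ(\cC)\simeq \cC\boxtimes\cC^{\mathrm{rev}}$ to identify $I(\1)\cong\bigoplus_i V_i\boxtimes V_i^*$, factor the RT link invariant across the Deligne product, and identify $\langle L_{V_i}\rangle_{\cC}$ with the colored Jones polynomial. The paper's argument is the paragraph immediately preceding the corollary, and your write-up simply spells out the multiplicativity and conjugation steps (and the normalization bookkeeping) in more detail than the paper does.
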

\begin{remark}
In \cite{BenPet96} \cite{CheYan18} and \cite{DKY18}, they only considered the case that $\mathcal{C}$ equals to the quantum group associated to representation theory of $SU(2)$ and $SO(3)$, the modularity of quantum groups is essential in their proof. Theorem \ref{thm:pseudotv=rt} generalize their result to spherical fusion categories.
\end{remark}

\section{Generalized Frobenius-Schur Indicators}

The classical Frobenius-Schur (FS) indicators for representations of finite groups were introduced more than a century ago and has been studied and generalized ever since. 
The FS indicators were generalized to modules of semisimple Hopf algebras by Linchenko-Montgomery \cite{LinMon20} and Kashina-Sommerh\"{a}user-Zhu \cite{KSZ06}, and the generalizations to semisimple quasi-Hopf algebras were studied by Mason-Ng \cite{MasNg05} and Schauenburg \cite{Sch04}. 
FS indicators in categorical settings were also considered and studied by many, e.g., Fuchs-Ganchev-Szlach\'{a}nyi-Vescerny\'{e}s \cite{FGSzV99} and Fuchs-Schweigert \cite{FucSch03}. 
In \cite{NgSch07}, Ng and Schauenburg defined higher FS indicators for pivotal categories and unified the aforementioned versions of indicators. 

In \cite{SomZhu12}, Sommerh\"{a}user and Zhu used an $\SL_2(\bZ)$-equivariant indicator to prove the congruence subgroup theorem for semisimple factorizable Hopf algebras. 
Ng and Schauenburg in \cite{NgSch10} defined generalized FS-indicators for pivotal categories that extends the equivariant indicators of Sommerh\"{a}user-Zhu, and used the equivariance of the generalized FS-indicator to successfully prove the congruence subgroup theorem for modular tensor categories in full generality. 
Although the equivariance of the generalized FS-indicator is one of the key ingredient in the proof of the congruence subgroup theorem, there has long been a lack of intuitive explanation. 

In this section, we present $\mathcal{C}$-decorated 3-alterfolds whose partition functions equals to generalized Frobenius-Schur indicators. The diagrammatic presentation implies $\SL(2, \mathbb{Z})$-equivariance immediately.

\subsection{Topologized Frobenius-Schur Indicators}
We briefly recall the definition of the generalized FS-indicators for a spherical fusion category introduced in \cite{NgSch10}, according to which it suffices to assume that the category is strict pivotal.

Let $\mathcal{C}$ be a spherical fusion category over $\k$ that is strict pivotal. For any $V \in \mathcal{C}$ and $m \in \bZ$, we follow \cite[Sec.~2]{NgSch10} and set
\[V^m = \begin{cases}
\1\,, & \text{if $m = 0$}\,,\\ 
V \otimes V^{m-1} & \text{if $m > 0$}\,,\\
(V^*)^{-m} & \text{if $m < 0$}\,.
\end{cases}\]
Note that $V^{-m} = (V^m)^*$ by definition and the strictness assumption. For any pair $(m, \ell) \in \bZ^2$, denote
\[J_{m,\ell}(V) := 
\begin{cases}
V^{-\ell} \otimes V^m \otimes V^{\ell} = (V^{-\ell} \otimes V^{\ell}) \otimes V^{m} \xrightarrow{\ev\otimes \id} V^m & \text{if $m\ell \ge 0$}\,,\\
V^{-\ell} \otimes V^m \otimes V^{\ell} = V^m \otimes (V^{-\ell} \otimes V^\ell) \xrightarrow{\id \otimes \ev} V^{m} & \text{if $m\ell \le 0$}\,.
\end{cases}\]
Then we use $J_{m, \ell}$ to define a map $E^{(m, \ell)}_{(X, e_X), V}: \hom_{\cC}(X, V^m) \to\hom_{\cC}(X, V^m)$ for any $(X, e_X) \in \cZ(\cC)$, $V \in \cC$ and $(m, \ell) \in \bZ^2$ as follows. To any morphism $f \in \hom_{\cC}(X, V^m)$, we assign the composition

\[\begin{split}
E^{(m, \ell)}_{(X, e_X), V}(f):
& X \xrightarrow{\ \id \otimes \coev\ } X \otimes V^{-\ell} \otimes V^{\ell} \xrightarrow{\ e_X(V^{-\ell})  \ot \id \ } \\
& \qquad\quad V^{-\ell} \ot X \ot V^{\ell} \xrightarrow{\ \id \ot f \ot \id\ } V^{-\ell} \ot V^m \ot V^{\ell} \xrightarrow{\ J_{m, \ell}(V)\ } V^m\,.
\end{split}\]
It is easy to see that $E^{(m, \ell)}_{(X, e_X), V}$ is linear, and the {\it generelized Frobenius-Schur indicator} (generalized FS-indicator) attached to $(X, e_X) \in \cZ(\cC)$, $V \in \cC$ and $(m, \ell) \in \bZ^2$, denoted by $\nu_{m, \ell}^{(X, e_X)}(V)$, is defined to be the trace of this linear map:
\[\nu_{(m, \ell)}^{(X, e_X)}(V) := \Tr_{\k}\left(E^{(m, \ell)}_{(X, e_X), V}\right)\,.\]

\begin{example}\label{example:nu}
The semisimplicity of $\cC$ allows one to depict the generalized Frobenius-Schur indicators. Here, we present the indicators in planar graphical calculus (cf.~\cite[Proposition 3.1]{NgSch10}), and compare it with our quantum invariant later. Let $(X, e_X) \in \cZ(\cC)$, $V \in \cC$ and $(m, \ell) \in \bZ^2$ be as above, we have the following cases. When $m = 0$, for any $\ell \in \bZ$, we have 
\[\nu_{(0, \ell)}^{(X, e_X)}(V) = % \sum_{\varphi \in \ONB(X, \1)}
\raisebox{-2.5em}{\scalebox{0.6}{\begin{tikzpicture}
\node[draw, inner xsep = 0.8em] (A) at (0,0) {$\varphi'$};
\node[draw, inner xsep = 0.8em] (B) at (0,-2.2) {$\varphi$};
\draw[line width=5mm, blue] (0,-2) circle [radius = 1.2];
\draw[line width=4.5mm, white] (0,-2) circle [radius = 1.2];
\node[fill=white, inner ysep = 5mm, inner xsep = 2mm] at (0, -0.9) {};
\draw (A.south)--(B.north); \node at (-0.25, -1.55) {$X$};
\node at (-1.2, -2) {$\ell$}; \node at (-1.6, -3) {$V^\ell$};
\end{tikzpicture}}}\,,\]
where the blue lines are understood as $\ell$ lines. As before, we use crossings to represent the half-braiding $e_X(V^{\ell})$, and the summation over $\varphi \in \ONB(X, \1)$ is suppressed.

When $m \ne 0$, by Eq.~(2.3) in \cite{NgSch10}, $\nu_{(m, \ell)}^{(X, e_X)}(V) = \nu_{(-m, -\ell)}^{(X, e_X)}(V^*)$, so without loss of generality, we can assume that $m > 0$. Then we can write $\ell = -qm + r$, for some $q \in \bZ$ and $0 \le r \le m-1$. By \cite[Lemma 2.7]{NgSch10}, we have

\[\nu_{m,\ell}^{(X,e_X)}(V)
= % \sum_{\varphi\in\ONB(X, V^m)}\ 
\raisebox{-5.5em}{\scalebox{0.5}{\begin{tikzpicture}
\node[draw, inner xsep = 2em] (A) at (0,0) {$\varphi'$};
\node[draw] (B) at (0,-2.5) {$\theta_X^q$};
\node[draw, inner xsep = 2em] (C) at (0,-4) {$\varphi$};
\draw[line width=5mm, blue] (C.210)
..controls +(0,-0.7) and +(0,-0.7)..($(C.210)+(-1,0)$)--++(0,2.5) node (C1) {}
..controls +(0,0.3) and +(-0.3,0)..($(C1)+(0.5,0.5)$)--++(2,0) node (C2) {}
..controls +(0.3,0) and +(0,0.3)..($(C2)+(0.5,-0.5)$)--++(0,-2.5) node (C3) {}
..controls +(0,-0.6) and +(0,-0.6)..($(C3)+(0.8,0)$)--++(0,4.7) node (C4) {}
..controls +(0,0.3) and +(0.3,0)..($(C4)+(-0.5,0.5)$)--++(-0.8,0)
..controls +(-0.4,0) and +(0,0.4)..(A.34);
\draw[line width=4.5mm, white] (C.210)
..controls +(0,-0.7) and +(0,-0.7)..($(C.210)+(-1,0)$)--++(0,2.5) node (C1) {}
..controls +(0,0.3) and +(-0.3,0)..($(C1)+(0.5,0.5)$)--++(2,0) node (C2) {}
..controls +(0.3,0) and +(0,0.3)..($(C2)+(0.5,-0.5)$)--++(0,-2.5) node (C3) {}
..controls +(0,-0.6) and +(0,-0.6)..($(C3)+(0.8,0)$)--++(0,4.7) node (C4) {}
..controls +(0,0.3) and +(0.3,0)..($(C4)+(-0.5,0.5)$)--++(-0.8,0) node (C5) {}
..controls +(-0.4,0) and +(0,0.4)..(A.34); \node at (-0.95,-4.8) {$r$};
\draw[line width=6mm, blue] (C.320)--++(0,-0.5)
..controls +(0,-1.5) and +(0,-1.5)..($(C.320)+(3.1,-0.5)$)--++(0,6) node (D1) {}
..controls +(0,0.5) and +(0.5,0)..($(D1)+(-1,0.8)$) --++(-1.9,0)
..controls +(-0.9,0) and +(0,0.7)..(A.135);
\draw[line width=5.5mm, white] (C.320)--++(0,-0.5)
..controls +(0,-1.5) and +(0,-1.5)..($(C.320)+(3.1,-0.5)$)--++(0,6) node (D1) {}
..controls +(0,0.5) and +(0.5,0)..($(D1)+(-1,0.8)$) --++(-1.9,0)
..controls +(-0.9,0) and +(0,0.7)..(A.135); \node at (1.9,-5.8) {$m-r$};
\node[fill=white, inner ysep=6mm, inner xsep = 2mm] at (0,-1.2) {};
\draw (A.south)--(B.north); \draw(B.south)--(C.north); \node at (0.3,-3.3) {$X$};
\node at (4.2,-1) {$V^m$};
\end{tikzpicture}}}
\quad
= % \sum_{\varphi\in\ONB(X, V^m)}\ 
\raisebox{-5em}{\scalebox{0.5}{\begin{tikzpicture}
\node[draw, inner xsep = 2em] (A) at (0,0) {$\varphi'$};
\node[draw] (B) at (0,-2.5) {$\theta_X^q$};
\node[draw, inner xsep = 2em] (C) at (0,-4) {$\varphi$};
\draw[line width=5mm, blue] (C.210)
..controls +(0,-0.7) and +(0,-0.7)..($(C.210)+(-1,0)$)--++(0,2.5) node (C1) {}
..controls +(0,0.3) and +(-0.3,0)..($(C1)+(0.5,0.5)$)--++(2,0) node (C2) {}
..controls +(0.3,0) and +(0,-0.3)..($(C2)+(0.5,0.5)$)--++(0,1.3) node (C3) {}
..controls +(0,0.45) and +(0,0.45)..($(C3)+(-1,0)$)
..controls +(0,-0.1) and +(0,0.1)..(A.33.5);
\draw[line width=4.5mm, white] (C.210)
..controls +(0,-0.7) and +(0,-0.7)..($(C.210)+(-1,0)$)--++(0,2.5) node (C1) {}
..controls +(0,0.3) and +(-0.3,0)..($(C1)+(0.5,0.5)$)--++(2,0) node (C2) {}
..controls +(0.3,0) and +(0,-0.3)..($(C2)+(0.5,0.5)$)--++(0,1.3) node (C3) {}
..controls +(0,0.45) and +(0,0.45)..($(C3)+(-1,0)$)
..controls +(0,-0.1) and +(0,0.1)..(A.33.5);
\draw[line width=6mm, blue] (C.320)
..controls +(0,-1.7) and +(0,-1.7)..($(C.320)+(-2.6,0)$)--++(0,4.6) node (D1) {}
..controls +(0,0.3) and +(-0.3,0)..($(D1)+(0.5,0.5)$)--++(1,0) node (D2) {}
..controls +(0.3,0) and +(0,0.3)..($(D2)+(0.5,-0.485)$);
\draw[line width=5.5mm, white] (C.320)
..controls +(0,-1.7) and +(0,-1.7)..($(C.320)+(-2.6,0)$)--++(0,4.6) node (D1) {}
..controls +(0,0.3) and +(-0.3,0)..($(D1)+(0.5,0.5)$)--++(1,0) node (D2) {}
..controls +(0.3,0) and +(0,0.3)..($(D2)+(0.5,-0.485)$);
\node[fill=white, inner ysep=6mm, inner xsep = 2mm] at (0,-1.2) {};
\draw (A.south)--(B.north); \draw(B.south)--(C.north);
\node at (0.3,-3.3) {$X$}; \node at (-3,-3) {$V^m$};
\node at (-0.95,-4.8) {$r$}; \node at (-0.95,-5.5) {$m-r$};
\end{tikzpicture}}}\,.\]
\end{example}

Now we relate the generalized FS-indicator with our quantum invariant. Let $L$ be a Hopf link embedded in $\bS^3$, and let $L_{\epsilon}$ be a tubular neighborhood of $L \subset \bS^3$. Let $\Sigma_L := \partial (\overline{\bS^3\setminus L_{\epsilon}})$ be boundary surface oriented so that the $A$-colored region of $(\bS^3, \Sigma_L)$ is $L_{\epsilon}$. For any $(X, e_X) \in \cZ(\cC)$, $V \in \cC$ and $(m,\ell) \in \bZ^2$, we will define a $\cC$-decorated 3-alterfold using $(\bS^3, \Sigma_L)$ that is related to $\nu_{(m,\ell)}^{(X, e_X)}(V)$. We start by describing some $\cC$-colored graphs on the torus with the following notations.

When $m \ne 0$, then its absolute value $|m| > 0$, and we write $\ell = -q\cdot m+\sgn(m)\cdot r$ for some $q \in \bZ$ and $0 \le r \le |m|-1$, where $\sgn(m) = m/|m|$ is the sign of $m$. In this case, let $\tilde{V} = V^{\sgn(m)}$. When $m=0$, let $\tilde{V} = V^{\sgn(\ell)}$, $q = 0$ and $r = |\ell|$. Let $\Gamma_{(m,r)}(V)$ and $\Phi(X, e_X)$ be two $\cC$-colored graphs such that

\begin{align}\label{eq:Gamma-and-Phi}
(\bS^3, \mathbb{T}, \Gamma_{(m, r)}(V)) = 
\vcenter{\hbox{\scalebox{0.8}{
\begin{tikzpicture}[scale=0.8]
\draw [double distance=0.8cm] (0,0) [partial ellipse=-0.1:180.1:2 and 1.5];
\draw[blue] (0,0) [partial ellipse=0:180:1.8 and 1.3];
\draw[blue] (0,0) [partial ellipse=0:180:2.2 and 1.7];
\draw [blue] (0,0) [partial ellipse=0:180:2.1 and 1.6];
\draw [line width=0.83cm] (0,0) [partial ellipse=180:360:2 and 1.5];
\draw [white, line width=0.8cm] (0,0) [partial ellipse=178:362:2 and 1.5];
\draw[blue] (0,0) [partial ellipse=178:362:1.8 and 1.3];
\draw[blue] (0,0) [partial ellipse=178:362:2.2 and 1.7];
\draw [blue] (0,0) [partial ellipse=178:362:2.1 and 1.6];
\begin{scope}[shift={(-1.98, 0.2)}]
\draw [blue, dashed](0,0) [partial ellipse=0:180:0.5 and 0.3];
\draw [blue] (0,0) [partial ellipse=180:360:0.5 and 0.3];
\end{scope} 
\begin{scope}[shift={(-1.99, -0.1)}]
\draw [blue, dotted](0,0) [partial ellipse=180:145:0.5 and 0.3];
\draw [blue, dotted](0,0) [partial ellipse=0:45:0.5 and 0.3];
\draw [blue] (0,0) [partial ellipse=180:360:0.5 and 0.3];
\end{scope} 
\begin{scope}[shift={(-1.98, -0.2)}]
\draw [fill=white] (-0.28,-0.3)--(-0.31, 0.3)--(0.36,0.3)--(0.39,-0.3)--(-0.28,-0.3);
\end{scope}
\node at (-1.8,-2) {\tiny $\tilde{V}$};
\node at (-0.4,1.4) {\tiny $|m|$};
\end{tikzpicture}}}}
\,,\quad
(\bS^3, \mathbb{T}, \Phi(X, e_X))=
\vcenter{\hbox{\scalebox{0.8}{
\begin{tikzpicture}[scale=0.8]
\begin{scope}
\draw [double distance=0.8cm] (0,0) [partial ellipse=-0.1:180.1:2 and 1.5];
\draw [red] (0,0) [partial ellipse=0:180:2 and 1.5];
\end{scope} % upper
\begin{scope}
\draw [line width=0.83cm] (0,0) [partial ellipse=180:360:2 and 1.5];
\draw [white, line width=0.8cm] (0,0) [partial ellipse=178:362:2 and 1.5];
\draw [red] (0,0) [partial ellipse=178:362:2 and 1.5];
\end{scope}
\begin{scope}[shift={(2, 0)}]
\draw [blue, dashed](0,0) [partial ellipse=0:180:0.5 and 0.3]; 
\node at (-0.75,0) {\tiny $X$};
\draw [white, line width=4pt] (0,0) [partial ellipse=270:290:0.5 and 0.3];
\fill[white] (-0.05,-0.3) circle[radius=2pt];
\draw [blue, ->-=0.3] (0,0) [partial ellipse=180:360:0.5 and 0.3];
\fill [blue] (0.2,-0.275) circle[radius=2pt];
\end{scope}
\node at (-1.8,-2) {\tiny \textcolor{white}{$(\tilde{V})$}};
\end{tikzpicture}}}}\,.
\end{align}
In $(\bS^3, \mathbb{T}, \Gamma_{(m,r)}(V))$, all strings are colored by $\tilde{V}$. When $m \ne 0$, the box is filled by the diagram 
$\raisebox{-1em}{\begin{tikzpicture}[scale=1.5]
\draw (-0.5, -0.4) rectangle (0.5,0.4);
\draw [blue, ->-=0.5] (0.3, 0.4) .. controls +(0, -0.15) and +(-0.1, 0) .. (0.5, 0.2);
\draw [blue, ->-=0.45] (-0.1, 0.4) .. controls +(0.08, -0.25) and +(-0.2, 0) .. (0.5, -0.2);
\draw [blue, ->-=0.5] (-0.3, 0.4) --(0.3, -0.4);
\draw [blue, -<-=0.5] (-0.3, -0.4) .. controls +(0, 0.15) and +(0.1, 0) .. (-0.5, -0.2);
\draw [blue, -<-=0.5] (0.1,- 0.4) .. controls +(-0.1, 0.25) and +(0.2, 0) .. (-0.5, 0.2);
\draw [decorate, decoration = {brace}] (-0.3,0.42) --  (0.3,0.42) node [pos=0.5, above] {\tiny $|m|$} ;
\draw [decorate, decoration = {brace}] (0.52,0.2) --  (0.52,-0.2) node [pos=0.5, right] {\tiny $r$} ;
\end{tikzpicture}}$.
When $m=0$, it is filled with 
$\raisebox{-1em}{\begin{tikzpicture}[scale=1.5]
\draw (-0.5, -0.4) rectangle (0.5,0.4);
\draw[blue, ->-=0.5] (-0.5,0.2)--(0.5,0.2);\draw[blue, ->-=0.5] (-0.5,-0.2)--(0.5,-0.2);
\draw [decorate, decoration = {brace}] (0.52,0.2) --  (0.52,-0.2) node [pos=0.5, right] {\tiny $r$} ;
\end{tikzpicture}}$
of $r$ parallel $V$-colored strings, which can be viewed as a marginal case for the $\cC$-decoration of the box when $m \ne 0$. Therefore, $\Gamma_{(m,r)}(V)$ is a $\tilde{V}$-colored multicurve on $\mathbb{T}$ of the homology class $r$-times the meridian and $|m|$-times the longitude. We also call such a multicurve an $(|m|, r)$-curve on the torus\footnote{The homology class of curve in the complement torus is $|m|$ times the meridian and $-r$ times the longitude.}
In $(\bS^3, \mathbb{T}, \Phi(X, e_X))$, the blue bullet represents $\theta_X^q$. For fixed $V$ and $(X, e_X)$, we simply write $\Gamma_{(m,r)}$ for $\Gamma_{(m,r)}(V)$, and $\Phi$ for $\Phi(X, e_X)$. 

Now we define a $\cC$-decorated 3-alterfold $(\bS^3, \Sigma_L, \Gamma_{(m,r)} \sqcup \Phi)$ by the following picture

\begin{equation}\label{eq:gamma-L}
(\bS^3, \Sigma_L, \Gamma_{(m,r)}(V) \sqcup \Phi(X, e_X)) 
% = (\bS^3, \Sigma_L, \Gamma_{(m,r)} \sqcup \Phi) 
:= 
\vcenter{\hbox{\scalebox{0.8}{
\begin{tikzpicture}[scale=0.8]
\draw [double distance=0.8cm] (0,0) [partial ellipse=-0.1:180.1:2 and 1.5];
\draw[blue] (0,0) [partial ellipse=0:180:1.8 and 1.3];
\draw[blue] (0,0) [partial ellipse=0:180:2.2 and 1.7];
\draw [blue] (0,0) [partial ellipse=0:180:2.1 and 1.6];
\begin{scope}[shift={(2, 0)}]
\draw [double distance=0.8cm] (0,0) [partial ellipse=-0.1:180.1:2 and 1.5];
\draw [red] (0,0) [partial ellipse=0:180:2 and 1.5];
\end{scope} % upper
\begin{scope}[shift={(2, 0)}]
\draw [line width=0.83cm] (0,0) [partial ellipse=180:360:2 and 1.5];
\draw [white, line width=0.8cm] (0,0) [partial ellipse=178:362:2 and 1.5];
\draw [red] (0,0) [partial ellipse=178:362:2 and 1.5];
\end{scope}
\draw [line width=0.83cm] (0,0) [partial ellipse=180:360:2 and 1.5];
\draw [white, line width=0.8cm] (0,0) [partial ellipse=178:362:2 and 1.5];
\draw[blue] (0,0) [partial ellipse=178:362:1.8 and 1.3];
\draw[blue] (0,0) [partial ellipse=178:362:2.2 and 1.7];
\draw [blue] (0,0) [partial ellipse=178:362:2.1 and 1.6];
\begin{scope}[shift={(-1.98, 0.2)}]
\draw [blue, dashed](0,0) [partial ellipse=0:180:0.5 and 0.3];
\draw [blue] (0,0) [partial ellipse=180:360:0.5 and 0.3];
\end{scope} 
\begin{scope}[shift={(-1.99, -0.1)}]
\draw [blue, dashed](0,0) [partial ellipse=180:145:0.5 and 0.3];
\draw [blue, dashed](0,0) [partial ellipse=0:45:0.5 and 0.3];
\draw [blue] (0,0) [partial ellipse=180:360:0.5 and 0.3];
\end{scope} 
\begin{scope}[shift={(-1.98, -0.2)}]
\draw [fill=white] (-0.28,-0.3)--(-0.31, 0.3)--(0.36,0.3)--(0.39,-0.3)--(-0.28,-0.3);
\end{scope}
% lower 
\begin{scope}[shift={(4, 0)}]
\draw [blue, dashed](0,0) [partial ellipse=0:180:0.5 and 0.3]; 
\node at (-0.75,0) {\tiny $X$};
\draw [white, line width=4pt] (0,0) [partial ellipse=270:290:0.5 and 0.3];
\fill[white] (-0.05,-0.3) circle[radius=2pt];
\draw [blue, ->-=0.3] (0,0) [partial ellipse=180:360:0.5 and 0.3];
\fill [blue] (0.2,-0.275) circle[radius=2pt];
\end{scope}
\node at (-1.8,-2.2) {\tiny $\tilde{V}$}; \node at (-0.4,1.4) {\tiny $|m|$};
\end{tikzpicture}}}}\,,
\end{equation}
where the torus on the left, denoted by $\mathbb{T}_{V}$ for later use, has $\Gamma_{(m,r)}$ on it, and that on the right, denoted by $\mathbb{T}_{(X, e_X)}$ has $\Phi$ on it. Of course, $\Sigma_L = \mathbb{T}_V \sqcup \mathbb{T}_{(X, e_X)}$. 

As an example, consider the case when $m = 0$. Then
\begin{align*}
(\bS^3, \Sigma_L, \Gamma_{(0,r)} \sqcup \Phi) =
\vcenter{\hbox{\scalebox{0.8}{
\begin{tikzpicture}[scale=0.8]
\draw [double distance=0.8cm] (0,0) [partial ellipse=-0.1:180.1:2 and 1.5];
\begin{scope}[shift={(2, 0)}]
\draw [double distance=0.8cm] (0,0) [partial ellipse=-0.1:180.1:2 and 1.5];
\draw [red] (0,0) [partial ellipse=0:180:2 and 1.5];
\end{scope} % upper
\begin{scope}[shift={(2, 0)}]
\draw [line width=0.83cm] (0,0) [partial ellipse=180:360:2 and 1.5];
\draw [white, line width=0.8cm] (0,0) [partial ellipse=178:362:2 and 1.5];
\draw [red] (0,0) [partial ellipse=178:362:2 and 1.5];
\end{scope}
\draw [line width=0.83cm] (0,0) [partial ellipse=180:360:2 and 1.5];
\draw [white, line width=0.8cm] (0,0) [partial ellipse=178:362:2 and 1.5];
\begin{scope}[shift={(-1.98, 0.2)}]
\draw [blue, dotted](0,0) [partial ellipse=0:180:0.5 and 0.3];
\draw [blue, ->-=0.4] (0,0) [partial ellipse=180:360:0.5 and 0.3];
\end{scope} 
\begin{scope}[shift={(-1.99, 0)}]
\draw [blue, dotted](0,0) [partial ellipse=0:180:0.5 and 0.3];
\draw [blue, ->-=0.4] (0,0) [partial ellipse=180:360:0.5 and 0.3];
\end{scope} 
\begin{scope}[shift={(-1.97, -0.2)}]
\draw [blue, dotted](0,0) [partial ellipse=0:180:0.51 and 0.3];
\draw [blue, ->-=0.4] (0,0) [partial ellipse=180:360:0.51 and 0.3];
\end{scope} 
% lower 
\begin{scope}[shift={(4, 0)}]
\draw [blue, dashed](0,0) [partial ellipse=0:180:0.5 and 0.3]; 
\node at (-0.75,0) {\tiny $X$};
\draw [white, line width=4pt] (0,0) [partial ellipse=270:290:0.5 and 0.3];
\fill[white] (-0.05,-0.3) circle[radius=2pt];
\draw [blue, ->-=0.3] (0,0) [partial ellipse=180:360:0.5 and 0.3];
\end{scope}
\draw [decorate, decoration = {brace}] (-2.55,-0.3) --  (-2.55,0.3) node [pos=0.5, left] {\tiny $\tilde{V}^{|\ell|}$} ;
\end{tikzpicture}}}}\,.
\end{align*}

\begin{theorem}\label{thm:nu-and-z}
Let $\mathcal{C}$ be a spherical fusion category. For any $(X, e_X) \in \cZ(\cC)$, $V \in \cC$ and $(m, \ell) \in \bZ^2$, 

\[\nu_{(m, \ell)}^{(X, e_X)}(V)=\frac{1}{\mu} Z(\bS^3, \Sigma_L, \Gamma_{(m, r)}\sqcup \Phi)\,.\]
\end{theorem}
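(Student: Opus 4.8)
The plan is to compute the right-hand side $\frac{1}{\mu}Z(\bS^3, \Sigma_L, \Gamma_{(m,r)}\sqcup \Phi)$ by a sequence of the topological moves of Theorem~\ref{thm:partition function} until we land on the planar-graphical-calculus expression for $\nu_{(m,\ell)}^{(X,e_X)}(V)$ recorded in Example~\ref{example:nu}. The key observation is that the $A$-colored region of $(\bS^3, \Sigma_L)$ is a tubular neighborhood of a Hopf link, i.e. two linked solid tori $\mathbb{T}_V$ and $\mathbb{T}_{(X,e_X)}$, and one of these tori carries a $\Gamma_{(m,r)}$-colored $(|m|,r)$-curve while the other carries an $\Omega$-colored longitude together with the coupon for $\theta_X^q$ and the half-braiding data of $(X,e_X)$. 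The first move is to observe, via Lemma~\ref{lem:zkirby} and its subsequent Remark, that the solid torus $\mathbb{T}_{(X,e_X)}$ with an $\Omega$-colored longitude, when a string colored by $V^{\ell}$ (the strand entering from $\Gamma_{(m,r)}$) passes through it, behaves exactly like the canonical Lagrangian-algebra object / the $\Omega$-color of $\mathcal{Z}(\mathcal{C})$, which after applying Move~1 and Move~2 collapses the $A$-colored torus to a $B$-colored handlebody decoration involving a sum over a dual basis $\varphi \in \ONB(X,\1)$ of the $X$-string and the half-braiding crossings $e_X(V^\ell)$. This is precisely the mechanism that turns ``$\frac1\mu$ times a picture with an $\Omega$-torus'' into ``the projector $P_\cC$ / the $\mathcal{Z}(\mathcal{C})$-trace'' that appears in the definition of $\nu$.

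Concretely I would proceed as follows. First, split into the cases $m=0$ and $m\neq 0$, and in the latter further use $\nu_{(m,\ell)}^{(X,e_X)}(V) = \nu_{(-m,-\ell)}^{(X,e_X)}(V^*)$ together with $\tilde V = V^{\sgn m}$ to reduce to $m>0$, matching the normalization in Example~\ref{example:nu}; the definitions of $\Gamma_{(m,r)}$ and $\Phi$ were rigged (the box filled by the $(|m|,r)$-braiding diagram, the $\theta_X^q$ bullet, $\ell = -qm + \sgn(m) r$) precisely so that the $B$-colored region of \eqref{eq:gamma-L} is, after collapsing, a $3$-ball whose boundary diagram is the closed-up version of the $\nu_{(m,\ell)}$ picture. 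Second, apply Move~1 to the $\Omega$-colored torus $\mathbb{T}_{(X,e_X)}$: since the $X$-string threads it once, by the string-genus relation (Proposition~\ref{pro:stringgenus}) / the $1$-$2$-handle cancellation we can change its color to $B$ at the cost of a factor $\frac1\zeta = 1$ (recall $\zeta=1$ in this section), producing the half-braidings $e_X(X_i)$ summed against $\Omega = \sum_i d_i X_i$; but here the threading string is $V^\ell$ rather than a single $X_i$, and using handle slides (Theorem~\ref{thm:handleslides}) through the $\Omega$-longitude one rewrites this as the sum over $\varphi\in\ONB(X,\1)$ of the half-braiding $e_X(V^\ell)$ acting on the $X$-strand — this is where the $\frac1\mu$ is absorbed, exactly as in the functor $G$ of the proof of Theorem~\ref{thm:equivcenter} (cf. the formula for $G(\id_{(X,e_X)}) = \mu^{-1}\cyl(\sum_i e_X(X_i))$). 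Third, apply Move~2 (and planar graphical calculus / homeomorphism) to collapse $\mathbb{T}_V$ and the remaining $A$-handlebody to a single $B$-colored $3$-ball, reading off the closed $\cC$-diagram on its boundary; by Theorem~\ref{thm:B-handlebody} this evaluates via ordinary spherical graphical calculus to the scalar pictured in Example~\ref{example:nu}, which is by definition $\Tr_\k(E^{(m,\ell)}_{(X,e_X),V}) = \nu_{(m,\ell)}^{(X,e_X)}(V)$. Finally, bookkeep the powers of $\mu$ and $\zeta$: the single $\frac1\mu$ prefactor on the right-hand side is cancelled by the $\mu$ coming from the $\Omega$-circle created in the Move~1 step (via the Remark after Lemma~\ref{lem:zkirby}), and all $\zeta$-factors are $1$, leaving exactly $\nu_{(m,\ell)}^{(X,e_X)}(V)$.

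The main obstacle I expect is the careful identification, in the $m\neq 0$ case, of the $(|m|,r)$-curve on $\mathbb{T}_V$ together with the box diagram with the stacked $\varphi$--$\theta_X^q$--$\varphi'$ picture of Example~\ref{example:nu}: one must check that cutting the curved multicurve along a meridian disk of $\mathbb{T}_V$ and straightening (an application of homeomorphism relative to the boundary plus planar graphical calculus, in the spirit of Theorem~\ref{lem:morphismform}) reproduces the precise combinatorics $\ell = -qm + \sgn(m)r$, with the $\theta_X^q$ arising from the $q$ full twists that the longitude of $\mathbb{T}_V$ picks up relative to the meridian framing when $\ell$ and $m$ are not coprime — equivalently, from Dehn twisting. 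I would verify this against \cite[Lemma 2.7]{NgSch10} and \cite[Proposition 3.1]{NgSch10}, whose pictures are exactly the target. The $m=0$ case is easier: there $\mathbb{T}_V$ carries $|\ell|$ parallel longitudes colored by $\tilde V$ and no twist, and the collapse directly yields the first picture of Example~\ref{example:nu}. Once the dictionary between the torus diagrams and the planar $\nu$-diagrams is nailed down, the rest is a routine application of Moves $0$--$3$, handle slides, and the normalization conventions already established.
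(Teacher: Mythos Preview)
Your overall strategy—reduce $\frac{1}{\mu}Z(\bS^3,\Sigma_L,\Gamma_{(m,r)}\sqcup\Phi)$ via the moves of Theorem~\ref{thm:partition function} to the planar diagram of Example~\ref{example:nu}—is exactly the paper's approach, and your reduction to $m>0$ and identification of the target diagram are correct. However, your ``second step'' is muddled in ways that would stall an actual computation. Move~1 changes a $B$-colored tube to $A$-color, never the reverse, so ``change its color to $B$'' is not a legal operation; nothing passes \emph{through} $\mathbb{T}_{(X,e_X)}$ as a string (what threads it is the entire torus $\mathbb{T}_V$, still $A$-colored); and Lemma~\ref{lem:zkirby} with its Remark apply to a bare $\Omega$-torus linking a tube, not to $\mathbb{T}_{(X,e_X)}$ which additionally carries the $X$-meridian, the half-braiding crossing, and the $\theta_X^q$ coupon—these are not bystanders you can wave away before invoking the lemma.

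The paper's concrete sequence is short and worth internalizing: (i) apply Move~1 along an arc in $R_B$ joining the two tori, producing a genus-$2$ $A$-handlebody with a belt $\Omega$-circle on the new bridge; (ii) handle-slide (Theorem~\ref{thm:handleslides}) so that the belt circle becomes contractible and evaluates to $\mu$, cancelling the prefactor $\mu^{-1}$; (iii) apply Move~2 along a compressing disk whose boundary meets the $X$-meridian once, yielding the $\varphi,\varphi'$ pair and dropping to genus~$1$; (iv) apply Move~2 once more to reach a sphere. The resulting closed diagram on $\bS^2$ is literally the second picture in Example~\ref{example:nu} (with the box filled by the $(|m|,r)$ shuffle and the bullet recording $\theta_X^q$, both of which were placed in $\Gamma_{(m,r)}$ and $\Phi$ by definition, not produced by the moves). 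Steps (i)--(iii) are precisely the mechanism behind Lemma~\ref{lem:zkirby}, now run with the $X$-meridian and $\theta_X^q$ carried along as passengers; that is the sense in which your invocation of that lemma is morally right but not literally applicable.
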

\begin{proof}
We assume that $m >0$.
Now we evaluate graphically the right hand side and obtain that

\begin{align*}
&\frac{1}{\mu} Z(\bS^3, \Sigma_L, \Gamma_{(m, r)}\sqcup \Phi)\xlongequal{\text{Move 1}}
\frac{1}{\mu}\vcenter{\hbox{\scalebox{0.8}{
\begin{tikzpicture}[scale=0.8]
\draw [double distance=0.8cm] (0,0) [partial ellipse=-0.1:180.1:2 and 1.5];
\draw [blue, double distance=0.4cm] (0,0) [partial ellipse=0:180:2 and 1.5];
\draw [blue] (0,0) [partial ellipse=0:180:2.1 and 1.6];
\node at (-0.4,1.4) {\tiny $m$};
\begin{scope}[shift={(2, 0)}]
\draw [double distance=0.8cm] (0,0) [partial ellipse=-0.1:180.1:2 and 1.5];
\draw [red] (0,0) [partial ellipse=0:180:2 and 1.5];
\end{scope} % upper
\begin{scope}[shift={(2, 0)}]
\draw [line width=0.83cm] (0,0) [partial ellipse=180:360:2 and 1.5];
\draw [white, line width=0.8cm] (0,0) [partial ellipse=178:362:2 and 1.5];
\draw [red] (0,0) [partial ellipse=178:362:2 and 1.5];
\end{scope}
\draw [double distance=0.8cm] (0,0) [partial ellipse=180:360:2 and 1.5];
\draw [blue, double distance=0.4cm] (0,0) [partial ellipse=180:360:2 and 1.5];
\draw [blue] (0,0) [partial ellipse=180:360:2.1 and 1.6];
%\draw [blue, double distance=0.13cm] (0,0) [partial ellipse=180:360:2 and 1.5];
\begin{scope}[shift={(-1.98, 0.2)}]
\draw [blue, dashed](0,0) [partial ellipse=0:180:0.5 and 0.3];
\draw [blue] (0,0) [partial ellipse=180:360:0.5 and 0.3];
\end{scope} 
% \begin{scope}[shift={(-2, 0)}]
% \draw [blue, dashed](0,0) [partial ellipse=0:180:0.5 and 0.3];
% \draw [blue] (0,0) [partial ellipse=180:360:0.5 and 0.3];
% \end{scope} 
\begin{scope}[shift={(-1.98, -0.2)}]
\draw [blue, dashed](0,0) [partial ellipse=0:180:0.5 and 0.3];
\draw [blue] (0,0) [partial ellipse=180:360:0.5 and 0.3];
\end{scope} 
\begin{scope}[shift={(-2, -0.2)}]
\draw [fill=white] (-0.3, -0.33) rectangle (0.4, 0.34);
\end{scope}% lower 
\begin{scope}[shift={(4, 0)}]
\draw [blue, dashed](0,0) [partial ellipse=0:180:0.5 and 0.3] node [below left, black] {\tiny $X$} ;
\draw [white, line width=4pt] (0,0) [partial ellipse=270:250:0.5 and 0.3];
\draw [blue, -<-=0.3] (0,0) [partial ellipse=180:360:0.5 and 0.3] node [pos=0.7] {\tiny $\bullet$};
\end{scope}
\draw  [line width=0.5cm] (0.55, 0)--(1.45, 0);
\draw  [line width=0.47cm, white] (0.5, 0)--(1.5, 0);
 \draw [red] (1,0) [partial ellipse=90:270:0.15 and 0.3];
  \draw [red,dashed] (1,0) [partial ellipse=90:-90:0.15 and 0.3];
\end{tikzpicture}}}} 
= 
\vcenter{\hbox{\scalebox{0.8}{
\begin{tikzpicture}[scale=0.8]
\draw [double distance=0.8cm] (0,0) [partial ellipse=-0.1:180.1:2 and 1.5];
\draw [blue, double distance=0.4cm] (0,0) [partial ellipse=0:180:2 and 1.5];
\draw [blue] (0,0) [partial ellipse=0:180:2.1 and 1.6];
\node at (-0.4,1.4) {\tiny $m$};
%\draw [blue, double distance=0.13cm] (0,0) [partial ellipse=0:180:2 and 1.5];
\begin{scope}[shift={(2, 0)}]
\draw [double distance=0.8cm] (0,0) [partial ellipse=-0.1:180.1:2 and 1.5];
\draw [red] (0,0) [partial ellipse=0:180:2 and 1.5];
\end{scope} % upper
\begin{scope}[shift={(2, 0)}]
\draw [line width=0.83cm] (0,0) [partial ellipse=180:360:2 and 1.5];
\draw [white, line width=0.8cm] (0,0) [partial ellipse=178:362:2 and 1.5];
\draw [red] (0,0) [partial ellipse=178:362:2 and 1.5];
\end{scope}
\draw [double distance=0.8cm] (0,0) [partial ellipse=180:360:2 and 1.5];
\draw [blue, double distance=0.4cm] (0,0) [partial ellipse=180:360:2 and 1.5];
\draw [blue] (0,0) [partial ellipse=180:360:2.1 and 1.6];
%\draw [blue, double distance=0.13cm] (0,0) [partial ellipse=180:360:2 and 1.5];
\begin{scope}[shift={(-1.98, 0.2)}]
\draw [blue, dashed](0,0) [partial ellipse=0:180:0.5 and 0.3];
\draw [blue] (0,0) [partial ellipse=180:360:0.5 and 0.3];
\end{scope} 
% \begin{scope}[shift={(-2, 0)}]
% \draw [blue, dashed](0,0) [partial ellipse=0:180:0.5 and 0.3];
% \draw [blue] (0,0) [partial ellipse=180:360:0.5 and 0.3];
% \end{scope} 
\begin{scope}[shift={(-1.98, -0.2)}]
\draw [blue, dashed](0,0) [partial ellipse=0:180:0.5 and 0.3];
\draw [blue] (0,0) [partial ellipse=180:360:0.5 and 0.3];
\end{scope} 
\begin{scope}[shift={(-2, -0.2)}]
\draw [fill=white] (-0.3, -0.33) rectangle (0.4, 0.34);
\end{scope}% lower 
\begin{scope}[shift={(4, 0)}]
\draw [blue, dashed](0,0) [partial ellipse=0:180:0.5 and 0.3] node [below left, black] {\tiny $X$};
\draw [white, line width=4pt] (0,0) [partial ellipse=270:250:0.5 and 0.3];
\draw [blue, -<-=0.3] (0,0) [partial ellipse=180:360:0.5 and 0.3] node [pos=0.7] {\tiny $\bullet$};
\end{scope}
\draw  [line width=0.5cm] (0.55, 0)--(1.45, 0);
\draw  [line width=0.47cm, white] (0.5, 0)--(1.5, 0);
% \draw [red] (1,0) [partial ellipse=90:270:0.15 and 0.3];
% \draw [red,dashed] (1,0) [partial ellipse=90:-90:0.15 and 0.3];
\end{tikzpicture}}}} \\
\xlongequal{\text{Move 2}}&
\vcenter{\hbox{\scalebox{0.8}{
\begin{tikzpicture}[scale=0.8]
\draw [double distance=0.8cm] (0,0) [partial ellipse=0:360:2 and 1.5];
\draw [blue, double distance=0.4cm] (0,0) [partial ellipse=0:360:2 and 1.5];
\draw [blue] (0,0) [partial ellipse=0:360:2.1 and 1.6];
\node at (-0.4,1.4) {\tiny $m$};
%\draw [blue, double distance=0.13cm] (0,0) [partial ellipse=0:360:2 and 1.5];
\begin{scope}[shift={(-1.98, 0.2)}]
\draw [blue, dashed](0,0) [partial ellipse=0:180:0.5 and 0.3];
\draw [blue] (0,0) [partial ellipse=180:360:0.5 and 0.3];
\end{scope} 
% \begin{scope}[shift={(-2, 0)}]
% \draw [blue, dashed](0,0) [partial ellipse=0:180:0.5 and 0.3];
% \draw [blue] (0,0) [partial ellipse=180:360:0.5 and 0.3];
% \end{scope} 
\begin{scope}[shift={(-1.98, -0.2)}]
\draw [blue, dashed](0,0) [partial ellipse=0:180:0.5 and 0.3];
\draw [blue] (0,0) [partial ellipse=180:360:0.5 and 0.3];
\end{scope} 
\begin{scope}[shift={(-2, -0.2)}]
\draw [fill=white] (-0.3, -0.33) rectangle (0.4, 0.34);
\end{scope}
\draw [white, line width=0.45cm] (0,0) [partial ellipse=-35:35:2 and 1.5];
\begin{scope}[shift={(2, 0)}]
\draw [red, dashed](0,0) [partial ellipse=0:180:0.5 and 0.3];
\draw [red] (0,0) [partial ellipse=180:360:0.5 and 0.3];
\draw [white, line width=4pt] (0,0) [partial ellipse=270:250:0.5 and 0.3];
p\end{scope}
\draw [blue, -<-=0.5] (0,0) [partial ellipse=-35:35:2 and 1.5] node [pos=0.5, right] {\tiny $X$} node [pos=0.7] {\tiny $\bullet$};
\draw [fill=white] (1.3, 0.6) rectangle (2, 1.1); \node at (1.65, 0.85) {\tiny $\varphi$};
\draw [fill=white] (1.3, -0.6) rectangle (2, -1.1); \node at (1.65, -0.85) {\tiny $\varphi'$};
\end{tikzpicture}}}}
\xlongequal{\text{Move 2}}
\vcenter{\hbox{\scalebox{0.8}{
\begin{tikzpicture}[scale=0.8]
\draw (0,0) [partial ellipse=0:360:2.2 and 2.2];
\draw [blue, double distance=0.4cm] (0,0) [partial ellipse=0:360:1.6 and 1.6];
\draw [blue] (0,0) [partial ellipse=0:360:1.7 and 1.7];
\node at (-0.4,1.45) {\tiny $m$};
%\draw [blue, double distance=0.13cm] (0,0) [partial ellipse=0:360:1.6 and 1.6];
% \draw [blue, dashed](0,0) [partial ellipse=0:180:2.2 and 0.6];
% \draw [blue] (0,0) [partial ellipse=180:360:2.2 and 0.6];
\begin{scope}[shift={(0, 0.2)}]
\draw [blue, dashed](0,0) [partial ellipse=0:180:2.18 and 0.6];
\draw [blue] (0,0) [partial ellipse=180:360:2.18 and 0.6];
\end{scope}
\begin{scope}[shift={(0, -0.2)}]
\draw [blue, dashed](0,0) [partial ellipse=0:180:2.18 and 0.6];
\draw [blue] (0,0) [partial ellipse=180:360:2.18 and 0.6];
\end{scope}
\draw [fill=white] (-1.9, -0.8) rectangle (-1, 0);
\draw [white, double distance=0.45cm] (0,0) [partial ellipse=50:-50:1.6 and 1.6];
\draw [blue, ->-=0.5] (0,0) [partial ellipse=50:-50:1.6 and 1.6] node [pos=0.5, left] {\tiny $X$}  node [pos=0.7] {\tiny $\bullet$};
\draw [fill=white] (0.5, 1) rectangle (1.4, 1.5); \node at (0.95, 1.25) {\tiny $\varphi$};
\draw [fill=white] (0.5, -1) rectangle (1.4, -1.5); \node at (0.95, -1.25) {\tiny $\varphi'$};
\end{tikzpicture}}}}
= 
\raisebox{-5em}{\scalebox{0.5}{\begin{tikzpicture}
\node[draw, inner xsep = 2em] (A) at (0,0) {$\varphi$};
\node[draw, inner xsep = 2em] (C) at (0,-4) {$\varphi'$};
\draw[line width=5mm, blue] (C.210)
..controls +(0,-0.7) and +(0,-0.7)..($(C.210)+(-1,0)$)--++(0,2.5) node (C1) {}
..controls +(0,0.3) and +(-0.3,0)..($(C1)+(0.5,0.5)$)--++(2,0) node (C2) {}
..controls +(0.3,0) and +(0,-0.3)..($(C2)+(0.5,0.5)$)--++(0,1.3) node (C3) {}
..controls +(0,0.45) and +(0,0.45)..($(C3)+(-1,0)$)
..controls +(0,-0.1) and +(0,0.1)..(A.33.5);
\draw[line width=4.5mm, white] (C.210)
..controls +(0,-0.7) and +(0,-0.7)..($(C.210)+(-1,0)$)--++(0,2.5) node (C1) {}
..controls +(0,0.3) and +(-0.3,0)..($(C1)+(0.5,0.5)$)--++(2,0) node (C2) {}
..controls +(0.3,0) and +(0,-0.3)..($(C2)+(0.5,0.5)$)--++(0,1.3) node (C3) {}
..controls +(0,0.45) and +(0,0.45)..($(C3)+(-1,0)$)
..controls +(0,-0.1) and +(0,0.1)..(A.33.5);
\draw[line width=6mm, blue] (C.320)
..controls +(0,-1.7) and +(0,-1.7)..($(C.320)+(-2.6,0)$)--++(0,4.6) node (D1) {}
..controls +(0,0.3) and +(-0.3,0)..($(D1)+(0.5,0.5)$)--++(1,0) node (D2) {}
..controls +(0.3,0) and +(0,0.3)..($(D2)+(0.5,-0.485)$);
\draw[line width=5.5mm, white] (C.320)
..controls +(0,-1.7) and +(0,-1.7)..($(C.320)+(-2.6,0)$)--++(0,4.6) node (D1) {}
..controls +(0,0.3) and +(-0.3,0)..($(D1)+(0.5,0.5)$)--++(1,0) node (D2) {}
..controls +(0.3,0) and +(0,0.3)..($(D2)+(0.5,-0.485)$);
\node[fill=white, inner ysep=6mm, inner xsep = 2mm] at (0,-1.2) {};
\draw (A.south)--(C.north);
\node at (0.3,-3.3) {$X$}; \node at (-3,-3) {$V^m$};
\node at (-0.95,-4.8) {$r$}; \node at (-0.95,-5.5) {$m-r$};
\node (B) at (0,-2.5) {\textcolor{blue}{$\bullet$}};
\end{tikzpicture}}}
=\nu_{m, \ell}^{(X, e_X)}(V),
\end{align*}
where the rectangle is filled with
$\raisebox{-1em}{\begin{tikzpicture}[scale=1.5]
\draw (-0.5, -0.4) rectangle (0.5,0.4);
\draw [blue, ->-=0.5] (0.3, 0.4) .. controls +(0, -0.15) and +(-0.1, 0) .. (0.5, 0.2);
\draw [blue, ->-=0.45] (-0.1, 0.4) .. controls +(0.08, -0.25) and +(-0.2, 0) .. (0.5, -0.2);
\draw [blue, ->-=0.5] (-0.3, 0.4) --(0.3, -0.4);
\draw [blue, -<-=0.5] (-0.3, -0.4) .. controls +(0, 0.15) and +(0.1, 0) .. (-0.5, -0.2);
\draw [blue, -<-=0.5] (0.1,- 0.4) .. controls +(-0.1, 0.25) and +(0.2, 0) .. (-0.5, 0.2);
\draw [decorate, decoration = {brace}] (-0.3,0.42) --  (0.3,0.42) node [pos=0.5, above] {\tiny $m$} ;
\draw [decorate, decoration = {brace}] (0.52,0.2) --  (0.52,-0.2) node [pos=0.5, right] {\tiny $r$} ;
\end{tikzpicture}}$, and the second equality follows from handle slide. This completes the proof.
\end{proof}

\subsection{$S$-matrix}
We will study the topologized of $S$-matrix in our framework.

\begin{lemma}\label{lem:unit4}
Suppose that $(X, e_X), (Y, e_Y)$ are simple objects in $\mathcal{Z}(\mathcal{C})$.
Then we have
\begin{align*}
\vcenter{\hbox{\scalebox{0.6}{
\begin{tikzpicture}[scale=0.8]
\begin{scope}[shift={(1.5,1.5)}]
\draw (-1, 1.25) .. controls +(0, 0.3) and +(0, 0.3).. (1, 1.25);
\draw  (1,1.25)  .. controls +(0, -0.3) and +(0, -0.3).. (-1, 1.25);
\end{scope}
\draw (0.5, 2.75) .. controls +(0, -0.5) and +(0, 0.5) .. (-1, 1.25);
\draw (2.5, 2.75) .. controls +(0, -0.5) and +(0, 0.5) .. (4, 1.25);
\draw (1, 1.25)  .. controls +(0, 0.5) and +(0, 0.5) ..  (2, 1.25);
\draw [blue, dashed] (1.5, 1.65) .. controls +(0.2, 0) and +(0, -0.3).. (1.8, 3) node [above, black] {\tiny $X$};
\draw [blue, -<-=0.5] (1.5, 1.65) .. controls +(-0.2, 0) and +(0, -0.3).. (1.2, 2.5);
\draw [dashed] (-1, 1.25) .. controls +(0, 0.3) and +(0, 0.3).. (1, 1.25);
\draw [dashed]  (1,1.25)  .. controls +(0, -0.3) and +(0, -0.3).. (-1, 1.25);
\begin{scope}[shift={(0, -2.5)}]
\draw  [dashed] (-1, 1.25) .. controls +(0, 0.3) and +(0, 0.3).. (1, 1.25);
\draw [dashed]  (1,1.25)  .. controls +(0, -0.3) and +(0, -0.3).. (-1, 1.25);
\end{scope}
\begin{scope}[shift={(3, 0)}]
\draw [dashed] (-1, 1.25) .. controls +(0, 0.3) and +(0, 0.3).. (1, 1.25);
\draw [dashed]  (1,1.25)  .. controls +(0, -0.3) and +(0, -0.3).. (-1, 1.25);
\end{scope}
\begin{scope}[shift={(3, -2.5)}]
\draw [dashed] (-1, 1.25) .. controls +(0, 0.3) and +(0, 0.3).. (1, 1.25);
\draw [dashed]  (1,1.25)  .. controls +(0, -0.3) and +(0, -0.3).. (-1, 1.25);
\end{scope}
\draw  (-1, 1.25)--(-1, -1.25) (1, 1.25)--(1, -1.25);
\begin{scope}[shift={(3, 0)}]
\draw  (-1, 1.25)--(-1, -1.25) (1, 1.25)--(1, -1.25);
\end{scope}
\begin{scope}[shift={(1.5,-4)}]
\draw [dashed] (-1, 1.25) .. controls +(0, 0.3) and +(0, 0.3).. (1, 1.25);
\draw   (1,1.25)  .. controls +(0, -0.3) and +(0, -0.3).. (-1, 1.25);
\end{scope}
\draw  (0.5, -2.75) .. controls +(0, 0.5) and +(0, -0.5) .. (-1, -1.25);
\draw  (2.5, -2.75) .. controls +(0, 0.5) and +(0, -0.5) .. (4, -1.25);
\draw  (1, -1.25)  .. controls +(0, -0.5) and +(0, -0.5) ..  (2, -1.25);
\draw [blue, ->-=0.5] (1.5, -1.65) .. controls +(0.2, 0) and +(0, 0.3).. (1.8, -2.95) node [below, black] {\tiny $Y$};
\draw [blue, dashed] (1.5, -1.65) .. controls +(-0.2, 0) and +(0, 0.3).. (1.2, -2.5);
\draw [red](1.5,0) [partial ellipse=105:275:1 and 2];
\draw [red](1.5,0) [partial ellipse=-75:95:1 and 2];
\end{tikzpicture}}}}
=\frac{\mu}{d(X)}\delta_{(X, e_X), (Y, e_Y)}
\vcenter{\hbox{\scalebox{0.7}{
\begin{tikzpicture}[scale=0.35]
\draw (0,5) [partial ellipse=0:360:2 and 0.8];
\draw (-2, 5)--(-2, -4);
\draw (2, 5)--(2, -4);
\draw[dashed] (0,-4) [partial ellipse=0:180:2 and 0.8];
\draw (0,-4) [partial ellipse=180:360:2 and 0.8];
\draw[blue, ->-=0.5] (-0.5, 4.2)--(-0.5, -4.8)node[below, black]{\tiny{${X}$}};
\draw[blue,dashed, ->-=0.5] (0.5, 5.8)--(0.5, -3.2)node[below, black]{\tiny{${X^*}$}};
\end{tikzpicture}}}}
\end{align*}
\end{lemma}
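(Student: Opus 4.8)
This is a statement about the tube category $\cA$, which has been shown to be braided equivalent to $\cZ(\cC)$. The picture shows a specific morphism built from tubes, with an $X$-colored string going up into one tube system, a $Y$-colored string coming out of another, and a closed $\Omega$-colored (red) curve linking them; the claim is that this equals a Kronecker delta times a scalar times the identity-type tube on $\CIRCLE_{X X^*}$ (really the identity of $O_X \otimes O_{X^*}$ up to the isomorphism $O_{XX^*} \cong O_X \otimes O_{X^*}$).

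\medskip

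\emph{Proof proposal.} The plan is to interpret the left-hand side through the equivalence $F\colon \cA \xrightarrow{\sim} \cZ(\cC)$ of Theorem~\ref{thm:equivcenter}, or equivalently to manipulate it directly by the topological moves of Theorem~\ref{thm:partition function} together with Lemma~\ref{lem:zkirby}. First I would identify the red closed curve: it is an $\Omega$-colored circle bounding a disk that separates the two tube-bundles carrying $X$ and $Y$. As in the proof of Lemma~\ref{lem:zkirby}, applying Move~1 turns the red circle into a ``digging'' that forces a projection onto the subobject where the two tubes couple nontrivially. Concretely, the red circle encircling the $X$-tube-system and the $Y$-tube-system realizes the projection $P_{\cC}$ onto the centralizer, i.e. it inserts the idempotent cutting out $\hom_{\cZ(\cC)}$ from $\hom_{\cC}$; combined with the fact that $(X,e_X)$ and $(Y,e_Y)$ are \emph{simple} objects of $\cZ(\cC)$, Schur's lemma forces the answer to be proportional to $\delta_{(X,e_X),(Y,e_Y)}$ times the ``identity'' configuration.

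\medskip

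Then I would pin down the scalar $\mu/d(X)$. This comes from two contributions: a factor of $\mu$ from the $\Omega$-colored circle (as in Lemma~\ref{lem:zkirby}, where such a circle evaluates to $\mu$ after a Move~2/handle-slide/Move~1 sequence collapses it), and a compensating factor of $1/d(X)$ coming from normalizing the idempotent $P_{\cC}$ that projects onto the simple object $(X,e_X)$ — since the pivotal trace of that idempotent in $\cZ(\cC)$ is $d(X)$, the normalized minimal idempotent carries a $1/d(X)$. Explicitly, once we reduce to the case $(X,e_X)=(Y,e_Y)$, I would take the trace of both sides: the left side becomes the partition function of a $\cC$-decorated 3-alterfold with an $\Omega$-colored curve and two copies of the half-braiding data, which by Lemma~\ref{lem:zkirby} and the computation of $\dim_{\cZ(\cC)}(X,e_X)=d(X)$ evaluates to $\mu$, while the right side has trace $d(X)$ times $\mu/d(X)=\mu$; matching these and using nondegeneracy of $\Tr_{\cA}$ (Remark after Definition of Tube category, or Lemma~\ref{lem:nondegenerate}) pins down the constant. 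Alternatively, one applies $F$ directly: $F$ sends the left-hand configuration to $\frac1\mu$ times the ``partial trace over the Kirby color of the braiding squared'', which is the standard categorical expression $\sum_k d_k\, c_{X_k,Y}c_{Y,X_k}$ projected appropriately, and simplicity plus the $S$-matrix being (the relevant block of) a scalar on $\hom$ gives the stated coefficient.

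\medskip

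The main obstacle I anticipate is bookkeeping the precise coefficient and the precise target object: one must be careful that the right-hand side is not literally $\id_{O_X}\otimes \id_{O_{X^*}}$ but the tube with a single straight $X$-string and a single straight $X^*$-string side by side inside one cylinder (i.e. $\id_{O_{X X^*}}$ under $O_{XX^*}\cong O_X\otimes O_{X^*}$ of Proposition~\ref{prop:objtensor}), and that the orientations and the $\$$-base points match. The cleanest route is: (1) use Move~1 to collapse the red circle, producing a factor and a $P_{\cC}$-type insertion; (2) use Proposition~\ref{prop:objtensor} / Corollary~\ref{cor:singledisk} to reduce $O_X\otimes O_{X^*}$-morphism spaces to $\hom_{\cC}$, where the simplicity hypothesis becomes ordinary Schur; (3) normalize by evaluating both sides against a suitable test morphism and comparing traces via the nondegeneracy of $\Tr_{\cA}$. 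I would present the diagrammatic version since it is in the spirit of the section, deferring the ``$= \nu$''-style scalar check to a one-line trace computation.
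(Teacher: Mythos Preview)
Your conceptual core is right: the red $\Omega$-circle together with the half-braidings implements the projector $P_{\cC}$ from $\hom_{\cC}$ onto $\hom_{\cZ(\cC)}$, and then simplicity of $(X,e_X),(Y,e_Y)$ plus a dimension count gives the Kronecker delta and the scalar $\mu/d(X)$. That is exactly the mechanism the paper invokes.

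Where you diverge from the paper is in the operational plan, and there your step~(1) is miscast. The red circle here is a surface decoration, not the belt of an $A$-colored handle, so there is no inverse Move~1 to ``collapse'' it; Lemma~\ref{lem:zkirby} is about an $A$-colored torus linking a tube, which is a different configuration. The paper's route is a single application of Move~2: cut the handlebody along a disk through one of the two parallel tubes. This reduces the genus-one handlebody to a plain cylinder carrying two parallel strings (the $X,X^*$ pair and the $Y,Y^*$ pair) together with dual-basis coupons $\varphi,\varphi'$ from the Move~2 insertion. On that cylinder the red circle crossing the string is now literally the diagram defining $\mu\,P_{\cC}$ applied to the morphism $\varphi'\!\circ\varphi$, and the ``property of the projector $P_{\cC}$'' (Schur's lemma in $\cZ(\cC)$, with the normalization that $\Tr P_{\cC}(\id_X)=d(X)$) finishes the argument in one stroke. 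No separate trace-matching step is needed to determine the constant.

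So: your idea is sound, but replace your Move~1/Lemma~\ref{lem:zkirby} maneuver by a single Move~2, after which the identification with $P_{\cC}$ is immediate and the coefficient falls out of the projector's normalization rather than from a secondary trace comparison.
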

\begin{proof}
By Move 2, we see that the left hand side is 
\begin{align*}
\vcenter{\hbox{\scalebox{0.7}{
\begin{tikzpicture}[yscale=0.35, xscale=0.6]
\draw (0,5) [partial ellipse=0:360:2 and 0.8];
\draw (-2, 5)--(-2, -4);
\draw (2, 5)--(2, -4);
\draw[dashed] (0,-4) [partial ellipse=0:180:2 and 0.8];
\draw (0,-4) [partial ellipse=180:360:2 and 0.8];
\draw[blue,dashed, ->-=0.2] (0.5, 5.8) node [above, black] {\tiny $X^*$}--(0.5, -3.2)node[below, black]{\tiny{$Y^*$}} node [pos=0.5, draw, fill=white] {\tiny $\varphi'$};
\draw[blue, ->-=0.2] (-0.5, 4.2)--(-0.5, -4.8) node[below, black]{\tiny{$Y$}} node [pos=0.5, draw, fill=white] {\tiny $\varphi$};
\begin{scope}[shift={(-0.5, -0.3)}]
\draw [red] (0, 0) [partial ellipse=95:265:1 and 2];
\draw [red] (0, 0) [partial ellipse=85:-85:1 and 2];
\end{scope}
\end{tikzpicture}}}}
=
\frac{\mu}{d(X)}\delta_{(X, e_X), (Y, e_Y)}
\vcenter{\hbox{\scalebox{0.7}{
\begin{tikzpicture}[scale=0.35]
\draw (0,5) [partial ellipse=0:360:2 and 0.8];
\draw (-2, 5)--(-2, -4);
\draw (2, 5)--(2, -4);
\draw[dashed] (0,-4) [partial ellipse=0:180:2 and 0.8];
\draw (0,-4) [partial ellipse=180:360:2 and 0.8];
\draw[blue, ->-=0.5] (-0.5, 4.2)--(-0.5, -4.8)node[below, black]{\tiny{${X}$}};
\draw[blue,dashed, ->-=0.5] (0.5, 5.8)--(0.5, -3.2)node[below, black]{\tiny{${X^*}$}};
\end{tikzpicture}}}}
\end{align*}
where the equality follows from the property of projector $P_{\mathcal{C}}$.
\end{proof}

\begin{proposition}\label{prop:centeridempotent}
Suppose that $(X, e_X), (Y, e_Y)$ are simple objects in $\mathcal{Z}(\mathcal{C})$.
Then for all $V\in \cC$ we have that 
\begin{align*}
\vcenter{\hbox{\scalebox{0.5}{
\begin{tikzpicture}[xscale=0.8, yscale=0.6]
\draw [double distance=0.8cm] (0,0) [partial ellipse=-0.1:180.1:2 and 1.5];
\draw [red] (0,0) [partial ellipse=0:180:2 and 1.5];
\path [fill=white](-0.65, 0) rectangle (0.65, 2.5);
\begin{scope}[shift={(0,3)}]
\draw (0,0) [partial ellipse=0:360:0.6 and 0.3];
\end{scope}
\draw (-0.6, 3)--(-0.6, 0) (0.6, 3)--(0.6, 0); 
\draw [blue, -<-=0.5] (0, 0)--(0, 2.7) node [pos=0.5, right, black] {\tiny $V$};% upper one
\draw (-0.6, -3)--(-0.6, 0) (0.6, -3)--(0.6, 0);
\draw [blue] (0, 0)--(0, -3.3);
\draw [line width=0.83cm] (0,0) [partial ellipse=180:360:2 and 1.5];
\draw [white, line width=0.8cm] (0,0) [partial ellipse=178:362:2 and 1.5];
\draw [red] (0,0) [partial ellipse=178:362:2 and 1.5];
\begin{scope}[shift={(-2, 0)}]
\draw [blue, dashed](0,0) [partial ellipse=0:180:0.5 and 0.3] node [below left, black] {\tiny $X$};
\draw [white, line width=4pt] (0,0) [partial ellipse=270:290:0.5 and 0.3];
\draw [blue, ->-=0.3] (0,0) [partial ellipse=180:360:0.5 and 0.3];
\end{scope}
\begin{scope}[shift={(0, -5)}] % The down one
\draw [double distance=0.8cm] (0,0) [partial ellipse=-0.1:180.1:2 and 1.5];
\draw [red] (0,0) [partial ellipse=0:180:2 and 1.5];
\path [fill=white](-0.65, 0) rectangle (0.65, 2.5);
\draw (-0.6, 3)--(-0.6, 0) (0.6, 3)--(0.6, 0); 
\draw [blue] (0, 0)--(0, 2.7);% upper one
\draw (-0.6, -3)--(-0.6, 0) (0.6, -3)--(0.6, 0);
\draw [blue] (0, 0)--(0, -3.3);
\draw [line width=0.83cm] (0,0) [partial ellipse=180:360:2 and 1.5];
\draw [white, line width=0.8cm] (0,0) [partial ellipse=178:362:2 and 1.5];
\draw [red] (0,0) [partial ellipse=178:362:2 and 1.5];
\begin{scope}[shift={(0,-3)}]
\draw [dashed](0,0) [partial ellipse=0:180:0.6 and 0.3];
\draw (0,0) [partial ellipse=180:360:0.6 and 0.3];
\end{scope}
\begin{scope}[shift={(-2, 0)}]
\draw [blue, dashed](0,0) [partial ellipse=0:180:0.5 and 0.3] node [below left, black] {\tiny $Y$};
\draw [white, line width=4pt] (0,0) [partial ellipse=270:290:0.5 and 0.3];
\draw [blue, ->-=0.3] (0,0) [partial ellipse=180:360:0.5 and 0.3];
\end{scope}
\end{scope}
\end{tikzpicture}}}}
=\frac{\mu^2}{d(X)} \delta_{(X, e_X), (Y, e_Y)}
\vcenter{\hbox{\scalebox{0.5}{
\begin{tikzpicture}[xscale=0.8, yscale=0.6]
\draw [line width=0.83cm] (0,0) [partial ellipse=-0.1:180.1:2 and 1.5];
\draw [white, line width=0.8cm] (0,0) [partial ellipse=-0.1:180.1:2 and 1.5];
\draw [red] (0,0) [partial ellipse=0:180:2 and 1.5];
\path [fill=white](-0.65, 0) rectangle (0.65, 2.5);
\begin{scope}[shift={(0,3)}]
\draw (0,0) [partial ellipse=0:360:0.6 and 0.3];
\end{scope}
\draw (-0.6, 3)--(-0.6, 0) (0.6, 3)--(0.6, 0); 
\draw [blue, -<-=0.5] (0, 0)--(0, 2.7) node [pos=0.5, right, black] {\tiny $V$};% upper one
\draw (-0.6, -3)--(-0.6, 0) (0.6, -3)--(0.6, 0);
\draw [blue] (0, 0)--(0, -3.3);
\draw [line width=0.83cm] (0,0) [partial ellipse=180:360:2 and 1.5];
\draw [white, line width=0.8cm] (0,0) [partial ellipse=178:362:2 and 1.5];
\draw [red] (0,0) [partial ellipse=178:362:2 and 1.5];
\begin{scope}[shift={(0,-3)}]
\draw [dashed](0,0) [partial ellipse=0:180:0.6 and 0.3];
\draw (0,0) [partial ellipse=180:360:0.6 and 0.3];
\end{scope}
\begin{scope}[shift={(-2, 0)}]
\draw [blue, dashed](0,0) [partial ellipse=0:180:0.5 and 0.3] node [below left, black] {\tiny $X$};
\draw [white, line width=4pt] (0,0) [partial ellipse=270:290:0.5 and 0.3];
\draw [blue, ->-=0.3] (0,0) [partial ellipse=180:360:0.5 and 0.3];
\end{scope}
\end{tikzpicture}}}}
\end{align*}
\end{proposition}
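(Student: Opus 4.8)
The plan is to reduce the identity to the orthogonality relation of Lemma~\ref{lem:unit4}. By Theorem~\ref{thm:tubemove} every move used in Theorem~\ref{thm:partition function} (Moves $0$--$3$, homeomorphisms relative to the boundary, planar graphical calculus) and the handle slides of Theorem~\ref{thm:handleslides} are legitimate identities between the two sides, viewed as morphisms in $\cA$; since $\zeta=1$ in this section, the only thing one has to watch is the powers of $\mu$ these steps introduce. Both sides are of the form $\cyl$ of a decorated cylinder around the $V$-string, so it suffices to manipulate the decoration.

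First I would put the left-hand picture into a normal form: isotope (relative to the separating surface, hence legitimate by the homeomorphism invariance) so that the two attached tori carrying the half-braiding loops coloured by $X$ and by $Y$, together with the $\Omega$-coloured circle lying on the handle between the two stacked gadgets, are brought into exactly the local configuration appearing on the left-hand side of Lemma~\ref{lem:unit4}. Concretely, one slides the $V$-coloured string off that handle by a handle slide and an ambient isotopy, and slides the $\Omega$-circle into position; the $A$-coloured piece squeezed between the $X$-torus and the $Y$-torus, encircled by an $\Omega$-circle, is then precisely the input of Lemma~\ref{lem:unit4}. Applying that lemma collapses the two half-braiding tori into a single pair of $X$- and $X^*$-coloured strings and multiplies the whole alterfold by $\dfrac{\mu}{d(X)}\,\delta_{(X,e_X),(Y,e_Y)}$. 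This is the step where simplicity of $(X,e_X)$ and $(Y,e_Y)$ enters: via the projector $P_{\cC}$ onto $\cZ(\cC)$ and Schur's lemma, it is exactly what forces the Kronecker delta, giving the orthogonality when $(X,e_X)\not\cong(Y,e_Y)$.

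Assuming $(X,e_X)=(Y,e_Y)$, the final step is to clean up the resulting $\cC$-decorated $3$-alterfold back to a single copy of the right-hand gadget: the leftover $X/X^*$ strings recombine by planar graphical calculus, and one further application of Move~$1$ (string--genus, trivial $\zeta$-factor) or of a handle slide — together with the $\Omega$-circle already present on the outer part of one of the gadgets, cf.\ the remark after Lemma~\ref{lem:zkirby} — restores the half-braiding torus on $X$. A careful accounting of the $\mu$-factors contributed by Lemma~\ref{lem:unit4}, by the surviving $\Omega$-circles and by the Moves then yields the claimed prefactor $\dfrac{\mu^2}{d(X)}$.

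The step I expect to be the real obstacle is the geometric normalization: exhibiting an explicit ambient isotopy of the left-hand alterfold that carries the subconfiguration around the interhandle $\Omega$-circle onto the left-hand side of Lemma~\ref{lem:unit4}, and then verifying that re-gluing the complementary piece reproduces the right-hand alterfold exactly (and not merely up to some further move). Everything past that is bookkeeping of $\mu$-powers and routine invocations of moves already established in Theorems~\ref{thm:partition function} and \ref{thm:handleslides}.
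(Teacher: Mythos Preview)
Your overall strategy—reduce to Lemma~\ref{lem:unit4}—is exactly what the paper does. But there is a genuine topological gap in your execution. In the left-hand side of the proposition, the two $A$-coloured solid tori (the one carrying the $X$-meridian and the one carrying the $Y$-meridian) are \emph{separate connected components} of the $A$-region; they merely link the central $V$-tube. There is no ``handle between the two stacked gadgets'' and no $\Omega$-circle sitting on such a handle. Consequently no isotopy, however clever, can carry this disconnected configuration onto the connected pair-of-pants input of Lemma~\ref{lem:unit4}.

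The paper's proof supplies exactly the missing step: after isotoping the two tori to be concentric, it applies \textbf{Move~1} to attach an $A$-coloured $1$-handle joining them; this creates the connecting tube and places a fresh $\Omega$-belt on it. A handle slide (over one of the pre-existing $\Omega$-longitudes) then removes that belt at the cost of a factor $\mu$. Only now is the configuration that of Lemma~\ref{lem:unit4}, which contributes the remaining $\mu/d(X)\,\delta_{(X,e_X),(Y,e_Y)}$, for a total prefactor $\mu^{2}/d(X)$. Your ``cleanup'' step—recombining $X,X^{*}$ strings and invoking the remark after Lemma~\ref{lem:zkirby} to restore the ring—is unnecessary once this is done: after Lemma~\ref{lem:unit4} the resulting picture already \emph{is} a single ring operator on the $V$-tube. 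So both the source of the second $\mu$ and the place where Move~1 enters are mislocated in your sketch; fix those and your argument coincides with the paper's.
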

\begin{proof}
Isotope the left hand side, we have
\begin{align*}
&\vcenter{\hbox{\scalebox{0.5}{
\begin{tikzpicture}[xscale=0.8, yscale=0.6]
\draw [double distance=0.8cm] (0,0) [partial ellipse=-0.1:180.1:2 and 1.5];
\draw [double distance=0.8cm] (0,0) [partial ellipse=-0.1:180.1:4.2 and 3.6];
\draw [red] (0,0) [partial ellipse=0:180:2 and 1.5];
\draw [red] (0,0) [partial ellipse=0:180:4.2 and 3.6];
\path [fill=white](-0.65, 0) rectangle (0.65, 4.5);
\begin{scope}[shift={(0,5)}]
\draw (0,0) [partial ellipse=0:360:0.6 and 0.3];
\end{scope}
\draw (-0.6, 5)--(-0.6, 0) (0.6, 5)--(0.6, 0); 
\draw [blue, -<-=0.5] (0, 0)--(0, 4.7) node [pos=0.5, right, black] {\tiny $V$};% upper one
\draw (-0.6, -5)--(-0.6, 0) (0.6, -5)--(0.6, 0);
\draw [blue] (0, 0)--(0, -5.3);
\draw [line width=0.83cm] (0,0) [partial ellipse=180:360:2 and 1.5];
\draw [white, line width=0.8cm] (0,0) [partial ellipse=178:362:2 and 1.5];
\draw [line width=0.83cm] (0,0) [partial ellipse=180:360:4.2 and 3.6];
\draw [white, line width=0.8cm] (0,0) [partial ellipse=178:362:4.2 and 3.6];
\draw [red] (0,0) [partial ellipse=178:362:2 and 1.5];
\draw [red] (0,0) [partial ellipse=178:362:4.2 and 3.6];
\begin{scope}[shift={(0,-5)}]
\draw [dashed](0,0) [partial ellipse=0:180:0.6 and 0.3];
\draw (0,0) [partial ellipse=180:360:0.6 and 0.3];
\end{scope}
\begin{scope}[shift={(-2, 0)}]
\draw [blue, dashed](0,0) [partial ellipse=0:180:0.5 and 0.3] node [below left, black] {\tiny $X$};
\draw [white, line width=4pt] (0,0) [partial ellipse=270:290:0.5 and 0.3];
\draw [blue, ->-=0.3] (0,0) [partial ellipse=180:360:0.5 and 0.3];
\end{scope}
\begin{scope}[shift={(-4.2, 0)}]
\draw [blue, dashed](0,0) [partial ellipse=0:180:0.5 and 0.3] node [below left, black] {\tiny $Y$};
\draw [white, line width=4pt] (0,0) [partial ellipse=270:290:0.5 and 0.3];
\draw [blue, ->-=0.3] (0,0) [partial ellipse=180:360:0.5 and 0.3];
\end{scope}
\end{tikzpicture}}}}
\xlongequal{\text{Move 1}}
\vcenter{\hbox{\scalebox{0.5}{
\begin{tikzpicture}[xscale=0.8, yscale=0.6]
\draw [double distance=0.8cm] (0,0) [partial ellipse=-0.1:180.1:2 and 1.5];
\draw [double distance=0.8cm] (0,0) [partial ellipse=-0.1:180.1:4.2 and 3.6];
\draw [red] (0,0) [partial ellipse=0:180:2 and 1.5];
\draw [red] (0,0) [partial ellipse=0:180:4.2 and 3.6];
\path [fill=white](-0.65, 0) rectangle (0.65, 4.5);
\begin{scope}[shift={(0,5)}]
\draw (0,0) [partial ellipse=0:360:0.6 and 0.3];
\end{scope}
\draw (-0.6, 5)--(-0.6, 0) (0.6, 5)--(0.6, 0); 
\draw [blue, -<-=0.5] (0, 0)--(0, 4.7) node [pos=0.5, right, black] {\tiny $V$};% upper one
\draw (-0.6, -5)--(-0.6, 0) (0.6, -5)--(0.6, 0);
\draw [blue] (0, 0)--(0, -5.3);
\draw [line width=0.83cm] (0,0) [partial ellipse=180:360:2 and 1.5];
\draw [white, line width=0.8cm] (0,0) [partial ellipse=178:362:2 and 1.5];
\draw [line width=0.83cm] (0,0) [partial ellipse=180:360:4.2 and 3.6];
\draw [white, line width=0.8cm] (0,0) [partial ellipse=178:362:4.2 and 3.6];
\draw [red] (0,0) [partial ellipse=178:362:2 and 1.5];
\draw [red] (0,0) [partial ellipse=178:362:4.2 and 3.6];
\begin{scope}[shift={(0,-5)}]
\draw [dashed](0,0) [partial ellipse=0:180:0.6 and 0.3];
\draw (0,0) [partial ellipse=180:360:0.6 and 0.3];
\end{scope}
\begin{scope}[shift={(-2, 0)}]
\draw [blue, dashed](0,0) [partial ellipse=0:180:0.5 and 0.3] node [below left, black] {\tiny $X$};
\draw [white, line width=4pt] (0,0) [partial ellipse=270:290:0.5 and 0.3];
\draw [blue, ->-=0.3] (0,0) [partial ellipse=180:360:0.5 and 0.3];
\end{scope}
\begin{scope}[shift={(-4.2, 0)}]
\draw [blue, dashed](0,0) [partial ellipse=0:180:0.5 and 0.3] node [below left, black] {\tiny $Y$};
\draw [white, line width=4pt] (0,0) [partial ellipse=270:290:0.5 and 0.3];
\draw [blue, ->-=0.3] (0,0) [partial ellipse=180:360:0.5 and 0.3];
\end{scope}
\path [fill=white] (-3.8, 0.5) rectangle (-2.01, 1.3);
\draw (-3.6, 0.5)--(-2.4, 0.5) (-3.4, 1.3)--(-2.01, 1.3);
\begin{scope}[shift={(-3,0.9)}]
\draw [red, dashed](0,0) [partial ellipse=90:-90:0.2 and 0.4];
\draw [red](0,0) [partial ellipse=90:270:0.2 and 0.4];
\end{scope}
\end{tikzpicture}}}}
\xlongequal{\text{Handle slide}}
\mu \vcenter{\hbox{\scalebox{0.5}{
\begin{tikzpicture}[xscale=0.8, yscale=0.6]
\draw [double distance=0.8cm] (0,0) [partial ellipse=-0.1:180.1:2 and 1.5];
\draw [double distance=0.8cm] (0,0) [partial ellipse=-0.1:180.1:4.2 and 3.6];
\draw [red] (0,0) [partial ellipse=0:180:2 and 1.5];
\draw [red] (0,0) [partial ellipse=0:180:4.2 and 3.6];
\path [fill=white](-0.65, 0) rectangle (0.65, 4.5);
\begin{scope}[shift={(0,5)}]
\draw (0,0) [partial ellipse=0:360:0.6 and 0.3];
\end{scope}
\draw (-0.6, 5)--(-0.6, 0) (0.6, 5)--(0.6, 0); 
\draw [blue, -<-=0.5] (0, 0)--(0, 4.7) node [pos=0.5, right, black] {\tiny $V$};% upper one
\draw (-0.6, -5)--(-0.6, 0) (0.6, -5)--(0.6, 0);
\draw [blue] (0, 0)--(0, -5.3);
\draw [line width=0.83cm] (0,0) [partial ellipse=180:360:2 and 1.5];
\draw [white, line width=0.8cm] (0,0) [partial ellipse=178:362:2 and 1.5];
\draw [line width=0.83cm] (0,0) [partial ellipse=180:360:4.2 and 3.6];
\draw [white, line width=0.8cm] (0,0) [partial ellipse=178:362:4.2 and 3.6];
\draw [red] (0,0) [partial ellipse=178:362:2 and 1.5];
\draw [red] (0,0) [partial ellipse=178:362:4.2 and 3.6];
\begin{scope}[shift={(0,-5)}]
\draw [dashed](0,0) [partial ellipse=0:180:0.6 and 0.3];
\draw (0,0) [partial ellipse=180:360:0.6 and 0.3];
\end{scope}
\begin{scope}[shift={(-2, 0)}]
\draw [blue, dashed](0,0) [partial ellipse=0:180:0.5 and 0.3] node [below left, black] {\tiny $X$};
\draw [white, line width=4pt] (0,0) [partial ellipse=270:290:0.5 and 0.3];
\draw [blue, ->-=0.3] (0,0) [partial ellipse=180:360:0.5 and 0.3];
\end{scope}
\begin{scope}[shift={(-4.2, 0)}]
\draw [blue, dashed](0,0) [partial ellipse=0:180:0.5 and 0.3] node [below left, black] {\tiny $Y$};
\draw [white, line width=4pt] (0,0) [partial ellipse=270:290:0.5 and 0.3];
\draw [blue, ->-=0.3] (0,0) [partial ellipse=180:360:0.5 and 0.3];
\end{scope}
\path [fill=white] (-3.8, 0.5) rectangle (-2.01, 1.3);
\draw (-3.6, 0.5)--(-2.4, 0.5) (-3.4, 1.3)--(-2.01, 1.3);
\end{tikzpicture}}}}
\end{align*}
By Lemma \ref{lem:unit4}, we see that the proposition is true.
\end{proof}

\begin{proposition}\label{prop:identitydecomposition}
We have that
\begin{align}\label{eq:centeriddecom}
\sum_{i=0}^r \sum_{(X, e_X)\in \Irr_{\mathcal{Z}}}\frac{d(X)}{\mu^{2}}
\vcenter{\hbox{\scalebox{0.6}{
\begin{tikzpicture}[xscale=0.8, yscale=0.6]
\draw [line width=0.83cm] (0,0) [partial ellipse=-0.1:180.1:2 and 1.5];
\draw [white, line width=0.8cm] (0,0) [partial ellipse=-0.1:180.1:2 and 1.5];
\draw [red] (0,0) [partial ellipse=0:180:2 and 1.5];
\path [fill=white](-0.65, 0) rectangle (0.65, 2.5);
\begin{scope}[shift={(0,3)}]
\draw (0,0) [partial ellipse=0:360:0.6 and 0.3];
\end{scope}
\draw (-0.6, 3)--(-0.6, 0) (0.6, 3)--(0.6, 0); 
\draw [blue, -<-=0.5] (0, 0)--(0, 2.7) node [pos=0.5, right, black] {\tiny $X_i$};% upper one
\draw (-0.6, -3)--(-0.6, 0) (0.6, -3)--(0.6, 0);
\draw [blue] (0, 0)--(0, -3.3);
\draw [line width=0.83cm] (0,0) [partial ellipse=180:360:2 and 1.5];
\draw [white, line width=0.8cm] (0,0) [partial ellipse=179:361:2 and 1.5];
\draw [red] (0,0) [partial ellipse=179:361:2 and 1.5];
\begin{scope}[shift={(0,-3)}]
\draw [dashed](0,0) [partial ellipse=0:180:0.6 and 0.3];
\draw (0,0) [partial ellipse=180:360:0.6 and 0.3];
\end{scope}
\begin{scope}[shift={(-2, 0)}]
\draw [blue, dashed](0,0) [partial ellipse=0:180:0.5 and 0.3] node [below left, black] {\tiny $X$};
\draw [white, line width=4pt] (0,0) [partial ellipse=270:290:0.5 and 0.3];
\draw [blue, ->-=0.3] (0,0) [partial ellipse=180:360:0.5 and 0.3];
\end{scope}
\end{tikzpicture}}}}
=\sum_{i=0}^r\vcenter{\hbox{\scalebox{0.6}{
\begin{tikzpicture}[scale=0.35]
\draw (0,5) [partial ellipse=0:360:2 and 0.8];
\draw (-2, 5)--(-2, -4);
\draw (2, 5)--(2, -4);
\draw[dashed] (0,-4) [partial ellipse=0:180:2 and 0.8];
\draw (0,-4) [partial ellipse=180:360:2 and 0.8] node[below, white]{\tiny{$X$}};
\draw[blue,->-=0.5] (0, 4.2)--(0, -4.8) node [pos=0.5, right, black] {\tiny $X_i$};
\end{tikzpicture}}}},
\end{align}
where $\Irr_{\mathcal{Z}}$ is the set of all non-isomorphic simple objects in $\mathcal{Z(C)}$.
Moreover, each summand
$$\frac{d(X)}{\mu^2}\sum_{i=0}^r\vcenter{\hbox{\scalebox{0.6}{
\begin{tikzpicture}[xscale=0.8, yscale=0.6]
\draw [line width=0.83cm] (0,0) [partial ellipse=-0.1:180.1:2 and 1.5];
\draw [white, line width=0.8cm] (0,0) [partial ellipse=-0.1:180.1:2 and 1.5];
\draw [red] (0,0) [partial ellipse=0:180:2 and 1.5];
\path [fill=white](-0.65, 0) rectangle (0.65, 2.5);
\begin{scope}[shift={(0,3)}]
\draw (0,0) [partial ellipse=0:360:0.6 and 0.3];
\end{scope}
\draw (-0.6, 3)--(-0.6, 0) (0.6, 3)--(0.6, 0); 
\draw [blue, -<-=0.5] (0, 0)--(0, 2.7) node [pos=0.5, right, black] {\tiny $X_i$};% upper one
\draw (-0.6, -3)--(-0.6, 0) (0.6, -3)--(0.6, 0);
\draw [blue] (0, 0)--(0, -3.3);
\draw [line width=0.83cm] (0,0) [partial ellipse=180:360:2 and 1.5];
\draw [white, line width=0.8cm] (0,0) [partial ellipse=178:362:2 and 1.5];
\draw [red] (0,0) [partial ellipse=178:362:2 and 1.5];
\begin{scope}[shift={(0,-3)}]
\draw [dashed](0,0) [partial ellipse=0:180:0.6 and 0.3];
\draw (0,0) [partial ellipse=180:360:0.6 and 0.3];
\end{scope}
\begin{scope}[shift={(-2, 0)}]
\draw [blue, dashed](0,0) [partial ellipse=0:180:0.5 and 0.3] node [below left, black] {\tiny $X$};
\draw [white, line width=4pt] (0,0) [partial ellipse=270:290:0.5 and 0.3];
\draw [blue, ->-=0.3] (0,0) [partial ellipse=180:360:0.5 and 0.3];
\end{scope}
\end{tikzpicture}}}}$$
 is a minimal central idempotent for $(X, e_X)\in \Irr_{\mathcal{Z}}$.
\end{proposition}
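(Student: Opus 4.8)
The plan is to show that the summands on the left of \eqref{eq:centeriddecom}, which we denote by $p_{(X,e_X)}\in\End_{\cA}\!\bigl(\bigoplus_{i=0}^{r}O_{X_i}\bigr)$ for $(X,e_X)\in\Irr_{\mathcal{Z}}$ (so $p_{(X,e_X)}$ is $\tfrac{d(X)}{\mu^2}$ times the sum over $i$ of the decorated tube), form a complete system of pairwise orthogonal minimal central idempotents of the algebra $\End_{\cA}\!\bigl(\bigoplus_i O_{X_i}\bigr)$, which is finite-dimensional and semisimple by Propositions \ref{prop:semisimple} and \ref{prop:localfinite}. Both assertions of the proposition follow: completeness gives \eqref{eq:centeriddecom}, and minimality is the last claim.

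\emph{Orthogonality, idempotence and centrality.} Composing $p_{(X,e_X)}\circ p_{(Y,e_Y)}$ amounts to stacking two of the decorated tubes on top of each other; Proposition \ref{prop:centeridempotent} identifies the stacked diagram with $\tfrac{\mu^{2}}{d(X)}\,\delta_{(X,e_X),(Y,e_Y)}$ times a single decorated tube, and after tracking the prefactors $\tfrac{d(X)}{\mu^{2}}$ and $\tfrac{d(Y)}{\mu^{2}}$ we obtain $p_{(X,e_X)}\circ p_{(Y,e_Y)}=\delta_{(X,e_X),(Y,e_Y)}\,p_{(X,e_X)}$; in particular each $p_{(X,e_X)}$ is an idempotent and $P:=\sum_{(X,e_X)\in\Irr_{\mathcal{Z}}}p_{(X,e_X)}$ is an idempotent. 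For centrality, let $h\in\End_{\cA}\!\bigl(\bigoplus_i O_{X_i}\bigr)$; by Theorem \ref{lem:morphismform} and Remark \ref{rem:homspace} we may assume $h$ is a finite sum of cylinders carrying an $\Omega$-colored belt and a single coupon, and then to compare $p_{(X,e_X)}\circ h$ with $h\circ p_{(X,e_X)}$ we slide the two decorating circles of $p_{(X,e_X)}$ past $h$: the $\Omega$-colored circle crosses the strings and the $\Omega$-colored belt of $h$ by the Handle Slide of Theorem \ref{thm:handleslides}, and the $e_X$-half-braided circle crosses $h$ by the naturality of half-braidings (the handle slide through half braidings recalled in Section 2.2). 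Hence $p_{(X,e_X)}\circ h=h\circ p_{(X,e_X)}$, so each $p_{(X,e_X)}$, and thus $P$, is central.

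\emph{Completeness and minimality via the equivalence $\cA\simeq\mathcal{Z(C)}$.} By Theorem \ref{thm:equivcenter}, $F\colon\cA\to\mathcal{Z(C)}$ is a braided equivalence; it sends $\bigoplus_i O_{X_i}$ to $\bigoplus_i F(O_{X_i})$, and since $\bigoplus_i X_i$ is the regular object of $\mathcal{C}$ one computes $\hom_{\mathcal{Z(C)}}\!\bigl((Y,e_Y),\bigoplus_i F(O_{X_i})\bigr)\cong\bigoplus_i\hom_{\mathcal{C}}(Y,X_i)\neq 0$ for every simple $(Y,e_Y)$, i.e.\ every simple of the center occurs in $\bigoplus_i F(O_{X_i})$. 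Being a functor, $F$ carries the central idempotent $p_{(X,e_X)}$ to a central idempotent of $\End_{\mathcal{Z(C)}}\!\bigl(\bigoplus_i F(O_{X_i})\bigr)$, hence to a projection onto a union of isotypic components; to pin this union down to the single class $(X,e_X)$, evaluate $F(p_{(X,e_X)})$ against a morphism factoring through a simple $(Y,e_Y)$, where the $\Omega$-colored encircling together with the $e_X$-half-braided circle is precisely the configuration of Lemma \ref{lem:unit4} and equals $\tfrac{\mu}{d(X)}\,\delta_{(X,e_X),(Y,e_Y)}$ times the corresponding identity. Thus $F(p_{(X,e_X)})$ annihilates the $(Y,e_Y)$-isotypic component for $(Y,e_Y)\not\cong(X,e_X)$ and acts by a nonzero scalar — being idempotent, by $1$ — on the $(X,e_X)$-isotypic component, so it is the minimal central idempotent of $\mathcal{Z(C)}$ attached to $(X,e_X)$ for the object $\bigoplus_i F(O_{X_i})$, whose image has endomorphism algebra $\End\bigl(\hom_{\mathcal{Z(C)}}((X,e_X),\bigoplus_i F(O_{X_i}))\bigr)$, a matrix algebra. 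Summing over $\Irr_{\mathcal{Z}}$ and using semisimplicity of $\mathcal{Z(C)}$ gives $\sum_{(X,e_X)}F(p_{(X,e_X)})=\id$, hence $P=\id_{\bigoplus_i O_{X_i}}$, which is \eqref{eq:centeriddecom}; pulling back along $F$, each $p_{(X,e_X)}$ is a minimal central idempotent of $\End_{\cA}\!\bigl(\bigoplus_i O_{X_i}\bigr)$.

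The main obstacle I expect is the identification of $F(p_{(X,e_X)})$ with the $(X,e_X)$-isotypic projector: one must argue diagrammatically that the $\Omega$-colored circle and the $e_X$-half-braided circle on the belt of the tube together implement this projector, in particular detecting the isomorphism class of the half-braiding. This is exactly where Lemma \ref{lem:unit4} (equivalently, the nondegeneracy of the $S$-matrix of the modular category $\mathcal{Z(C)}$, cf.\ Remark \ref{rem:modularity}) does the work; the remainder is bookkeeping of the scalars $\mu$ and $d(X)$ and routine applications of the handle-slide moves.
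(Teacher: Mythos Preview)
Your orthogonality and centrality arguments match the paper's: Proposition~\ref{prop:centeridempotent} gives orthogonality, and the paper phrases centrality simply as ``the outer ring can be moved up and down'' (an isotopy of the linked solid torus along the central tube), which is your sliding argument in different words.

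Where you diverge is in establishing completeness and minimality. The paper uses a pure counting argument: it first checks that each $p_{(X,e_X)}$ is \emph{nonzero} by computing its trace directly as $d(X)^2$ (you omit this step, though your identification via $F$ implicitly covers it), then observes that since $\cA\simeq\mathcal{Z(C)}$, the endomorphism algebra $\End_{\cA}(\bigoplus_i O_{X_i})$ has at most $|\Irr_{\mathcal{Z}}|$ minimal central idempotents. Having produced exactly $|\Irr_{\mathcal{Z}}|$ pairwise orthogonal nonzero central idempotents, they must be the minimal ones and must sum to the identity. This avoids entirely the need to transport the $p_{(X,e_X)}$ through $F$ and match them with isotypic projectors.

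Your route through $F$ and Lemma~\ref{lem:unit4} is valid and gives strictly more information (it tells you \emph{which} simple each $p_{(X,e_X)}$ projects onto, not just that the collection is complete), but as you yourself flag in the final paragraph, actually carrying out the identification of $F(p_{(X,e_X)})$ requires unwinding the handle-on functor on this specific diagram and some scalar bookkeeping. The paper's counting argument sidesteps that work at the cost of being nonconstructive.
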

\begin{proof}
We first show each summand on the left hand side is nonzero. Indeed, the trace of each summand in $\cA$ is given by
\begin{align*}
\frac{d(X)}{\mu^{2}}
\vcenter{\hbox{\scalebox{0.5}{
\begin{tikzpicture}[scale=0.8]
\draw [line width=0.83cm] (0,0) [partial ellipse=-0.1:180.1:2 and 1.5];
\draw [white, line width=0.8cm] (0,0) [partial ellipse=-0.1:180.1:2 and 1.5];
\draw [red] (0,0) [partial ellipse=0:180:2 and 1.5];
\begin{scope}[shift={(2, 0)}]
\draw [line width=0.83cm] (0,0) [partial ellipse=-0.1:180.1:2 and 1.5];
\draw [white, line width=0.8cm] (0,0) [partial ellipse=-0.1:180.1:2 and 1.5];
\draw [red] (0,0) [partial ellipse=0:180:2 and 1.5];
\end{scope} % upper
\begin{scope}[shift={(2, 0)}]
\draw [line width=0.83cm] (0,0) [partial ellipse=180:360:2 and 1.5];
\draw [white, line width=0.8cm] (0,0) [partial ellipse=178:362:2 and 1.5];
\draw [red] (0,0) [partial ellipse=178:362:2 and 1.5];
\end{scope}
\draw [line width=0.83cm] (0,0) [partial ellipse=180:360:2 and 1.5];
\draw [white, line width=0.8cm] (0,0) [partial ellipse=178:362:2 and 1.5];
\draw [red] (0,0) [partial ellipse=178:362:2 and 1.5];
\begin{scope}[shift={(-2, 0)}]
\draw [blue, dashed](0,0) [partial ellipse=0:180:0.5 and 0.3] node [below left, black] {\tiny $X$};
\draw [white, line width=4pt] (0,0) [partial ellipse=270:290:0.5 and 0.3];
\draw [blue, ->-=0.3] (0,0) [partial ellipse=180:360:0.5 and 0.3];
\end{scope} % lower 
\end{tikzpicture}}}}
=\frac{d(X)}{\mu^{2}}\mu^{2} d(X)=d(X)^{2}\ne 0.
\end{align*}
By Proposition \ref{prop:centeridempotent}, summands on the left hand side of Equation \eqref{eq:centeriddecom} are orthogonal idempotents in $\text{End}_\mathcal{A}(I(\bigoplus_i X_{i}))$, where the summation is over $\Irr(\cC)$. Also notice the outer ring can be moved up and down, thus commute with morphisms on the center tube. i.e., they are central idempotents in $\text{End}_\mathcal{A}(I(\bigoplus X_{i}))$. 

On the other hand, since $\mathcal{A}$ is equivalent to $\mathcal{Z(C)}$, there are at most $|\Irr(\mathcal{Z})|$ minimal central idempotents in $\text{End}_\mathcal{A}(\bigoplus_{i}I(X_{i}))$(arbitrary endomorphism algebra in $\mathcal{A}$, actually). Therefore, the summands on the left hand side are minimal central idempotents, and add up to the identity morphism.
\end{proof}

\begin{corollary}
Suppose $\mathcal{C}$ is a modular tensor category.
Then 
\begin{align*}
\sum_{j,k=0}^r \frac{d_j d_k}{\mu^2}\vcenter{\hbox{ \scalebox{0.6}{
\begin{tikzpicture}[xscale=0.8, yscale=0.6]
\draw [line width=0.83cm] (0,0) [partial ellipse=-0.1:180.1:2 and 1.5];
\draw [white, line width=0.8cm] (0,0) [partial ellipse=-0.1:180.1:2 and 1.5];
\draw [red] (0,0) [partial ellipse=0:180:2 and 1.5];
\path [fill=white](-0.65, 0) rectangle (0.65, 2.5);
\begin{scope}[shift={(0,3)}]
\draw (0,0) [partial ellipse=0:360:0.6 and 0.3];
\end{scope}
\draw (-0.6, 3)--(-0.6, 0) (0.6, 3)--(0.6, 0); 
\draw [blue, -<-=0.5] (0, 0)--(0, 2.7) node [pos=0.5, right, black] {\tiny $V$};% upper one
\draw (-0.6, -3)--(-0.6, 0) (0.6, -3)--(0.6, 0);
\draw [blue] (0, 0)--(0, -3.3);
\draw [line width=0.83cm] (0,0) [partial ellipse=180:360:2 and 1.5];
\draw [white, line width=0.8cm] (0,0) [partial ellipse=179:361:2 and 1.5];
\draw [red] (0,0) [partial ellipse=179:361:2 and 1.5];
\begin{scope}[shift={(0,-3)}]
\draw [dashed](0,0) [partial ellipse=0:180:0.6 and 0.3];
\draw (0,0) [partial ellipse=180:360:0.6 and 0.3];
\end{scope}
\begin{scope}[shift={(-2, 0)}]
\draw [blue, dashed](0,0) [partial ellipse=0:180:0.5 and 0.3] node [below left, black] {\tiny $X_k$};
\draw [white, line width=4pt] (0,0) [partial ellipse=270:290:0.5 and 0.3];
\draw [blue, ->-=0.3] (0,0) [partial ellipse=180:360:0.5 and 0.3];
\end{scope}
\begin{scope}[shift={(2, 0)}]
\draw [blue, dashed](0,0) [partial ellipse=0:180:0.5 and 0.3] node [pos=0.5, above right, black] {\tiny $X_j$};
\draw [blue, -<-=0.5] (0,0) [partial ellipse=180:260:0.5 and 0.3];
\draw [blue] (0,0) [partial ellipse=280:360:0.5 and 0.3];
\end{scope}
\end{tikzpicture}}}}
=\vcenter{\hbox{\scalebox{0.6}{
\begin{tikzpicture}[scale=0.35]
\draw (0,5) [partial ellipse=0:360:2 and 0.8];
\draw (-2, 5)--(-2, -4);
\draw (2, 5)--(2, -4);
\draw[dashed] (0,-4) [partial ellipse=0:180:2 and 0.8];
\draw (0,-4) [partial ellipse=180:360:2 and 0.8] node[below, white]{\tiny{$X$}};
\draw[blue, ->-=0.5] (0, 4.2)--(0, -4.8) node [pos=0.5, right, black] {\tiny $V$};
\end{tikzpicture}}}}
\end{align*}
\end{corollary}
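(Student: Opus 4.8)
The plan is to realize both sides of the asserted equation as endomorphisms of the tube $O_V$ in $\cA$ and to deduce the identity from the decomposition of $\id_{O_V}$ into minimal central idempotents. The proof of Proposition~\ref{prop:identitydecomposition} uses only that the outer $\Omega$-ring slides freely along the central tube and the structure of the semisimple endomorphism algebras in $\cA$ (Propositions~\ref{prop:semisimple} and~\ref{prop:localfinite}); hence it applies with $O_V$ in place of $\bigoplus_i O_{X_i}$, yielding
\[\id_{O_V}=\sum_{(X,e_X)\in\Irr_{\mathcal{Z}}}\frac{d(X)}{\mu^{2}}\,P_{(X,e_X)}(O_V),\]
where $P_{(X,e_X)}(O_V)$ denotes the diagram consisting of the $\Omega$-ring encircling $O_V$ together with the small $X$-colored loop linking it, that is, the (possibly zero) minimal central idempotent projecting $F(O_V)\in\cZ(\cC)$ onto its $(X,e_X)$-isotypic component. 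Since $\cC$ is modular, $\cZ(\cC)$ is braided equivalent to $\cC\boxtimes\cC^{\mathrm{rev}}$ by~\cite{EGNO15,Mug03b}, so $\Irr_{\mathcal{Z}}=\{X_j\boxtimes X_k^{*}\mid 0\le j,k\le r\}$ with $d(X_j\boxtimes X_k^{*})=d_jd_k$. Thus the corollary is equivalent to the single diagrammatic identity stating that $P_{(X_j\boxtimes X_k^{*},\,e)}(O_V)$ equals the $(j,k)$-th summand on the left-hand side of the statement with the scalar $d_jd_k/\mu^{2}$ removed; granting this, summing over $(j,k)$ gives the claim.

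To establish that identity I would resolve the red $\Omega$-ring, which is the Kirby color $\sum_m d_mX_m$ of $\cC$, and handle-slide the two small loops colored $X_j$ and $X_k$ across it using Theorem~\ref{thm:handleslides}. In the modular category $\cC$ the encircling relation $\tfrac1{\mu}\sum_m d_m\bigl(X_m\ \text{Hopf-linked with}\ Y\bigr)=\delta_{Y,\1}\,\id_Y$, equivalent to the orthogonality of the $S$-matrix, then collapses the $\Omega$-ring and exhibits the remaining two-loop configuration as a central idempotent of $\End_{\cA}(O_V)$, with distinct pairs $(j,k)$ orthogonal, cutting out on $F(O_V)$ precisely the half-braiding carried by $X_j\boxtimes X_k^{*}\in\cZ(\cC)$ under the equivalence $F$ of Theorem~\ref{thm:equivcenter}; this is the modular refinement of Proposition~\ref{prop:centeridempotent} and Lemma~\ref{lem:unit4}. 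As a consistency check, the $\cA$-trace of the normalized $(j,k)$-diagram evaluates to $d_j^{2}d_k^{2}\neq0$, so none of these idempotents is degenerate, and there are exactly $(r+1)^{2}=|\Irr_{\mathcal{Z}}|$ of them, matching the count of minimal central idempotents in $\End_{\cA}\bigl(\bigoplus_i O_{X_i}\bigr)$.

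The main obstacle is the diagrammatic bookkeeping in the second step: one must track carefully how each of the two $\cC$-colored loops links the $\Omega$-ring and the central tube, and verify that after resolving $\Omega$ by the Kirby color the modular encircling relations reproduce exactly the half-braiding of $X_j\boxtimes X_k^{*}$, and not that of another simple object of $\cZ(\cC)$ nor the correct one up to a spurious scalar. Once this matching is pinned down, the summation over $(j,k)$ together with the identity from Proposition~\ref{prop:identitydecomposition} finishes the proof.
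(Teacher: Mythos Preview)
Your approach is correct and matches what the paper implicitly intends (it leaves the proof to the reader immediately after Proposition~\ref{prop:identitydecomposition}): apply that proposition to $O_V$, then use the modular identification $\Irr_{\mathcal Z}=\{X_k\boxtimes X_j\}$ to rewrite the sum. Your worry about step~3 is overstated, however, and the machinery you propose (resolving the $\Omega$-ring, handle-sliding, invoking $S$-matrix orthogonality) is unnecessary. The two small meridians $X_k$ and $X_j$ live on the same boundary torus and can simply be isotoped along the longitude direction until they sit side by side; once adjacent, the single meridian they form is labeled by $X_k\otimes X_j^*$ (accounting for the opposite orientations), and its crossing with the $\Omega$-longitude is ``$X_k$ over, $X_j^*$ under''. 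This is precisely the half-braiding carried by the simple object of $\mathcal Z(\mathcal C)$ corresponding to $X_k\boxtimes X_j^*$ under the standard equivalence $\mathcal C\boxtimes\mathcal C^{\mathrm{rev}}\to\mathcal Z(\mathcal C)$ (first factor braids via $c$, second via $c^{-1}$). So the two-loop diagram \emph{is} the one-loop diagram of Proposition~\ref{prop:identitydecomposition} with $(X,e_X)=X_k\boxtimes X_j^*$, and the dimension factor $d(X_k\boxtimes X_j^*)=d_jd_k$ matches on the nose. The precise pairing of indices with over/under and orientation is a bookkeeping matter but irrelevant to the summed identity, since $(j,k)\mapsto(j^*,k^*)$ permutes the index set and preserves $d_jd_k$.
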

\begin{proof}
We leave it to the readers.
\end{proof}

By definition in \cite{BakKir01}, the $S$-matrix entries of $\cA$ ,or equivalently, of $\mathcal{Z(C)}$, are traces of double braiding on pairs of simple objects. More precisely, for simple objects $(X, e_X), (Y, e_Y)$ in $\mathcal{Z}(\mathcal{C})$, we have

\begin{align}\label{eq:S-matrix}
S_{(X, e_X), (Y^*, e_{Y^*})}=\frac{1}{\mu}
\vcenter{\hbox{\scalebox{0.7}{
\begin{tikzpicture}[scale=0.8]
\draw [double distance=0.8cm] (0,0) [partial ellipse=-0.1:180.1:2 and 1.5];
\draw [blue, -<-=0.7] (0,0) [partial ellipse=0:180:2 and 1.5];
\begin{scope}[shift={(2, 0)}]
\draw [double distance=0.8cm] (0,0) [partial ellipse=-0.1:180.1:2 and 1.5];
\draw [blue, -<-=0.5] (0,0) [partial ellipse=0:180:2 and 1.5];
\end{scope} % upper
\begin{scope}[shift={(2, 0)}]
\draw [line width=0.83cm] (0,0) [partial ellipse=180:360:2 and 1.5];
\draw [white, line width=0.8cm] (0,0) [partial ellipse=178:362:2 and 1.5];
\draw [blue] (0,0) [partial ellipse=178:362:2 and 1.5];
\end{scope}
\draw [line width=0.83cm] (0,0) [partial ellipse=180:360:2 and 1.5];
\draw [white, line width=0.8cm] (0,0) [partial ellipse=178:362:2 and 1.5];
\draw [blue] (0,0) [partial ellipse=178:362:2 and 1.5];
\begin{scope}[shift={(-2, 0)}]
\draw [red, dashed](0,0) [partial ellipse=0:180:0.5 and 0.3]; 
\node[below left, black] at (0.6,-0.8) {\tiny $X$};
\draw [red] (0,0) [partial ellipse=180:255:0.5 and 0.3];
\draw [red] (0,0) [partial ellipse=285:360:0.5 and 0.3];
\end{scope} % lower 
\begin{scope}[shift={(0, 0)}]
\draw [red, dashed](0,0) [partial ellipse=0:180:0.5 and 0.3]; 
\node[below left, black] at (4.2,-0.8) {\tiny $Y$};
\draw [red] (0,0) [partial ellipse=180:255:0.5 and 0.3];
\draw [red] (0,0) [partial ellipse=285:360:0.5 and 0.3];
\end{scope}
\end{tikzpicture}}}}
=\frac{1}{\mu}
\vcenter{\hbox{\scalebox{0.7}{
\begin{tikzpicture}[scale=0.8]
\draw [line width=0.83cm] (0,0) [partial ellipse=0:180:2 and 1.5];
\draw [line width=0.8cm, white] (0,0) [partial ellipse=-0.2:180.2:2 and 1.5];
\draw [red] (0,0) [partial ellipse=0:180:2 and 1.5];
\begin{scope}[shift={(2, 0)}]
\draw [line width=0.83cm] (0,0) [partial ellipse=0:180:2 and 1.5];
\draw [line width=0.8cm, white] (0,0) [partial ellipse=-0.2:180.2:2 and 1.5];
\draw [red] (0,0) [partial ellipse=0:180:2 and 1.5];
\end{scope} % upper
\begin{scope}[shift={(2, 0)}]
\draw [line width=0.83cm] (0,0) [partial ellipse=180:360:2 and 1.5];
\draw [white,line width=0.8cm] (0,0) [partial ellipse=178:362:2 and 1.5];
\draw [red] (0,0) [partial ellipse=178:362:2 and 1.5];
\end{scope}
\draw [line width=0.83cm] (0,0) [partial ellipse=180:360:2 and 1.5];
\draw [white, line width=0.8cm] (0,0) [partial ellipse=178:362:2 and 1.5];
\draw [red] (0,0) [partial ellipse=178:362:2 and 1.5];
\begin{scope}[shift={(-2, 0)}]
\draw [blue, dashed](0,0) [partial ellipse=0:180:0.5 and 0.3] node [below left, black] {\tiny $X$};
\draw [white, line width=4pt] (0,0) [partial ellipse=270:290:0.5 and 0.3];
\draw [blue, ->-=0.3] (0,0) [partial ellipse=180:360:0.5 and 0.3];
\end{scope} % lower 
\begin{scope}[shift={(0, 0)}]
\draw [blue, dashed](0,0) [partial ellipse=0:180:0.5 and 0.3] node [below left, black] {\tiny $Y$};
\draw [white, line width=4pt] (0,0) [partial ellipse=270:290:0.5 and 0.3];
\draw [blue, -<-=0.7] (0,0) [partial ellipse=180:360:0.5 and 0.3];
\end{scope}
\end{tikzpicture}}}}\,,
\end{align}
where the second equality follows from
similar argument as in the proof of Proposition \ref{prop:centeridempotent}. Notice the complement of a tubular neighborhood of a Hopf link is homeomorphic to $\mathbb{T}\times I$, The mapping class of a homeomorphism of $\mathbb{T}$ mapping $(m, l)$ to $(-l, m)$ extends to $\mathbb{T}\times I$, which defines a homeomorphism between the $B$-colored region of the second term and the $B$-colored region of the third term. This is an other way to see the second equality of Equation \eqref{eq:S-matrix}.

\begin{proposition}\label{prop:Stransform}
Suppose that $(X, e_X)\in \mathcal{Z(C)}$ is simple.
Then for all $V\in \cC$ we have that
\begin{align}\label{eq:Sinverse}
\vcenter{\hbox{\scalebox{0.7}{
\begin{tikzpicture}[xscale=0.8, yscale=0.6]
\draw [line width=0.83cm] (0,0) [partial ellipse=-0.1:180.1:2 and 1.5];
\draw [white, line width=0.8cm] (0,0) [partial ellipse=-0.1:180.1:2 and 1.5];
\draw [blue] (0,0) [partial ellipse=0:180:2 and 1.5];
\path [fill=white](-0.65, 0) rectangle (0.65, 2.5);
\begin{scope}[shift={(0,3)}]
\draw (0,0) [partial ellipse=0:360:0.6 and 0.3];
\end{scope}
\draw (-0.6, 3)--(-0.6, 0) (0.6, 3)--(0.6, 0); 
\draw [blue, -<-=0.5] (0, 0)--(0, 2.7) node [pos=0.5, right, black] {\tiny $V$};% upper one
\draw (-0.6, -3)--(-0.6, 0) (0.6, -3)--(0.6, 0);
\draw [blue, ->-=0.5] (0, 0)--(0, -3.3) node [pos=0.5, right, black] {\tiny $V$};
\draw [line width=0.83cm] (0,0) [partial ellipse=180:360:2 and 1.5];
\draw [white, line width=0.8cm] (0,0) [partial ellipse=179:361:2 and 1.5];
\draw [blue, ->=0.3] (0,0) [partial ellipse=170:361:2 and 1.5] node [pos=0.7, below left, black] {\tiny $X$};
\begin{scope}[shift={(0,-3)}]
\draw [dashed](0,0) [partial ellipse=0:180:0.6 and 0.3];
\draw (0,0) [partial ellipse=180:360:0.6 and 0.3];
\end{scope}
\begin{scope}[shift={(-2, 0)}]
\draw [red, dashed](0,0) [partial ellipse=0:180:0.5 and 0.3];
%\draw [white, line width=4pt] (0,0) [partial ellipse=270:290:0.5 and 0.3];
\draw [red] (0,0) [partial ellipse=180:270:0.5 and 0.3];
\draw [red] (0,0) [partial ellipse=285:360:0.5 and 0.3];
\end{scope}
\end{tikzpicture}}}}
=
\sum_{(Y, e_Y)\in \Irr_{\mathcal{Z}}} \mu^{-1} S_{(X, e_X), (Y, e_Y)}
\vcenter{\hbox{ \scalebox{0.7}{
\begin{tikzpicture}[xscale=0.8, yscale=0.6]
\draw [line width=0.83cm] (0,0) [partial ellipse=-0.1:180.1:2 and 1.5];
\draw [white, line width=0.8cm] (0,0) [partial ellipse=-0.1:180.1:2 and 1.5];
\draw [red] (0,0) [partial ellipse=0:180:2 and 1.5];
\path [fill=white](-0.65, 0) rectangle (0.65, 2.5);
\begin{scope}[shift={(0,3)}]
\draw (0,0) [partial ellipse=0:360:0.6 and 0.3];
\end{scope}
\draw (-0.6, 3)--(-0.6, 0) (0.6, 3)--(0.6, 0); 
\draw [blue, -<-=0.5] (0, 0)--(0, 2.7) node [pos=0.5, right, black] {\tiny $V$};% upper one
\draw (-0.6, -3)--(-0.6, 0) (0.6, -3)--(0.6, 0);
\draw [blue] (0, 0)--(0, -3.3);
\draw [line width=0.83cm] (0,0) [partial ellipse=180:360:2 and 1.5];
\draw [white, line width=0.8cm] (0,0) [partial ellipse=179:361:2 and 1.5];
\draw [red] (0,0) [partial ellipse=179:361:2 and 1.5];
\begin{scope}[shift={(0,-3)}]
\draw [dashed](0,0) [partial ellipse=0:180:0.6 and 0.3];
\draw (0,0) [partial ellipse=180:360:0.6 and 0.3];
\end{scope}
\begin{scope}[shift={(-2, 0)}]
\draw [blue, dashed](0,0) [partial ellipse=0:180:0.5 and 0.3] node [below left, black] {\tiny $Y$};
\draw [white, line width=4pt] (0,0) [partial ellipse=270:290:0.5 and 0.3];
\draw [blue, -<-=0.3] (0,0) [partial ellipse=180:360:0.5 and 0.3];
\end{scope}
\end{tikzpicture}}}},
\end{align}
\end{proposition}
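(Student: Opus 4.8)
The plan is to reduce Equation \eqref{eq:Sinverse} to the decomposition of the identity tube into minimal central idempotents (Proposition \ref{prop:identitydecomposition}) together with the defining formula \eqref{eq:S-matrix} for the $S$-matrix. Concretely, I would first observe that the left-hand side of \eqref{eq:Sinverse} is the composition of the identity tube on $O_V$ with the $\cC$-decorated pattern on the separating torus consisting of an $X$-colored longitude $L_X$ of the $B$-colored solid torus together with an $\Omega$-colored meridian circle $M_\Omega$ linking it once. Using Lemma \ref{lem:zkirby} and Move $1$/Move $2$ exactly as in the proof of Proposition \ref{prop:identitydecomposition}, the $\Omega$-colored meridian circle can be opened into a tube and re-expanded, so that the left-hand side becomes $\sum_{(Y,e_Y)\in\Irr_{\mathcal{Z}}}\frac{d(Y)}{\mu^{2}}$ times the picture in which a $Y$-colored meridian circle and an $\Omega$-colored longitude have been inserted, while $L_X$ is still present and now links the $Y$-colored meridian circle that appears inside the minimal central idempotent $p_{(Y,e_Y)}(V)$ of Proposition \ref{prop:identitydecomposition}.

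The second step is to evaluate, for each fixed simple $(Y,e_Y)$, the effect of the $X$-colored longitude $L_X$ linking the $Y$-colored meridian of the idempotent $p_{(Y,e_Y)}(V)$. Since $p_{(Y,e_Y)}(V)$ is central (Proposition \ref{prop:identitydecomposition}), encircling it by $L_X$ multiplies it by a scalar; identifying this scalar as $\frac{S_{(X,e_X),(Y,e_Y)}}{S_{(\1),(Y,e_Y)}}$ is the point where the formula \eqref{eq:S-matrix} and Proposition \ref{prop:centeridempotent} enter: one slides $L_X$ off along the $\Omega$-colored longitude using the handle slide moves (Theorem \ref{thm:handleslides}) and the $Y$-colored meridian, reducing the configuration to the Hopf-link picture that computes $S_{(X,e_X),(Y,e_Y)}$, up to the normalization $S_{(\1),(Y,e_Y)} = d(Y)/\mu$ coming from the unknot colored by $Y$. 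Collecting scalars gives $\frac{d(Y)}{\mu^{2}}\cdot\frac{S_{(X,e_X),(Y,e_Y)}}{d(Y)/\mu} = \mu^{-1}S_{(X,e_X),(Y,e_Y)}$, and once $L_X$ has been absorbed each summand is precisely the tube with $V$, an $\Omega$-colored longitude, and a $Y$-colored meridian appearing on the right-hand side of \eqref{eq:Sinverse}. Summing over $(Y,e_Y)\in\Irr_{\mathcal{Z}}$ then yields the claimed identity.

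A more topological route, which I would record as a remark, uses the observation made after Equation \eqref{eq:S-matrix}: the complement of a tubular neighborhood of the Hopf link in $\bS^{3}$ is homeomorphic to $\mathbb{T}\times I$, and the mapping class of $\mathbb{T}$ exchanging meridian and longitude extends over $\mathbb{T}\times I$. Applying this homeomorphism to the $B$-colored region of the left-hand side of \eqref{eq:Sinverse} turns the $X$-colored longitude into an $X$-colored meridian and the $\Omega$-colored meridian into an $\Omega$-colored longitude; one then decomposes the $X$-colored meridian circle in the Grothendieck ring of $\cA\simeq\mathcal{Z(C)}$ (Theorem \ref{thm:equivcenter}), and the multiplicities are exactly the $S$-matrix entries by the trace-of-double-braiding description of $S$. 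In either route, the main obstacle I anticipate is the bookkeeping in the second step: correctly pinning down the encircling eigenvalue of a minimal central idempotent as $S_{(X,e_X),(Y,e_Y)}/S_{(\1),(Y,e_Y)}$ and tracking the powers of $\mu$ and the factors $d(Y)$ through the sequence of Moves, rather than any conceptual difficulty.
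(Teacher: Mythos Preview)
Your proposal is correct and follows essentially the same approach as the paper. The paper's proof is terse: it observes that the left-hand side lies in the center of $\End_{\cA}(O_V)$, invokes Proposition \ref{prop:identitydecomposition} to write it as a linear combination of the minimal central idempotents, and then says the coefficients are computed ``similar to Proposition \ref{prop:centeridempotent}'' and leaves this to the reader. Your outline fills in exactly these steps, with the coefficient computation made explicit via the encircling eigenvalue and the Hopf-link formula \eqref{eq:S-matrix}. One small point: in your second step you justify ``encircling by $L_X$ multiplies $p_{(Y,e_Y)}(V)$ by a scalar'' by citing only the centrality of $p_{(Y,e_Y)}$, but you also need (and implicitly use) that the $L_X$-encircling operation is itself central in $\End_{\cA}(O_V)$---this is the paper's opening observation, and it is what forces $L_X\cdot p_{(Y,e_Y)}$ to land in the scalar center of the simple block cut out by $p_{(Y,e_Y)}$.
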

\begin{proof}
Note that the left hand side is an element in the center of the algebra 
 $\displaystyle \End_{\cA}(O_V)$.
 By Proposition \ref{prop:identitydecomposition}, we see that it is a linear combination of minimal central idempotents.
 The rest argument is similar to the one in Proposition \ref{prop:centeridempotent}.
We leave it to the readers.
\end{proof}

\subsection{Equivariance}
The modular group $\SL_2(\bZ)$ naturally acts on pairs of integers $(m,\ell) \in \bZ^2$ by multiplication from right, i.e., $(m, \ell)\mathfrak{g} = (am+c\ell, bm+d\ell)$ for any $\mathfrak{g} =
\begin{pmatrix} a & b \\ c&d \end{pmatrix} \in \SL_2(\bZ)$. On the other hand, $\SL_2(\bZ)$ acts on the fusion algebra $K_0(\cZ(\cC))$ by extending linearly the assignments 

\[\mathfrak{t}(X, e_X):=\theta_{(X,e_X)}(X, e_X)\,,\quad \mathfrak{s}(X, e_X) := \mu^{-1} \sum_{(Y,e_Y) \in \Irr(\cZ(\cC))} S_{(X, e_X), (Y, e_Y)} (Y, e_Y)\] 
for $(X, e_X) \in \Irr_{\cZ}$, where $\mathfrak{t} =
\begin{pmatrix}  1 & 1\\ 0&1\end{pmatrix}$ and $\mathfrak{s} =
\begin{pmatrix}  0 & -1\\ 1 & 0 \end{pmatrix}$ form a set of generators of $\SL_2(\bZ)$. It is shown in \cite{NgSch10} that the generalized FS-indicator is equivariant, i.e., the action of $\SL_2(\bZ)$ on $\bZ^2$ is compatible with that on $K_0(\cZ(\cC))$. More precisely, the equivariance of $\nu$ says $\nu_{(m, \ell)\mathfrak{g}}^{(X,e_X)}(V) = \nu_{(m,\ell)}^{\tilde{\mathfrak{g}}(X,e_X)}(V)$ for all $(m,\ell)\in\bZ^2$, $(X, e_X) \in \cZ(\cC)$, $V \in \cC$ and $\mathfrak{g} \in \SL_2(\bZ)$, where $\tilde{\mathfrak{g}} = \mathfrak{jgj}$ with $\mathfrak{j} =\begin{pmatrix}  1 & 0 \\ 0 & -1
\end{pmatrix}$. Note that $\tilde{\mathfrak{t}} = \mathfrak{t}^{-1}$  and $\tilde{\mathfrak{s}} = \mathfrak{s}^{-1}$.

In this section, we give a topological proof of the equivariance of $\nu$ using quantum invariants, and argue that the conjugation by $\mathfrak{j}$ in the equation is a consequence of orientation reversing homeomorphisms.

We start the proof of the $\mathfrak{t}$-equivariance with the following lemma.

\begin{lemma}\label{lem:twist}
For all $X\in \mathcal{C}$, we have
\begin{equation}\label{eq:fulltwist}
\vcenter{\hbox{\scalebox{0.6}{
\begin{tikzpicture}[xscale=0.8, yscale=0.6]
\draw [line width=0.83cm] (0,0) [partial ellipse=-0.1:180.1:2 and 1.5];
\draw [white, line width=0.8cm] (0,0) [partial ellipse=-0.1:180.1:2 and 1.5];
\draw [red] (0,0) [partial ellipse=0:180:2 and 1.5];
\path [fill=white](-0.65, 0) rectangle (0.65, 2.5);
\begin{scope}[shift={(0,3)}]
\draw (0,0) [partial ellipse=0:360:0.6 and 0.3];
\end{scope}
\draw (-0.6, 3)--(-0.6, 0) (0.6, 3)--(0.6, 0); 
\draw [blue, -<-=0.5] (0, 0)--(0, 2.7);% upper one
\draw (0.3, 1.3) node{\tiny{$X$}};
\draw (-0.6, -3)--(-0.6, 0) (0.6, -3)--(0.6, 0);
\draw [blue] (0, 0)--(0, -3.3);
\draw [line width=0.83cm] (0,0) [partial ellipse=180:360:2 and 1.5];
\draw [white, line width=0.8cm] (0,0) [partial ellipse=179:361:2 and 1.5];
\draw [red] (0,0) [partial ellipse=179:361:2 and 1.5];
\begin{scope}[shift={(0,-3)}]
\draw [dashed](0,0) [partial ellipse=0:180:0.6 and 0.3];
\draw (0,0) [partial ellipse=180:360:0.6 and 0.3];
\end{scope}
\begin{scope}[shift={(-2, 0)}, scale=1.4]
\draw [red, dashed] (0,0) [partial ellipse=0:180:0.37 and 0.25];
\draw [white, line width=4pt] (0,0) [partial ellipse=270:250:0.37 and 0.25];
\draw [red] (0,0) [partial ellipse=180:360:0.37 and 0.25];
\begin{scope}[shift={(0, -0.2)}]
\path [fill=white] (-0.2, -0.23) rectangle (0.3, 0.24);
\draw[red] (0.12,-0.23)..controls +(0,0.2) and +(-0.2,-0.1)..(0.3,0.05);
\draw[red] (0,0.24)..controls +(0,-0.2) and +(0.2,-0.05)..(-0.2,-0.01);
\end{scope}
\end{scope}
\end{tikzpicture}}}}
=\mu\vcenter{\hbox{\scalebox{0.6}{
\begin{tikzpicture}[scale=0.35]
\draw (0,5) [partial ellipse=0:360:2 and 0.8];
\draw (-2, 5)--(-2, -4);
\draw (2, 5)--(2, -4);
\draw[dashed] (0,-4) [partial ellipse=0:180:2 and 0.8];
\draw (0,-4) [partial ellipse=180:360:2 and 0.8] node[below, white]{\tiny{$X$}};
\draw[blue, ->-=0.2] (0, 4.2)--(0, -4.8);
\draw (0.6, 2.5) node{\tiny{$X$}};
\begin{scope}[shift={(0, 0)}, xscale=-5.4, yscale=4]
\draw [blue, dashed] (0,0) [partial ellipse=0:180:0.37 and 0.25];
\draw [white, line width=4pt] (0,0) [partial ellipse=270:250:0.37 and 0.25];
\draw [blue] (0,0) [partial ellipse=180:360:0.37 and 0.25];
\begin{scope}[shift={(0, -0.2)}]
\path [fill=white] (-0.2, -0.23) rectangle (0.3, 0.24);
\draw[blue] (0,-0.23)..controls +(0,0.2) and +(-0.2,-0.1)..(0.3,0.05);
\draw[blue] (0,0.24)..controls +(0,-0.2) and +(0.2,-0.05)..(-0.2,-0.01);
\end{scope}
\end{scope}
\end{tikzpicture}}}}.\end{equation}
In addition, if $(X, e_X)\in \mathcal{Z(C)}$, we have
\begin{equation}\label{eq:centerfulltwist}
\vcenter{\hbox{\scalebox{0.6}{
\begin{tikzpicture}[xscale=0.8, yscale=0.6]
\draw [line width=0.83cm] (0,0) [partial ellipse=-0.1:180.1:2 and 1.5];
\draw [white, line width=0.8cm] (0,0) [partial ellipse=-0.1:180.1:2 and 1.5];
\draw [red] (0,0) [partial ellipse=0:180:2 and 1.5];
\path [fill=white](-0.65, 0) rectangle (0.65, 2.5);
\begin{scope}[shift={(0,3)}]
\draw (0,0) [partial ellipse=0:360:0.6 and 0.3];
\end{scope}
\draw (-0.6, 3)--(-0.6, 0) (0.6, 3)--(0.6, 0); 
\draw [blue, -<-=0.5] (0, 0)--(0, 2.7);% upper one
\draw (0.3, 1.3) node{\tiny{$X$}};
\draw (-0.6, -3)--(-0.6, 0) (0.6, -3)--(0.6, 0);
\draw [blue] (0, 0)--(0, -3.3);
\draw [line width=0.83cm] (0,0) [partial ellipse=180:360:2 and 1.5];
\draw [white, line width=0.8cm] (0,0) [partial ellipse=179:361:2 and 1.5];
\draw [red] (0,0) [partial ellipse=179:361:2 and 1.5];
\begin{scope}[shift={(0,-3)}]
\draw [dashed](0,0) [partial ellipse=0:180:0.6 and 0.3];
\draw (0,0) [partial ellipse=180:360:0.6 and 0.3];
\end{scope}
\begin{scope}[shift={(-2, 0)}, scale=1.4]
\draw [red, dashed] (0,0) [partial ellipse=0:180:0.37 and 0.25];
\draw [white, line width=4pt] (0,0) [partial ellipse=270:250:0.37 and 0.25];
\draw [red] (0,0) [partial ellipse=180:360:0.37 and 0.25];
\begin{scope}[shift={(0, -0.2)}]
\path [fill=white] (-0.2, -0.23) rectangle (0.3, 0.24);
\draw[red] (0.12,-0.23)..controls +(0,0.2) and +(-0.2,-0.1)..(0.3,0.05);
\draw[red] (0,0.24)..controls +(0,-0.2) and +(0.2,-0.05)..(-0.2,-0.01);
\end{scope}
\end{scope}
\begin{scope}[shift={(0, 0)}, scale=0.3]
\draw [red](0, 0) [partial ellipse=-75:0:2 and 0.8];
\draw [red, dashed](0, 0) [partial ellipse=0:180:2 and 0.8];
\draw [red](0, 0) [partial ellipse=180:255:2 and 0.8];
\end{scope}
\end{tikzpicture}}}}
=\mu\vcenter{\hbox{\scalebox{0.6}{
\begin{tikzpicture}[scale=0.35]
\draw (0,5) [partial ellipse=0:360:2 and 0.8];
\draw (-2, 5)--(-2, -4);
\draw (2, 5)--(2, -4);
\draw[dashed] (0,-4) [partial ellipse=0:180:2 and 0.8];
\draw (0,-4) [partial ellipse=180:360:2 and 0.8] node[below, white]{\tiny{$X$}};
\draw[blue, ->-=0.2] (0, 4.2)--(0, -4.8);
\draw (0.6, 2.5) node{\tiny{$X$}};
\begin{scope}[shift={(0, 0)}, xscale=-5.4, yscale=4]
\draw [blue, dashed] (0,0) [partial ellipse=0:180:0.37 and 0.25];
\draw [white, line width=4pt] (0,0) [partial ellipse=270:250:0.37 and 0.25];
\draw [blue] (0,0) [partial ellipse=180:360:0.37 and 0.25];
\begin{scope}[shift={(0, -0.2)}]
\path [fill=white] (-0.2, -0.23) rectangle (0.3, 0.24);
\draw[blue] (0,-0.23)..controls +(0,0.2) and +(-0.2,-0.1)..(0.3,0.05);
\draw[blue] (0,0.24)..controls +(0,-0.2) and +(0.2,-0.05)..(-0.2,-0.01);
\end{scope}
\end{scope}
\begin{scope}[shift={(0, -2)}]
\draw [red](0, 0) [partial ellipse=-75:0:2 and 0.8];
\draw [red, dashed](0, 0) [partial ellipse=0:180:2 and 0.8];
\draw [red](0, 0) [partial ellipse=180:255:2 and 0.8];
\end{scope}
\end{tikzpicture}}}}
=
\mu\vcenter{\hbox{\scalebox{0.6}{
\begin{tikzpicture}[scale=0.35]
\draw (0,5) [partial ellipse=0:360:2 and 0.8];
\draw (-2, 5)--(-2, -4);
\draw (2, 5)--(2, -4);
\draw[dashed] (0,-4) [partial ellipse=0:180:2 and 0.8];
\draw (0,-4) [partial ellipse=180:360:2 and 0.8] node[below, white]{\tiny{$X$}};
\draw[blue, ->-=0.2] (0, 4.2)--(0, -4.8);
\draw (0.6, 2.5) node{\tiny{$X$}};
\begin{scope}[xscale=-0.6, yscale=0.6, xshift=0.5cm]
\fill[white] (-1.5, -2) rectangle (1.5, 2);
\draw[blue] (-.5, 2) ..controls (-0.5,1) and (-0,-1)..(1, -1);
\draw[blue] (1, 0) [partial ellipse=-90:90:.5 and 1];
\fill[white] (-0.25,-0.5) rectangle (0.25, 0.5);
\draw[blue] (-.5, -2) ..controls (-0.5,-1) and (-0,1)..(1, 1);
\end{scope}
\begin{scope}[shift={(0, -2)}]
\draw [red](0, 0) [partial ellipse=-75:0:2 and 0.8];
\draw [red, dashed](0, 0) [partial ellipse=0:180:2 and 0.8];
\draw [red](0, 0) [partial ellipse=180:255:2 and 0.8];
\end{scope}
\end{tikzpicture}}}}=\mu^2 G(\theta_{(X, e_X)}).
\end{equation}
\end{lemma}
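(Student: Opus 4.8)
The plan is to prove the chain of equalities in \eqref{eq:centerfulltwist} one link at a time, using the topological moves of Theorem \ref{thm:partition function} together with the handle slide of Theorem \ref{thm:handleslides} and the structure of the functor $G$ set up in the previous section.

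First I would establish \eqref{eq:fulltwist}. The picture on the left is a solid torus (the $A$-colored tube carrying the $X$-colored blue string) with an $\Omega$-colored circle that has been pushed through a full twist of the tube, linking the blue string. Applying Move $1$ in reverse to remove the $\Omega$-circle-bounded hole is not directly available, so instead I would use the handle slide of Theorem \ref{thm:handleslides}: sliding the blue $X$-string over the $\Omega$-colored circle, or equivalently recognizing that the $\Omega$-circle on the belt of the twisted tube can be unwound by an ambient isotopy that converts it into the untwisted belt circle at the cost of introducing the full twist $\theta_X$ on the $X$-string (this is exactly the standard ``ribbon = framing'' relation, made rigorous here by the homeomorphism-invariance of $Z$ in Theorem \ref{thm:partition function}, since a full twist of a solid torus is an orientation-preserving self-homeomorphism whose effect on the decoration is the twist). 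After removing the now-trivial belt circle via Move $1$ applied in the direction of decreasing genus (this produces the factor $\mu$, as in Lemma \ref{lem:zkirby}), what remains is precisely the right-hand cylinder with the full twist drawn on the $X$-string. The main subtlety is bookkeeping: verifying that the twist produced is $\theta_X$ and not $\theta_X^{-1}$, which amounts to tracking the orientation of the belt circle relative to the tube; I would fix this by comparing with the sign conventions already used in Lemma \ref{lem:zkirby} and in the definition of $\cyl$.

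Next, for \eqref{eq:centerfulltwist}, the left-hand side differs from the left-hand side of \eqref{eq:fulltwist} only by an extra $\Omega$-colored circle encircling the central tube (the small red loop $\texttt{partial ellipse}$ around the $X$-string). Applying the result just proved to the ``twisted'' part of the diagram gives the middle expression: the $\mu$ from \eqref{eq:fulltwist}, the full twist on the $X$-string, and the surviving $\Omega$-circle around the tube. For the second equality I would isotope the full-twist crossing of the $X$-string into the standard ``kink'' form drawn in the last picture; this is just Planar Graphical Calculus applied on a contractible region of the separating torus, so it changes nothing. Finally, the third equality is the definition of $G$: the diagram consisting of the tube $O_X$ with an $\Omega$-colored circle around it and a morphism (here the full twist, i.e.\ $\theta_{(X,e_X)}$ since $(X,e_X)\in\cZ(\cC)$ forces the twist to be a scalar times the half-braiding data) inserted on the $X$-string is, up to the factor $\frac1\mu$, exactly $G(\theta_{(X,e_X)})$ by the formula defining $G$; hence the full diagram equals $\mu^2 G(\theta_{(X,e_X)})$, where one factor $\mu$ comes from \eqref{eq:fulltwist} and the other is the normalization built into $G$.

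The step I expect to be the main obstacle is the first one: justifying rigorously that pushing the $\Omega$-colored belt circle through a full twist of the tube and then erasing it produces exactly the full twist $\theta_X$ on the $X$-string, with the correct orientation and no spurious scalar. Everything else is a sequence of Move $1$, Move $2$, Planar Graphical Calculus, and unpacking a definition — routine given Theorems \ref{thm:partition function} and \ref{thm:handleslides} — but the twist identity is where the ribbon structure of $\cA$ (equivalently of $\cZ(\cC)$) genuinely enters, and I would want to phrase it so that it reduces cleanly to the already-established handle slide plus homeomorphism invariance rather than re-deriving the ribbon axioms from scratch.
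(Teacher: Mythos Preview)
Your overall plan for \eqref{eq:fulltwist} and for the first and last equalities of \eqref{eq:centerfulltwist} is close to what the paper does (the paper just points back to the Move~1/handle slide/Move~2 sequence used in the proof of Theorem~\ref{thm:nu-and-z}, and then unpacks the definition of $G$). Where you go wrong is the second equality of \eqref{eq:centerfulltwist}.

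You claim that passing from the middle picture to the third picture is ``just Planar Graphical Calculus applied on a contractible region of the separating torus.'' It is not. In the middle picture the blue $X$-string wraps once around the meridian of the tube: as a curve on the boundary torus its homology class differs from the straight vertical string by a meridian. In the third picture the kink lies entirely on the front of the tube, so the curve is isotopic (on the torus) to the straight vertical string. These two curves are \emph{not} isotopic on the surface, and in particular the difference cannot be concentrated in any disk. What makes the equality hold is exactly the presence of the $\Omega$-colored meridian circle below: handle sliding the wrapped portion of the blue string along that $\Omega$-circle (Theorem~\ref{thm:handleslides}) subtracts a meridian from the blue curve and leaves the planar kink, which is then the half-braiding curl representing $\theta_{(X,e_X)}$. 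This is precisely what the paper says: ``the second one comes from handle sliding the blue string along the red circle.'' Without the $\Omega$-circle that step would simply be false, so your proposed justification does not work.

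A smaller comment on \eqref{eq:fulltwist}: your description conflates two different mechanisms (``Move~1 in reverse,'' handle slide, and a Dehn-twist homeomorphism of the $B$-region). The homeomorphism-invariance route you sketch can be made to work, but you then still have to eliminate the outer torus, and doing so cleanly again comes down to the Move~1/handle slide/Move~2 pattern of Theorem~\ref{thm:nu-and-z}; it would be clearer to invoke that directly, as the paper does.
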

\begin{proof}
Equation \eqref{eq:fulltwist} follows from a similar argument as in the proofs of Theorem \ref{thm:nu-and-z}.

For Equation \eqref{eq:centerfulltwist}, the first equality comes from Equation \eqref{eq:fulltwist} and the second one comes from handle sliding the blue string along the red circle.
\end{proof}

\begin{proposition}[$\mathfrak{t}$-Equivariance]\label{prop:t-equiv}
Suppose $V\in\mathcal{C}$ and $(X, e_X)\in \mathcal{Z(C)}$. Then for any $(m,\ell) \in \bZ^2$, we have
\begin{align*}
\nu_{(m,\ell)\mathfrak{t}}^{(X, e_X)}(V)=\nu_{(m,\ell)}^{\tilde{\mathfrak{t}}(X, e_X)}(V).
\end{align*}
\end{proposition}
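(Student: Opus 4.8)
The strategy is to reduce the $\mathfrak{t}$-equivariance to a computation of partition functions via Theorem~\ref{thm:nu-and-z}, and then to realize the two sides as $Z$ evaluated on $\cC$-decorated $3$-alterfolds that differ only by an orientation-preserving homeomorphism of the ambient $\bS^3$, together with the extra twist data encoded in $\Phi$. First I would recall that $(m,\ell)\mathfrak{t} = (m, m+\ell)$, so by Theorem~\ref{thm:nu-and-z} the left-hand side is $\mu^{-1} Z(\bS^3, \Sigma_L, \Gamma_{(m,r')} \sqcup \Phi')$, where $r'$ and the power $q'$ of $\theta_X$ in $\Phi'$ are read off from the division $m+\ell = -q'm + \sgn(m)r'$; one checks that $r' = r$ and $q' = q - \sgn(m)$ (so in particular the underlying multicurve $\Gamma_{(m,r)}$ on $\mathbb{T}_V$ is unchanged, only the power of the twist on $\mathbb{T}_{(X,e_X)}$ changes). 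On the other hand, by definition $\tilde{\mathfrak{t}}(X, e_X) = \theta_{(X,e_X)}^{-1}(X, e_X)$, so by Theorem~\ref{thm:nu-and-z} the right-hand side is $\mu^{-1} Z$ of the alterfold built from $\theta_{(X,e_X)}^{-1}(X, e_X)$ in place of $(X, e_X)$.

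The key geometric input, which I would isolate as the main step, is the interpretation of the $\mathfrak{t}$-action on the torus $\mathbb{T}_V$ side as a Dehn twist. Adding $m$ to $\ell$ (the longitude count relative to the meridian count) is realized topologically by performing a Dehn twist along the meridian of $\mathbb{T}_V$ in the $(|m|,r)$-curve $\Gamma_{(m,r)}$ — or, equivalently, since we are on a Hopf-link complement $\mathbb{T}\times I$, by the self-homeomorphism of $\bS^3$ that restricts to this Dehn twist on $\Sigma_L$ and is supported near $\mathbb{T}_V$. Pushing this homeomorphism across the $I$-factor of the complement and out to $\mathbb{T}_{(X,e_X)}$, the twist is absorbed; on the algebraic side this is exactly the statement that the curl created by the Dehn twist on a $V$-colored cable can be traded, through the red ($\Omega$-colored) circle of $\Phi$, for a factor of $\theta_X^{\pm 1}$ inserted on the $X$-string of $\Phi$. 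This is the content I would extract from Lemma~\ref{lem:twist}: Equation~\eqref{eq:centerfulltwist} says precisely that a full twist of an $X$-cable around the $\Omega$-colored tube equals $\mu^2 G(\theta_{(X,e_X)})$, i.e.\ inserting a single twist shifts the power of $\theta_X$ in $\Phi$ by one. Combining with the bookkeeping $q' = q - \sgn(m)$, the twist generated by the Dehn twist on $\mathbb{T}_V$ cancels exactly the discrepancy between $\theta_X^{q'}$ and $\theta_X^q \cdot \theta_{(X,e_X)}^{-1}$, so the two alterfolds have homeomorphic $B$-colored regions with identical $\cC$-decoration.

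Concretely, the steps in order: (1) Unwind the divisions defining $r, q$ for $(m,\ell)$ and $r', q'$ for $(m,\ell)\mathfrak{t}$ and record $r' = r$, $q' = q - \sgn(m)$; handle the marginal case $m=0$ separately, where $\mathfrak{t}$ sends $(0,\ell)\mapsto(0,\ell)$ and there is nothing to prove except matching $\tilde{\mathfrak t}(X,e_X)$ via $\theta_X$ acting as a scalar on the relevant $\hom$-space, which is transparent from the Example~\ref{example:nu} picture. (2) Identify the alterfold $(\bS^3, \Sigma_L, \Gamma_{(m,r')}\sqcup\Phi')$ for $(m,\ell)\mathfrak t$ as obtained from that for $(m,\ell)$ by a Dehn twist along the meridian of $\mathbb{T}_V$, extended to an orientation-preserving self-homeomorphism of $(\bS^3, \Sigma_L)$ using that $\overline{\bS^3 \setminus L_\epsilon} \cong \mathbb{T}\times I$. (3) Use Theorem~\ref{thm:partition function} (homeomorphism invariance) to move the twist to a neighborhood of $\mathbb{T}_{(X,e_X)}$, then apply Equation~\eqref{eq:centerfulltwist} of Lemma~\ref{lem:twist} to convert the resulting curl into a $\theta_{(X,e_X)}^{\mp 1}$ insertion, matching $\Phi'$ with the $\Phi$-diagram attached to $\theta_{(X,e_X)}^{-1}(X,e_X)$. (4) Conclude $Z(\bS^3, \Sigma_L, \Gamma_{(m,r)}\sqcup\Phi') = Z(\bS^3,\Sigma_L, \Gamma_{(m,r)}\sqcup \Phi_{\theta^{-1}})$ and divide by $\mu$, invoking Theorem~\ref{thm:nu-and-z} on both ends.

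The main obstacle I anticipate is Step~(2): making precise that adding $m$ to $\ell$ in the homology class of the decorating multicurve is genuinely induced by a Dehn twist that extends to a homeomorphism of the whole alterfold carrying the decoration correctly (including the coherent framing of the $V$-cables through the box in $\Gamma_{(m,r)}$, and the fact that the twist is around the correct curve so that it does not disturb $\Gamma_{(m,r)}$'s intersection pattern with the box). The framing subtlety is exactly why the $\sgn(m)$ appears and why one must be careful that the twist one performs on $\mathbb{T}_V$ is the meridional one and that its sign matches; getting this sign right, rather than off by a factor of $\theta_X^{\pm 2}$, is where the bookkeeping of the divisions and the orientation conventions for $\Sigma_L$ must be handled with care. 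Once the homeomorphism is correctly set up, the rest is a direct application of homeomorphism invariance and Lemma~\ref{lem:twist}.
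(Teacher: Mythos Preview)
Your step~(1) observation that $r' = r$ and $q' = q - 1$ (your formula $q' = q - \sgn(m)$ has a sign slip for $m < 0$, but that is cosmetic) is correct and in fact already proves the result for $m \neq 0$ with no further geometry: by Theorem~\ref{thm:nu-and-z} the two indicators are $\mu^{-1}Z$ of alterfolds carrying the \emph{same} multicurve $\Gamma_{(m,r)}$ on $\mathbb{T}_V$ and differing only in the power of $\theta_X$ in the bullet on $\mathbb{T}_{(X,e_X)}$; for simple $(X,e_X)$ this is a scalar, and the Planar Graphical Calculus property of $Z$ gives the identity at once. This is actually simpler than the paper's argument for $m \neq 0$. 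But precisely for this reason your steps (2)--(4) are confused as written: a Dehn twist along the meridian of $\mathbb{T}_V$ sends the $(m,r)$-curve to the $(m, r+m)$-curve, which is \emph{not} the decoration on the alterfold for $(m,\ell)\mathfrak{t}$ you just identified in step~(1). So step~(2) does not ``obtain'' one alterfold from the other.

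The genuine gap is the case $m = 0$, which you dismiss as transparent. There the statement is $\nu_{(0,\ell)}^{(X,e_X)}(V) = \theta_{(X,e_X)}^{-1}\nu_{(0,\ell)}^{(X,e_X)}(V)$, i.e.\ $\nu_{(0,\ell)}^{(X,e_X)}(V) = 0$ whenever $\theta_{(X,e_X)} \neq 1$. This does not follow from ``$\theta_X$ acting as a scalar on $\hom_{\cC}(X, \1)$'': that scalar need not be $1$, and nothing in Example~\ref{example:nu} makes the vanishing manifest. Ng--Schauenburg handle $m=0$ via an $\SL_2(\bZ)$-relation, and the paper's remark after the proof stresses that its topological argument is designed precisely to avoid that detour. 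The paper's device is to introduce a \emph{third} auxiliary torus $\mathbb{T}_\Theta$ carrying an $\Omega$-colored $(1,1)$-curve, linked into the Hopf configuration; the two placements of $\mathbb{T}_\Theta$ (adjacent to $\mathbb{T}_V$ versus adjacent to $\mathbb{T}_{(X,e_X)}$) have homeomorphic $B$-regions, and Lemma~\ref{lem:twist} evaluates each. This works uniformly in $m$, including $m = 0$, and is the missing ingredient your steps~(2)--(4) gesture toward but do not actually supply.
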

\begin{proof}
As is explained above, we can assume without loss of generality that $m \ge 0$. 
Write $\ell = -qm + r$ for $0 \le r \le m-1$ when $m > 0$, and set $q = 0$, $r = |\ell|$ when $m = 0$. In addition, by the linearity of $\nu$, we can assume that $(X, e_X) \in \Irr_{\cZ}$. 

Let $\Theta$ be a $\cC$-colored graph on a torus $\mathbb{T}_{\Theta} \subset \bS^3$ containing an $\Omega$-colored $(1,1)$-curve which is the result of the right-hand Dehn twist along the meridian of $\mathbb{T}_{\Theta}$. Consider the following $\cC$-decorated 3-alterfolds in $\bS^3$ with $\Xi = \Theta\sqcup\Gamma_{(m,r)}(V)\sqcup\Phi(X, e_X)$:

\begin{align*}\label{eq:action1}
(\bS^3, \Sigma, \Xi) := \vcenter{\hbox{\scalebox{0.8}{
\begin{tikzpicture}[scale=0.7]
\draw [double distance=0.57cm] (0,0) [partial ellipse=-0.1:180.1:2 and 1.5];
\draw [blue] (0,0) [partial ellipse=0:180:2.05 and 1.55];
\draw [blue] (0,0) [partial ellipse=0:180:2.15 and 1.65];
\draw [blue] (0,0) [partial ellipse=0:180:1.85 and 1.35];
\begin{scope}[shift={(2.5, 0)}]
\draw [double distance=0.57cm] (0,0) [partial ellipse=-0.1:180.1:2 and 1.5];
\draw [red] (0,0) [partial ellipse=0:180:2 and 1.5];
\end{scope} % upper
\begin{scope}[shift={(2.5, 0)}]
\draw [line width=0.6cm] (0,0) [partial ellipse=180:360:2 and 1.5];
\draw [white, line width=0.57cm] (0,0) [partial ellipse=178:362:2 and 1.5];
\draw [red] (0,0) [partial ellipse=178:362:2 and 1.5];
\end{scope}
\begin{scope}[shift={(-2.5, 0)}]
\draw [double distance=0.57cm] (0,0) [partial ellipse=-0.1:180.1:2 and 1.5];
\draw [red] (0,0) [partial ellipse=0:180:2 and 1.5];
\end{scope} % upper
\begin{scope}[shift={(-2.5, 0)}]
\draw [line width=0.6cm] (0,0) [partial ellipse=180:360:2 and 1.5];
\draw [white, line width=0.57cm] (0,0) [partial ellipse=178:362:2 and 1.5];
\draw [red] (0,0) [partial ellipse=178:362:2 and 1.5];
\end{scope}
\draw [line width=0.6cm] (0,0) [partial ellipse=180:360:2 and 1.5];
\draw [white, line width=0.57cm] (0,0) [partial ellipse=178:362:2 and 1.5];
\draw [blue] (0,0) [partial ellipse=178:362:2.05 and 1.55];
\draw [blue] (0,0) [partial ellipse=178:362:2.15 and 1.65];
\draw [blue] (0,0) [partial ellipse=178:362:1.85 and 1.35];
\begin{scope}[shift={(-2, -0.1)}]
\draw [blue, dashed](0,0) [partial ellipse=0:180:0.37 and 0.25];
\draw [blue,] (0,0) [partial ellipse=180:360:0.37 and 0.25];
\end{scope} 
\begin{scope}[shift={(-2, 0.1)}]
\draw [blue, dashed](0,0) [partial ellipse=0:180:0.37 and 0.25];
\draw [blue,] (0,0) [partial ellipse=180:360:0.37 and 0.25];
\end{scope} 
\begin{scope}[shift={(-2, -0.2)}]
\draw [fill=white] (-0.2, -0.23) rectangle (0.3, 0.24);
\end{scope}% lower 
\begin{scope}[shift={(4.5, 0)}]
\draw [blue, dashed](0,0) [partial ellipse=0:180:0.37 and 0.25] node [below left, black] {\tiny $X$};
\draw [white, line width=4pt] (0,0) [partial ellipse=270:250:0.37 and 0.25];
\draw [blue, -<-=0.3] (0,0) [partial ellipse=180:360:0.37 and 0.25] node [pos=0.7] {\tiny $\bullet$};
\end{scope}
\begin{scope}[shift={(-4.5, 0)}]
\draw [red, dashed] (0,0) [partial ellipse=0:180:0.37 and 0.25];
\draw [white, line width=4pt] (0,0) [partial ellipse=270:250:0.37 and 0.25];
\draw [red] (0,0) [partial ellipse=180:360:0.37 and 0.25];
\begin{scope}[shift={(0, -0.2)}]
\path [fill=white] (-0.2, -0.23) rectangle (0.3, 0.24);
\draw[red] (0.08,-0.23)..controls +(0,0.2) and +(-0.2,-0.1)..(0.3,0.05);
\draw[red] (0,0.24)..controls +(0,-0.2) and +(0.2,-0.05)..(-0.2,-0.01);
\end{scope}
\end{scope}
\end{tikzpicture}}}}
\end{align*}
and 
\begin{align*}
(\bS^3, \Sigma', f(\Xi)) := 
\vcenter{\hbox{\scalebox{0.7}{
\begin{tikzpicture}[scale=0.8]
\draw [double distance=0.57cm] (0,0) [partial ellipse=-0.1:180.1:2 and 1.5];
\draw [blue] (0,0) [partial ellipse=0:180:2.05 and 1.55];
\draw [blue] (0,0) [partial ellipse=0:180:2.15 and 1.65];
\draw [blue] (0,0) [partial ellipse=0:180:1.85 and 1.35];
\begin{scope}[shift={(2.5, 0)}]
\draw [double distance=0.57cm] (0,0) [partial ellipse=-0.1:180.1:2 and 1.5];
\draw [red] (0,0) [partial ellipse=0:180:2 and 1.5];
\end{scope} 
\begin{scope}[shift={(2.5, 0)}]
\draw [double distance=0.57cm] (0,0) [partial ellipse=-0.1:180.1:3.3 and 2.5];
\draw [red] (0,0) [partial ellipse=0:180:3.3 and 2.5];
\end{scope} % upper
\begin{scope}[shift={(2.5, 0)}]
\draw [line width=0.6cm] (0,0) [partial ellipse=180:360:2 and 1.5];
\draw [white, line width=0.57cm] (0,0) [partial ellipse=178:362:2 and 1.5];
\draw [red] (0,0) [partial ellipse=178:362:2 and 1.5];
\end{scope}
\begin{scope}[shift={(2.5, 0)}]
\draw [line width=0.6cm] (0,0) [partial ellipse=180:360:3.3 and 2.5];
\draw [white, line width=0.57cm] (0,0) [partial ellipse=178:362:3.3 and 2.5];
\draw [red] (0,0) [partial ellipse=178:362:3.3 and 2.5];
\end{scope}
\draw [line width=0.6cm] (0,0) [partial ellipse=180:360:2 and 1.5];
\draw [white, line width=0.57cm] (0,0) [partial ellipse=178:362:2 and 1.5];
\draw [blue] (0,0) [partial ellipse=178:362:2.05 and 1.55];
\draw [blue] (0,0) [partial ellipse=178:362:2.15 and 1.65];
\draw [blue] (0,0) [partial ellipse=178:362:1.85 and 1.35];
\begin{scope}[shift={(-2, -0.1)}]
\draw [blue, dashed](0,0) [partial ellipse=0:180:0.37 and 0.25];
\draw [blue,] (0,0) [partial ellipse=180:360:0.37 and 0.25];
\end{scope} 
\begin{scope}[shift={(-2, 0.1)}]
\draw [blue, dashed](0,0) [partial ellipse=0:180:0.37 and 0.25];
\draw [blue,] (0,0) [partial ellipse=180:360:0.37 and 0.25];
\end{scope} 
\begin{scope}[shift={(-2, -0.2)}]
\draw [fill=white] (-0.2, -0.23) rectangle (0.3, 0.24);
\end{scope}% lower 
\begin{scope}[shift={(4.5, 0)}]
\draw [blue, dashed](0,0) [partial ellipse=0:180:0.37 and 0.25] node [below left, black] {\tiny $X$};
\draw [white, line width=4pt] (0,0) [partial ellipse=270:250:0.37 and 0.25];
\draw [blue, -<-=0.3] (0,0) [partial ellipse=180:360:0.37 and 0.25] node [pos=0.7] {\tiny $\bullet$};
\end{scope}
\begin{scope}[shift={(5.8, 0)}]
\draw [red, dashed] (0,0) [partial ellipse=0:180:0.37 and 0.25];
\draw [white, line width=4pt] (0,0) [partial ellipse=270:250:0.37 and 0.25];
\draw [red] (0,0) [partial ellipse=180:360:0.37 and 0.25];
\begin{scope}[shift={(0, -0.2)}]
\path [fill=white] (-0.2, -0.23) rectangle (0.3, 0.24);
% \draw [red] (0, 0.24) .. controls +(0, -0.2) and +(-0.2, -0.1) .. (0.3, 0.05);
% \draw [red] (-0.2, -0.01) .. controls +(0.2, -0.1) and +(-0.1, 0.2) .. (-0.02, -0.23);
\draw[red] (-0.05,-0.23)..controls +(0,0.2) and +(-0.2,-0.1)..(0.3,0.05);
\draw[red] (0,0.24)..controls +(0,-0.2) and +(0.2,-0.05)..(-0.2,-0.01);
\end{scope}
\end{scope}
\end{tikzpicture}}}}\,,
\end{align*}
where $f: R_B(\bS^3, \Sigma) \to R_B(\bS^3, \Sigma')$ is a homeomorphism of 3-manifolds with boundary such that $\Sigma' = f(\Sigma) \cong \mathbb{T}_\Theta \sqcup \mathbb{T}_V \sqcup \mathbb{T}_{(X, e_X)}$. By Theorem \ref{thm:partition function}, we have $Z(\bS^3, \Sigma, \Xi) = Z(\bS^3, \Sigma', f(\Xi))$. Now we express the two sides of the above equation in terms of generalized FS-indicators.

By Lemma \ref{lem:twist}, we have
\begin{align*}
  Z(\bS^3, \Sigma, \Xi) = \mu Z(\bS^3, \Sigma_L, \tilde{\Gamma}_{(m,m+r)} \sqcup \Phi)= 
\mu Z\left(\vcenter{\hbox{\scalebox{0.7}{
\begin{tikzpicture}[scale=0.8]
\draw [double distance=0.57cm] (0,0) [partial ellipse=-0.1:180.1:2 and 1.5];
\draw [blue] (0,0) [partial ellipse=0:180:2.05 and 1.55];
\draw [blue] (0,0) [partial ellipse=0:180:2.15 and 1.65];
\draw [blue] (0,0) [partial ellipse=0:180:1.85 and 1.35];
\begin{scope}[shift={(2.5, 0)}]
\draw [double distance=0.57cm] (0,0) [partial ellipse=-0.1:180.1:2 and 1.5];
\draw [red] (0,0) [partial ellipse=0:180:2 and 1.5];
\end{scope} % upper
\begin{scope}[shift={(2.5, 0)}]
\draw [line width=0.6cm] (0,0) [partial ellipse=180:360:2 and 1.5];
\draw [white, line width=0.57cm] (0,0) [partial ellipse=178:362:2 and 1.5];
\draw [red] (0,0) [partial ellipse=178:362:2 and 1.5];
\end{scope}
\draw [line width=0.6cm] (0,0) [partial ellipse=180:360:2 and 1.5];
\draw [white, line width=0.57cm] (0,0) [partial ellipse=178:362:2 and 1.5];
\draw [blue] (0,0) [partial ellipse=178:362:2.05 and 1.55];
\draw [blue] (0,0) [partial ellipse=178:362:2.15 and 1.65];
\draw [blue] (0,0) [partial ellipse=178:362:1.85 and 1.35];
\begin{scope}[shift={(-2, -0.1)}]
\draw [blue, dashed](0,0) [partial ellipse=0:180:0.37 and 0.25];
\draw [blue,] (0,0) [partial ellipse=180:360:0.37 and 0.25];
\end{scope} 
\begin{scope}[shift={(-2, 0.1)}]
\draw [blue, dashed](0,0) [partial ellipse=0:180:0.37 and 0.25];
\draw [blue,] (0,0) [partial ellipse=180:360:0.37 and 0.25];
\end{scope} 
\begin{scope}[shift={(-2, -0.2)}]
\draw [fill=white] (-0.2, -0.23) rectangle (0.3, 0.24);
\end{scope}% lower 
\begin{scope}[shift={(4.5, 0)}]
\draw [blue, dashed](0,0) [partial ellipse=0:180:0.37 and 0.25] node [below left, black] {\tiny $X$};
\draw [white, line width=4pt] (0,0) [partial ellipse=270:250:0.37 and 0.25];
\draw [blue, -<-=0.3] (0,0) [partial ellipse=180:360:0.37 and 0.25] node [pos=0.7] {\tiny $\bullet$};
\end{scope}
\end{tikzpicture}}}}\right)
\end{align*}
where $\tilde{\Gamma}_{(m,m+r)}$ is the $\cC$-colored graph on $\mathbb{T}_{V}$ obtained from changing the $V$-colored $(m,r)$-curve in $\Gamma_{m,r}$ into a $V$-colored $(m, m+r)$-curve. The box in the right hand side of the above equation is understood as 
$\raisebox{-1em}{\begin{tikzpicture}[scale=1.5]
\draw (-0.5, -0.5) rectangle (0.5,0.5);
\draw [blue, ->-=0.5] (0.3, 0.5) .. controls +(0, -0.15) and +(-0.1, 0) .. (0.5, 0.2);
\draw [blue, ->-=0.45] (-0.1, 0.5) .. controls +(0.08, -0.25) and +(-0.2, 0) .. (0.5, -0.2);
\draw [blue, ->-=0.5] (-0.5, 0.4) --(0.5, -0.4);
\draw [blue, -<-=0.5] (-0.3, -0.5) .. controls +(0, 0.15) and +(0.1, 0) .. (-0.5, -0.2);
\draw [blue, -<-=0.5] (0.1,- 0.5) .. controls +(-0.1, 0.25) and +(0.2, 0) .. (-0.5, 0.2);
\draw [decorate, decoration = {brace}] (-0.1,0.52) --  (0.3,0.52) node [pos=0.5, above] {\tiny $m$} ;
\draw [decorate, decoration = {brace}] (0.52,0.2) --  (0.52,-0.4) node [pos=0.5, right] {\tiny $m+r$} ;
\end{tikzpicture}}$. By a similar argument as in Theorem \ref{thm:nu-and-z}, we have
\begin{align*}
Z(\bS^3, \Sigma, \Xi)=Z(\bS^3, \Sigma_L, \tilde{\Gamma}_{(m,m+r)} \sqcup \Phi)=\mu^2 \nu_{(m, m+\ell)}^{(X, e_X)}(V) =\mu^2 \nu_{(m, \ell)\mathfrak{t}}^{(X, e_X)}(V)\,.
\end{align*}

For $(\bS^3, \Sigma', f(\Xi))$, by Lemma \ref{lem:twist}, Theorem \ref{thm:nu-and-z} and the linearity of $\nu$, we have 
\begin{align*}
Z(\bS^3, \Sigma', f(\Xi)) = \mu \theta_{(X, e_X)}^{-1} Z(\bS^3, \Sigma_L, \Gamma_{m, \ell} \sqcup \Phi)
=\mu^2\theta_{(X,e_X)}^{-1} \nu_{(m, \ell)}^{(X, e_X)}(V)
=\mu^2 \nu_{(m, \ell)}^{\tilde{\mathfrak{t}}(X, e_X)}(V)\,.
\end{align*}
Therefore, 
\[
\nu_{(m, \ell)\mathfrak{t}}^{(X, e_X)}(V) = \frac{1}{\mu^2}Z(\bS^3, \Sigma, \Gamma) = \frac{1}{\mu^2}Z(\bS^3, \Sigma', \Gamma') = \nu_{(m, \ell)}^{\tilde{\mathfrak{t}}(X, e_X)}(V)\,.\qedhere
\]
\end{proof}

\begin{remark}
Our topological proof gives a unified treatment on the $\mathfrak{t}$-equivariance of the indicators, i.e., there is no need to discuss whether $m=0$ or not. In particular, this avoids the use of $\SL_2(\bZ)$-relations in the case when $m=0$ as in \cite{NgSch10}, because Dehn twists themselves know these relations.
\end{remark}

\begin{remark}
In the above proof, we argued that the $\mathfrak{t}$ on the left hand side is a result of handle sliding along a curve {\it linked} to $\mathbb{T}_V$ and the  $\tilde{\mathfrak{t}}$ on the right hand side is given by handle slide along a curve {\it parallel} to $\mathbb{T}_{(X, e_X)}$. This can also be explained as the extension of a right-hand Dehn twist on $\mathbb{T}_V$ in the $B$-colored region restricts to a left-hand Dehn twist on $\mathbb{T}_{(X,e_X)}$. This phenomenon happens in general, which is the key to the equivariance of the generalized FS-indicator.
\end{remark}

For any $V \in \cC$ and $(m, \ell) \in \bZ^2$, if $m \ne 0$, write $\ell = -qm +\sgn(m)r$ as before. Define $\tilde{\Gamma}_{(m, \ell)}(V)$, or simply, $\tilde{\Gamma}_{(m, \ell)}$, to be the $\cC$-colored diagram on a torus $\mathbb{T}$ obtained from performing right-hand Dehn twist $q$-times (if $q < 0$, this means we need to apply left-hand Dehn twist $|q|$-times). If $m=0$, define $\tilde{\Gamma}_{(0, \ell)}(V) = \Gamma_{(0, \ell)}(V)$. In particular, $\tilde{\Gamma}_{(m, \ell)}$ is a $\tilde{V}$-colored $(|m|, |\ell|)$-curve on the torus. We depict $(\bS^3, \mathbb{T}, \tilde{\Gamma}_{(m,\ell)}(V))$ by 
\[(\bS^3, \mathbb{T}, \tilde{\Gamma}_{(m, r)}(V)) = (\bS^3, \mathbb{T}, \tilde{\Gamma}_{(m, r)})=
\vcenter{\hbox{\scalebox{0.8}{
\begin{tikzpicture}[scale=0.8]
\draw [double distance=0.77cm] (0,0) [partial ellipse=-0.1:180.1:2 and 1.5];
\draw[blue] (0,0) [partial ellipse=0:180:1.8 and 1.3];
\draw[blue] (0,0) [partial ellipse=0:180:2.2 and 1.7];
\draw [blue] (0,0) [partial ellipse=0:180:2.1 and 1.6];
\draw [line width=0.8cm] (0,0) [partial ellipse=180:360:2 and 1.5];
\draw [white, line width=0.77cm] (0,0) [partial ellipse=178:362:2 and 1.5];
\draw[blue] (0,0) [partial ellipse=178:362:1.8 and 1.3];
\draw[blue] (0,0) [partial ellipse=178:362:2.2 and 1.7];
\draw [blue] (0,0) [partial ellipse=178:362:2.1 and 1.6];
\begin{scope}[shift={(-1.98, 0.2)}]
\draw [blue, dashed](0,0) [partial ellipse=0:180:0.5 and 0.3];
\draw [blue] (0,0) [partial ellipse=180:360:0.5 and 0.3];
\end{scope} 
\begin{scope}[shift={(-1.99, -0.1)}]
\draw [blue, dashed](0,0) [partial ellipse=180:145:0.5 and 0.3];
\draw [blue, dashed](0,0) [partial ellipse=0:45:0.5 and 0.3];
\draw [blue] (0,0) [partial ellipse=180:360:0.5 and 0.3];
\end{scope} 
\begin{scope}[shift={(-1.98, -0.2)}]
\draw [fill=white] (-0.28,-0.3)--(-0.31, 0.3)--(0.36,0.3)--(0.39,-0.3)--(-0.28,-0.3);
\node at (0.05,0) {\tiny $\sim$};
\end{scope}
\node at (-1.8,-2) {\tiny $\tilde{V}$};
\node at (-0.4,1.4) {\tiny $|m|$};
\end{tikzpicture}}}}\]
where the tilded box is understood as filled with strings colored by $\tilde{V}$ such that the result of the filling is the $(|m|, |\ell|)$-curve in the definition of $\tilde{\Gamma}_{(m,\ell)}$.

\begin{corollary}\label{cor:general-m-ell}
For any $V \in \cC$, $(X, e_X) \in \cZ(\cC)$, and $(m, \ell) \in \bZ^2$, we have
\[Z(\bS^3, \Sigma_L, \tilde{\Gamma}_{(m,\ell)}(V) \sqcup \Phi(X, e_X)) = Z(\bS^3, \mathbb{T}_{V} \sqcup \mathbb{T}_{(X, e_X)}, \tilde{\Gamma}_{(m,\ell)} \sqcup \Phi) =  \mu\nu_{(m,\ell)}^{(X, e_X)}(V)\,,\]
where in $\mathbb{T}_V$, the torus containig $\tilde{\Gamma}_{(m,\ell)}(V)$, is on the left.
\end{corollary}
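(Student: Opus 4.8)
The plan is to reduce Corollary~\ref{cor:general-m-ell} to Theorem~\ref{thm:nu-and-z} by peeling off the Dehn twists one at a time via the $\mathfrak{t}$-equivariance machinery already set up in Proposition~\ref{prop:t-equiv}. The key observation is that $\tilde{\Gamma}_{(m,\ell)}(V)$ is, by its very definition, obtained from $\Gamma_{(m,r)}(V)$ by applying the right-hand Dehn twist along the meridian of $\mathbb{T}_V$ exactly $q$ times, where $\ell = -qm + \sgn(m) r$ and $0 \le r \le |m|-1$ (with the convention $q=0$, $r=|\ell|$ when $m=0$). So the statement to prove is essentially the ``accumulated'' version of the single-twist identity that underlies Proposition~\ref{prop:t-equiv}.

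First I would handle the case $m=0$ separately: here $\tilde{\Gamma}_{(0,\ell)}(V) = \Gamma_{(0,\ell)}(V)$ by definition and $r = |\ell|$, so the identity is exactly Theorem~\ref{thm:nu-and-z}, $Z(\bS^3, \Sigma_L, \Gamma_{(0,\ell)}\sqcup\Phi) = \mu\,\nu_{(0,\ell)}^{(X,e_X)}(V)$, with nothing further to do. For $m \ne 0$, by the relation $\nu_{(m,\ell)}^{(X,e_X)}(V) = \nu_{(-m,-\ell)}^{(X,e_X)}(V^*)$ (Eq.~(2.3) in \cite{NgSch10}) and the corresponding symmetry of the $\cC$-decorated 3-alterfold under orientation-reversing homeomorphism of the Hopf link complement, I may assume $m > 0$. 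Now I proceed by induction on $|q|$. The base case $q = 0$ means $\ell = r$ with $0 \le r \le m-1$, so $\tilde{\Gamma}_{(m,r)}(V) = \Gamma_{(m,r)}(V)$ and the claim is again precisely Theorem~\ref{thm:nu-and-z}. For the inductive step, suppose the identity holds for some value $q$; I want to pass to $q+1$ (the case $q-1$ is symmetric, using left-hand instead of right-hand Dehn twists). The diagram $\tilde{\Gamma}_{(m, \ell - m)}(V)$, where $\ell - m = -(q+1)m + r$, is obtained from $\tilde{\Gamma}_{(m,\ell)}(V)$ by one more right-hand Dehn twist along the meridian of $\mathbb{T}_V$. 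As in the proof of Proposition~\ref{prop:t-equiv}, this Dehn twist is realized inside the $B$-colored region of $(\bS^3, \Sigma_L)$: I glue in an $\Omega$-colored $(1,1)$-curve on an auxiliary torus $\mathbb{T}_\Theta$ linked to $\mathbb{T}_V$, apply Lemma~\ref{lem:twist} to absorb it into a Dehn twist, and compare. Concretely, the homeomorphism $f$ of the $B$-colored region that extends the right-hand Dehn twist on $\mathbb{T}_V$ restricts to a \emph{left}-hand Dehn twist on $\mathbb{T}_{(X,e_X)}$, which by Lemma~\ref{lem:twist} (Eq.~\eqref{eq:centerfulltwist}) produces a factor $\theta_{(X,e_X)}^{-1}$; combining with the homeomorphism invariance of $Z$ (Theorem~\ref{thm:partition function}) and the inductive hypothesis $Z(\bS^3, \Sigma_L, \tilde{\Gamma}_{(m,\ell)}\sqcup\Phi) = \mu\,\nu_{(m,\ell)}^{(X,e_X)}(V)$, I get
\[
Z(\bS^3, \Sigma_L, \tilde{\Gamma}_{(m,\ell-m)}\sqcup\Phi) = \theta_{(X,e_X)}^{-1}\, Z(\bS^3, \Sigma_L, \tilde{\Gamma}_{(m,\ell)}\sqcup\Phi) = \mu\,\theta_{(X,e_X)}^{-1}\,\nu_{(m,\ell)}^{(X,e_X)}(V) = \mu\,\nu_{(m,\ell-m)}^{(X,e_X)}(V),
\]
where the last equality is exactly the $\mathfrak{t}$-equivariance $\nu_{(m,\ell)\mathfrak{t}^{-1}}^{(X,e_X)}(V) = \nu_{(m,\ell)}^{\tilde{\mathfrak{t}}^{-1}(X,e_X)}(V) = \theta_{(X,e_X)}^{-1}\nu_{(m,\ell)}^{(X,e_X)}(V)$ restricted to this simple object (extended linearly to general $(X,e_X)$). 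This closes the induction.

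The main obstacle I anticipate is bookkeeping rather than conceptual: I need to verify carefully that the ``$q$ Dehn twists'' in the \emph{definition} of $\tilde{\Gamma}_{(m,\ell)}$ agree, diagram by diagram, with the Dehn twists induced \emph{topologically} by the homeomorphism $f$ of the Hopf-link complement $\mathbb{T}\times I$ — in particular that the sign conventions (right-hand twist on the $\mathbb{T}_V$ side corresponds to left-hand twist on the $\mathbb{T}_{(X,e_X)}$ side, hence $\theta^{-1}$ not $\theta$) match the normalization of $\tilde{\mathfrak{t}} = \mathfrak{t}^{-1}$ used throughout Section~5. A secondary point to get right is that the framing/homology class of the multicurve: a $(|m|,r)$-curve after one right-hand meridian Dehn twist becomes an $(|m|, r + |m|)$-curve (matching $\ell \mapsto \ell - m$ under the homology conventions in the footnote of the definition of $\Gamma_{(m,r)}$), so the indices genuinely track the $\SL_2(\bZ)$-action. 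Once these identifications are pinned down, the corollary follows formally by the inductive argument above, and I would present it in roughly that structure.
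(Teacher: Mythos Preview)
Your proposal is correct and takes essentially the same approach as the paper: the paper's proof is the single sentence ``follows directly from Theorem~\ref{thm:nu-and-z} and Proposition~\ref{prop:t-equiv}'', and you have unpacked this into the natural induction on $|q|$ that makes that sentence precise. The bookkeeping concerns you flag (matching the right-hand Dehn twist on $\mathbb{T}_V$ to the left-hand twist on $\mathbb{T}_{(X,e_X)}$ and hence to $\tilde{\mathfrak{t}}=\mathfrak{t}^{-1}$) are exactly the points already verified in the proof of Proposition~\ref{prop:t-equiv}, so once you invoke that proposition there is nothing further to check.
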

\begin{proof}
  The statement follows directly from Theorem \ref{thm:nu-and-z} and Proposition \ref{prop:t-equiv}.
\end{proof}

Now we are able to show the $\mathfrak{s}$-equivariance of $\nu$. 

\begin{proposition}[$\mathfrak{s}$-equivariance]\label{prop:s-equiv}
  For any $V \in \cC$, $(X, e_X) \in \cZ(\cC)$, and $(m, \ell) \in \bZ^2$, we have
\[\nu_{(m,\ell)\mathfrak{s}}^{(X, e_X)}(V)= \nu_{(m,\ell)}^{\tilde{\mathfrak{s}}(X,e_X)}(V)\,.\]
\end{proposition}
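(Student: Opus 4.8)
The plan is to mimic the proof of the $\mathfrak{t}$-equivariance (Proposition \ref{prop:t-equiv}), replacing the Dehn twist along the meridian by the $\mathfrak{s}$-transformation, which is realized topologically by the homeomorphism of the torus interchanging the meridian and the longitude (up to sign). As in the $\mathfrak{t}$-case, by the linearity of $\nu$ we may assume $(X, e_X) \in \Irr_{\cZ}$, and we record $\ell = -qm + \sgn(m)r$ with $0 \le r \le |m| - 1$ when $m \ne 0$, and $q = 0$, $r = |\ell|$ when $m = 0$. The starting point is the $\cC$-decorated $3$-alterfold $(\bS^3, \Sigma_L, \tilde{\Gamma}_{(m,\ell)}(V) \sqcup \Phi(X, e_X))$ from Corollary \ref{cor:general-m-ell}, whose partition function is $\mu\,\nu_{(m,\ell)}^{(X, e_X)}(V)$, with $\mathbb{T}_V$ the torus carrying $\tilde{\Gamma}_{(m,\ell)}(V)$ and $\mathbb{T}_{(X,e_X)}$ the one carrying $\Phi(X,e_X)$, linked as a Hopf link. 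The complement of a tubular neighborhood of the Hopf link in $\bS^3$ is $\mathbb{T} \times I$, and the mapping class group element sending $(m,\ell) \mapsto (\ell, -m)$ (equivalently $(m,\ell)\mathfrak{s}$) on one boundary component extends over $\mathbb{T}\times I$ and restricts to the inverse (orientation-reversed) transformation on the other boundary component — this is exactly the mechanism described after Equation \eqref{eq:S-matrix} and reiterated in the remark following the $\mathfrak{t}$-equivariance proof.

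The key steps, in order, are as follows. First, I would build a homeomorphism $f: R_B(\bS^3, \Sigma_L) \to R_B(\bS^3, \Sigma_L)$ that is the identity away from a collar of $\mathbb{T}_V$, performs the $\mathfrak{s}$-type coordinate change $(\text{meridian}, \text{longitude}) \mapsto (\text{longitude}, -\text{meridian})$ near $\mathbb{T}_V$, and hence restricts on $\mathbb{T}_{(X,e_X)}$ to the inverse change; concretely, on the $\mathbb{T}\times I$ sitting between the two tori, $f$ is the track of an isotopy of $\mathbb{T}$ realizing $\mathfrak{s}$. Second, applying the homeomorphism invariance from Theorem \ref{thm:partition function}, $Z(\bS^3, \Sigma_L, \tilde{\Gamma}_{(m,\ell)}(V) \sqcup \Phi(X, e_X)) = Z(\bS^3, \Sigma_L, f(\tilde{\Gamma}_{(m,\ell)}(V) \sqcup \Phi(X, e_X)))$. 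Third, I would identify the two sides. On the source side, by Corollary \ref{cor:general-m-ell} the value is $\mu\,\nu_{(m,\ell)}^{(X,e_X)}(V)$. On the target side, $f$ carries the $(|m|,|\ell|)$-multicurve $\tilde{\Gamma}_{(m,\ell)}(V)$ on $\mathbb{T}_V$ to the $\cC$-colored curve $\tilde{\Gamma}_{(m,\ell)\mathfrak{s}}(V)$ (using the normalization of $\tilde\Gamma$ from Corollary \ref{cor:general-m-ell}, which is twist-invariant up to the $\theta$-factors already bundled into $\Phi$), and carries $\Phi(X,e_X)$ on $\mathbb{T}_{(X,e_X)}$ to the diagram obtained by applying the $\mathfrak{s}$-transformation to the $\Omega$-colored longitude; by Lemma \ref{lem:zkirby} and Proposition \ref{prop:Stransform}, this latter diagram expands as $\sum_{(Y,e_Y)\in\Irr_\cZ}\mu^{-1}S_{(X,e_X),(Y,e_Y)}\,\Phi(Y,e_Y)$, which is exactly $\Phi$ evaluated at $\mathfrak{s}(X,e_X)$. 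Hence the target side equals $\mu\,\nu_{(m,\ell)}^{\tilde{\mathfrak{s}}(X,e_X)}(V)$ after accounting for the orientation-reversal (conjugation by $\mathfrak{j}$) contributed by the $f$-restriction to $\mathbb{T}_{(X,e_X)}$. Dividing by $\mu$ yields $\nu_{(m,\ell)\mathfrak{s}}^{(X,e_X)}(V) = \nu_{(m,\ell)}^{\tilde{\mathfrak{s}}(X,e_X)}(V)$.

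The main obstacle is the bookkeeping around Step 3: one must verify that the homeomorphism $f$ indeed sends $\tilde{\Gamma}_{(m,\ell)}$ to $\tilde{\Gamma}_{(m,\ell)\mathfrak{s}}$ on the nose (not merely up to isotopy that could reintroduce Dehn twists), and, crucially, that the induced transformation on $\mathbb{T}_{(X,e_X)}$ produces $\tilde{\mathfrak{s}}$ rather than $\mathfrak{s}$ — i.e., pinning down the orientation-reversal and the resulting conjugation by $\mathfrak{j} = \operatorname{diag}(1,-1)$. This is precisely the geometric content of the paper's claim that the $\mathfrak{j}$-conjugation is "a consequence of orientation reversing homeomorphisms": when a right-handed modular transformation is performed in the $B$-colored region near $\mathbb{T}_V$, its restriction to the other boundary torus $\mathbb{T}_{(X,e_X)}$ (viewed with its own boundary orientation as part of $\partial R_B$) is the orientation-reversed, hence $\mathfrak{j}$-conjugated, transformation. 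I would make this precise by choosing compatible oriented meridian-longitude bases on $\mathbb{T}_V$ and $\mathbb{T}_{(X,e_X)}$ induced from the Hopf link complement $\mathbb{T}\times I$ (with its product orientation), tracking how $f$ acts on $H_1$ of each boundary component, and invoking Proposition \ref{prop:Stransform} together with the twist identities of Lemma \ref{lem:twist} to rewrite the image diagram. The remaining verifications — that $\nu$ is linear so reduction to simple $(X,e_X)$ is legitimate, and that the $q$-dependent Dehn-twist normalization in $\tilde\Gamma$ and the $\theta_X^q$ in $\Phi$ are matched — are routine and parallel to the $\mathfrak{t}$-case.
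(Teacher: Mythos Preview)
Your proposal has the right ingredients---homeomorphism invariance of $Z$, the Hopf link complement as $\mathbb{T}\times I$, Proposition~\ref{prop:Stransform} for the $S$-expansion, and the orientation reversal producing the $\mathfrak{j}$-conjugation---and in spirit it is the paper's argument. But the logical flow in Step~2--3 does not give the identity you claim.

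First, a self-homeomorphism of $R_B\cong\mathbb{T}\times I$ that is the identity away from a collar of $\mathbb{T}_V$ must be isotopic to the identity on $\mathbb{T}_{(X,e_X)}$ as well; this works for a Dehn twist (which is what the $\mathfrak{t}$-proof uses) but not for $\mathfrak{s}$. So your $f$ cannot be ``identity away from a collar'' and still realize $\mathfrak{s}$ on $\mathbb{T}_V$. What you actually get is $f=\phi\times\id$ for $\phi$ the $\mathfrak{s}$-homeomorphism, acting nontrivially on \emph{both} boundary tori.

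Second, and more seriously, with that $f$ your Step~2 yields
\[
Z(\tilde{\Gamma}_{(m,\ell)}\sqcup\Phi)=Z\bigl(\tilde{\Gamma}_{(m,\ell)\mathfrak{s}}\sqcup f|_{\mathbb{T}_{(X,e_X)}}(\Phi)\bigr),
\]
so the right-hand side has \emph{both} variables transformed. After expanding the transformed $\Phi$ via Proposition~\ref{prop:Stransform} you obtain an identity of the shape $\nu_{(m,\ell)}^{(X,e_X)}(V)=\nu_{(m,\ell)\mathfrak{s}}^{g(X,e_X)}(V)$ for some $g$, not the desired $\nu_{(m,\ell)\mathfrak{s}}^{(X,e_X)}(V)=\nu_{(m,\ell)}^{\tilde{\mathfrak{s}}(X,e_X)}(V)$. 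Your final sentence asserting the target equals $\mu\,\nu_{(m,\ell)}^{\tilde{\mathfrak{s}}(X,e_X)}(V)$ silently drops the $(m,\ell)\mapsto(m,\ell)\mathfrak{s}$ change on $\tilde{\Gamma}$, which is the gap.

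The paper fixes this by comparing two \emph{different} alterfolds rather than applying a single self-homeomorphism: one has $f(\tilde{\Gamma}_{(m,\ell)})$ on $\mathbb{T}_V$ and the original $\Phi$ on $\mathbb{T}_{(X,e_X)}$, the other has the original $\tilde{\Gamma}_{(m,\ell)}$ and $f'(\Phi)$ with $f'=jfj$. These have homeomorphic decorated $B$-regions (push $f$ through $\mathbb{T}\times I$), so their $Z$-values agree; the first is $\mu\,\nu_{(m,\ell)\mathfrak{s}}^{(X,e_X)}(V)$ by Corollary~\ref{cor:general-m-ell}, and the second is $\mu\,\nu_{(m,\ell)}^{\tilde{\mathfrak{s}}(X,e_X)}(V)$ after applying Proposition~\ref{prop:Stransform} and the $S$-matrix symmetry $S_{(X,e_X),(Y^*,e_{Y^*})}=S_{(Y,e_Y),(X^*,e_{X^*})}$. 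Reorganizing your argument along these lines---transform only one torus at a time and equate---closes the gap.
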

\begin{proof}
We will use the notations in Corollary \ref{cor:general-m-ell}. Let $f$ be an orientation preserving homeomorphism on the torus representing the mapping class $\mathfrak{s} \in \SL_2(\bZ)$, i.e., it first switches the meridian and longitude, then (to preserve orientation) reverses the orientation of the new longitude. Then the $\cC$-decorated 3-alterfolds $(\bS^3, f(\mathbb{T}_{V}) \sqcup \mathbb{T}_{(X, e_X)}, f(\tilde{\Gamma}_{(m,\ell)}) \sqcup \Phi)$ is homeomorphic to $(\bS^3, \mathbb{T}_{V} \sqcup f'(\mathbb{T}_{(X, e_X)}), \tilde{\Gamma}_{(m,\ell)} \sqcup f'(\Phi))$, where $f'$ is obtained by ``pushing'' $f$ to $R_B(\bS^3, \mathbb{T}_V, \tilde{\Gamma}_{(m,\ell)}) \cong \mathbb{T}\times I$, and then ``restract'' to $\mathbb{T}_{(X, e_X)}$. To be more precise, let $j$ be the orientation reversing homeomorphism on a torus $\mathbb{T}$ that identifies two boundaries of $\mathbb{T}\times I$ that is degree 1 on the meridian direction, and degree on the longitude direction. The induced action of $j$ on the homology group $H_1(\mathbb{T}, \bZ)$ is represented by $\mathfrak{j} =\begin{pmatrix}  1 & 0 \\ 0 & -1\end{pmatrix}$.  Then $f'$ is a conjugation of $f$ by $j$ from $\mathbb{T}_{(X, e_X)}$ to $\mathbb{T}_V$, and its mapping class is exactly $\tilde{\mathfrak{s}}$.

Therefore, by Theorem \ref{thm:partition function}, 
\begin{align*}
Z(\bS^3, f(\mathbb{T}_{V}) \sqcup \mathbb{T}_{(X, e_X)}, f(\tilde{\Gamma}_{(m,\ell)}) \sqcup \Phi) = Z(\bS^3, \mathbb{T}_{V} \sqcup f'(\mathbb{T}_{(X, e_X)}), \tilde{\Gamma}_{(m,\ell)} \sqcup f'(\Phi))
\end{align*}
On the one hand, by Corollary \ref{cor:general-m-ell}, the left hand side equals to 
\begin{align*}
Z(\bS^3, f(\mathbb{T}_{V}) \sqcup \mathbb{T}_{(X, e_X)}, \tilde{\Gamma}_{(\ell, -m)} \sqcup \Phi) = \mu\nu_{(m, \ell)\mathfrak{s}}^{(X, e_X)}(V)\,.
\end{align*}
On the other hand, 
\begin{align*}
Z(\bS^3, \mathbb{T}_{V} \sqcup f'(\mathbb{T}_{(X, e_X)}), \tilde{\Gamma}_{(m,\ell)} \sqcup f'(\Phi)) = 
\vcenter{\hbox{\scalebox{0.8}{
\begin{tikzpicture}[scale=0.8]
\draw [double distance=0.6cm] (0,0) [partial ellipse=-0.1:180.1:2 and 1.5];
\draw [blue] (0,0) [partial ellipse=0:180:2.05 and 1.55];
\draw [blue] (0,0) [partial ellipse=0:180:2.15 and 1.65];
\draw [blue] (0,0) [partial ellipse=0:180:1.85 and 1.35];
\begin{scope}[shift={(2.5, 0)}]
\draw [double distance=0.6cm] (0,0) [partial ellipse=-0.1:180.1:2 and 1.5];
\draw [blue] (0,0) [partial ellipse=0:180:2 and 1.5];
\end{scope} % upper
\begin{scope}[shift={(2.5, 0)}]
\draw [line width=0.63cm] (0,0) [partial ellipse=180:360:2 and 1.5];
\draw [white, line width=0.6cm] (0,0) [partial ellipse=178:362:2 and 1.5];
\draw [blue, ->-=0.6] (0,0) [partial ellipse=178:362:2 and 1.5] node [pos=0.4, below, black] {\tiny $X$};
\end{scope}
\draw [line width=0.63cm] (0,0) [partial ellipse=180:360:2 and 1.5];
\draw [white, line width=0.6cm] (0,0) [partial ellipse=178:362:2 and 1.5];
\draw [blue] (0,0) [partial ellipse=178:362:2.05 and 1.55];
\draw [blue] (0,0) [partial ellipse=178:362:2.15 and 1.65];
\draw [blue] (0,0) [partial ellipse=178:362:1.85 and 1.35];
\begin{scope}[shift={(-2, -0.1)}]
\draw [blue, dashed](0,0) [partial ellipse=0:180:0.37 and 0.25];
\draw [blue,] (0,0) [partial ellipse=180:360:0.37 and 0.25];
\end{scope} 
\begin{scope}[shift={(-2, 0.1)}]
\draw [blue, dashed](0,0) [partial ellipse=0:180:0.37 and 0.25];
\draw [blue,] (0,0) [partial ellipse=180:360:0.37 and 0.25];
\end{scope} 
\begin{scope}[shift={(-2, -0.2)}]
\draw [fill=white] (-0.2, -0.23) rectangle (0.3, 0.24);
\end{scope}% lower 
\begin{scope}[shift={(4.5, 0)}]
\draw [red, dashed](0,0) [partial ellipse=0:180:0.37 and 0.25];
\draw [red] (0,0) [partial ellipse=180:265:0.37 and 0.25];
\draw [red] (0,0) [partial ellipse=285:360:0.37 and 0.25];
\end{scope}
\end{tikzpicture}}}}
\end{align*}
and by Proposition \ref{prop:Stransform}, the latter equals to 
\begin{align*}
\sum_{(Y, e_Y)}  \mu^{-1} S_{(X, e_X), (Y, e_Y)}
\vcenter{\hbox{\scalebox{0.8}{
\begin{tikzpicture}[scale=0.8]
\draw [double distance=0.6cm] (0,0) [partial ellipse=-0.1:180.1:2 and 1.5];
\draw [blue] (0,0) [partial ellipse=0:180:2.05 and 1.55];
\draw [blue] (0,0) [partial ellipse=0:180:2.15 and 1.65];
\draw [blue] (0,0) [partial ellipse=0:180:1.85 and 1.35];
\begin{scope}[shift={(2.5, 0)}]
\draw [double distance=0.6cm] (0,0) [partial ellipse=-0.1:180.1:2 and 1.5];
\draw [red] (0,0) [partial ellipse=0:180:2 and 1.5];
\end{scope} % upper
\begin{scope}[shift={(2.5, 0)}]
\draw [line width=0.63cm] (0,0) [partial ellipse=180:360:2 and 1.5];
\draw [white, line width=0.6cm] (0,0) [partial ellipse=178:362:2 and 1.5];
\draw [red] (0,0) [partial ellipse=178:362:2 and 1.5];
\end{scope}
\draw [line width=0.63cm] (0,0) [partial ellipse=180:360:2 and 1.5];
\draw [white, line width=0.6cm] (0,0) [partial ellipse=178:362:2 and 1.5];
\draw [blue] (0,0) [partial ellipse=178:362:2.05 and 1.55];
\draw [blue] (0,0) [partial ellipse=178:362:2.15 and 1.65];
\draw [blue] (0,0) [partial ellipse=178:362:1.85 and 1.35];
\begin{scope}[shift={(-2, -0.1)}]
\draw [blue, dashed](0,0) [partial ellipse=0:180:0.37 and 0.25];
\draw [blue,] (0,0) [partial ellipse=180:360:0.37 and 0.25];
\end{scope} 
\begin{scope}[shift={(-2, 0.1)}]
\draw [blue, dashed](0,0) [partial ellipse=0:180:0.37 and 0.25];
\draw [blue,] (0,0) [partial ellipse=180:360:0.37 and 0.25];
\end{scope} 
\begin{scope}[shift={(-2, -0.2)}]
\draw [fill=white] (-0.2, -0.23) rectangle (0.3, 0.24);
\end{scope}% lower 
\begin{scope}[shift={(4.5, 0)}]
\draw [blue, dashed](0,0) [partial ellipse=0:180:0.37 and 0.25] node [below left, black] {\tiny $Y$};
\draw [white, line width=4pt] (0,0) [partial ellipse=270:250:0.37 and 0.25];
\draw [blue, ->-=0.3] (0,0) [partial ellipse=180:360:0.37 and 0.25] node [pos=0.7] {\tiny $\bullet$};
\end{scope}
\end{tikzpicture}}}}\,,
\end{align*}
where here and below, the summation over $(Y,e_Y)$ is understood as summing over simple objects in $\mathcal{Z(C)}$. The $\cC$-diagram on the right torus is understood as $\Phi(Y^*, e_{Y^*})$ since the $Y$-colored string is oriented differently from that in Equation \eqref{eq:Gamma-and-Phi}. Therefore, the diagram evaluates to
\begin{align*}
&\sum_{(Y, e_Y)}  \mu^{-1} S_{(X, e_X), (Y^*, e_{Y^*})} Z(\bS^3, \mathbb{T}_V \sqcup \mathbb{T}_{(Y,e_Y)}, \tilde{\Gamma}_{(m,\ell)}(V) \sqcup \Phi(Y,e_{Y}))\\ 
=&\mu\sum_{(Y, e_Y)}  \mu^{-1} S_{(X, e_X), (Y^*, e_{Y^*})} \nu_{(m,\ell)}^{(Y,e_Y)}(V)
=\mu\sum_{(Y, e_Y)}  \mu^{-1} S_{(Y,e_Y), (X^*, e_{X^*})} \nu_{(m,\ell)}^{(Y,e_Y)}(V)\\
=& \mu \nu_{(m,\ell)}^{\tilde{\mathfrak{s}}(X, e_X)}(V)\,,
\end{align*}
where the first equality follows from Corollary \ref{cor:general-m-ell}, the second equality is based on the well-known property of the $S$-matrix (see, for example, \cite[Equation (3.13)]{BakKir01}), and the last one follows from \cite[Equation (3.17)]{BakKir01} and the linearity of $\nu$. Since $\mu \ne 0$, we have $\nu_{(m,\ell)\mathfrak{s}}^{(X, e_X)}(V)= \nu_{(m,\ell)}^{\tilde{\mathfrak{s}}(X,e_X)}(V)$ as desired.
\end{proof}

Combining the above propositions, we obtain the $\SL_2(\bZ)$-equivariance of the generalized Frobenius-Schur indicators.

\begin{theorem}[$\SL_2(\bZ)$-equivariance]\label{thm:equivariance}
    For any $V \in \cC$, $(X, e_X) \in \cZ(\cC)$,  $(m, \ell) \in \bZ^2$, and $\mathfrak{g} \in \SL_2(\bZ)$, we have
\[\nu_{(m,\ell)\mathfrak{g}}^{(X, e_X)}(V)= \nu_{(m,\ell)}^{\tilde{\mathfrak{g}}(X,e_X)}(V)\,.\]
\end{theorem}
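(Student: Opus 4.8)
The plan is to bootstrap from the two special cases already settled, namely the $\mathfrak{t}$-equivariance (Proposition \ref{prop:t-equiv}) and the $\mathfrak{s}$-equivariance (Proposition \ref{prop:s-equiv}), using that $\mathfrak{s}$ and $\mathfrak{t}$ generate $\SL_2(\bZ)$ and that the equivariance relation is compatible with composition. First I would record two structural facts. The map $\mathfrak{g}\mapsto\tilde{\mathfrak{g}}=\mathfrak{j}\mathfrak{g}\mathfrak{j}$ is an involutive automorphism of $\SL_2(\bZ)$: since $\det\mathfrak{j}=-1$, conjugation by $\mathfrak{j}$ preserves the set of integer matrices of determinant $1$, and $\mathfrak{j}^2=I$, so in particular $\widetilde{\mathfrak{g}_1\mathfrak{g}_2}=\tilde{\mathfrak{g}}_1\tilde{\mathfrak{g}}_2$ and $\widetilde{\mathfrak{g}^{-1}}=\tilde{\mathfrak{g}}^{-1}$. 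Second, the operators $\mathfrak{s}$ and $\mathfrak{t}$ on $K_0(\cZ(\cC))$ described before the theorem satisfy the defining relations of $\SL_2(\bZ)$ and hence define a genuine left group action; this is standard for the modular category $\cZ(\cC)\simeq\cA$ (Remark \ref{rem:modularity}) and is part of the Ng–Schauenburg framework \cite{NgSch10}. Consequently $\tilde{\mathfrak{g}}_1\bigl(\tilde{\mathfrak{g}}_2(X,e_X)\bigr)=\widetilde{\mathfrak{g}_1\mathfrak{g}_2}(X,e_X)$ in $K_0(\cZ(\cC))$.

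With these in hand, let $H\subseteq\SL_2(\bZ)$ be the set of $\mathfrak{g}$ such that $\nu_{(m,\ell)\mathfrak{g}}^{(X,e_X)}(V)=\nu_{(m,\ell)}^{\tilde{\mathfrak{g}}(X,e_X)}(V)$ for all $(m,\ell)\in\bZ^2$, all $(X,e_X)\in\cZ(\cC)$ and all $V\in\cC$; by linearity of $\nu$ in the $\cZ(\cC)$-variable it suffices to test on simple objects. Clearly $I\in H$. Given $\mathfrak{g}_1,\mathfrak{g}_2\in H$, I would compute, for arbitrary data $(m,\ell),(X,e_X),V$,
\begin{align*}
\nu_{(m,\ell)\mathfrak{g}_1\mathfrak{g}_2}^{(X,e_X)}(V)
&=\nu_{((m,\ell)\mathfrak{g}_1)\mathfrak{g}_2}^{(X,e_X)}(V)
=\nu_{(m,\ell)\mathfrak{g}_1}^{\tilde{\mathfrak{g}}_2(X,e_X)}(V)\\
&=\nu_{(m,\ell)}^{\tilde{\mathfrak{g}}_1(\tilde{\mathfrak{g}}_2(X,e_X))}(V)
=\nu_{(m,\ell)}^{\widetilde{\mathfrak{g}_1\mathfrak{g}_2}(X,e_X)}(V),
\end{align*}
where the second equality applies $\mathfrak{g}_2\in H$ to the pair $(m,\ell)\mathfrak{g}_1$, the third applies $\mathfrak{g}_1\in H$ to the object $\tilde{\mathfrak{g}}_2(X,e_X)$ (using linearity of $\nu$), and the last uses the second structural fact. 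Hence $H$ is closed under products, i.e.\ $H$ is a submonoid of $\SL_2(\bZ)$.

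By Propositions \ref{prop:t-equiv} and \ref{prop:s-equiv}, $\mathfrak{s},\mathfrak{t}\in H$. Since $\mathfrak{s}^4=I$ and $(\mathfrak{s}\mathfrak{t})^3=\mathfrak{s}^2$ in $\SL_2(\bZ)$ (so $(\mathfrak{s}\mathfrak{t})^6=I$), the submonoid property gives $\mathfrak{s}^{-1}=\mathfrak{s}^3\in H$ and $\mathfrak{t}^{-1}=(\mathfrak{s}\mathfrak{t})^5\mathfrak{s}\in H$ (alternatively, $\mathfrak{t}^{-1}\in H$ follows verbatim from the diagrammatic proof of Proposition \ref{prop:t-equiv} with right-hand Dehn twists replaced by left-hand ones). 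Every element of $\SL_2(\bZ)$ is a finite product of $\mathfrak{s}^{\pm1}$ and $\mathfrak{t}^{\pm1}$, all of which lie in $H$, so $H=\SL_2(\bZ)$, which is the assertion of the theorem.

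I do not expect a genuine obstacle: all the analytic and topological content sits in Propositions \ref{prop:t-equiv} and \ref{prop:s-equiv}, and what remains is purely formal. The only point requiring care is the bookkeeping in the displayed chain of equalities — in particular correctly identifying, at each step, whether a previously established equivariance relation is being instantiated on the lattice variable $(m,\ell)$ or on the object variable $(X,e_X)$ — together with the (standard but essential) input that the $\SL_2(\bZ)$-action on $K_0(\cZ(\cC))$ is a genuine action, so that $\tilde{\mathfrak{g}}_1\circ\tilde{\mathfrak{g}}_2=\widetilde{\mathfrak{g}_1\mathfrak{g}_2}$ on objects.
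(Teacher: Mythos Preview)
Your proof is correct and follows the same approach as the paper, which simply states that the result ``follows from the above and that conjugation by $\mathfrak{j}$ is automorphism of $\SL_2(\bZ)$.'' You have spelled out in detail exactly the bootstrapping argument the paper leaves implicit: closure of the equivariance relation under composition, combined with $\mathfrak{s},\mathfrak{t}\in H$ and the $\SL_2(\bZ)$ relations, forces $H=\SL_2(\bZ)$.
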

\begin{proof}
  Follows from the above and that conjugation by $\mathfrak{j}$ is automorphism of $\SL_2(\bZ)$. 
\end{proof}

\section{Summary and Further Researches}

In this section, we summarize the dictionary for the topologized notions related to the Drinfeld center and tube algebra in Figure \ref{tab:top}. 
Next, we discuss the future work on topological quantum field theory associated to the partition function $Z$, the generalized Frobenius-Schur indicator on higher genus handlebodies and quantum Fourier analysis respectively in the following.
\begin{figure}
\centering
\begin{tabular}{|c|c|c|c|c|}
\hline
     &\small  Drinfeld Center & \small Braiding & \small Half-Braiding & \small Minimal Central Idempotent\\
     \hline
    Topologized & Tube Category & $\vcenter{\hbox{\scalebox{0.5}{
\begin{tikzpicture}
\draw  (1.6, 0) .. controls +(0, 0.3) and +(0, 0.3) .. (2.4, 0);
\draw (1.6, 0) .. controls +(0,-0.3) and +(0, -0.3) .. (2.4, 0);
\draw  (1.6, 0) .. controls +(0, -0.3) and +(0.3, 0) ..(1, -0.7) .. controls +(-0.7, 0) and +(0,0.7) ..(-0.4, -2);
\draw  (2.4, 0) .. controls +(0, -0.7) and +(0.7, 0) ..(1, -1.3) .. controls +(-0.3, 0) and +(0,0.3) ..(0.4, -2);
\draw [blue,->-=0.1, ->-=0.9] (2, -0.21) .. controls +(0, -0.5) and +(0.5, 0) ..(1, -1) .. controls +(-0.4, 0) and +(0,0.4) ..(0, -2)--(0, -2.2) node [below, black] {\tiny $Y$};
\draw [ dashed] (-0.4, -2) .. controls +(0, 0.3) and +(0, 0.3) .. (0.4, -2);
\draw (-0.4, -2) .. controls +(0,-0.3) and +(0, -0.3) .. (0.4, -2); % right tube
\path [fill=white]  (0.15,-0.69) rectangle (1.85, -1.31);
\draw (-0.4, 0) .. controls +(0, 0.3) and +(0, 0.3) .. (0.4, 0);
\draw  (-0.4, 0) .. controls +(0,-0.3) and +(0, -0.3) .. (0.4, 0);
\draw  (0.4, 0) .. controls +(0, -0.3) and +(-0.3, 0) ..(1, -0.7) .. controls +(0.7, 0) and +(0,0.7) ..(2.4, -2);
\draw  (-0.4, 0) .. controls +(0, -0.7) and +(-0.7, 0) ..(1, -1.3) .. controls +(0.3, 0) and +(0,0.3) ..(1.6, -2);
\draw [blue,->-=0.5] (0, -0.21) .. controls +(0, -0.5) and +(-0.5, 0) ..(1, -1) .. controls +(0.5, 0) and +(0,0.4) ..(2, -2)--(2, -2.2) node [below, black] {\tiny $X$};
\draw [dashed] (1.6, -2) .. controls +(0, 0.3) and +(0, 0.3) .. (2.4, -2);
\draw  (1.6, -2) .. controls +(0,-0.3) and +(0, -0.3) .. (2.4, -2); % left tube
\end{tikzpicture}}}}$ & $\displaystyle \sum_{i,j}d_i^{1/2}d_j^{1/2}
\vcenter{\hbox{\scalebox{0.4}{\begin{tikzpicture}[scale=0.35]
\draw (4, -5)--(-4,-6)--(-4, 7) --(4, 8)  (4, -5)--( 4, -2.5) (4, 4)--(4, 8);
\draw [dashed]( 4, -2.5)--(4, 4);
\draw [dashed] (0,5) [partial ellipse=0:360:2 and 0.8];
\draw (-2, 5)..controls +(0,-1) and +(0,1)..(3,1.5)--(3,0.5)..controls +(0,-1) and +(0,1)..(-2,-4);
\draw (2, 5)..controls +(0,-1) and +(0,1)..(7,1.5)--(7,0.5)..controls +(0,-1) and +(0,1)..(2,-4);
\draw [blue] (0, 5)..controls +(0,-1) and +(0,1)..(5,1.5)--(5,0.5)..controls +(0,-1) and +(0,1)..(0,-4);
\draw[dashed] (0,-4) [partial ellipse=0:180:2 and 0.8];
\draw [dashed] (0,-4) [partial ellipse=180:360:2 and 0.8];
\draw[blue, ->-=0.5] (0, 7.5) node[above, black]{\tiny{${X}$}}->(0, 5);
\draw[blue, ->-=0.5] (0, -4)--(0, -5.5)node[below, black]{\tiny{${X}$}};
% \draw [red](5, 1.25) [partial ellipse=-75:0:2 and 0.8];
% \draw [red, dashed](5, 1.25) [partial ellipse=0:180:2 and 0.8];
% \draw [red](5, 1.25) [partial ellipse=180:255:2 and 0.8];
% \node [draw, fill=white] at (5, 0.5){\tiny $f$};
\draw [blue] (0, -6) [partial ellipse=40:10:2.5 and 4] node [below, black] {\tiny $X_{i}$};
\draw [blue,dashed, ->-=0.5] (0, -6) [partial ellipse=90:40:2.5 and 4];
\draw [blue] (0, -6) [partial ellipse=90:179:2.5 and 4] node [below, black] {\tiny $X_{i}^*$};
\draw [blue] (0, 7) [partial ellipse=-40:10:2.5 and 4] node [above, black] {\tiny $X_{j}$};
\draw [blue,dashed, ->-=0.5] (0, 7) [partial ellipse=-40:-90:2.5 and 4];
\draw [blue] (0, 7) [partial ellipse=-90:-182:2.5 and 4] node [above, black] {\tiny $X_{j}^*$};
\draw [blue] (0, 4) [partial ellipse=-90:-40:3.5 and 3];
\draw [blue,dashed] (0, 4) [partial ellipse=0:-40:3.5 and 3] ;
\draw [blue] (3.5, 4)--(3.5, 8) node [above, black]{\tiny $V$};
\draw [blue] (0, -3) [partial ellipse=90:180:3.2 and 4];
\draw [blue, ->-=0.1] (-3.2, -3)--(-3.2, -6);
\end{tikzpicture}}}}$ & $\displaystyle \frac{d(X)}{\mu^2}\sum_{i=0}^r\vcenter{\hbox{\scalebox{0.4}{
\begin{tikzpicture}[xscale=0.8, yscale=0.6]
\draw [line width=0.83cm] (0,0) [partial ellipse=-0.1:180.1:2 and 1.5];
\draw [white, line width=0.8cm] (0,0) [partial ellipse=-0.1:180.1:2 and 1.5];
\draw [red] (0,0) [partial ellipse=0:180:2 and 1.5];
\path [fill=white](-0.65, 0) rectangle (0.65, 2.5);
\begin{scope}[shift={(0,3)}]
\draw (0,0) [partial ellipse=0:360:0.6 and 0.3];
\end{scope}
\draw (-0.6, 3)--(-0.6, 0) (0.6, 3)--(0.6, 0); 
\draw [blue, -<-=0.5] (0, 0)--(0, 2.7) node [pos=0.5, right, black] {\tiny $X_i$};% upper one
\draw (-0.6, -3)--(-0.6, 0) (0.6, -3)--(0.6, 0);
\draw [blue] (0, 0)--(0, -3.3);
\draw [line width=0.83cm] (0,0) [partial ellipse=180:360:2 and 1.5];
\draw [white, line width=0.8cm] (0,0) [partial ellipse=178:362:2 and 1.5];
\draw [red] (0,0) [partial ellipse=178:362:2 and 1.5];
\begin{scope}[shift={(0,-3)}]
\draw [dashed](0,0) [partial ellipse=0:180:0.6 and 0.3];
\draw (0,0) [partial ellipse=180:360:0.6 and 0.3];
\end{scope}
\begin{scope}[shift={(-2, 0)}]
\draw [blue, dashed](0,0) [partial ellipse=0:180:0.5 and 0.3] node [below left, black] {\tiny $X$};
\draw [white, line width=4pt] (0,0) [partial ellipse=270:290:0.5 and 0.3];
\draw [blue, ->-=0.3] (0,0) [partial ellipse=180:360:0.5 and 0.3];
\end{scope}
\end{tikzpicture}}}}$ \\
\hline
\end{tabular}
\begin{tabular}{|c|c|c|c|c|}
     & $\Omega$-Color & \small $S$-Matrix Coefficients & \small FS Indicators & \small Twist\\
     \hline
    Topologized &  $\vcenter{\hbox{
 \begin{tikzpicture}[scale=0.5]
 \draw (0, 0) [partial ellipse=0:360:1 and 1];
\draw (0, 0) [partial ellipse=0:360:1.5 and 1.5];
 \draw[red] (0, 0) [partial ellipse=0:360:1.25 and 1.25];
 \end{tikzpicture}
 }}$ & $\frac{1}{\mu}
\vcenter{\hbox{\scalebox{0.5}{
\begin{tikzpicture}[scale=0.8]
\draw [line width=0.83cm] (0,0) [partial ellipse=0:180:2 and 1.5];
\draw [line width=0.8cm, white] (0,0) [partial ellipse=-0.2:180.2:2 and 1.5];
\draw [red] (0,0) [partial ellipse=0:180:2 and 1.5];
\begin{scope}[shift={(2, 0)}]
\draw [line width=0.83cm] (0,0) [partial ellipse=0:180:2 and 1.5];
\draw [line width=0.8cm, white] (0,0) [partial ellipse=-0.2:180.2:2 and 1.5];
\draw [red] (0,0) [partial ellipse=0:180:2 and 1.5];
\end{scope} % upper
\begin{scope}[shift={(2, 0)}]
\draw [line width=0.83cm] (0,0) [partial ellipse=180:360:2 and 1.5];
\draw [white,line width=0.8cm] (0,0) [partial ellipse=178:362:2 and 1.5];
\draw [red] (0,0) [partial ellipse=178:362:2 and 1.5];
\end{scope}
\draw [line width=0.83cm] (0,0) [partial ellipse=180:360:2 and 1.5];
\draw [white, line width=0.8cm] (0,0) [partial ellipse=178:362:2 and 1.5];
\draw [red] (0,0) [partial ellipse=178:362:2 and 1.5];
\begin{scope}[shift={(-2, 0)}]
\draw [blue, dashed](0,0) [partial ellipse=0:180:0.5 and 0.3] node [below left, black] {\tiny $X$};
\draw [white, line width=4pt] (0,0) [partial ellipse=270:290:0.5 and 0.3];
\draw [blue, ->-=0.3] (0,0) [partial ellipse=180:360:0.5 and 0.3];
\end{scope} % lower 
\begin{scope}[shift={(0, 0)}]
\draw [blue, dashed](0,0) [partial ellipse=0:180:0.5 and 0.3] node [below left, black] {\tiny $Y$};
\draw [white, line width=4pt] (0,0) [partial ellipse=270:290:0.5 and 0.3];
\draw [blue, -<-=0.7] (0,0) [partial ellipse=180:360:0.5 and 0.3];
\end{scope}
\end{tikzpicture}}}}$ & $\displaystyle \frac{1}{\mu}\vcenter{\hbox{\scalebox{0.6}{
\begin{tikzpicture}[scale=0.8]
\draw [double distance=0.8cm] (0,0) [partial ellipse=-0.1:180.1:2 and 1.5];
\draw[blue] (0,0) [partial ellipse=0:180:1.8 and 1.3];
\draw[blue] (0,0) [partial ellipse=0:180:2.2 and 1.7];
\draw [blue] (0,0) [partial ellipse=0:180:2.1 and 1.6];
\begin{scope}[shift={(2, 0)}]
\draw [double distance=0.8cm] (0,0) [partial ellipse=-0.1:180.1:2 and 1.5];
\draw [red] (0,0) [partial ellipse=0:180:2 and 1.5];
\end{scope} % upper
\begin{scope}[shift={(2, 0)}]
\draw [line width=0.83cm] (0,0) [partial ellipse=180:360:2 and 1.5];
\draw [white, line width=0.8cm] (0,0) [partial ellipse=178:362:2 and 1.5];
\draw [red] (0,0) [partial ellipse=178:362:2 and 1.5];
\end{scope}
\draw [line width=0.83cm] (0,0) [partial ellipse=180:360:2 and 1.5];
\draw [white, line width=0.8cm] (0,0) [partial ellipse=178:362:2 and 1.5];
\draw[blue] (0,0) [partial ellipse=178:362:1.8 and 1.3];
\draw[blue] (0,0) [partial ellipse=178:362:2.2 and 1.7];
\draw [blue] (0,0) [partial ellipse=178:362:2.1 and 1.6];
\begin{scope}[shift={(-1.98, 0.2)}]
\draw [blue, dashed](0,0) [partial ellipse=0:180:0.5 and 0.3];
\draw [blue] (0,0) [partial ellipse=180:360:0.5 and 0.3];
\end{scope} 
\begin{scope}[shift={(-1.99, -0.1)}]
\draw [blue, dashed](0,0) [partial ellipse=180:145:0.5 and 0.3];
\draw [blue, dashed](0,0) [partial ellipse=0:45:0.5 and 0.3];
\draw [blue] (0,0) [partial ellipse=180:360:0.5 and 0.3];
\end{scope} 
\begin{scope}[shift={(-1.98, -0.2)}]
\draw [fill=white] (-0.28,-0.3)--(-0.31, 0.3)--(0.36,0.3)--(0.39,-0.3)--(-0.28,-0.3);
\end{scope}
% lower 
\begin{scope}[shift={(4, 0)}]
\draw [blue, dashed](0,0) [partial ellipse=0:180:0.5 and 0.3]; 
\node at (-0.75,0) {\tiny $X$};
\draw [white, line width=4pt] (0,0) [partial ellipse=270:290:0.5 and 0.3];
\fill[white] (-0.05,-0.3) circle[radius=2pt];
\draw [blue, ->-=0.3] (0,0) [partial ellipse=180:360:0.5 and 0.3];
\fill [blue] (0.2,-0.275) circle[radius=2pt];
\end{scope}
\node at (-1.8,-2.2) {\tiny $\tilde{V}$}; \node at (-0.4,1.4) {\tiny $|m|$};
\end{tikzpicture}}}}$
& $\displaystyle \frac{1}{\mu}\vcenter{\hbox{\scalebox{0.5}{
\begin{tikzpicture}[scale=0.35]
\draw (0,5) [partial ellipse=0:360:2 and 0.8];
\draw (-2, 5)--(-2, -4);
\draw (2, 5)--(2, -4);
\draw[dashed] (0,-4) [partial ellipse=0:180:2 and 0.8];
\draw (0,-4) [partial ellipse=180:360:2 and 0.8] node[below, white]{\tiny{$X$}};
\draw[blue, ->-=0.2] (0, 4.2)--(0, -4.8);
\draw (0.6, 2.5) node{\tiny{$X$}};
\begin{scope}[shift={(0, 0)}, xscale=-5.4, yscale=4]
\draw [blue, dashed] (0,0) [partial ellipse=0:180:0.37 and 0.25];
\draw [white, line width=4pt] (0,0) [partial ellipse=270:250:0.37 and 0.25];
\draw [blue] (0,0) [partial ellipse=180:360:0.37 and 0.25];
\begin{scope}[shift={(0, -0.2)}]
\path [fill=white] (-0.2, -0.23) rectangle (0.3, 0.24);
\draw[blue] (0,-0.23)..controls +(0,0.2) and +(-0.2,-0.1)..(0.3,0.05);
\draw[blue] (0,0.24)..controls +(0,-0.2) and +(0.2,-0.05)..(-0.2,-0.01);
\end{scope}
\end{scope}
\begin{scope}[shift={(0, -2)}]
\draw [red](0, 0) [partial ellipse=-75:0:2 and 0.8];
\draw [red, dashed](0, 0) [partial ellipse=0:180:2 and 0.8];
\draw [red](0, 0) [partial ellipse=180:255:2 and 0.8];
\end{scope}
\end{tikzpicture}}}}$\\
\hline
\end{tabular}
\caption{The Dictionary for Topologized Notions}
\label{tab:top}
\end{figure}

It is natural to ask if our partition function $Z$ comes from a topological quantum field theory. 
The answer is yes. 
In a forthcoming paper, we show that by applying the universal construction (see \cite{BHMV2}), we will get a topological quantum field theory defined on the $\cC$-decorated cobordism category described as follows. 
The objects are bi-colored surfaces with $A$-$B$ colored regions separated by multi-curves with marked points colored by $\mathcal{C}$. 
The morphisms are $\mathcal{C}$ bi-colored cobordisms, i.e. 3-alterfold with bi-colored surfaces as its boundary. 
In this setting, our cobordism category is larger than the ones that been usually used in for $2+1$-dimensioncal TQFTs. Therefore, Different models(string-net, Levin-Wen, RT and TV) naturally embed in our TQFT so one can compare them in a unified setting. As a corollary, the Tureav-Viro TQFT of $\mathcal{C}$ equals to the Reshetikhin-Turaev TQFT of $\mathcal{Z(C)}$.

Theorem \ref{thm:nu-and-z} identifies the generalized Frobenius-Schur indicators as the partition functions on $\cC$-decorated bi-colored genus 1 handlebodies, and our TQFT described above gives rise to a straightforward generalization of the generalized Frobenius-Schur indicators to higher genus handlebodies, which we call {\it topological indicators}. 
Similar to the situation in Theorem \ref{thm:nu-and-z} in genus 1, consider two $\cC$-decorated bi-colored genus $g$ handlebodies in $\bS^3$ ``linked genus by genus'' (i.e., the handlebodies without coloring give rise to the genus $g$ Heegaard splitting of $\bS^3$) such that one of the handlebody is decorated with simple closed multicurves on the boundary surface colored by objects in $\cC$, and the other handlebody is given by a vector in the Reshetikhin-Turaev TQFT of $\mathcal{Z(C)}$ for the boundary surface of genus $g$ in the similar way as \cite[Chapter IV]{Tur94}. Color the interior of the above $\cC$-decorated linked handlebody by $A$, then we define the genus $g$ topological invariant associated to such a handlebody to be (up to scalar) the partition function of it. We will show that by the existence of our TQFT, the topological indicator is naturally equivariant with respect to the mapping class group action on the two boundary surfaces of the handlebodies, generalizing Theorem \ref{thm:equivariance}. The topological indicator is thus a pairing between the Reshetikhin-Turaev TQFT and the Turaev-Viro TQFT, using which we can study the interaction between these two TQFTs under our framework.

Suppose $\mathcal{C}$ is a unitary fusion category. 
In our framework, we have that $\mathcal{Z(C)}$ is braided monoidal equivalent to a unitary Drinfeld center. The topological quantum field theory associated to the quantum invariant $Z$ is also unitary which will be shown in the forthcoming paper. 
Quantum Fourier analysis \cite{JJLRW20} is a program studying Fourier analysis on quantum symmetries such as subfactors, unitary fusion categories, etc. Our framework can produce more interesting inequalities for quantum symmetries.

% \bibliographystyle{plain}
% \bibliography{TQFT}

\end{document}